\newcommand{\N}{\mathbb{N}}
\newcommand{\R}{\mathbb{R}}
\newcommand{\eps}{\varepsilon}
\renewcommand{\P}{\mathbb{P}}
\newcommand\E{\mathbb{E}}
\theoremstyle{plain}
\newtheorem{theorem}{Theorem}[section]
\newtheorem{lemma}[theorem]{Lemma}
\newtheorem{corollary}[theorem]{Corollary}
\newtheorem{proposition}[theorem]{Proposition}
\theoremstyle{remark}
\newtheorem{remark}{Remark}[section]
\newtheorem{example}{Example}[section]
\newtheorem{question}{Question}
\newtheorem{assumption}{Assumption}[section]
\begin{document}

\title{Consistency of~$p$-norm based tests in high dimensions: characterization, monotonicity, domination\footnote{We would like to thank Davy Paindaveine, Michael Wolf, and seminar participants in Bristol/Warwick, Durham, Ohio, Turku, Paris, Rome, Vienna, Wisconsin and York for helpful comments and discussions. David Preinerstorfer gratefully acknowledges support from the Program of Concerted Research Actions (ARC) of the Universit\'e libre de Bruxelles.}}

\date{}

\author{
	\begin{tabular}{c}
		Anders Bredahl Kock \\ 
		\small	University of Oxford \\
		\small	CREATES, Aarhus University\\
		\small	10 Manor Rd, Oxford OX1 3UQ
		\\
		\small	{\small	\href{mailto:anders.kock@economics.ox.ac.uk}{anders.kock@economics.ox.ac.uk}} 
	\end{tabular}
	\and
	\begin{tabular}{c}
		David Preinerstorfer \\ 
		{\small	SEW-SEPS} \\ 
		{\small	 University of St.~Gallen} \\
		{\small	Varnb\"uelstrasse 14, 9000 St.~Gallen} \\ 
		{\small	 \href{mailto:david.preinerstorfer@unisg.ch}{david.preinerstorfer@unisg.ch}}
	\end{tabular}
}

\maketitle

\begin{abstract}
Many commonly used test statistics are based on a norm measuring the evidence against the null hypothesis. To understand how the choice of a norm affects power properties of tests in high dimensions, we study the \emph{consistency sets} of~$p$-norm based tests in the prototypical framework of sequence models with unrestricted parameter spaces, the null hypothesis being that all observations have zero mean. The consistency set of a test is here defined as the set of all arrays of alternatives the test is consistent against as the dimension of the parameter space diverges. We characterize the consistency sets of~$p$-norm based tests and find, in particular, that the consistency against an array of alternatives cannot be determined solely in terms of the~$p$-norm of the alternative. Our characterization also reveals an unexpected monotonicity result: namely that the consistency set is strictly increasing in~$p \in (0, \infty)$, such that tests based on higher~$p$ strictly dominate those based on lower~$p$ in terms of consistency. This monotonicity allows us to construct novel tests that dominate, with respect to their consistency behavior, \emph{all}~$p$-norm based tests without sacrificing size.
\end{abstract}
\doublespacing
\section{Introduction} 

Since the advent of the Big Data era, the development of procedures for testing hypotheses in high-dimensional models has received considerable attention with applications in fields such as genomics, finance, economics and engineering. Many tests are based on a~$p$-norm, measuring the evidence against the null hypothesis. In contrast to classical low-dimensional testing problems, tests based on different~$p$-norms tend to be consistent against different types of alternatives in the high-dimensional setting, cf., e.g., \cite{ingster}. This observation led \cite{fan2015} to put forward their remarkable power enhancement principle, a combination procedure that improves an initial tests by combining it with another one. The resulting enhanced test has the same asymptotic size as the initial test but is consistent against more alternatives. Specifically, \cite{fan2015} enhanced the Euclidean norm based test by combining it with the supremum-norm based test, thus constructing a test that: (i) retains the good consistency properties of the Euclidean norm based test against dense alternatives and (ii) is also consistent against a large class of sparse alternatives. This possibility of increasing power by combining tests has led to an intense recent interest in constructions combining tests based on various~$p$-norms in many types of high-dimensional testing problems, cf.~\cite{xu2016adaptive}, \cite{yang2017weighted}, \cite{yu2020fisher}, \cite{he2021asymptotically}, \cite{yu2021power} [testing high-dimensional means and covariance matrices]; \cite{zhang2021adaptive} [change point detection]; \cite{jammalamadaka2020sobolev} [tests for uniformity on the sphere]; \cite{feng2020max} [tests for cross-sectional independence
in high-dimensional panel data models]. Furthermore, \cite{wang2018testing} pointed to the potential usefulness of combining tests based on the Euclidean and supremum-norm in the context of high-dimensional quantile regression. By combining either tests based on the Euclidean and supremum-norm (sometimes referred to as ``max-sum'' tests) or a finite number of norms, these papers establish that one can construct tests with better power properties compared to tests based on a single~$p$-norm in the concrete testing problems considered.

The classical results in \cite{ingster} as well as the more recent contributions in the context of the power enhancement principle of \cite{fan2015} discussed above have in common that \emph{sufficient} conditions for consistency (or inconsistency) of the tests under consideration are established. However, even for a test based on a single $p$-norm it is not known to date what the necessary and sufficient condition for consistency of such a test is in high dimensions. As a consequence, also the following questions are open:
\begin{itemize}
	\item Can one obtain tests that are consistent against more alternatives by using~$p$-norms other than the 1-, Euclidean or supremum-norm, e.g., the~$4$-norm?
	\item Is it possible to rank~$p$-norm based tests in terms of their consistency properties?
\end{itemize}
Similarly, in the context of combinations of~$p$-norm based tests, one may ask questions such as:
\begin{itemize}
\item When combining several~$p$-norm based tests: how many and which should one combine if one wants to obtain a test with optimal consistency properties?
\item Is there a test that simultaneously dominates all~$p$-norm based tests in terms of consistency?
\end{itemize}

\subsection{Contributions}

We study the questions raised above in the prototypical context of a sequence model. To reject the ``global null'' of all observations having zero mean, these tests gauge whether the~$p$-norm of the observation vector exceeds a given critical value. Our contributions are as follows:
\begin{itemize}
\item \emph{Characterization}: We characterize the \emph{consistency set} of each~$p$-norm based test, that is the set of arrays of alternatives that the test is consistent against. Somewhat surprisingly, we find that whether the~$p$-norm based test is consistent against an array of alternatives cannot be determined solely by the~$p$-norm of the alternative (which would in general only lead to sufficient but not necessary conditions for consistency). For~$p\in(2,\infty)$ it actually suffices that the~$p$-norm \emph{or} the Euclidean norm of the alternative is sufficiently large.
\item \emph{Monotonicity}: For~$0<p<q<\infty$ the~$q$-norm based test is consistent against any array of alternative that the~$p$-norm based test is consistent against --- and more. This finding sheds light on the consequences of employing tests based on the commonly used~$1$- and Euclidean norms, as these are strictly dominated in terms of the magnitude of their consistency sets. Note that the strict ranking of~$p$-norm based tests in terms of the magnitude of their consistency sets is in contrast to the absence of such a strict ranking in terms of local power properties, e.g., \cite{pinelis2010asymptotic} and \cite{pinelisI}. We also identify the structure of the additional alternatives that a test based on a larger power is consistent against. This strict dominance relation does not extend to~$p=\infty$. 
\item \emph{Domination}: Although no single~$p$-norm based test dominates all other ones in terms of the magnitude of its consistency set, we construct a single test that is consistent against any array of alternatives that any~$p$-norm based test is consistent against --- and more. Thus, it is possible to \emph{simultaneously} dominate all members of the commonly used class of~$p$-norm based tests. We also revisit the results given in \cite{ingster} in Appendix~\ref{sec:minimax} where we also establish some new adaptation results in the minimax framework building on our results obtained in earlier sections.
\end{itemize}

An immediate consequence of our findings for the current practice of either using a Euclidean norm based test, a supremum-norm based test or a combination of these, is that a test that is consistent against strictly more alternatives than any of these can be explicitly constructed. In fact, the test we propose is consistent against strictly more alternatives than any test that combines a finite fixed number of~$p$-norm based tests.

\section{Framework}\label{sec:GLM}

\subsection{Model and testing problem}

We consider a sequence model
\begin{equation}\label{eqn:model}
	y_{i,d} = \theta_{i,d} + \varepsilon_i, \quad i = 1, \hdots, d,
\end{equation}
where the~$\theta_{i,d} \in \R$ are unknown parameters, the unobserved error terms~$\varepsilon_i$ have mean zero and are i.i.d., and where~$y_{1,d}, \hdots, y_{d,d}$ are the observations. \emph{For simplicity of presentation, and since it is the most important case, we shall assume throughout the main text of this article that~$\varepsilon_i \sim \mathbb{N}(0, 1)$.} All results generalize appropriately to non-normal errors under suitable assumptions concerning the tail of the distribution of~$\varepsilon_i$. Such results are established in Appendix~\ref{app:AUX}.

For notational simplicity, we write~$\bm{y}_d = (y_{i,1}, \hdots y_{i,d})$,~$\bm{\varepsilon}_d = (\varepsilon_1, \hdots, \varepsilon_d)$, and~$\bm{\theta}_d = (\theta_{1,d}, \hdots, \theta_{1,d}) \in \R^d$, upon which we may write~\eqref{eqn:model} equivalently as~$\bm{y}_d = \bm{\theta}_d + \bm{\varepsilon}_d$.

In the sequence model~\eqref{eqn:model}, we study properties of tests for the testing problem
\begin{equation}\label{eqn:tp}
	H_{0, d}: \bm{\theta}_d = \bm{0}_d \quad \text{ against } \quad H_{1, d}: \bm{\theta}_d \in \R^d \setminus \{\bm{0}_d\},
\end{equation}
where~$\bm{0}_d = (0, \hdots, 0) \in \R^d$. That is, we are concerned with what is sometimes referred to as the ``global null'' of no effect. Oftentimes, a test for this hypothesis is done in the initial stage of a whole series of potential further investigations that are carried out only if the global null is rejected. It is crucial to use a test with good power properties at the initial stage. 

The sequence model~\eqref{eqn:model}
is an often used prototypical framework in high-dimensional statistics. On the one hand, the simplicity of the model allows one to focus on asymptotic power properties as the dimension~$d$ diverges to~$\infty$ in a clean framework. On the other hand,  the framework assumes away certain complications, such as the estimation of nuisance (e.g., variance) parameters, or the interplay between sample size and the dimensionality of the parameter vector, which we do not address.

We also note that the results derived in a Gaussian sequence model immediately correspond to results in a Gaussian regression model by a sufficiency argument detailed (for completeness) in the following remark. This is a model of fundamental importance for applications. 
\begin{remark}\label{rem:linmod}
Let~$\bm{X}_d \in \R^{n \times d}$ be a matrix of full column rank, in particular implying~$d \leq n$. In the Gaussian linear regression model~$\mathbf{z}_d = \bm{X}_d \bm{\beta}_d + \mathbf{u}_d$ with~$\mathbf{u}_d \sim \mathbb{N}(\bm{0}_d, \bm{I}_d)$, the OLS estimator~$\hat{\bm{\beta}}_d$ is sufficient for~$\bm{\beta}_d$ and, denoting~$\bm{M}_d := (\bm{X}_d'\bm{X}_d)^{1/2}$, the same holds for~$\bm{M}_d\hat{\bm{\beta}}_d$, which satisfies~$$\bm{M}_d\hat{\bm{\beta}}_d := \bm{M}_d\bm{\beta}_d + \bm{M}_d^{-1} \bm{X}_d'\mathbf{u}_d \quad \text{ with } \quad \bm{M}_d^{-1}\bm{X}_d'\mathbf{u}_d \sim \mathbb{N}(\bm{0}_d, \bm{I}_d),$$
which is clearly of the form~\eqref{eqn:model}. Note also that~$\bm{\theta}_d := \bm{M}_d\bm{\beta}_d = \bm{0}_d$ if and only if~$\bm{\beta}_d = \bm{0}_d$. That is, this problem is statistically equivalent to testing~\eqref{eqn:tp} in a (re-parameterized) Gaussian sequence model.
\end{remark}

\begin{remark}
On a \emph{conceptual and informal} level, results obtained in Gaussian sequence models carry over much more broadly to any situation where the distribution of a properly standardized estimator is approximately Gaussian. The reasoning follows along similar lines as in Remark~\ref{rem:linmod}: Consider a situation where an estimator~$\hat{\bm{\beta}}_d$ for a target parameter~$\bm{\beta}_d \in \R^d$ is available the distribution of which satisfies
\begin{equation*}
\hat{\bm{\beta}}_d \approx \mathbb{N}(\bm{\beta}_d, \bm{\Omega}_d).
\end{equation*}
Suppose further that an invertible estimator~$\hat{\bm{\Omega}}_d \approx \bm{\Omega}_d$ is available, such that
\begin{equation*}
\hat{\bm{\Omega}}_d^{-1/2} \hat{\bm{\beta}}_d \approx \mathbb{N}(\bm{\Omega}_d^{-1/2}\bm{\beta}_d, \bm{I}_d).
\end{equation*}
Then, testing~$\bm{\beta}_d = \bm{0}_d$ on the basis of~$\hat{\bm{\beta}}_d$ and~$\hat{\bm{\Omega}}_d$ is approximated by testing~$\bm{\theta}_d :=  \bm{\Omega}_d^{-1/2}\bm{\beta}_d = \bm{0}_d$ in a Gaussian sequence model. Precise sets of conditions under which the above approximation statements hold depend on the interplay of the dimension of the target parameter and sample size and on particularities of the specific setup under consideration. 
\end{remark}

\subsection{Tests, asymptotic size, and consistency sets}

Suppose that for every~$d \in \N$ we are given a (possibly randomized) test~$\varphi_d$ for the testing problem~\eqref{eqn:tp}. That is,~$\varphi_d$ is a (Borel measurable) function from~$\R^d$ to~$[0, 1]$. We denote the set of all such sequences of tests~$\{\varphi_d\}_{d \in \N}$ by~$\mathbb{T}$. With some abuse of notation, we shall often abbreviate~$\{\varphi_d\}_{d \in \N}$ by~$\varphi_d$, to which we then also refer to as a sequence of tests or just as a test. 

We say that the sequence of tests~$\varphi_d \in \mathbb{T}$ has \emph{asymptotic size}~$\alpha \in [0, 1]$ if and only if
\begin{equation}\label{eqn:size}
	\lim_{d \to \infty} \mathbb{E}\left(\varphi_d(\bm{\varepsilon}_d)\right) = \alpha.
\end{equation}
The subset of all sequences of tests with asymptotic size~$\alpha$ is denoted by~$\mathbb{T}_{\alpha} \subseteq \mathbb{T}$. We will mostly be concerned with~$\alpha \in (0, 1)$.

We consider an asymptotic framework where the coordinates of the parameter vector~$\bm{\theta}_d = (\theta_{1,d}, \hdots, \theta_{d,d})$ depend on~$d$. That is, we consider (uniform) power properties along triangular arrays of alternatives~$\bm{\vartheta} = \{\bm{\theta}_d: d\in \N\}$, where~$\bm{\theta}_d \in \R^d$ for every~$d \in \N$. The set of all such triangular arrays will be denoted by~$\bm{\Theta} := \bigtimes_{d = 1}^{\infty} \R^d$. As is common, we say that the sequence of tests~$\varphi_d$ is \emph{consistent} against an array~$\bm{\vartheta} = \{\bm{\theta}_d: d\in \N\} \in \bm{\Theta}$ if and only if
\begin{equation}\label{eqn:consistent}
	\lim_{d \to \infty} \mathbb{E}
	\left(
	\varphi_d(\bm{\theta}_d + \bm{\varepsilon}_d)
	\right) = 1.
\end{equation}
To any sequence of tests~$\varphi_d \in \mathbb{T}$ we associate its \emph{consistency set}~$\mathscr{C}(\varphi_d) \subseteq \bm{\Theta}$, i.e., the subset of all arrays of alternatives~$\bm{\vartheta} \in \bm{\Theta}$ that the sequence of tests~$\varphi_d$ is consistent against. The characterization and comparison of consistency sets is the main focus of the present article.

\subsection{Optimal consistency properties}\label{sec:optcons}

For a given~$\alpha \in (0, 1)$, one can compare the quality of two sequences of tests~$\psi_d \in \mathbb{T}_{\alpha}$ and~$\varphi_d \in \mathbb{T}_{\alpha}$ on the basis of their consistency sets.\footnote{In principle, one can also compare two sequences of tests with different asymptotic sizes by comparing their consistency sets, but the comparison is less meaningful in such situations since the consistency sets could depend on the asymptotic size.} Note that, by definition,~$\mathscr{C}(\psi_d) \supseteq \mathscr{C}(\varphi_d)$ if and only if~$\psi_d$ is consistent against \emph{all} alternatives that~$\varphi_d$ is consistent against. If this is the case,~$\psi_d$ weakly dominates~$\varphi_d$ in terms of consistency, whereas~$\psi_d$ strongly dominates~$\varphi_d$ in terms of consistency if even~$\mathscr{C}(\psi_d) \supsetneqq \mathscr{C}(\varphi_d)$.

In search for an ``optimal'' test, one would hope that there exists a single sequence of tests~$\psi_d \in \mathbb{T}_{\alpha}$ that is consistent against all alternatives that some~$\varphi_d \in \mathbb{T}_{\alpha}$ is consistent against, i.e.,~$\psi_d$ satisfies
\begin{equation}\label{eqn:reqpsi}
	\mathscr{C}(\psi_d) \supseteq \bigcup \{\mathscr{C}(\varphi_d): \varphi_d \in \mathbb{T}_{\alpha}\}.
\end{equation}
Such a test~$\psi_d$ (in case it exists) would be ``consistency-optimal'' in the sense that no other test with the same asymptotic size exists that is consistent against more alternatives.

Whereas the optimality property in~\eqref{eqn:reqpsi} is often achieved by the usual candidates of tests in standard finite-dimensional testing problems, the consistency properties of tests are much more delicate in high-dimensional testing  problems, where a~$\psi_d \in \mathbb{T}_{\alpha}$ satisfying~\eqref{eqn:reqpsi} typically does \emph{not} exist. This is illustrated in the following result, which can be obtained by a similar reasoning as in Section~1.1 of~\cite{kp1}, to which we refer for further discussions and results in the context of models that are locally asymptotically normal. 

\begin{theorem}\label{thm:impo}
	Let~$\alpha \in (0, 1)$. For every~$\psi_d \in \mathbb{T}_{\alpha}$ there exists a~$\overline{\psi}_d \in \mathbb{T}_{\alpha}$, such that:
	\begin{enumerate}
		\item~$\overline{\psi}_d \geq \psi_d$ holds for every~$d \in \N$, guaranteeing that~$\overline{\psi}_d$ has uniformly non-inferior asymptotic power compared to~$\psi_d$; 
		\item~$\overline{\psi}_d$ is consistent against an array of alternatives against which~$\psi_d$ has asymptotic power at most~$\alpha$. 
	\end{enumerate}
	In particular it holds that~$\mathscr{C}(\psi_d) \subsetneqq \mathscr{C}(\overline{\psi}_d)$.
\end{theorem}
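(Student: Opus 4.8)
The plan is to establish this impossibility by a power-enhancement construction. Given $\psi_d \in \mathbb{T}_{\alpha}$, I will produce an array $\bm{\vartheta}^* = \{\bm{\theta}_d^*\} \in \bm{\Theta}$ that is \emph{detectable} (some test is consistent against it) but that $\psi_d$ fails to detect, and then set $\overline{\psi}_d := \max(\psi_d, \chi_d)$, where $\chi_d$ is a test tailored to $\bm{\vartheta}^*$ with vanishing size. Taking the pointwise maximum gives statement~1 for free, and if $\chi_d$ has asymptotic size $0$ then $\E(\overline{\psi}_d(\bm{\varepsilon}_d))$ is squeezed between $\E(\psi_d(\bm{\varepsilon}_d))$ and $\E(\psi_d(\bm{\varepsilon}_d)) + \E(\chi_d(\bm{\varepsilon}_d))$, both of which converge to $\alpha$, so $\overline{\psi}_d \in \mathbb{T}_{\alpha}$. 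The result thus reduces to two tasks: (i) exhibiting the bad-but-detectable array, and (ii) building the detector.

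For (i) I would use an averaging argument. Fix a scale $\tau_d > 0$ and the Gaussian prior $\pi_d = \mathbb{N}(\bm{0}_d, \tau_d^2 \bm{I}_d)$; the average power $\bar{P}_d := \int \E(\psi_d(\bm{\theta} + \bm{\varepsilon}_d))\, d\pi_d(\bm{\theta})$ equals $\int \psi_d \, dQ_d$, where $Q_d = \mathbb{N}(\bm{0}_d, (1+\tau_d^2)\bm{I}_d)$ is the marginal of $\bm{\theta} + \bm{\varepsilon}_d$. A direct computation gives $\chi^2(Q_d \| P_d) = (1-\tau_d^4)^{-d/2} - 1$ for $P_d = \mathbb{N}(\bm{0}_d, \bm{I}_d)$, so choosing $\tau_d$ with $\tau_d^4 d \to 0$ but $\tau_d^2 d \to \infty$ (e.g.\ $\tau_d^2 = d^{-3/4}$) forces $\|Q_d - P_d\|_{TV} \to 0$, hence $\bar{P}_d \to \alpha$ since $\E(\psi_d(\bm{\varepsilon}_d)) \to \alpha$, while simultaneously the prior mass concentrates on vectors with $\|\bm{\theta}\|^2 \approx \tau_d^2 d \to \infty$. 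A Markov bound on $\{\bm{\theta} : \E(\psi_d(\bm{\theta}+\bm{\varepsilon}_d)) > \alpha + \eps_d\}$ together with the exponential concentration of $\|\bm{\theta}\|$ under $\pi_d$ then lets me select, for each large $d$, a single $\bm{\theta}_d^*$ that satisfies both $\E(\psi_d(\bm{\theta}_d^* + \bm{\varepsilon}_d)) \le \alpha + \eps_d$ and $\|\bm{\theta}_d^*\| \to \infty$. The array $\bm{\vartheta}^* = \{\bm{\theta}_d^*\}$ is the desired one, and statement~2 follows since $\limsup_{d} \E(\psi_d(\bm{\theta}_d^* + \bm{\varepsilon}_d)) \le \alpha$.

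For (ii), since $\|\bm{\theta}_d^*\| \to \infty$ I take $\chi_d$ to be the one-sided directional test rejecting when $\langle \bm{y}, \bm{\theta}_d^*\rangle / \|\bm{\theta}_d^*\| > z_{1-\beta_d}$, whose size is exactly $\beta_d$ and whose power against $\bm{\theta}_d^*$ equals $\Phi(\|\bm{\theta}_d^*\| - z_{1-\beta_d})$. Choosing $\beta_d \to 0$ slowly enough that $z_{1-\beta_d} \le \tfrac12\|\bm{\theta}_d^*\|$ eventually yields both asymptotic size $0$ and power $\to 1$, so $\chi_d$, and hence $\overline{\psi}_d$, is consistent against $\bm{\vartheta}^*$. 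Combining the pieces, $\overline{\psi}_d \ge \psi_d$ gives $\mathscr{C}(\psi_d) \subseteq \mathscr{C}(\overline{\psi}_d)$ (consistency of $\psi_d$ against any array forces consistency of the larger test), while $\bm{\vartheta}^* \in \mathscr{C}(\overline{\psi}_d) \setminus \mathscr{C}(\psi_d)$ upgrades this to the strict inclusion $\mathscr{C}(\psi_d) \subsetneqq \mathscr{C}(\overline{\psi}_d)$.

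The main obstacle is the rate calibration in task (i): the prior scale $\tau_d$ must be small enough that the mixture $Q_d$ is asymptotically indistinguishable from the null (otherwise $\bar{P}_d$ need not converge to $\alpha$), yet large enough that typical draws have diverging Euclidean norm (otherwise \emph{no} test could detect $\bm{\vartheta}^*$, and statement~2 could not be sharpened to a strict separation of consistency sets). Making these two requirements compatible, and ensuring that the low-power set and the large-norm set overlap when extracting the single sequence $\bm{\theta}_d^*$ --- which requires the concentration of $\|\bm{\theta}\|$ under $\pi_d$ to dominate the slack $\eps_d$ in the Markov step --- is where the quantitative work concentrates; everything downstream is routine bookkeeping with the $\max$ combination.
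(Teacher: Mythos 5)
Your proposal is correct, and it follows the same two-stage skeleton as the paper's proof: a mixture/second-moment ($\chi^2$) argument to produce an array against which $\psi_d$ has asymptotic power at most $\alpha$, followed by a power-enhancement step that adds a vanishing-size detector via a pointwise maximum (the paper uses $\min(\psi_d+\nu_d,1)$, which is the same device). The implementations differ in a way worth noting. The paper mixes over the \emph{discrete} set of $d$ one-sparse alternatives $a_d\bm{e}_i(d)$ with $a_d=\sqrt{\log(d)/2}$; the second-moment bound is a one-line MGF computation giving $\E^Q_{d,0}(L_d^2)-1\le d^{-1/2}$, and the extraction step is trivial because the minimum of finitely many power values is at most their average. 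You instead use the conjugate Gaussian prior $\mathbb{N}(\bm{0}_d,\tau_d^2\bm{I}_d)$, for which the $\chi^2$-divergence $(1-\tau_d^4)^{-d/2}-1$ is explicit and the calibration $\tau_d^4 d\to 0$, $\tau_d^2 d\to\infty$ is clean; the price is that extracting a single $\bm{\theta}_d^*$ requires the Markov bound (with slack $\eps_d$ chosen to dominate $\bar P_d-\alpha$, which is possible since only $\eps_d\to 0$ is needed) intersected with the exponential lower-tail bound for $\|\bm{\theta}\|^2\sim\tau_d^2\chi^2_d$ --- you correctly identify this overlap as the crux, and it does go through since the Markov set has mass of polynomial order while the small-norm event has mass $e^{-cd}$. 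What the paper's choice buys is the sharper Theorem~\ref{thm:imposparse}: because its mixture is supported on $1$-sparse vectors, the improving array lies in $\bm{\Sigma}_1$, so the impossibility persists even under the most stringent sparsity restrictions; your $\bm{\theta}_d^*$ is an unstructured Gaussian draw and yields only Theorem~\ref{thm:impo} itself. What your choice buys is a self-contained closed-form divergence computation (no MGF cross-term bookkeeping) and a detector --- the directional test $\mathds{1}\{\langle\cdot,\bm{\theta}_d^*\rangle/\|\bm{\theta}_d^*\|>z_{1-\beta_d}\}$ --- that works for any alternative with diverging Euclidean norm, whereas the paper's coordinate test exploits the known sparse structure of its alternative. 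Both detectors have vanishing size and power tending to one, so the combination and the strict inclusion $\mathscr{C}(\psi_d)\subsetneqq\mathscr{C}(\overline{\psi}_d)$ follow identically in both arguments.
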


Theorem~\ref{thm:impo} establishes that the consistency set of \emph{any} test (of asymptotic size~$\alpha \in (0, 1)$) can be strictly improved.\footnote{Actually one can show that any test can be strictly improved against a highly sparse array of alternatives; cf.~Theorem~\ref{thm:imposparse} in Appendix~\ref{app:MT}. In this sense, not even the most stringent sparsity assumptions overturn the conclusion of Theorem~\ref{thm:impo}.} Inspection of the proof, which largely builds on an argument in Section~3.4.2 of \cite{ingster}, shows that the improvement is achieved through the power enhancement principle of~\cite{fan2015}. Hence, no consistency-optimal test as fancied in the second paragraph of this section and~\eqref{eqn:reqpsi} in particular can exist.

The literature mainly offers two solutions in situations where (for a given optimality criterion) no optimal test exists: The first is to restrict one's attention to a certain subclass of alternatives. Restricting the class of alternatives is an approach that is often employed in high-dimensional and nonparametric problems, e.g., by focusing on sparse alternatives or all alternatives for which a given norm exceeds a certain threshold. One then explores and constructs tests that work well for such alternatives (but do perhaps not work so well for others). This approach then typically studies minimax rates of detection, and minimax consistent tests. We offer more discussion and provide some results in that direction in Appendix~\ref{sec:minimax}, but that is not our main focus.

A different approach is to leave the set of alternatives unrestricted as is, but instead focus on a specific class of tests, and to study optimality for this restricted class of tests. In essence, one fixes a subset~$\mathbb{T}^*_{\alpha} \subseteq \mathbb{T}_{\alpha}$ and looks for a test whose consistency set contains the consistency sets of all tests in~$\mathbb{T}^*_{\alpha}$; that is one searches for a~$\psi_d \in \mathbb{T}_{\alpha}$ that satisfies~\eqref{eqn:reqpsi} but with~$\mathbb{T}_{\alpha}$ replaced by~$\mathbb{T}^*_{\alpha}$. Whether or not a test~$\psi_d$ with the desired property exists clearly depends on~$\mathbb{T}^*_{\alpha}$. In the present article we explore this question for an important subclass of tests, namely~$p$-norm based tests.

\subsection{$p$-norm based tests}

For~$\bm{x} = (x_1, \hdots, x_d) \in \R^d$ and~$p \in (0, \infty]$, we define
\begin{equation}
	\|\bm{x}\|_p := 
	\begin{cases}
		\left(\sum_{i = 1}^d |x_i|^p\right)^{\frac{1}{p}} & \text{if } p < \infty, \\
		\max_{i = 1, \hdots, d} |x_i| & \text{else}.
	\end{cases}
\end{equation}
For~$p \geq 1$ the function~$\|\cdot\|_p$ defines a norm on~$\R^d$; whereas for~$p \in (0, 1)$ it only defines a quasinorm on~$\R^d$. For the sake of brevity, we will refer to~$\|\cdot\|_p$ as the~``$p$-norm'' also in case~$p \in (0, 1)$.  Given a radius~$r \in [0, \infty]$, we denote~$\mathbb{B}_p^d(r) := \{\bm{y} \in \R^d: \|\bm{y}\|_p \leq r\}$, i.e., the closed ``ball'' with respect to~$\|\cdot\|_p$ of radius~$r$ centered at the origin. 

It is well known that the likelihood-ratio test for~\eqref{eqn:tp} rejects if~$\|\bm{y}_d\|_2$, the Euclidean norm of the vector of observations~$\bm{y}_d$, exceeds a critical value (chosen to satisfy a given size constraint). More generally, any~$p$-norm delivers a test for~\eqref{eqn:tp}, i.e., one rejects the null hypothesis if~$\|\bm{y}_d\|_p$ exceeds some critical value. Given~$p \in (0, \infty]$ and a sequence of critical values~$\kappa_{d}$, we abbreviate the sequence of tests~$\mathds{1}\{\|\cdot\|_p \geq \kappa_{d}\}$ by~$\{p, \kappa_{d}\}$, and refer to such a test as a \emph{$p$-norm based} test. Consequently, the consistency set of such a sequence of tests is written as~$\mathscr{C}(\{p, \kappa_{d}\})$. 

By a classical result of~\cite{birnbaum1955} and~\cite{stein1956},
tests that reject if the~$p$-norm of~$\bm{y}_d$ exceeds a given critical value are \emph{admissible} for every~$d \in \N$ and every~$p \in [1, \infty]$; the reason is that their acceptance region is convex. Furthermore, the tests just described are all \emph{unbiased}, for every~$d\in \N$, due to Anderson's theorem, cf.~\cite{anderson1955integral}. Hence, all~$p$-norm based tests with~$p \in [1, \infty]$ are reasonable from a non-asymptotic point of view.

Following the Neyman-Pearson approach, we shall mostly be interested in the situation where the critical values~$\kappa_d$ are chosen such that the asymptotic size of~$\{p, \kappa_d\}$ is in~$(0, 1)$, i.e.,
\begin{equation}\label{eqn:sizep}
	\lim_{d \to \infty} \mathbb{P} \left(\|\bm{\varepsilon}_d \|_p \geq \kappa_{d}
	\right) = \alpha \in (0, 1).
\end{equation}
One has that~\eqref{eqn:sizep} is true if and only if 
\begin{equation}\label{eqn:cvpalpha}
	\kappa_{d} = 
	\begin{cases}
		\left[\left(\Phi^{-1}(1-\alpha) + o(1)\right)\sqrt{d} \sigma_p + d \mu_p \right]^{1/p}	 & \text{ if } p \in (0, \infty),\\[7pt]
		\sqrt{2\log(d)}-\frac{\log\log(d)+\log(4\pi) +2\log(-\log(1-\alpha)/2) + o(1)}{2\sqrt{2	\log(d)}} & \text{ if } p = \infty.
	\end{cases}
\end{equation}
Here~$\Phi$ denotes the cdf of the standard normal distribution and for~$p \in (0, \infty)$ we abbreviated~$\sigma_p^2 := \mathbb{V}ar(|\varepsilon_1|^p)$ and~$\mu_p := \mathbb{E}(|\varepsilon_1|^p)$. The equality in~\eqref{eqn:cvpalpha} follows since for~$p \in (0, \infty)$ the test statistic converges in distribution under the null (upon suitable centering and scaling) to the standard normal distribution, whereas for~$p = \infty$ it converges to a slight modification (only taking care of absolute values) of Gumbel's double exponential distribution (cf.~Lemma~\ref{lem:asyn} and Lemma~\ref{lem:crit_val} for formal statements).

\section{Consistency sets of~$p$-norm based tests}

We shall now derive a characterization of~$\mathscr{C}(\{p, \kappa_{d}\})$ for sequences~$\kappa_{d}$ satisfying~\eqref{eqn:sizep}. This characterization will in particular allow us to compare the consistency sets for different~$p \in (0, \infty]$. The present section is divided into the cases~$p \in (0, \infty)$ and~$p = \infty$, which lead to fundamentally different results. 

Characterizing the consistency set of a~$p$-norm based test requires us to establish a necessary and sufficient condition for the test to be consistent against an array of alternatives. For~$p$-norm based tests a set of sufficient conditions is also given in~\cite{ingster}, Sections~3.1.2--3.1.4. However, these conditions are not necessary. That is, there are arrays~$\bm{\vartheta}$ against which a~$p$-norm based test is consistent, but which do not satisfy the sufficient condition for consistency there.

\subsection{Case~$p \in (0, \infty)$}\label{sec:cpreal}

For any~$p \in \R$, denote by~$g_p: \R \to \R$ the function
\begin{equation}\label{eqn:rhodef}
	g_p(x) := x^2 \mathds{1}_{[-1,1]}(x) + |x|^p \mathds{1}_{\R \setminus [-1,1]}(x).
\end{equation}
In particular,~$g_p(x) = x^2 \wedge |x|^p$ for~$p \in (0, 2)$ and~$g_p(x) = x^2 \vee |x|^p$ for~$p \in [2, \infty)$, where for two real numbers~$x$ and~$y$ we write~$\min(x, y) = x \wedge y$ and~$\max(x, y) = x \vee y$ for readability.

The consistency set of a~$p$-norm based test with non-trivial asymptotic size is characterized next.
\begin{theorem}\label{thm:conspreal}
	For~$p \in (0, \infty)$ and~$\{p, \kappa_d\} \in \mathbb{T}_{\alpha}$,~$\alpha \in (0, 1)$, we have
	\begin{equation}\label{eqn:cpreal}
		\bm{\vartheta} \in \mathscr{C}(\{p,\kappa_{d} \}) \quad \Leftrightarrow \quad 
		\frac{\sum_{i = 1}^d g_p(\theta_{i,d})}{\sqrt{d}} \to \infty.
	\end{equation}
\end{theorem}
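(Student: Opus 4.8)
The plan is to reduce the consistency statement to a law-of-large-numbers/central-limit analysis of the \emph{$p$-th power} of the test statistic, exploiting that for $p \in (0,\infty)$ the event $\{\|\bm{y}_d\|_p \geq \kappa_d\}$ coincides with $\{\|\bm{y}_d\|_p^p \geq \kappa_d^p\}$. Writing $\bm{y}_d = \bm{\theta}_d + \bm{\varepsilon}_d$ and invoking the null behaviour underlying~\eqref{eqn:cvpalpha} (Lemma~\ref{lem:asyn} and Lemma~\ref{lem:crit_val}), I would first record that $\kappa_d^p - d\mu_p = (\Phi^{-1}(1-\alpha) + o(1))\sqrt{d}\,\sigma_p$. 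Setting $S_d := \sum_{i=1}^d\big(|\theta_{i,d}+\varepsilon_i|^p - \mu_p\big)$, the definition of consistency~\eqref{eqn:consistent} becomes $\mathbb{P}(S_d \geq \tau_d) \to 1$, where $\tau_d = \Theta(\sqrt{d})$ is explicit. The task is thus to decide when the signal in $S_d$ overwhelms the order-$\sqrt{d}$ threshold.

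The heart of the argument is a deterministic per-coordinate estimate. Let $\phi$ be the $\mathbb{N}(0,1)$ density and set $m_p(\theta) := \mathbb{E}|\theta+\varepsilon_1|^p - \mu_p$ and $v_p(\theta) := \mathbb{V}ar(|\theta+\varepsilon_1|^p)$. I would establish the two-sided bound $m_p(\theta) \asymp g_p(\theta)$ uniformly in $\theta \in \R$, together with $v_p(\theta) \leq C$ for $|\theta|\leq 1$, $v_p(\theta)\leq C|\theta|^{2p-2}$ for $|\theta|>1$, and $\inf_{|\theta|\le 1} v_p(\theta) > 0$. For small $\theta$ this rests on a Taylor expansion of $\theta \mapsto \mathbb{E}|\theta+\varepsilon_1|^p = \int |u|^p \phi(u-\theta)\,du$, where one differentiates the smooth Gaussian density rather than $|u|^p$, obtaining $m_p(\theta) \sim \tfrac{1}{2} p\mu_p\, \theta^2$ (using $\mu_{p+2} = (p+1)\mu_p$), which matches $g_p(\theta)=\theta^2$. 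For $|\theta|\to\infty$ one has $m_p(\theta)\sim|\theta|^p$ and $v_p(\theta)\sim p^2|\theta|^{2p-2}$, matching $g_p(\theta)=|\theta|^p$; the intermediate range follows from continuity and strict positivity on compacta. Proving this lemma uniformly across all $p \in (0,\infty)$ and all $\theta$, despite the non-smoothness of $|\cdot|^p$ and the different shapes of $g_p$ for $p<2$ versus $p\geq 2$, is what I expect to be the main obstacle.

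Granting the lemma, decompose $S_d = A_d + B_d$ with $A_d := \sum_i m_p(\theta_{i,d})$ and $B_d$ centred of variance $V_d := \sum_i v_p(\theta_{i,d})$; by the lemma $A_d \asymp G_d := \sum_i g_p(\theta_{i,d})$. For \emph{sufficiency}, assume $G_d/\sqrt{d}\to\infty$, so $A_d/\sqrt{d}\to\infty$ and $d = o(A_d^2)$. Over the coordinates with $|\theta_{i,d}|>1$ write $b_i := |\theta_{i,d}|^p\ge 1$ and note $g_p(\theta)=|\theta|^p$ there, whence the variance bounds give $V_d \leq Cd + C\sum_i b_i^{2-2/p}$. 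Since $b_i \geq 1$ and $\sum_i b_i \leq G_d$, the elementary inequality $\sum_i b_i^{2-2/p} \leq G_d^{(2-2/p)\vee 1} = o(G_d^2)$ yields $V_d = o(A_d^2)$, and Chebyshev's inequality gives $\mathbb{P}(S_d < \tau_d) = \mathbb{P}(-B_d > A_d - \tau_d) \leq V_d/(A_d-\tau_d)^2 \to 0$, i.e.\ consistency.

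For \emph{necessity}, suppose $G_d/\sqrt{d}\not\to\infty$ and pass to a subsequence along which $G_d \leq M\sqrt{d}$, so that $A_d = O(\sqrt{d})$. The same bounds now force the large-coordinate contribution $V_d^{\mathrm{large}} = o(d)$ (using $\#\{|\theta_{i,d}|>1\}\le G_d$ and $\sum_i b_i^{2-2/p}\le G_d^{(2-2/p)\vee1}=o(d)$), while at least $d-o(d)$ coordinates satisfy $|\theta_{i,d}|\le 1$ with variance bounded below, giving $V_d^{\mathrm{bulk}} \asymp d$; hence $V_d \asymp d$ with the large coordinates asymptotically negligible. A Lindeberg/Lyapunov central limit theorem (or Berry--Esseen for the uniformity) applied to $B_d/\sqrt{d}$ then shows that $\mathbb{P}(S_d < \tau_d) = \mathbb{P}(B_d < \tau_d - A_d) \geq \mathbb{P}(B_d < -c\sqrt{d})$ stays bounded away from zero for a finite constant $c$, so the power cannot tend to $1$ and the test is not consistent against $\bm{\vartheta}$.
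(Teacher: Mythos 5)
Your proposal is correct and follows essentially the same route as the paper: the reduction of the rejection event to a centered sum against an order-$\sqrt{d}$ threshold, the two-sided bound $\mathbb{E}|\theta+\varepsilon|^p - \mu_p \asymp g_p(\theta)$ together with the variance bounds (these are exactly the paper's Lemmas~\ref{lem:suff0behav} and~\ref{lem:pbounds}), Chebyshev for sufficiency, and a subsequence-plus-CLT argument for necessity (the paper's Lemma~\ref{lem:asyn} combined with the Portmanteau theorem). The only cosmetic differences are that the paper proves these ingredients for general error distributions satisfying Assumption~\ref{ass:pfindist} and specializes to the Gaussian case, and that its CLT handles all coordinates at once via a fourth-moment Lyapunov condition rather than your bulk/large-coordinate split.
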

It is a somewhat surprising aspect of Theorem~\ref{thm:conspreal} that, apart from the case~$p = 2$, the consistency set~$\mathscr{C}(\{p,\kappa_{d} \})$ cannot be entirely characterized in terms of the asymptotic behavior of the~$p$-norm of the elements of an array~$\bm{\vartheta} \in \bm{\Theta}$. Regardless of~$p$, coordinates of~$\bm{\theta}_d$ that are small in absolute value enter the consistency criterion in~\eqref{eqn:cpreal} via their squares, whereas coordinates with large absolute values enter differently and in dependence on~$p$. We illustrate this in Figure~\ref{fig:cont} by showing contour plots of the level sets of the function~$(x_1, x_2) \mapsto g_p(x_1)/\sqrt{2} + g_p(x_2)\sqrt{2}$ for~$p = 1, 3$, which are genuinely different from the level sets of the corresponding~$p$-norm in dimension~$d = 2$. Note that the level sets close to the origin are circular irrespective of the value of~$p$, i.e., in correspondence to level sets of a Euclidean norm, whereas level sets further away from the origin approach those of the~$p$-norm the test is based on. 

Theorem~\ref{thm:conspreal} follows from a more general result that also holds in non-Gaussian settings and which is given in Theorem~\ref{thm:consprealGEN} of Appendix~\ref{app:AUX}. All results in the present Section~\ref{sec:cpreal} carry over to this more general setting, cf.~the discussion after the proof of Theorem~\ref{thm:consprealGEN}.

\begin{figure}%
	\centering
	\subfloat{{\includegraphics[width=5cm]{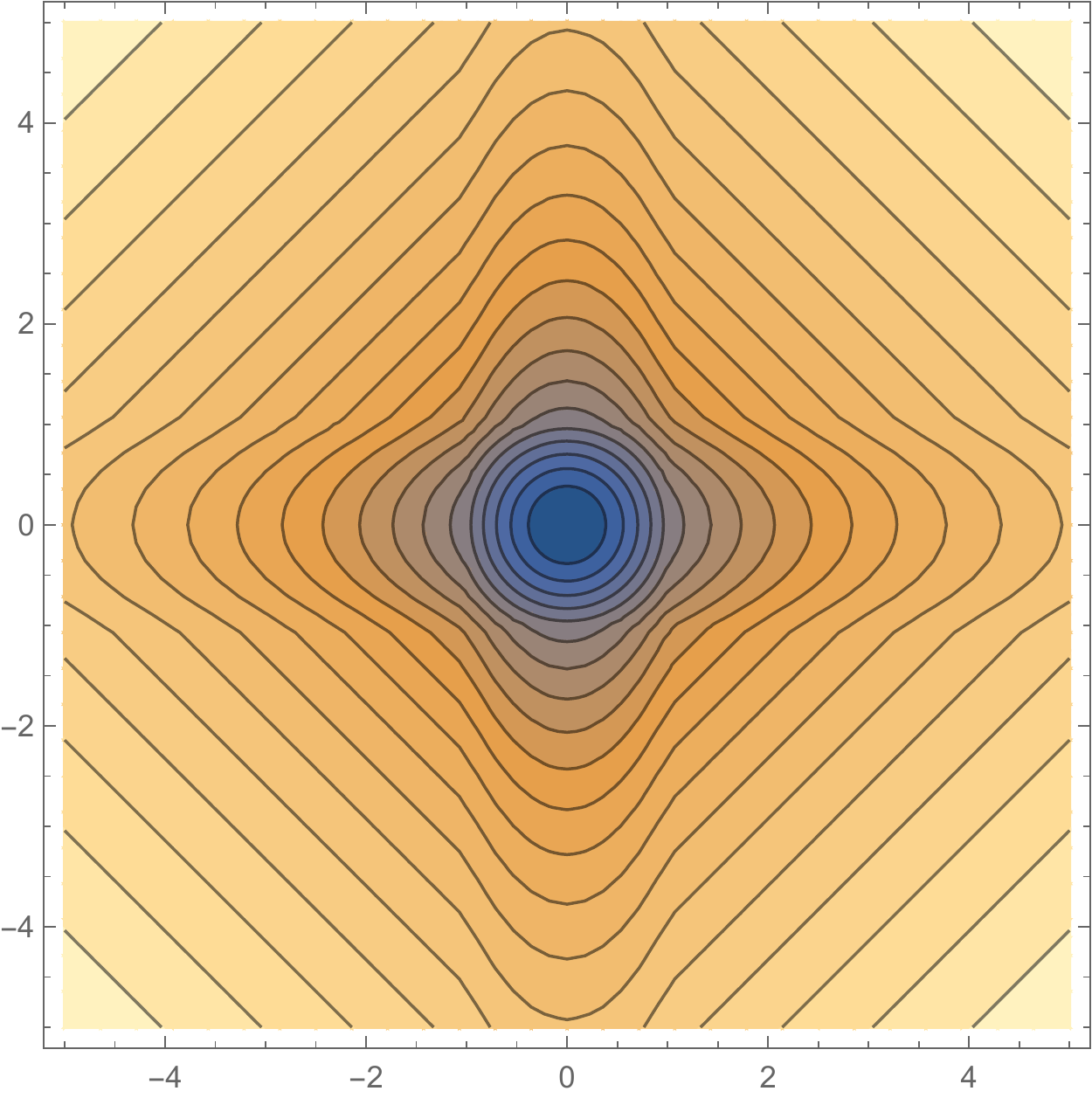} }}%
	\qquad
	\subfloat{{\includegraphics[width=5cm]{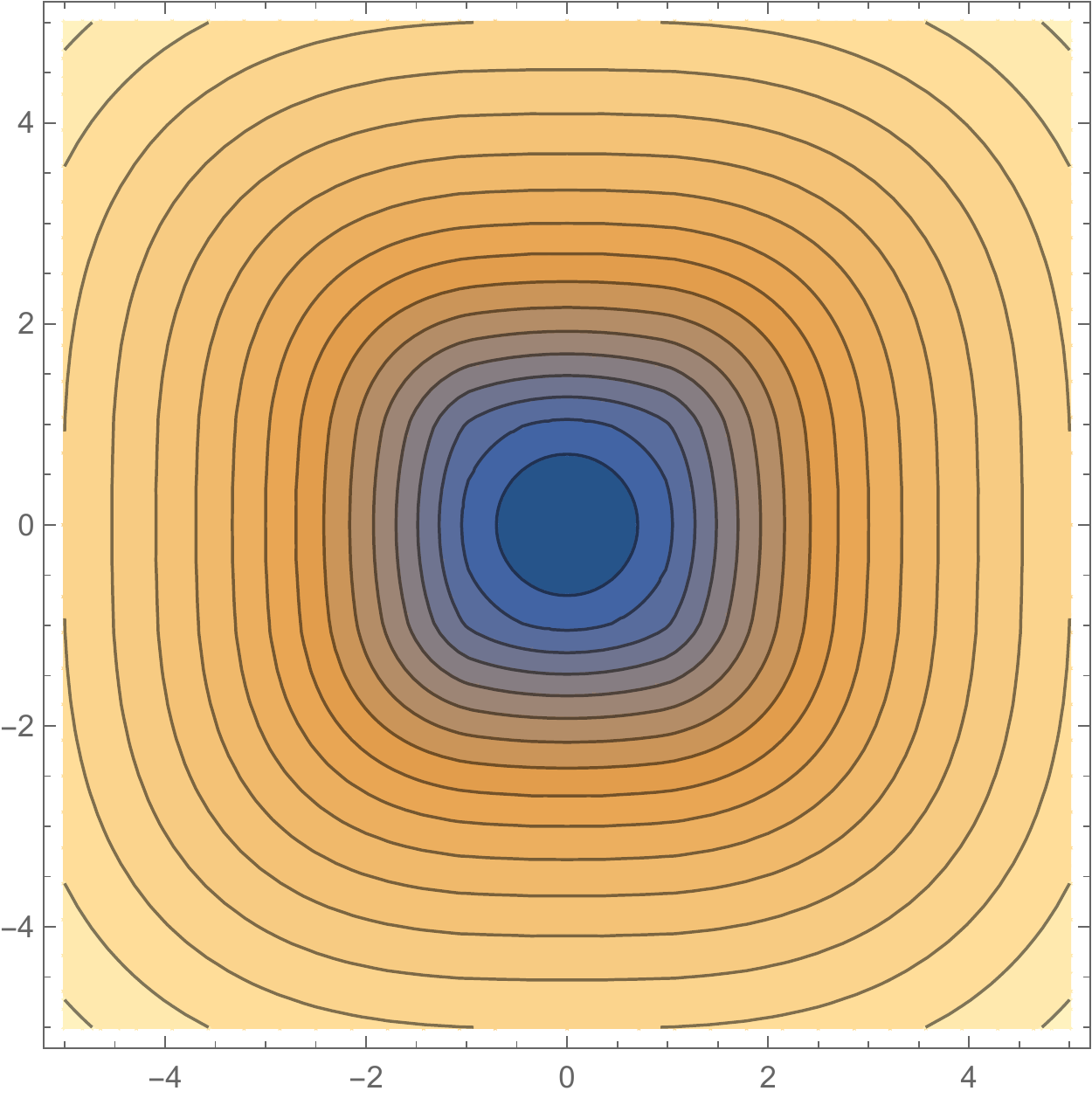} }}%
	\caption{Contour plots of the function characterizing the consistency of the~$p$-norm based test for~$d = 2$ and~$p = 1$ (left panel) and~$p = 3$ (right panel).}%
	\label{fig:cont}%
\end{figure}
\begin{remark}\label{rem:arb}
	There is nothing special about~the interval~$[-1,1]$ in the definition of~$g_p$ in Equation~\eqref{eqn:rhodef}. In principle, one could replace~the interval~$[-1,1]$ in the definition of~$g_p$ by any interval~$[-M, M]$,~for a fixed~$M>0$, and the statement in~\eqref{eqn:cpreal} would still be correct with this re-defined function~$g^{(M)}_p$, say, in place of~$g_p$. This follows immediately from~$$0 < \inf_{x \in \R \setminus \{0\}} \frac{g^{(M)}_p(x)}{g_p(x)} \leq \sup_{x \in \R \setminus \{0\}} \frac{g^{(M)}_p(x)}{g_p(x)} < \infty.$$ We have chosen~$M = 1$ in the formulation of Theorem~\ref{thm:conspreal} for concreteness and because it is the most convenient choice for later use in the proof of Theorem~\ref{thm:incl}. The content of this remark will be instrumental in establishing Theorem~\ref{thm:cons_prop} further below. 
\end{remark}

We emphasize (also for later use in Section~\ref{sec:dom}) that the condition on the right in Equation~\eqref{eqn:cpreal} does not depend on~$\alpha$. That is, as long as the sequence of critical values~$\kappa_d$ is chosen such that the asymptotic size of the corresponding test equals~$\alpha \in (0, 1)$ (cf.~also~\eqref{eqn:cvpalpha}), the consistency set does not depend on the concrete value of~$\alpha$.

An immediate consequence of Theorem~\ref{thm:conspreal} is the following observation, which we shall make use of later. In particular, it contains an equivalent way of writing~\eqref{eqn:cpreal} in case~$p \in [2, \infty)$ which further clarifies that the consistency of the~$p$-norm based test against any array of alternatives cannot be settled based on the sequence~$\|\bm{\theta}_d\|_p$ alone. The condition~$\|\bm{\theta}_d\|_p^p/\sqrt{d} \to \infty$, which is also given in~\cite{ingster} Corollary 3.6, is only sufficient.
\begin{corollary}\label{cor:rewrite}
	For~$p \in (0, \infty)$ and~$\{p, \kappa_d\} \in \mathbb{T}_{\alpha}$,~$\alpha \in (0, 1)$, we have:
	\begin{enumerate}
		\item If~$p \in [2, \infty)$, then
		\begin{equation}\label{eqn:equi}
			\bm{\vartheta} \in \mathscr{C}(\{p,\kappa_{d} \}) \quad \Leftrightarrow \quad d^{-1/2} \left(\|\bm{\theta}_d\|^2_2 \vee \|\bm{\theta}_d\|^p_p\right) \to \infty.
		\end{equation}
		\item If~$p \in (0, 2)$,
		\begin{equation}\label{eqn:equi2}
			\bm{\vartheta} \in \mathscr{C}(\{p,\kappa_{d} \}) \quad \Rightarrow \quad d^{-1/2} \left( \|\bm{\theta}_d\|^2_2 \wedge \|\bm{\theta}_d\|^p_p\right)	 \to \infty;
		\end{equation}
		but the condition on the right in~\eqref{eqn:equi2} does not imply the condition to the left.
	\end{enumerate} 
\end{corollary}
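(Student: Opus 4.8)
The plan is to deduce both parts directly from Theorem~\ref{thm:conspreal} by relating the quantity $\sum_{i=1}^{d} g_p(\theta_{i,d})$ in \eqref{eqn:cpreal} to $\|\bm{\theta}_d\|_2^2$ and $\|\bm{\theta}_d\|_p^p$, exploiting the pointwise identities recorded after \eqref{eqn:rhodef}: $g_p(x) = x^2 \vee |x|^p$ for $p \in [2,\infty)$ and $g_p(x) = x^2 \wedge |x|^p$ for $p \in (0,2)$. Setting $a_i = \theta_{i,d}^2 \ge 0$ and $b_i = |\theta_{i,d}|^p \ge 0$, the problem reduces to comparing sums of pointwise maxima (resp.\ minima) with the maximum (resp.\ minimum) of the two sums $\sum_i a_i = \|\bm{\theta}_d\|_2^2$ and $\sum_i b_i = \|\bm{\theta}_d\|_p^p$.

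For Part~1 ($p \in [2,\infty)$) I would invoke the elementary two-sided bound, valid for nonnegative reals,
\begin{equation*}
\Big(\sum_i a_i\Big) \vee \Big(\sum_i b_i\Big) \le \sum_i (a_i \vee b_i) \le \sum_i a_i + \sum_i b_i \le 2\Big[\Big(\sum_i a_i\Big) \vee \Big(\sum_i b_i\Big)\Big],
\end{equation*}
which shows that $\sum_{i=1}^{d} g_p(\theta_{i,d})$ and $\|\bm{\theta}_d\|_2^2 \vee \|\bm{\theta}_d\|_p^p$ coincide up to the multiplicative constant $2$. Dividing by $\sqrt{d}$, one diverges if and only if the other does, so \eqref{eqn:equi} is immediate from \eqref{eqn:cpreal}. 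For the forward implication in Part~2 ($p \in (0,2)$) only the one-sided bound $\sum_i (a_i \wedge b_i) \le \big(\sum_i a_i\big) \wedge \big(\sum_i b_i\big)$ is needed; it yields $\sum_{i=1}^{d} g_p(\theta_{i,d}) \le \|\bm{\theta}_d\|_2^2 \wedge \|\bm{\theta}_d\|_p^p$. Thus if $\bm{\vartheta} \in \mathscr{C}(\{p,\kappa_d\})$, then by \eqref{eqn:cpreal} the left-hand side divided by $\sqrt{d}$ tends to infinity, forcing $d^{-1/2}\big(\|\bm{\theta}_d\|_2^2 \wedge \|\bm{\theta}_d\|_p^p\big) \to \infty$ as well.

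The substantive step, and the one I expect to be the main obstacle, is to show that this one-sided bound cannot be reversed. The design principle is to make $\|\bm{\theta}_d\|_2^2$ large through a single \emph{large} coordinate and $\|\bm{\theta}_d\|_p^p$ large through many \emph{small} coordinates, so that each group is negligible in $\sum_i g_p$ (which truncates large entries to $|\cdot|^p$ and small entries to $(\cdot)^2$). Concretely, for $p \in (0,2)$ fix any $\gamma \in (1/4,\, 1/(2p))$ --- a nonempty interval exactly because $p < 2$ --- and put $\theta_{1,d} = d^{\gamma}$ and $\theta_{i,d} = d^{-\gamma}$ for $2 \le i \le d$. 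Since $\gamma > 1/4$ one has $d^{-1/2}\|\bm{\theta}_d\|_2^2 \ge d^{2\gamma - 1/2} \to \infty$, and since $\gamma p < 1/2$ one has $d^{-1/2}\|\bm{\theta}_d\|_p^p \ge (d-1)d^{-\gamma p - 1/2} \to \infty$, so the right-hand side of \eqref{eqn:equi2} holds; however,
\begin{equation*}
\frac{\sum_{i=1}^{d} g_p(\theta_{i,d})}{\sqrt{d}} = \frac{d^{\gamma p} + (d-1)d^{-2\gamma}}{\sqrt{d}} = d^{\gamma p - 1/2} + (1 - d^{-1})\, d^{1/2 - 2\gamma} \to 0,
\end{equation*}
because $\gamma p - 1/2 < 0$ and $1/2 - 2\gamma < 0$. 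By \eqref{eqn:cpreal} the test is therefore not consistent against this array, so the right-hand condition in \eqref{eqn:equi2} does not imply consistency. The only genuine difficulty lies in choosing the scalings so that both norm-powers blow up while the truncated sum vanishes; this is feasible precisely because for $p < 2$ the admissible range $(1/4, 1/(2p))$ for $\gamma$ is nonempty, and the remaining verification is routine exponent bookkeeping.
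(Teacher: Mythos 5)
Your proposal is correct and follows essentially the same route as the paper's proof: both parts are deduced from Theorem~\ref{thm:conspreal} via the pointwise comparison of~$g_p$ with~$|\cdot|^2$ and~$|\cdot|^p$ (your max/min-of-sums inequalities are equivalent to the paper's bounds~$\tfrac{1}{2}(|\cdot|^2+|\cdot|^p)\leq g_p \leq |\cdot|^2+|\cdot|^p$ for~$p\geq 2$ and~$g_p \leq |\cdot|^2$,~$g_p\leq|\cdot|^p$ for~$p<2$), and the non-implication is settled by a one-spike-plus-many-small-entries array. The paper's counterexample uses the exponents~$d^{1/(2p)}$ and~$d^{-1/4}$ while you use~$d^{\gamma}$ and~$d^{-\gamma}$ with~$\gamma\in(1/4,1/(2p))$, an immaterial difference in parameterization.
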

We shall now highlight a ``monotonicity'' result concerning the consistency set of~$p$-norm based tests, which can be obtained from Theorem~\ref{thm:conspreal}. This result shows that in terms of consistency properties, and choosing among~$p$-norm based tests with~$p \in (0, \infty)$ and asymptotic size in~$(0, 1)$,
it is best to choose~$p$ ``large.'' \footnote{Because the statement of Theorem~\ref{thm:incl} involves tests based on different exponents and the sequences of critical values effecting asymptotic size control depend on the respective exponent, we here explicitly indicate this dependence simply to distinguish the two sequences. A similar convention will be applied in the statements below when needed.}

\begin{theorem}\label{thm:incl}
	For sequences of tests~$\{p, \kappa_{d,p}\}$ and~$\{q, \kappa_{d,q}\}$ with asymptotic sizes in~$(0, 1)$, and~$0 < p < q < \infty$, we have~$$\mathscr{C}(\{p, \kappa_{d, p}\}) \subsetneqq \mathscr{C}(\{q, \kappa_{d, q}\}).$$
\end{theorem}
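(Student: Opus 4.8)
The plan is to reduce the entire statement to the analytic characterization in Theorem~\ref{thm:conspreal}. Since both~$\{p,\kappa_{d,p}\}$ and~$\{q,\kappa_{d,q}\}$ have asymptotic size in~$(0,1)$, and since the right-hand side of~\eqref{eqn:cpreal} does not depend on~$\alpha$, membership~$\bm{\vartheta} \in \mathscr{C}(\{p,\kappa_{d,p}\})$ is equivalent to~$d^{-1/2}\sum_{i=1}^d g_p(\theta_{i,d}) \to \infty$, and similarly for~$q$ with~$g_q$ in place of~$g_p$. Thus the theorem becomes two purely deterministic claims about these sums: (i) divergence of the~$p$-sum forces divergence of the~$q$-sum, and (ii) the converse fails for a suitable array.

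The first claim (weak inclusion) rests on a single pointwise comparison of the functions~$g_p$ and~$g_q$. On~$[-1,1]$ both equal~$x^2$, while on~$\R \setminus [-1,1]$ one has~$g_p(x) = |x|^p < |x|^q = g_q(x)$ because the base exceeds~$1$ and~$p < q$. Hence~$g_p(x) \leq g_q(x)$ for every~$x \in \R$, with strict inequality whenever~$|x| > 1$. Summing over coordinates and dividing by~$\sqrt{d}$ gives~$d^{-1/2}\sum_{i=1}^d g_p(\theta_{i,d}) \leq d^{-1/2}\sum_{i=1}^d g_q(\theta_{i,d})$ for every array and every~$d$, so divergence of the left-hand side implies divergence of the right-hand side. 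By Theorem~\ref{thm:conspreal} this yields~$\mathscr{C}(\{p,\kappa_{d,p}\}) \subseteq \mathscr{C}(\{q,\kappa_{d,q}\})$.

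For strictness I would exhibit a single ``spike'' array that is detected under the~$q$-norm but not under the~$p$-norm. Set~$\theta_{1,d} := d^{1/(2p)}$ and~$\theta_{i,d} := 0$ for~$i \geq 2$. For~$d$ large the spike exceeds~$1$, so~$g_p(\theta_{1,d}) = \sqrt{d}$ and~$g_q(\theta_{1,d}) = d^{q/(2p)}$. Consequently~$d^{-1/2}\sum_{i=1}^d g_p(\theta_{i,d}) = 1 \not\to \infty$, whereas~$d^{-1/2}\sum_{i=1}^d g_q(\theta_{i,d}) = d^{(q-p)/(2p)} \to \infty$ since~$q > p$. Invoking Theorem~\ref{thm:conspreal} once more, this array lies in~$\mathscr{C}(\{q,\kappa_{d,q}\}) \setminus \mathscr{C}(\{p,\kappa_{d,p}\})$, which upgrades the inclusion to a strict one and completes the argument.

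The conceptual work has already been done in Theorem~\ref{thm:conspreal}, so I anticipate no serious obstacle: the proof is essentially the observation that~$g_p \leq g_q$ with a strictness gap on~$\{|x| > 1\}$, combined with a spike that exploits that gap. The only points requiring a little care are (a) confirming the pointwise inequality at and near the threshold~$|x| = 1$, where the two pieces defining~$g_p$ meet continuously, and (b) calibrating the spike magnitude so that the~$p$-sum stays bounded while the~$q$-sum diverges; the choice~$\theta_{1,d} = d^{1/(2p)}$, which pins the~$p$-sum to the constant~$1$, accomplishes both simultaneously. One could equally use~$k_d$ identical spikes to generate a whole family of separating arrays interpolating between sparse and dense, which also foreshadows the structural description of the ``additional'' alternatives alluded to in the discussion following the theorem.
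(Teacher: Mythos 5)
Your proof is correct and follows essentially the same route as the paper's: both rest on the pointwise inequality~$g_p \leq g_q$ combined with Theorem~\ref{thm:conspreal} for the weak inclusion, and both use the identical spike array~$\{(d^{1/(2p)}, 0, \hdots, 0): d \in \N\}$ to establish strictness. Your additional remarks on the size-independence of the characterization and the calibration of the spike are accurate but merely make explicit what the paper leaves implicit.
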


In other words, the larger~$p \in (0, \infty)$, the larger the consistency set of~$\{p, \kappa_{d, p}\}$. Somewhat unfortunately, however, the theorem shows --- at the same time --- that it is impossible to choose~$p$ ``large enough.'' That tests based on larger powers have larger consistency sets may appear contradictory in light of the inequality~$\|\cdot\|_p \geq \|\cdot\|_q$ for~$1 \leq p \leq q$, which implies that the test statistic gets \emph{smaller} for large powers. Note, however, that this is compensated by the critical values,~cf.~\eqref{eqn:cvpalpha}. We finally emphasize that the asymptotic sizes of the two sequences of tests in Theorem~\ref{thm:incl} do not need to be identical. This is a consequence of the independence of the consistency set of~$\alpha \in (0, 1)$, cf.~Theorem~\ref{thm:conspreal}. 

Returning to the discussion in Section~\ref{sec:optcons}, the monotonicity statement established in Theorem~\ref{thm:incl} immediately raises the question whether there exists a test of asymptotic size~$\alpha \in (0, 1)$ that is consistent against all sequences of alternatives that any~$p$-norm based test is consistent against; that is:
\begin{question}\label{q:existdom}
	Given~$\alpha \in (0, 1)$, does there exist a~$\psi_d \in \mathbb{T}_{\alpha}$ such that~\begin{equation}\label{eqn:better}
		\mathscr{C}(\{p, \kappa_{d}\}) \subseteq \mathscr{C}(\psi_d)
	\end{equation} for every~$p \in (0, \infty)$ and every sequence~$\kappa_d$ such that~$\{p, \kappa_{d}\} \in \mathbb{T}_{\alpha}$?
\end{question}
\begin{remark}\label{q:equivform}
	It follows immediately from Theorem~\ref{thm:incl} that the requirement on~$\psi_d$ in~\eqref{eqn:better} of Question~\ref{q:existdom} can equivalently be replaced by 
	\begin{equation}
		\mathscr{C}(\{p, \kappa_{d}\}) \subsetneqq \mathscr{C}(\psi_d);
	\end{equation}
	that is, if a test as in Question~\ref{q:existdom} exists, then it must not only contain the consistency set of every~$p$-norm based test ($p \in (0, \infty))$) as a subset, but this set inclusion must actually be strict for every~$p$. In this sense, if a test~$\psi_d$ as in Question~\ref{q:existdom} exists, it \emph{strictly} dominates all~$p$-norm based tests in terms of their consistency behavior.
\end{remark}

We will come back to Question~\ref{q:existdom} in later sections. There, it will also be shown that despite the monotonicity established in Theorem~\ref{thm:incl} for~$p \in (0, \infty)$, the supremum-norm based (i.e.,~$p = \infty$) test does not have the property of~$\psi_d$ sought in Question~\ref{q:existdom}. 

In the context of Theorem~\ref{thm:incl} one may ask whether the arrays of alternatives that the~\mbox{$q$-norm} based test but not the~$p$-norm based test is consistent against have particular structural properties. Thus, we next study the structure of
\begin{equation}\label{eqn:mincon}
	\mathscr{C}(\{q, \kappa_{d,q}\}) \setminus \mathscr{C}(\{p, \kappa_{d,p}\}) \quad \text{ for } p < q.
\end{equation}

\subsubsection{Structure of elements of~\eqref{eqn:mincon}}

Let~$2 \leq p < q < \infty$
and fix critical values such that~$\{p, \kappa_{d,p}\}$ and~$\{q, \kappa_{d,q}\}$ have asymptotic sizes in~$(0, 1)$. Then Corollary~\ref{cor:rewrite} shows that~$\bm{\vartheta}$ is an element of the set in~\eqref{eqn:mincon} if and only if (i) there exists a subsequence along which~$d^{-1/2} \left(\|\bm{\theta}_d\|^2_2 \vee \|\bm{\theta}_d\|^p_p\right)$ converges to a real number and (ii) if along any such subsequence it holds that~$d^{-1/2}\|\bm{\theta}_{d}\|_q^q \to \infty$. A concrete example of such an array is~$\bm{\vartheta}=\{(d^{\frac{1}{2p}},0,\hdots,0): d \in \N\}$, i.e., sparse alternatives with a dominating coordinate diverging to~$\infty$ at an appropriate rate. That all elements~$\bm{\vartheta}$ in the set in~\eqref{eqn:mincon} are approximately sparse and highly unbalanced, at least along suitably chosen subsequences, will be shown next; the proof makes use of the observation in Remark~\ref{rem:arb}.
\begin{theorem}\label{thm:cons_prop}
	Let~$0 < p < q < \infty$, and let~$\{p, \kappa_{d,p}\}$ and~$\{q, \kappa_{d,q}\}$ have asymptotic sizes in~$(0, 1)$. Then, for every~$\bm{\vartheta} \in \mathscr{C}(\{q, \kappa_{d,q}\}) \setminus \mathscr{C}(\{p, \kappa_{d,p}\})$ the following holds:
	\begin{enumerate}
		\item For every~$\delta \in (0, \infty)$ there exists a subsequence~$d'$ of~$d$, such that
		\begin{equation}\label{eqn:corstr1}
			\sup_{d'} \frac{\sum_{i=1}^{d'}\mathds{1}\cbr[0]{|\theta_{i,d'}|> \delta}}{\sqrt{d'}} < \infty \quad \text{ and } \quad \max_{1\leq i\leq d'}|\theta_{i,d'}|\to\infty.
		\end{equation}
		\item If~$p \geq 2$, then there exists a subsequence~$d'$ of~$d$, such that
		\begin{equation} \label{eqn:corstr2}
			\sup_{\delta \in (0, \infty)} \delta^p \times \sup_{d'} \frac{\sum_{i=1}^{d'}\mathds{1}\cbr[0]{|\theta_{i,d'}|> \delta}}{\sqrt{d'}} < \infty \quad \text{ and } \quad \max_{1\leq i\leq d'}|\theta_{i,d'}|\to\infty.
		\end{equation}
	\end{enumerate}

\end{theorem}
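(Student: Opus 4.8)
The plan is to read everything off the characterization in Theorem~\ref{thm:conspreal}. Write $S_p(d) := d^{-1/2}\sum_{i=1}^d g_p(\theta_{i,d})$ and $S_q(d)$ analogously. Then $\bm{\vartheta} \in \mathscr{C}(\{q,\kappa_{d,q}\})$ means $S_q(d) \to \infty$, while $\bm{\vartheta} \notin \mathscr{C}(\{p,\kappa_{d,p}\})$ means $S_p(d) \not\to \infty$, so there is a constant $M < \infty$ and a subsequence $d'$ along which $S_p(d') \le M$. I would fix this single subsequence $d'$ once and for all and verify both displayed conclusions along it. Throughout I split the coordinates into the ``small'' ones ($|\theta_{i,d}|\le 1$) and the ``large'' ones ($|\theta_{i,d}|>1$), and set $T_d := \sum_{|\theta_{i,d}|\le1}\theta_{i,d}^2$, $P_d := \sum_{|\theta_{i,d}|>1}|\theta_{i,d}|^p$ and $Q_d := \sum_{|\theta_{i,d}|>1}|\theta_{i,d}|^q$, so that $S_p(d)=(T_d+P_d)/\sqrt d$ and $S_q(d)=(T_d+Q_d)/\sqrt d$ since $g_p=g_q$ on $[-1,1]$.

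First I would prove $\max_{1\le i\le d'}|\theta_{i,d'}| \to \infty$. Boundedness of $S_p(d')$ forces both $T_{d'}/\sqrt{d'}\le M$ and $P_{d'}/\sqrt{d'}\le M$, whereas $S_q(d')\to\infty$ together with $T_{d'}/\sqrt{d'}\le M$ forces $Q_{d'}/\sqrt{d'}\to\infty$. If $\max_i|\theta_{i,d'}|$ stayed bounded by some $R$ along a further subsequence, then on the large block $|\theta_{i,d'}|^q \le R^{q-p}|\theta_{i,d'}|^p$, giving $Q_{d'}/\sqrt{d'}\le R^{q-p}P_{d'}/\sqrt{d'}$ bounded, contradicting $Q_{d'}/\sqrt{d'}\to\infty$. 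This establishes the second displayed condition in both parts (along the same $d'$).

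For the counting condition I would exploit Remark~\ref{rem:arb}. Fix $\delta>0$, write $N_\delta(d):=\sum_{i=1}^d\mathds{1}\{|\theta_{i,d}|>\delta\}$, and pass to the re-thresholded function $g_p^{(\delta)}$: every coordinate counted by $N_\delta$ satisfies $g_p^{(\delta)}(\theta_{i,d})=|\theta_{i,d}|^p>\delta^p$, whence $\delta^p N_\delta(d)\le\sum_{i=1}^d g_p^{(\delta)}(\theta_{i,d})$. By Remark~\ref{rem:arb} the quantity $d^{-1/2}\sum_i g_p^{(\delta)}(\theta_{i,d})$ is bounded by a ($\delta$-dependent) multiple of $S_p(d)$, hence bounded along $d'$; dividing by $\sqrt{d'}$ shows $N_\delta(d')/\sqrt{d'}$ is bounded along $d'$. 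As this $d'$ was fixed independently of $\delta$, a single subsequence in fact works for all $\delta$, which gives Part~1.

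Part~2 is the delicate point, and it is where the hypothesis $p\ge2$ enters, because the bound must now be \emph{uniform} in $\delta$. The key observation is that for $p\ge2$ one has $g_p^{(\delta)}\le g_p$ pointwise for every $\delta$: on $\{1<|x|\le\delta\}$ one compares $x^2\le|x|^p$, and on $\{\delta<|x|\le1\}$ one compares $|x|^p\le x^2$. Consequently $d'^{-1/2}\sum_i g_p^{(\delta)}(\theta_{i,d'})\le S_p(d')\le M$ with the \emph{same} $M$ for every $\delta$, so that $\delta^p N_\delta(d')/\sqrt{d'}\le M$ uniformly in $\delta$ and $d'$, which is exactly Part~2. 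The main obstacle is precisely this uniformity: for $p<2$ the comparison constant between $g_p^{(\delta)}$ and $g_p$ blows up as $\delta\to0$, since small coordinates are measured by their square rather than by their $p$-th power, so the weighted count $\delta^p N_\delta$ cannot be controlled uniformly, explaining why Part~2 is restricted to $p\ge2$.
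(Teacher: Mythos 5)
Your proposal is correct and follows essentially the same route as the paper's proof: both rest on Theorem~\ref{thm:conspreal} and Remark~\ref{rem:arb}, bound $\delta^p$ times the count of large coordinates by the re-thresholded sum $\sum_{i} g_p^{(\delta)}(\theta_{i,d})/\sqrt{d}$, exploit the pointwise inequality $g_p^{(\delta)} \leq g_p$ (valid precisely when $p \geq 2$, the paper phrases this as $\delta \mapsto G_{d,p}(\delta)$ being maximized at $\delta = 1$) to get the uniformity in $\delta$ for Part~2, and derive the divergence of the maximum from the divergence of the large-coordinate part of the $q$-criterion. The only differences are organizational: you fix a single subsequence and transfer bounds across thresholds via the comparison constant from Remark~\ref{rem:arb} (the paper instead takes $\delta$-dependent subsequences when $p < 2$), and you obtain the max-divergence by contradiction rather than by the paper's direct estimate.
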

The first part of Theorem~\ref{thm:cons_prop} shows that those arrays~$\bm{\vartheta} \in \bm{\Theta}$ that are consistently detected by some~$p$-norm based test, but only if~$p$ is chosen large enough, are, along a subsequence, (i) ``approximately sparse,'' in that they have a vanishing fraction of entries larger than a given~$\delta>0$, and (ii) ``highly unbalanced,'' in the sense that~$$\min_{1\leq i\leq d}|\theta_{i,d}|/\max_{1\leq i\leq d}|\theta_{i,d}| \approx 0.$$ 
Furthermore, the second part of Theorem~\ref{thm:cons_prop} shows that in case~$\bm{\vartheta}$ is not consistently detected by a~$p$-norm based test with~$p \geq 2$ (but is detected by some~$q$-norm based test with~$q > p$), a bit more can be said concerning~(i). Namely that then the fraction of entries larger than a given~$\delta>0$ decays (at least) at the order~$\delta^{-p}$, uniformly over a \emph{common} subsequence~$d'$.

\subsection{Case~$p = \infty$}\label{sec:pinf}

We shall now present a result that characterizes the consistency set of tests~$\{\infty, \kappa_d\} \in \mathbb{T}_{\alpha}$,~$\alpha \in (0, 1)$. Structurally, the result is comparable to the one given in Theorem~\ref{thm:conspreal}. Define
\begin{equation}\label{eqn:ginfty}
	g_{\infty}(x) :=
	\begin{cases}
		w(x) & \text{ if } x < 1 \\
		e^{-x^2/2}/x & \text{ else},
	\end{cases}
\end{equation}
for a fixed continuous function~$w: \R \to (0, \infty)$ that satisfies~$w(z) \to \infty$ as~$z \to -\infty$; e.g.,~$w(z) = e^{(z^2-2z)/2}$, in which case~$g_{\infty}$ is also continuous and strictly decreasing. The result is as follows, where we recall that we denote the standard Gaussian cdf by~$\Phi$ and set~$\overline{\Phi} := 1-\Phi$. Its proof is based on a result applicable beyond the Gaussian setting which can be found in Proposition~\ref{prop:abs_noabs} in Appendix~\ref{app:conspinf}.

\begin{theorem}\label{thm:conspinf}
	For~$\kappa_{d}$ such that~$\{\infty, \kappa_d\} \in \mathbb{T}_{\alpha}$,~$\alpha \in (0, 1)$, we have
	\begin{equation}\label{eqn:cpinf}
		\begin{aligned}
			\bm{\vartheta} \in \mathscr{C}(\{\infty,\kappa_{d} \}) & ~ \Leftrightarrow ~ 
			\sum_{i = 1}^d \frac{\overline{\Phi}\left(\mathfrak{c}_d - |\theta_{i,d}|\right)}{\Phi\left(\mathfrak{c}_d - |\theta_{i,d}|\right)} \to \infty
			~ \Leftrightarrow ~
			\sum_{i = 1}^d g_{\infty}\left(\mathfrak{c}_d - |\theta_{i,d}|\right) \to \infty,
		\end{aligned}
	\end{equation}
	where~$\mathfrak{c}_d := \sqrt{2\log(d)} - \frac{\log \log(d)}{2 \sqrt{2\log(d)}}$ for~$d \geq 2$ (and one may set~$\mathfrak{c}_1 := 0$ for completeness).
\end{theorem}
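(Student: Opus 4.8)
The plan is to work directly with the non-randomized test $\varphi_d = \mathds{1}\{\|\cdot\|_\infty \geq \kappa_d\}$ and to prove the two equivalences in \eqref{eqn:cpinf} by first reducing consistency to the divergence of a sum of per-coordinate quantities, and then comparing that sum, through a chain of ``boundedness'' comparisons, to the two displayed sums. Since the coordinates are independent and $\varepsilon_i$ is symmetric, writing $t_{i,d} := |\theta_{i,d}|$ the power of the test equals
\begin{equation*}
\E\big(\varphi_d(\bm{\theta}_d + \bm{\varepsilon}_d)\big) = 1 - \prod_{i=1}^d \big(1 - a_{i,d}\big), \qquad a_{i,d} := \overline{\Phi}(\kappa_d - t_{i,d}) + \overline{\Phi}(\kappa_d + t_{i,d}) \in [0,1),
\end{equation*}
so that $\bm{\vartheta} \in \mathscr{C}(\{\infty,\kappa_d\})$ if and only if $\prod_{i=1}^d(1-a_{i,d}) \to 0$, equivalently $\sum_{i=1}^d -\log(1 - a_{i,d}) \to \infty$. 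Writing $u_{i,d} := \overline{\Phi}(\kappa_d - t_{i,d})$ and $v_{i,d} := \overline{\Phi}(\kappa_d + t_{i,d})$, monotonicity of $\overline{\Phi}$ gives $v_{i,d} \leq u_{i,d}$ and $v_{i,d} \leq \overline{\Phi}(\kappa_d)$; moreover the size condition \eqref{eqn:sizep} reads $(1 - 2\overline{\Phi}(\kappa_d))^d \to 1-\alpha$, whence $d\,\overline{\Phi}(\kappa_d) \to -\tfrac{1}{2}\log(1-\alpha) < \infty$ and in particular $\sum_{i=1}^d v_{i,d} \leq d\,\overline{\Phi}(\kappa_d) = O(1)$. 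This control of the ``reflection'' terms $v_{i,d}$ is what lets them be absorbed throughout.

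The guiding principle from here is that the relation $\sum_i b_{i,d}\to\infty$ is preserved under replacing $b_{i,d}$ by $b'_{i,d}$ whenever, uniformly in $d$, $\sum_i b_{i,d}\le B$ implies $\sum_i b'_{i,d}\le C(B)$ and vice versa; I would verify three such comparisons in turn. First, using the elementary bounds $a \leq -\log(1-a) \leq a/(1-a)$ together with $u_{i,d} \leq R_{i,d} := \overline{\Phi}(\kappa_d-t_{i,d})/\Phi(\kappa_d-t_{i,d})$ and $\sum_i v_{i,d} = O(1)$, I would show $\sum_i -\log(1-a_{i,d})$ is bounded precisely when $\sum_i R_{i,d}$ is: if $\sum_i R_{i,d}\le B$ then each $u_{i,d}$ is bounded away from $1$ and splitting indices according to whether $u_{i,d}$ is small or large (at most $O(B)$ being large) bounds the left sum, while conversely $-\log(1-a_{i,d})\ge u_{i,d}$ and $1-u_{i,d}\ge e^{-B'}$ give $R_{i,d}\le e^{B'}u_{i,d}$. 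Second, I would replace $\kappa_d$ by $\mathfrak{c}_d$: since $R(x):=\overline{\Phi}(x)/\Phi(x)$ is decreasing, $\sum_i R(\mathfrak{c}_d-t_{i,d})\le B$ forces $\mathfrak{c}_d-t_{i,d}\ge r_B$ for finite $r_B$, confining all coordinates to $x\in[r_B,\mathfrak{c}_d]$, and on this range the Mills-ratio bound $|\tfrac{d}{dx}\log R(x)| = \phi(x)/(\overline{\Phi}(x)\Phi(x)) \leq C(1+|x|)$ (with $\phi$ the standard normal density) yields $|\log R(x) - \log R(x+(\kappa_d-\mathfrak{c}_d))| \leq C(\kappa_d-\mathfrak{c}_d)(1+\mathfrak{c}_d) = O(1)$, so each term changes by only a bounded factor; the reverse direction is symmetric. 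Third, Mills' ratio gives $R(x) \asymp e^{-x^2/2}/x = g_{\infty}(x)$ for $x \geq 1$, while for $x<1$ both $R(x)$ and $g_{\infty}(x)=w(x)$ are bounded below by positive constants and, when either sum is bounded, the relevant coordinates lie in a fixed compact $x$-range on which the two are comparable. Chaining these comparisons with the reduction above delivers both equivalences in \eqref{eqn:cpinf}.

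\textbf{Main obstacle.} I expect the second comparison — the passage from $\kappa_d$ to the $\alpha$-free centering $\mathfrak{c}_d$ — to be the crux. Although $\kappa_d-\mathfrak{c}_d\to 0$, the ratio of individual summands $R(\mathfrak{c}_d-t_{i,d})/R(\kappa_d-t_{i,d}) \approx \exp\!\big((\mathfrak{c}_d-t_{i,d})(\kappa_d-\mathfrak{c}_d)\big)$ is \emph{not} bounded when $t_{i,d}$ is very large, so a naive termwise comparison fails. The argument must instead exploit that a bounded sum confines every coordinate to $\mathfrak{c}_d-t_{i,d}\le\mathfrak{c}_d$, where the gap $\kappa_d-\mathfrak{c}_d = O(1/\sqrt{\log d})$ is amplified by $\mathfrak{c}_d \sim \sqrt{2\log d}$ to exactly $O(1)$. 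This calibration is precisely why the $\alpha$-dependent and $\log(4\pi)$ corrections separating $\kappa_d$ from $\mathfrak{c}_d$ in \eqref{eqn:cvpalpha} wash out, and it explains why the characterization can be stated with the $\alpha$-free centering $\mathfrak{c}_d$.
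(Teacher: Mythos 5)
Your proposal is correct, and it reaches the theorem by a more self-contained route than the paper. The paper does not argue directly from the product formula: it first proves a general-purpose result (Proposition~\ref{prop:abs_noabs}, valid for any symmetric continuous error cdf~$F$) giving four equivalent formulations of~$\bm{\vartheta} \in \mathscr{C}(\{\infty,\kappa_d\})$, then performs the centering swap~$\kappa_d \to \mathfrak{c}_d$ at the level of~$\overline{\Phi}$-sums together with a separate~$\|\bm{\theta}_d\|_\infty$ condition, splitting indices at the threshold~$3$ and again at~$\sqrt{2\log(d)}$ (observations O1/O2: arguments beyond~$\sqrt{2\log(d)}$ contribute at most~$d \cdot d^{-1}e^{|u_d|} = O(1)$ in total, while below that level the Mills-ratio inequality~\eqref{eqn:Gtail} gives termwise comparability), and finally re-applies the proposition with the critical values~$\mathfrak{c}_d$ to land on the ratio form. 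You instead (i) rederive the reduction to coordinate-wise sums directly from~$1-\prod_i(1-a_{i,d})$, absorbing the reflection terms via~$d\,\overline{\Phi}(\kappa_d) \to -\tfrac12\log(1-\alpha)$ — this replicates, in the Gaussian case, exactly what Proposition~\ref{prop:abs_noabs} delivers; (ii) keep the ratio~$R = \overline{\Phi}/\Phi$ throughout and do the centering swap with the single analytic estimate~$|\tfrac{d}{dx}\log R(x)| \le C(1+|x|)$, so that the confinement~$\mathfrak{c}_d - |\theta_{i,d}| \in [r_B, \mathfrak{c}_d]$ plus~$\kappa_d - \mathfrak{c}_d = O(1/\sqrt{\log d})$ gives a uniformly bounded change of~$\log R$ per term; and (iii) replace the paper's subsequence-of-subsequence extractions by your uniform-boundedness comparison principle, which is logically equivalent but tidier. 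Both work; the paper's detour through the general proposition buys reusability (it is cited again for Theorem~\ref{thm:pdconssup} and all the non-Gaussian results in Appendix~\ref{app:AUX}), whereas your argument is shorter, purely Gaussian, and isolates the real content — the~$O(1/\sqrt{\log d}) \times O(\sqrt{\log d}) = O(1)$ calibration — more transparently. The only point you should make explicit when writing it up is the bound~$\kappa_d - \mathfrak{c}_d = O(1/\sqrt{\log d})$ itself, which you can either take from~\eqref{eqn:cvpalpha} (Lemma~\ref{lem:crit_val}) or derive from your own normalization~$d\,\overline{\Phi}(\kappa_d) \to -\tfrac12\log(1-\alpha)$ via the Mills ratio; as your closing paragraph indicates, you are aware of this, so it is a detail rather than a gap.
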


First of all, we note that the sequence~$\mathfrak{c}_d$ in~\eqref{eqn:cpinf} plays the role of centering. This is in contrast to~\eqref{eqn:cpreal}, where the function~$g_p$ is symmetric about~$0$ and  standardization is achieved by multiplying by~$1/\sqrt{d}$.  

In analogy to the discussion after Theorem~\ref{thm:conspreal} (cf.~also Figure~\ref{fig:cont}), we observe that Theorem~\ref{thm:conspinf} shows that the consistency set of a supremum-norm based test is not characterized exclusively by the supremum-norm of the deviation from the null hypothesis. Figure~\ref{fig:cont2} further clarifies this by showing the contour sets for the function~$(x_1, x_2) \mapsto \overline{\Phi}(\mathfrak{c}_2 - |x_1|)/\Phi(\mathfrak{c}_2 - |x_1|) + \overline{\Phi}(\mathfrak{c}_2 - |x_2|)/\Phi(\mathfrak{c}_2 - |x_2|)$. We note that the contour sets deviate from the ones of a supremum-norm, in particular along the ``diagonals'' or close to the origin.
\begin{figure}
	\centering
	\includegraphics[width=0.4\linewidth]{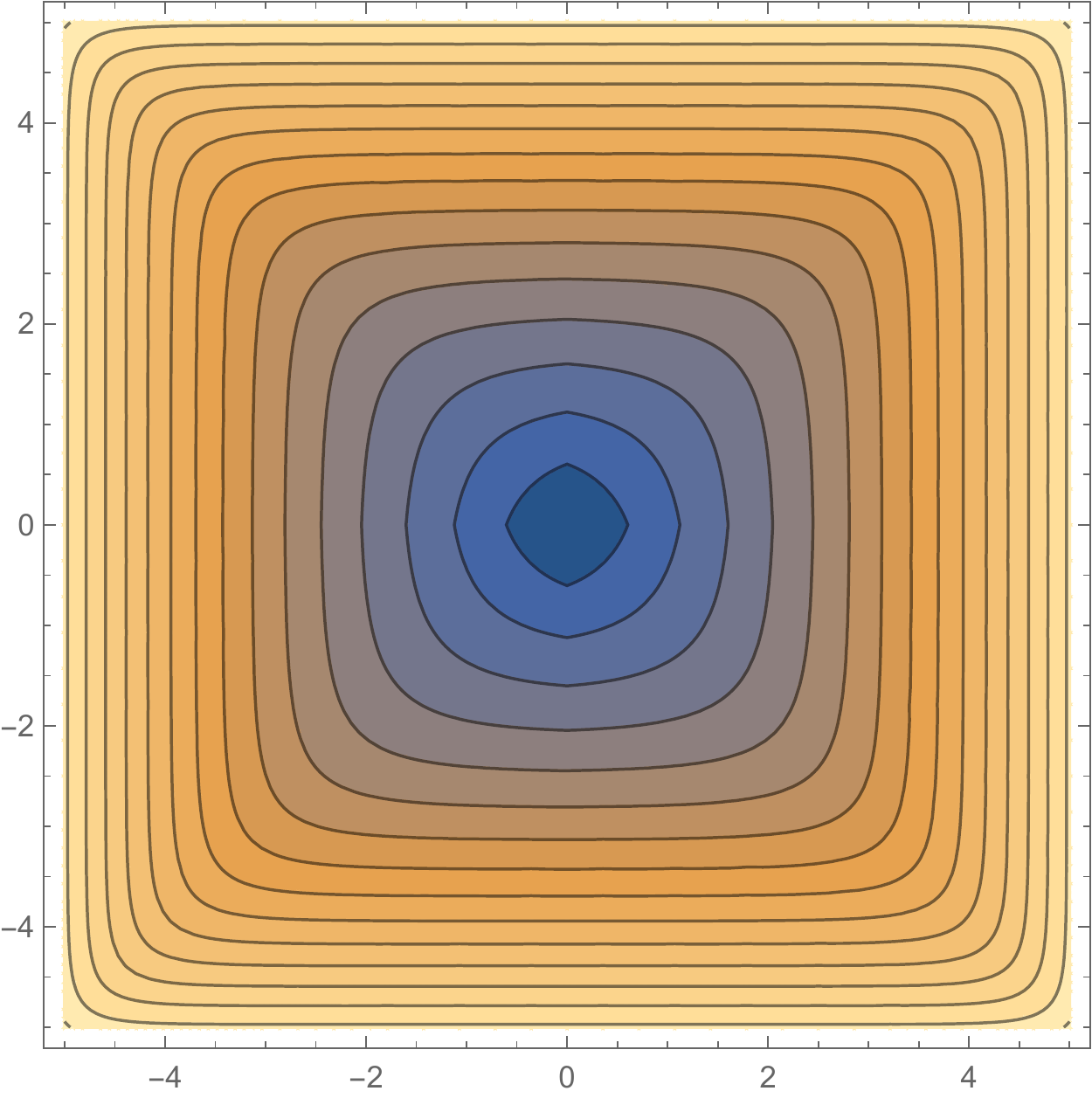}
	\caption{Contour plots of the function characterizing the consistency of the supremum-norm based test for~$d = 2$.}
	\label{fig:cont2}
\end{figure}

Theorem~\ref{thm:conspinf} also reveals that as long as the critical values~$\kappa_d$ are chosen such that the asymptotic size of a supremum-norm based test is in~$(0, 1)$, the consistency set remains the same, i.e., does not depend on the concrete asymptotic size. Although this already follows from the first equivalence in Theorem~\ref{thm:conspinf}, the second equivalence statement has the advantage that it is easier to use in order to check whether a particular~$\bm{\vartheta} \in \bm{\Theta}$ is in~$\mathscr{C}(\{\infty, \kappa_d\})$.

Concerning ``dense'' alternatives, i.e., arrays of the form~$\{\tau_d \bm{\iota}_d : d \in \N\}$ for a real sequence~$\tau_d$ and~$\bm{\iota}_d$ denoting the vector of ones of length~$d$, Theorem~\ref{thm:conspinf} implies that such alternatives are in~$\mathscr{C}(\{\infty,\kappa_{d} \})$ if and only if~$\sqrt{\log(d)}|\tau_d| \to \infty$; see Appendix~\ref{app:supt} for details.  Furthermore, Theorem~\ref{thm:conspinf} (together with the conditions assumed on~$w$) immediately implies the well-known fact that~$\bm{\vartheta} \in \mathscr{C}(\{\infty,\kappa_{d} \})$ if~$\|\bm{\theta}_d\|_{\infty} - \sqrt{2\log(d)} \to \infty$; see \citet[p.~92]{ingster}. 

The discussion in the previous paragraph shows that the supremum-norm based test with asymptotic size~$\alpha$ is consistent against the array~$\{(\sqrt{3\log(d)},0,\hdots,0) : d \in \N\}$. In contrast, it follows from Corollary~\ref{cor:rewrite} that this is not the case for any~$p$-norm based test for~$p\in(0,\infty)$. This echoes the conventional wisdom that supremum-norm based tests ``are more powerful against sparse alternatives.'' On the other hand, Corollary~\ref{cor:rewrite} shows that  for~$p\in(0,\infty)$ all~$p$-norm based tests are consistent against the array~$\{ \bm{\iota}_d/\sqrt{\log(d)} : d \in \N\}$. The above discussion implies, however, that this is not the case for the supremum-norm based test. Hence, there exists no ``ranking'' of the consistency sets of the~$p$-norm based tests for~$p\in(0,\infty)$ and the one of the supremum-norm based test. In particular, the monotonicity result in Theorem~\ref{thm:incl} does \emph{not} extend to~$\infty$. We summarize this observation:
\begin{corollary}\label{cor:supcomp}
	Let~$p \in (0, \infty)$. Then, for~$\{p, \kappa_{d, p}\}$ and~$\{\infty, \kappa_{d, \infty}\}$ with asymptotic sizes in~$(0, 1)$, it holds that~
	\begin{equation}\label{eqn:infcons}
		\mathscr{C}(\{p, \kappa_{d, p}\}) \not \subseteq \mathscr{C}(\{\infty, \kappa_{d, \infty}\}) \quad \text{ and } \quad \mathscr{C}(\{\infty, \kappa_{d, \infty}\}) \not \subseteq \mathscr{C}(\{p, \kappa_{d, p}\}).
	\end{equation}
\end{corollary}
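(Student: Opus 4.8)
The plan is to exhibit, for each fixed $p \in (0,\infty)$, two explicit arrays of alternatives: one in $\mathscr{C}(\{p,\kappa_{d,p}\}) \setminus \mathscr{C}(\{\infty,\kappa_{d,\infty}\})$ and one in $\mathscr{C}(\{\infty,\kappa_{d,\infty}\}) \setminus \mathscr{C}(\{p,\kappa_{d,p}\})$. Both arrays are essentially already named in the discussion preceding the statement, so the task reduces to verifying membership (or non-membership) by plugging them into the two characterizations in Theorem~\ref{thm:conspreal} and Theorem~\ref{thm:conspinf}. Since the consistency sets do not depend on $\alpha$ (by those same theorems), I may fix any convenient critical values effecting asymptotic size in $(0,1)$.

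For the first non-inclusion, I would take the dense array $\bm{\vartheta} = \{\bm{\iota}_d/\sqrt{\log(d)} : d \in \N\}$. To see it lies in $\mathscr{C}(\{p,\kappa_{d,p}\})$, note that for large $d$ every coordinate $\theta_{i,d} = 1/\sqrt{\log(d)}$ lies in $[-1,1]$, so $g_p(\theta_{i,d}) = \theta_{i,d}^2 = 1/\log(d)$ regardless of $p$; hence $\sum_{i=1}^d g_p(\theta_{i,d})/\sqrt{d} = \sqrt{d}/\log(d) \to \infty$, and the criterion \eqref{eqn:cpreal} is satisfied. To see it lies \emph{outside} $\mathscr{C}(\{\infty,\kappa_{d,\infty}\})$, I apply the dense-alternative consequence of Theorem~\ref{thm:conspinf} recorded in the text: such arrays are consistency-detected by the supremum-norm test iff $\sqrt{\log(d)}\,|\tau_d| \to \infty$. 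Here $\tau_d = 1/\sqrt{\log(d)}$, so $\sqrt{\log(d)}\,|\tau_d| = 1 \not\to \infty$, giving the failure. (Alternatively one evaluates $\sum_{i=1}^d g_\infty(\mathfrak{c}_d - 1/\sqrt{\log d})$ directly and checks it stays bounded.)

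For the second non-inclusion, I would take the sparse array $\bm{\vartheta} = \{(\sqrt{3\log(d)},0,\dots,0) : d \in \N\}$. Its membership in $\mathscr{C}(\{\infty,\kappa_{d,\infty}\})$ follows from the single-spike computation in the text: since $\|\bm{\theta}_d\|_\infty - \sqrt{2\log(d)} = (\sqrt{3}-\sqrt{2})\sqrt{\log(d)} \to \infty$, the stated sufficient condition for $\{\infty,\kappa_d\}$-consistency applies. For non-membership in $\mathscr{C}(\{p,\kappa_{d,p}\})$, I use Corollary~\ref{cor:rewrite}: the only nonzero coordinate contributes $g_p(\sqrt{3\log d}) = (3\log d)^{p/2} \vee (3\log d) $ (a power of $\log d$), so $\sum_{i=1}^d g_p(\theta_{i,d})/\sqrt{d} = O\big((\log d)^{p/2 \vee 1}/\sqrt{d}\big) \to 0$, violating \eqref{eqn:cpreal}. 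This establishes both relations in \eqref{eqn:infcons}.

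I do not anticipate a genuine obstacle here, as the corollary is explicitly flagged as a summary of the preceding discussion; the only care needed is bookkeeping. The one step worth double-checking is the dense-alternative criterion $\sqrt{\log(d)}|\tau_d|\to\infty$ for the supremum-norm test, which is cited as proved in Appendix~\ref{app:supt} but which I would want to confirm reduces correctly from \eqref{eqn:cpinf}: with all coordinates equal to $\tau_d$, the sum becomes $d\, g_\infty(\mathfrak{c}_d - |\tau_d|)$, and since $g_\infty(z) \asymp e^{-z^2/2}/z$ for large $z$, divergence of $d\,e^{-(\mathfrak{c}_d - |\tau_d|)^2/2}/(\mathfrak{c}_d - |\tau_d|)$ must be shown equivalent to $\sqrt{\log d}\,|\tau_d|\to\infty$. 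Expanding the exponent using $\mathfrak{c}_d^2/2 = \log d - \tfrac12\log\log d + o(1)$ and the leading cross-term $\mathfrak{c}_d|\tau_d| \sim \sqrt{2\log d}\,|\tau_d|$ makes this transparent, and I would record just this expansion rather than the full appendix argument.
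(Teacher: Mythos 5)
Your proposal is correct and follows essentially the same route as the paper, whose ``proof'' is precisely the discussion preceding the corollary: the sparse array $\{(\sqrt{3\log(d)},0,\hdots,0): d \in \N\}$ handled via the sufficient condition $\|\bm{\theta}_d\|_\infty - \sqrt{2\log(d)} \to \infty$ and Corollary~\ref{cor:rewrite}, and the dense array $\{\bm{\iota}_d/\sqrt{\log(d)}: d \in \N\}$ handled via Theorem~\ref{thm:conspreal} and the dense-alternative criterion $\sqrt{\log(d)}|\tau_d| \to \infty$ from Appendix~\ref{app:supt}. One cosmetic note: since $\sqrt{3\log d} > 1$ eventually, the definition \eqref{eqn:rhodef} gives $g_p(\sqrt{3\log d}) = (3\log d)^{p/2}$ for \emph{all} $p \in (0,\infty)$ (your expression with the maximum is the correct identity only for $p \geq 2$), but as your expression upper-bounds $g_p$ and still vanishes after division by $\sqrt{d}$, the conclusion is unaffected.
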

Corollary~\ref{cor:supcomp} implies in particular that supremum-norm based tests do not have the property desired in Question~\ref{q:existdom}.

While the previous section focused on the consistency set for~$p$-norm based tests with~$p$ fixed, we conclude this section with a result concerning the consistency set of~$p$-norm based tests with~$p$ growing with~$d$. The following theorem relates the resulting consistency set to the one of the supremum-norm based test, and will be instrumental in the proof of Theorem~\ref{thm:domp} below.
\begin{theorem}\label{thm:pdconssup}
	Let the sequences~$p_d \in (0, \infty)$ and~$\kappa_d$ be such that the sequence of tests~$\{p_d, \kappa_d\} := \mathds{1}\{\|\cdot \|_{p_d} \geq \kappa_d\}$ is in~$\mathbb{T}_{\alpha}$,~$\alpha \in (0, 1)$. Under the condition that
	\begin{equation}\label{eqn:fasterlogdgauss}
		\liminf_{d \to \infty} \frac{p_d}{\log(d)} > 2,
	\end{equation}
	it holds that
	\begin{equation}
		\mathscr{C}(\{\infty, \kappa_{d, \infty}\}) \subseteq \mathscr{C}(\{p_d, \kappa_d\})
	\end{equation}
	for every sequence of critical values~$\kappa_{d, \infty}$ such that~$\{\infty, \kappa_{d, \infty}\}$ has asymptotic size in~$(0, 1)$.
\end{theorem}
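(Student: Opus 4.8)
The plan is to exploit the elementary norm inequality $\|\bm{x}\|_{p_d} \geq \|\bm{x}\|_{\infty}$ together with the fact, recorded in Theorem~\ref{thm:conspinf}, that the consistency set of a supremum-norm based test does not depend on its asymptotic size in $(0,1)$. Fix $\bm{\vartheta} \in \mathscr{C}(\{\infty, \kappa_{d, \infty}\})$; I must show $\P(\|\bm{\theta}_d + \bm{\varepsilon}_d\|_{p_d} \geq \kappa_d) \to 1$. Since $\|\bm{\theta}_d + \bm{\varepsilon}_d\|_{p_d} \geq \|\bm{\theta}_d + \bm{\varepsilon}_d\|_{\infty}$, it suffices to produce a sequence $\tilde{\kappa}_d$ with (i) $\kappa_d \leq \tilde{\kappa}_d$ for all large $d$, and (ii) $\{\infty, \tilde{\kappa}_d\} \in \mathbb{T}_{\tilde{\alpha}}$ for some $\tilde{\alpha} \in (0,1)$: indeed (ii) and Theorem~\ref{thm:conspinf} give $\mathscr{C}(\{\infty, \tilde{\kappa}_d\}) = \mathscr{C}(\{\infty, \kappa_{d,\infty}\}) \ni \bm{\vartheta}$, so $\P(\|\bm{\theta}_d + \bm{\varepsilon}_d\|_{\infty} \geq \tilde{\kappa}_d) \to 1$, and then (i) forces $\P(\|\bm{\theta}_d + \bm{\varepsilon}_d\|_{p_d} \geq \kappa_d) \geq \P(\|\bm{\theta}_d + \bm{\varepsilon}_d\|_{\infty} \geq \tilde{\kappa}_d) \to 1$.

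By the extreme-value asymptotics underlying~\eqref{eqn:cvpalpha}, every sequence $\tilde{\kappa}_d = \mathfrak{c}_d + C/\sqrt{2\log(d)}$ (with $C \in \R$ fixed) defines a supremum-norm test with asymptotic size in $(0,1)$, that size decreasing to $0$ as $C \to \infty$. Hence the entire problem reduces to the purely null-distributional quantile estimate
\begin{equation*}
\limsup_{d\to\infty}\ \sqrt{2\log(d)}\,\big(\kappa_d - \mathfrak{c}_d\big) < \infty, \tag{$\star$}
\end{equation*}
for then $\kappa_d \leq \mathfrak{c}_d + C/\sqrt{2\log(d)}$ eventually for $C$ large, so (i)--(ii) hold with that $\tilde{\kappa}_d$. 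Because $\kappa_d$ is an $\alpha$-quantile of $\|\bm{\varepsilon}_d\|_{p_d}$ and survival functions are monotone, $(\star)$ follows once I show that $\sqrt{2\log(d)}(\|\bm{\varepsilon}_d\|_{p_d} - \mathfrak{c}_d)$ is bounded in probability from above: if its upper tail at level $C$ can be made $< \alpha$ uniformly over large $d$ by taking $C$ large, then $\P(\|\bm{\varepsilon}_d\|_{p_d} \geq \mathfrak{c}_d + C/\sqrt{2\log(d)}) < \alpha$ eventually, which forces $\kappa_d \leq \mathfrak{c}_d + C/\sqrt{2\log(d)}$.

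To control $\|\bm{\varepsilon}_d\|_{p_d}$ from above I would write, with $M_d := \|\bm{\varepsilon}_d\|_{\infty}$,
\begin{equation*}
\|\bm{\varepsilon}_d\|_{p_d} = M_d\,(1 + R_d)^{1/p_d}, \qquad R_d := \sum_{i\,:\,|\varepsilon_i| < M_d} \left(\frac{|\varepsilon_i|}{M_d}\right)^{p_d},
\end{equation*}
so that $\|\bm{\varepsilon}_d\|_{p_d} - M_d = M_d\big((1+R_d)^{1/p_d} - 1\big)$. Now $\sqrt{2\log(d)}(M_d - \mathfrak{c}_d) = O_p(1)$ by the Gumbel-type limit for the Gaussian maximum (cf.~Lemma~\ref{lem:crit_val}), and, fixing $c \in (2,\ \liminf_d p_d/\log(d))$ so that $p_d \geq c\log(d)$ eventually, one has $M_d/p_d = O_p(1/\sqrt{\log d})$. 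Consequently, if $R_d = O_p(1)$, then $(1+R_d)^{1/p_d}-1 = p_d^{-1}\log(1+R_d)(1+o_p(1))$ and hence $\|\bm{\varepsilon}_d\|_{p_d} - M_d = O_p(1/\sqrt{\log d})$, so that $\sqrt{2\log(d)}(\|\bm{\varepsilon}_d\|_{p_d}-\mathfrak{c}_d)=O_p(1)$ and $(\star)$ holds. Thus everything reduces to the single claim $R_d = O_p(1)$.

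This last claim is the heart of the matter and, I expect, the main obstacle. I would establish it by a level-set decomposition of $\sum_{i}(|\varepsilon_i|/M_d)^{p_d}$ according to the scale $|\varepsilon_i| \approx \gamma\sqrt{2\log(d)}$. The finitely many (Poisson-many) top order statistics, sitting within $O(1/\sqrt{\log d})$ of $M_d$, i.e.\ at $\gamma = 1 - O(1/\log d)$, each contribute an $O_p(1)$ amount and sum to $O_p(1)$; the contribution of the bulk at a generic scale $\gamma \in (0,1)$ is of order $d\,\overline{\Phi}(\gamma\sqrt{2\log d})\,\gamma^{p_d}$, whose exponent in $d$ is at most $h(\gamma) := 1 - \gamma^2 + c\log\gamma$. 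The crucial point is that $h(1) = 0$ while $h'(1) = c - 2 > 0$ exactly because $\liminf_d p_d/\log(d) > 2$, so $h$ is strictly negative on all of $(0,1)$; this is precisely what renders every bulk scale asymptotically negligible and confines the mass of $\sum_i |\varepsilon_i|^{p_d}$ to the immediate vicinity of the maximum. (Had the liminf fallen below $2$, coordinates at scales just below the maximum would dominate and $R_d$ would diverge, which is why the inclusion genuinely fails there.) Making this rigorous requires a first-moment bound on the sum restricted below a cutoff $\ell_d = \mathfrak{c}_d - A/\sqrt{2\log d}$ — where the integral $\int_0^{\ell_d} x^{p_d}e^{-x^2/2}\,dx$ is dominated by its upper endpoint because the integrand peaks only at $x=\sqrt{p_d} > \sqrt{2\log d}$, again using $c>2$ — a rare-event estimate $\P(\exists i:\,|\varepsilon_i| > b_d)\to 0$ for a suitable $b_d$, and a careful treatment of the transition region $\gamma \uparrow 1$ where the top order statistics take over.
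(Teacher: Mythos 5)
Your proof skeleton coincides with the paper's: both arguments rest on $\|\cdot\|_{p_d}\geq\|\cdot\|_{\infty}$ plus the size-independence of sup-norm consistency sets (Theorem~\ref{thm:conspinf}), and both reduce the theorem to showing that~$\kappa_d$ is eventually dominated by a sup-norm critical sequence whose asymptotic size stays in~$(0,1)$. Where you genuinely depart from the paper is in how that domination is obtained. The paper (via Proposition~\ref{prop:pdsup} and Remark~\ref{rem:consnotdep}) squeezes~$\|\cdot\|_{\infty}\leq\|\cdot\|_{p_d}\leq\|\cdot\|_{r_d}$ for a smaller power~$r_d=2\log(d)/a$ with~$a<1$, and imports two limit theorems from \cite{bogachev2006limit} (the Gumbel limit for the maximum and a stable-law-type limit~$\Lambda_a$ for~$\|\bm{\eps}_d\|_{r_d}$) to trap the normalized critical values between two tight quantile sequences. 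You instead propose to prove the required upper-tail tightness of the null statistic directly, through~$\|\bm{\eps}_d\|_{p_d}=M_d(1+R_d)^{1/p_d}$ and the claim~$R_d=O_p(1)$. Your chain of reductions down to that claim is correct (including the quantile argument for~$(\star)$, the estimate~$M_d\sqrt{2\log d}/p_d=O(1)$, and the observation that the~$O_p(1)$-many terms above any cutoff are each trivially less than~$1$), the claim itself is true exactly under~\eqref{eqn:fasterlogdgauss}, and a completed proof along these lines would be self-contained and Gaussian-elementary, avoiding the external extreme-value machinery entirely.

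There is, however, a genuine gap at the step you yourself call the heart of the matter. The justification you give for the first-moment bound --- the integrand~$x^{p_d}e^{-x^2/2}$ is increasing on~$[0,\ell_d]$ because its peak~$\sqrt{p_d}$ lies beyond~$\sqrt{2\log d}$ --- only yields the crude estimate~$\int_0^{\ell_d}x^{p_d}e^{-x^2/2}\,dx\leq \ell_d\,\ell_d^{p_d}e^{-\ell_d^2/2}$, hence (using~$d\,e^{-\ell_d^2/2}\asymp \sqrt{\log d}\,e^{A}$) an expected bulk contribution of order~$\ell_d\sqrt{\log d}\asymp\log d$, not~$O(1)$. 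This loss is fatal for your argument as structured:~$R_d=O_p(\log d)$ only gives~$\log(1+R_d)=O_p(\log\log d)$, hence~$\sqrt{2\log d}\,(\|\bm{\eps}_d\|_{p_d}-\mathfrak{c}_d)=O_p(\log\log d)$, so the dominating sup-norm critical sequence you can extract exceeds~$\mathfrak{c}_d$ by an amount of order~$\log\log d/\sqrt{\log d}$ and therefore has asymptotic size~$0$ (cf.~Lemma~\ref{lem:crit_val}), a regime in which Theorem~\ref{thm:conspinf} no longer equates consistency sets. The fix is standard but must be carried out: since~$f(x):=p_d\log x-x^2/2$ is concave, one has
\begin{equation*}
\int_0^{\ell_d}e^{f(x)}\,dx\ \leq\ \int_0^{\ell_d}e^{f(\ell_d)-f'(\ell_d)(\ell_d-x)}\,dx\ \leq\ \frac{e^{f(\ell_d)}}{f'(\ell_d)},
\qquad
f'(\ell_d)=\frac{p_d}{\ell_d}-\ell_d\ \geq\ \frac{(c-2)\sqrt{\log d}}{\sqrt{2}}\,(1+o(1)),
\end{equation*}
and the extra factor~$f'(\ell_d)\asymp(c-2)\sqrt{\log d}$ in the denominator is precisely what converts the~$O(\log d)$ into~$O(1)$. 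Note that this is where the hypothesis~$\liminf p_d/\log(d)>2$ enters \emph{quantitatively}, not merely through monotonicity of the integrand: at~$c=2$ one has~$f'(\ell_d)\asymp\log\log d/\sqrt{\log d}\to 0$ and the bound diverges, consistent with the failure of the inclusion there.
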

The proof of Theorem~\ref{thm:pdconssup} uses the fact that~$\mathscr{C}(\{\infty, \kappa_d\})$ does not depend on the concrete value of the asymptotic size, as long as this size is in~$(0, 1)$, which we established in Theorem~\ref{thm:conspinf}. Furthermore, it uses a ``squeezing-argument'' based on results in \cite{bogachev2006limit}. That article provides asymptotic approximations for~$p_d$-norm based test statistics under the null and for sequences~$p_d \to \infty$ (under weak assumptions on the sequences, which necessitate the squeezing argument), cf.~also~\cite{schlather2001} and~\cite{janssen2010limit} for related results. A general version of Theorem~\ref{thm:pdconssup} not assuming Gaussianity is provided in Proposition~\ref{prop:pdsup} and Remark~\ref{rem:consnotdep} in Appendix~\ref{app:conspinf}. Although Theorem~\ref{thm:pdconssup} proves that~$p_d$-norm based tests weakly dominate supremum-norm based tests, the discussion in Section~4.5 of~\cite{GF} reveals that tests based on a sequence of powers~$p_d$ with an asymptotic behavior as in the theorem just given do not have good power properties against dense alternatives.

\section{Power enhancements and related procedures}
To maximize power against sparse alternatives, it is often suggested to use a test based on the supremum-norm. At the other extreme, the typical choice maximizing power against dense alternatives is the likelihood-ratio test. Therefore, one could hope that the supremum-norm based test and the likelihood-ratio test together ``suffice'' in the sense that whenever some~$p$-norm based test is consistent against~$\bm{\vartheta}$, the supremum-norm based test or the likelihood-ratio test is consistent against~$\bm{\vartheta}$ as well. That is, one may conjecture that~$\mathscr{C}(\{2, \kappa_{d, 2}\})\cup \mathscr{C}(\{\infty, \kappa_{d, \infty}\})$ contains
\begin{equation}\label{eqn:unionp}
	\bigcup_{p\in(2,\infty)}\mathscr{C}(\{p, \kappa_{d, p}\});
\end{equation}
all critical values being chosen so that the corresponding asymptotic sizes are in~$(0, 1)$. The fact established in Theorem~\ref{thm:cons_prop} that for any~$q \in (2, \infty)$ all elements of the set~$\mathscr{C}(\{q, \kappa_{d,q}\}) \setminus \mathscr{C}(\{2, \kappa_{d,2}\})$ are approximately sparse along a subsequence may make such a conjecture appear even more plausible, since these are the types of alternatives against which the supremum-norm based test has particularly good power properties. Based on Theorems~\ref{thm:conspreal} and~\ref{thm:conspinf} we now show, however, that the likelihood-ratio test and the supremum-norm based test do \emph{not} suffice. Specifically, we show that
\begin{equation}
	\bm{\vartheta}^{\dagger} \in \bigcap_{p \in (2, \infty)} \mathscr{C}(\{p, \kappa_{d,p}\}) \setminus \left[
	\mathscr{C}(\{2, \kappa_{d, 2}\})\cup \mathscr{C}(\{\infty, \kappa_{d, \infty}\})
	\right],
\end{equation} 
where~$\bm{\vartheta}^{\dagger} \in \bm{\Theta}$ is defined as
\begin{equation}\label{eqn:deadenh}
	\bm{\theta}^{\dagger}_d := (\tau_d, \hdots, \tau_d, 0, \hdots, 0) \quad \text{ for } \tau_d=\frac{\sqrt{2\log(d)}}{\log\log(d)},
\end{equation}
and where the number of non-zero entries of~$\bm{\theta}_d^{\dagger}$ is~$\lceil \sqrt{d}/\log(d)\rceil$ (at least for~$d$ large enough).
The statement is as follows. 

\begin{theorem}\label{thm:2-p-inf}
	For every~$p \in (2, \infty)$ and any sequences of critical values such that~$\{2, \kappa_{d, 2}\} \in \mathbb{T}_{\alpha_2}$,~$\{\infty, \kappa_{d, \infty}\}\in \mathbb{T}_{\alpha_{\infty}}$, and~$\{p, \kappa_{d, p}\} \in \mathbb{T}_{\alpha_p}$, the asymptotic sizes all being in~$(0, 1)$, it holds that
	\begin{equation}\label{eqn:2-p-inf}
		\bm{\vartheta}^{\dagger} \in \mathscr{C}(\{p, \kappa_{d, p}\}) \quad \text{ but } \quad \bm{\vartheta}^{\dagger} \notin \mathscr{C}(\{2, \kappa_{d, 2}\})\cup \mathscr{C}(\{\infty, \kappa_{d, \infty}\}).
	\end{equation}
	Furthermore, the following convergences, stronger than the statement on the right in~\eqref{eqn:2-p-inf}, hold
	\begin{equation}\label{eq:strongerstatement}
		\mathbb{P}\left(
		\|\bm{\theta}_d^{\dagger} + \bm{\varepsilon}_d \|_2 \geq \kappa_{d,2}
		\right) \to \alpha_2 \quad \text{ and } \quad \mathbb{P}\left(
		\|\bm{\theta}_d^{\dagger} + \bm{\varepsilon}_d \|_{\infty} \geq \kappa_{d,\infty}
		\right) \to \alpha_{\infty}.
	\end{equation}
\end{theorem}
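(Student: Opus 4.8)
The plan is to settle the three claims with two different tools: the membership $\bm{\vartheta}^{\dagger} \in \mathscr{C}(\{p, \kappa_{d,p}\})$ for $p \in (2,\infty)$ follows directly from the characterization in Corollary~\ref{cor:rewrite}, whereas the two non-membership claims will be obtained as immediate consequences of the sharper convergences in~\eqref{eq:strongerstatement}, which I would establish by a direct analysis of the power of the two tests. Throughout I write $k_d := \lceil \sqrt{d}/\log(d)\rceil$ for the number of non-zero coordinates, so that $\|\bm{\theta}_d^{\dagger}\|_p^p = k_d \tau_d^p$ and $k_d \sim \sqrt{d}/\log(d)$.

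For $p \in (2,\infty)$ I would simply insert $\bm{\theta}_d^{\dagger}$ into the criterion of Corollary~\ref{cor:rewrite}: using $k_d \geq \sqrt{d}/\log(d)$ and $\tau_d^p = (2\log(d))^{p/2}/(\log\log(d))^p$ one obtains
\[
d^{-1/2}\|\bm{\theta}_d^{\dagger}\|_p^p \;\geq\; \frac{2^{p/2}\,(\log d)^{p/2 - 1}}{(\log\log d)^{p}}.
\]
Since $p > 2$ gives $p/2 - 1 > 0$, and any fixed positive power of $\log d$ eventually dominates any power of $\log\log d$, the right-hand side diverges; hence $d^{-1/2}(\|\bm{\theta}_d^{\dagger}\|_2^2 \vee \|\bm{\theta}_d^{\dagger}\|_p^p) \to \infty$ and $\bm{\vartheta}^{\dagger} \in \mathscr{C}(\{p,\kappa_{d,p}\})$. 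The same computation with $p = 2$ gives $d^{-1/2}\|\bm{\theta}_d^{\dagger}\|_2^2 \sim 2/(\log\log d)^2 \to 0$, which already shows $\bm{\vartheta}^{\dagger} \notin \mathscr{C}(\{2,\kappa_{d,2}\})$ via Theorem~\ref{thm:conspreal}; but for the sharper claim in~\eqref{eq:strongerstatement} I would decompose $\|\bm{\theta}_d^{\dagger}+\bm{\varepsilon}_d\|_2^2 - d = (\sum_{i=1}^d \varepsilon_i^2 - d) + 2\tau_d\sum_{i=1}^{k_d}\varepsilon_i + k_d\tau_d^2$. After dividing by $\sqrt{2d}$ the first term converges to $\mathbb{N}(0,1)$ by the CLT, while the deterministic shift $k_d\tau_d^2/\sqrt{2d} \sim \sqrt{2}/(\log\log d)^2$ and the mean-zero Gaussian cross term (whose variance $4\tau_d^2 k_d/(2d)\to 0$) both vanish; Slutsky together with $(\kappa_{d,2}^2 - d)/\sqrt{2d}\to \Phi^{-1}(1-\alpha_2)$ (from~\eqref{eqn:cvpalpha}) then yields power $\to \alpha_2$.

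For the supremum-norm test I would compute the power through the product formula coming from independence,
\[
\mathbb{P}\left(\|\bm{\theta}_d^{\dagger}+\bm{\varepsilon}_d\|_{\infty} < \kappa_{d,\infty}\right) = \left[1 - 2\overline{\Phi}(\kappa_{d,\infty})\right]^{d - k_d}\left[\Phi(\kappa_{d,\infty}-\tau_d) - \overline{\Phi}(\kappa_{d,\infty}+\tau_d)\right]^{k_d}.
\]
Taking logarithms, the zero-coordinate factor contributes $-2(d-k_d)\overline{\Phi}(\kappa_{d,\infty})(1+o(1))$, which by the size normalization (under the null $[1-2\overline{\Phi}(\kappa_{d,\infty})]^d \to 1-\alpha_{\infty}$, hence $2d\overline{\Phi}(\kappa_{d,\infty})\to -\log(1-\alpha_{\infty})$) and $k_d/d\to 0$ converges to $\log(1-\alpha_{\infty})$. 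The non-zero-coordinate factor contributes $\approx -k_d\overline{\Phi}(\kappa_{d,\infty}-\tau_d)$, so everything hinges on $k_d\overline{\Phi}(\kappa_{d,\infty}-\tau_d)\to 0$. Using~\eqref{eqn:cvpalpha} to write $\kappa_{d,\infty} = \sqrt{2\log d} + O(1/\sqrt{\log d})$, the Mills-ratio estimate $\overline{\Phi}(x)\sim \phi(x)/x$, and $k_d\sim \sqrt d/\log d$, the logarithm of $k_d\overline{\Phi}(\kappa_{d,\infty}-\tau_d)$ equals, to leading order, $-\tfrac12\log d + \tfrac{2\log d}{\log\log d}(1+o(1)) = \log d\,(-\tfrac12 + 2/\log\log d + o(1))$, whose bracket tends to $-\tfrac12$; hence the quantity tends to $-\infty$ and $k_d\overline{\Phi}(\kappa_{d,\infty}-\tau_d)\to 0$. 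Therefore the log-probability of acceptance tends to $\log(1-\alpha_{\infty})$ and the power tends to $\alpha_{\infty}$. Since $\alpha_2,\alpha_{\infty}\in(0,1)$, both convergences in~\eqref{eq:strongerstatement} entail the corresponding non-membership in~\eqref{eqn:2-p-inf}. (Alternatively, the non-membership in $\mathscr{C}(\{\infty,\kappa_{d,\infty}\})$ follows directly from Theorem~\ref{thm:conspinf} via the same tail estimate, which shows $\sum_i \overline{\Phi}(\mathfrak{c}_d - |\theta_{i,d}^{\dagger}|)/\Phi(\mathfrak{c}_d - |\theta_{i,d}^{\dagger}|)$ converges to the finite constant $1/(2\sqrt{\pi})$ rather than diverging.)

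The main obstacle is precisely the estimate $k_d\overline{\Phi}(\kappa_{d,\infty}-\tau_d)\to 0$: the array $\bm{\vartheta}^{\dagger}$ is calibrated so that the $k_d\sim \sqrt d/\log d$ spikes of height $\tau_d\sim \sqrt{2\log d}/\log\log d$ lie jointly just below the detection boundary of the supremum-norm test, and verifying this requires expanding $\kappa_{d,\infty}^2 - (\kappa_{d,\infty}-\tau_d)^2$ carefully enough to retain the $\log\log d$ corrections and thereby confirm that the surviving exponent $\log d\,(-\tfrac12 + 2/\log\log d)$ is eventually negative. The $O(1/\sqrt{\log d})$ gap between $\kappa_{d,\infty}$ and the centering sequence $\mathfrak{c}_d$ multiplies the estimate only by a bounded factor $e^{O(1)}$ and hence does not affect the conclusion.
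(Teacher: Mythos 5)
Your proposal is correct, and its overall skeleton matches the paper's: membership for $p\in(2,\infty)$ via Corollary~\ref{cor:rewrite}, and the two non-membership claims deduced from the stronger convergences in~\eqref{eq:strongerstatement}, with the supremum-norm power computed through the same product factorization over zero and non-zero coordinates. The differences are in execution. For the Euclidean case, the paper simply invokes the ``asymptotic power equals size iff $\sum_i g_2(\theta_{i,d})/\sqrt d \to 0$'' clause of Theorem~\ref{thm:consprealGEN}, whereas you rederive it by an explicit CLT/Slutsky decomposition $\|\bm{\theta}_d^{\dagger}+\bm{\varepsilon}_d\|_2^2 - d = (\sum_i\varepsilon_i^2-d) + 2\tau_d\sum_{i\le k_d}\varepsilon_i + k_d\tau_d^2$; your computation is correct and has the virtue of being self-contained, at the cost of redoing work the paper has already packaged. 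For the supremum-norm case, the paper's treatment of the non-zero-coordinate factor is markedly cruder and shorter than yours: it only needs $\kappa_{d,\infty}-\tau_d \ge \sqrt{\log d}$ eventually (immediate since $\tau_d = o(\sqrt{\log d})$), then Bernoulli's inequality and the Gaussian tail bound give $A_d \ge 1 - 2k_d\overline{\Phi}(\sqrt{\log d}) \ge 1 - 2k_d/\sqrt d \to 1$. In particular, your closing claim that one must ``retain the $\log\log d$ corrections'' to verify $k_d\overline{\Phi}(\kappa_{d,\infty}-\tau_d)\to 0$ is overcautious: the exponent $-\tfrac12\log d$ already wins against $\log k_d = \tfrac12\log d - \log\log d$ by a factor $1/\log d$, so no fine expansion is required (though yours is accurate). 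Your parenthetical alternative---checking via Theorem~\ref{thm:conspinf} that $\sum_i \overline{\Phi}(\mathfrak{c}_d-|\theta^{\dagger}_{i,d}|)/\Phi(\mathfrak{c}_d-|\theta^{\dagger}_{i,d}|) \to 1/(2\sqrt{\pi})$---is also correct, but note it only yields the weaker non-membership statement, not the convergence of power to $\alpha_\infty$ claimed in~\eqref{eq:strongerstatement}, so the product-formula argument cannot be dispensed with.
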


Theorem~\ref{thm:2-p-inf} shows that there exist arrays of alternatives against which (i) \emph{all}~$p$-norm based tests are consistent for~$p \in (2,\infty)$, and (ii) the~$2$- and supremum-norm based
tests have asymptotic power equaling their asymptotic size.

As a corollary to this observation, we can now show that (under reasonable assumptions concerning the asymptotic sizes of the tests involved) it is impossible to obtain a test with the desired properties in Question~\ref{q:existdom} by ``combining'' the likelihood-ratio test and the supremum-norm based test into a test~$\varphi_d$ such that~$\mathscr{C}(\varphi_d)$ contains the union in~\eqref{eqn:unionp}. Here we say that a test~$\varphi_d$ is a combination of the likelihood-ratio test and the supremum-norm based test if there exist sequences of critical values~$\kappa_{d,2}$ and~$\kappa_{d,\infty}$ such that
\begin{equation}\label{eqn:PEup}
	\varphi_d \leq  \max\left(\{2, \kappa_{d,2}\}_d, \{\infty, \kappa_{d,\infty}\}_d\right) \quad \text{ for every } d \in \N.
\end{equation}
A remarkable special case of such a combination procedure was investigated in~\cite{fan2015}, where it was suggested to improve the likelihood-ratio test by the supremum-norm based test in the context of their power enhancement principle. Their construction satisfies Equation~\eqref{eqn:PEup} for critical values~$\kappa_{d,2}$ such that the corresponding likelihood-ratio test has asymptotic size~$\alpha \in (0, 1)$, and~$\kappa_{d,\infty}$ such that the corresponding supremum-norm based test has asymptotic size~$0$. Our result concerning such procedures is as follows.
\begin{corollary}\label{cor:PE}
	If~$\{2, \kappa_{d,2}\} \in \mathbb{T}_{\alpha_2}$ and~$\{\infty, \kappa_{d,\infty}\} \in \mathbb{T}_{\alpha_{\infty}}$ with~$0\leq \alpha_2 + \alpha_{\infty} < 1$, then~$\bm{\vartheta}^{\dagger} \notin \mathscr{C}(\varphi_d)$ for every~$\varphi_d \in \mathbb{T}$ that satisfies Equation~\eqref{eqn:PEup}.
\end{corollary}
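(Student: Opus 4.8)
The plan is to deduce Corollary~\ref{cor:PE} from Theorem~\ref{thm:2-p-inf} by exploiting the defining inequality~\eqref{eqn:PEup} together with the two ``stronger'' convergence statements in~\eqref{eq:strongerstatement}. The key observation is that the combination test~$\varphi_d$ is dominated from above by the maximum of the two individual tests, so its rejection probability under~$\bm{\vartheta}^{\dagger}$ can be controlled by a union bound. First I would note that, since~$\varphi_d \leq \max(\{2, \kappa_{d,2}\}_d, \{\infty, \kappa_{d,\infty}\}_d)$ pointwise and the latter is an indicator, for every~$d$ we have the pointwise bound
\begin{equation*}
	\varphi_d(\bm{\theta}^{\dagger}_d + \bm{\varepsilon}_d) \leq \mathds{1}\{\|\bm{\theta}_d^{\dagger} + \bm{\varepsilon}_d\|_2 \geq \kappa_{d,2}\} + \mathds{1}\{\|\bm{\theta}_d^{\dagger} + \bm{\varepsilon}_d\|_{\infty} \geq \kappa_{d,\infty}\},
\end{equation*}
because the maximum of two~$\{0,1\}$-valued indicators is at most their sum.

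Taking expectations and using the two convergences in~\eqref{eq:strongerstatement} from Theorem~\ref{thm:2-p-inf}, I would conclude that
\begin{equation*}
	\limsup_{d \to \infty} \mathbb{E}\left(\varphi_d(\bm{\theta}^{\dagger}_d + \bm{\varepsilon}_d)\right) \leq \alpha_2 + \alpha_{\infty} < 1,
\end{equation*}
where the strict inequality is exactly the hypothesis~$0 \leq \alpha_2 + \alpha_{\infty} < 1$ of the corollary. By the definition of consistency in~\eqref{eqn:consistent}, this~$\limsup$ being strictly below~$1$ means precisely that the limit of~$\mathbb{E}(\varphi_d(\bm{\theta}^{\dagger}_d + \bm{\varepsilon}_d))$ cannot equal~$1$ (indeed it cannot even have~$1$ as a subsequential limit), hence~$\bm{\vartheta}^{\dagger} \notin \mathscr{C}(\varphi_d)$, which is the claim.

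The conceptual point worth emphasizing is why the corollary relies on the \emph{stronger} statement~\eqref{eq:strongerstatement} rather than merely on~$\bm{\vartheta}^{\dagger} \notin \mathscr{C}(\{2, \kappa_{d,2}\}) \cup \mathscr{C}(\{\infty, \kappa_{d,\infty}\})$. The latter only tells us that neither individual rejection probability tends to~$1$, which would be insufficient to bound the sum away from~$1$: without knowing the actual limiting values, two subsequential limits each below~$1$ could in principle still sum to something~$\geq 1$. The explicit identification of the two limits as~$\alpha_2$ and~$\alpha_{\infty}$ is what makes the union bound effective and is precisely why Theorem~\ref{thm:2-p-inf} was formulated with these sharper convergences. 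Consequently, there is no genuine obstacle here; the only mild subtlety is that the bound is a~$\limsup$, so one should record that a~$\limsup$ strictly less than~$1$ rules out convergence to~$1$ along the full sequence, which is all that the definition of the consistency set requires. The union-bound step itself is routine once the pointwise inequality is in place.
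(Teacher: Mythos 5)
Your union-bound argument is exactly the paper's main step, but your proof as written does not cover the full statement of the corollary: it silently assumes that Theorem~\ref{thm:2-p-inf} applies, and that theorem requires \emph{all} asymptotic sizes to lie in~$(0,1)$. The corollary's hypothesis is~$0 \leq \alpha_2 + \alpha_{\infty} < 1$, which explicitly allows~$\alpha_2 = 0$ or~$\alpha_{\infty} = 0$; in that case the convergences in~\eqref{eq:strongerstatement} are simply not available to you, so the step ``taking expectations and using the two convergences in~\eqref{eq:strongerstatement}'' fails. This is not a fringe case: as the paper points out just before the corollary, the power enhancement construction of \cite{fan2015} uses a supremum-norm component whose asymptotic size is exactly~$0$, so the degenerate case is the one most relevant to the motivating application.

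The gap is easy to close, and the paper does so in one line: if~$\alpha_2 = 0$ or~$\alpha_{\infty} = 0$, replace the corresponding critical values by smaller ones chosen so that both asymptotic sizes lie in~$(0,1)$ and still sum to a number strictly less than~$1$ (possible since~$\alpha_2 + \alpha_{\infty} < 1$). Decreasing a critical value enlarges the rejection region, so the upper bound~\eqref{eqn:PEup} remains valid with the new critical values, i.e.~$\varphi_d$ is still dominated by the maximum of the two modified tests. Now both modified tests have sizes in~$(0,1)$, Theorem~\ref{thm:2-p-inf} applies to them, and your argument for the interior case goes through verbatim. You should add this monotonicity-and-relabeling step; everything else in your write-up, including the remark on why the sharper statement~\eqref{eq:strongerstatement} is needed rather than mere non-consistency, is correct and matches the paper's reasoning.
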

Thus, any combination of the likelihood-ratio test and the supremum-norm based test is inconsistent against~$\bm{\vartheta}^{\dagger}$, and hence cannot be used to answer Question~\ref{q:existdom} in the affirmative.

\section{Tests that dominate all~$p$-norm based tests}\label{sec:dom}

We now answer Question~\ref{q:existdom} in the affirmative by constructing sequences of tests~$\psi_d$ that are consistent against any deviation from the null
that some~$p$-norm based test is consistent against. A similar statement also holds in the non-Gaussian, cf.~Appendix~\ref{app:existdomgen}.

The idea underlying our construction is related to the test proposed in \cite{xu2016adaptive}, who went beyond the classical power enhancement principle and suggested to combine a \emph{fixed} number of $p$-norm based tests. Importantly, however, we intend to combine \emph{all} $p$-norm based tests into a better test. In principle, this would require us to combine an \emph{uncountable} amount of tests indexed by~$p$ in the non-compact set~$(0, \infty)$, which seems impossible at first sight. But the monotonicity result in Theorem~\ref{thm:incl} can be used as a ``discretization device,'' allowing us to get all consistency sets corresponding to powers~$p \in (0, \infty)$ by suitably combining a finite, but in~$d$ \emph{increasing}, number~$m_d$, say, of powers~$p_1, \hdots, p_{m_d}$, say, cf.~Equation~\eqref{eq:max_test} below.  Besides the monotonicity result, a crucial aspect exploited in the proof is the independence of the consistency set of~$p$-norm based tests of their asymptotic size, cf.~Theorems~\ref{thm:conspreal} and~\ref{thm:conspinf} and the ensuing discussions. 
\begin{theorem}\label{thm:domp}
	Let~$p_d$ be a strictly increasing and unbounded sequence in~$(0, \infty)$ and let~$m_d$ be a non-decreasing and unbounded sequence in~$\N$. Choose~$\alpha \in (0, 1)$ and fix an array~$$\mathcal{A}  = \left\{\alpha_{j,d} \in (0, 1): d \in \N,~ j = 1, \hdots, m_d\right\}$$ such that, for~$\mathbb{M} \subseteq \N$ unbounded, it holds that
	\begin{equation}\label{eqn:Aarray}
		\sum_{j = 1}^{m_d} \alpha_{j,d} = \alpha \text{ for every } d\in \N,~~\lim_{d \to \infty} \alpha_{m_d, d} > 0,~\text{ and }~ \lim_{d \to \infty} \alpha_{j,d}  > 0 \text{ for every } j \in \mathbb{M},
	\end{equation}
	where the conditions implicitly impose the existence of the respective limits. For every~$d \in \N$ and every~$j = 1, \hdots, m_d$, choose~$\kappa_{j, d} > 0$ and~$c_d \in (0, 1]$ such that
	\begin{equation}\label{eqn:exactsize}
		\mathbb{P}\left( \|\bm{\varepsilon}_d \|_{p_j} \geq \kappa_{j, d} \right) = \alpha_{j,d} \quad \text{ and } \quad \mathbb{E}\left(\psi_d(\bm{\varepsilon}_d)\right) = \alpha,
	\end{equation}
	where
	\begin{equation}\label{eq:max_test}
		\psi_d(\cdot) := \mathds{1}\left\{ \max_{j = 1, \hdots, m_d} \kappa_{j,d}^{-1}
		\| \cdot \|_{p_j} \geq c_d
		\right\}.
	\end{equation}
	Then, the following statements hold:
	\begin{enumerate}
		\item The sequence of tests~$\psi_d$ has the property requested in Question~\ref{q:existdom}, that is, it has asymptotic size~$\alpha$ and satisfies
		\begin{equation}\label{eqn:better_2} 
			\mathscr{C}(\{p, \kappa_{d}\}) \subseteq \mathscr{C}(\psi_d), \text{ for every } p \in (0, \infty) \text{ and every } \{p, \kappa_d\} \in \mathbb{T}_{\alpha}.
		\end{equation}
		\item Under the additional condition that
		\begin{equation}\label{eqn:powslow}
			\liminf_{d \to \infty} \frac{p_{m_d}}{\log(d)} > 2,
		\end{equation}
		it furthermore holds that~$$\mathscr{C}(\{\infty, \kappa_{d}\}) \subseteq \mathscr{C}(\psi_d), \text{ for every } \{\infty, \kappa_d\} \in \mathbb{T}_{\alpha}.$$
	\end{enumerate}  
\end{theorem}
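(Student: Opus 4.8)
The plan is to first settle the size and the existence of the constants, and then to prove both containments by dominating $\psi_d$ with a single, carefully selected component test. For the existence of $\kappa_{j,d}$, note that $t \mapsto \P(\|\bm{\varepsilon}_d\|_{p_j} \geq t)$ is continuous and strictly decreasing from $1$ to $0$. For $c_d$, the statistic $T_d := \max_{j \leq m_d} \kappa_{j,d}^{-1}\|\bm{\varepsilon}_d\|_{p_j}$ is a continuous functional of a Gaussian vector, so $c \mapsto \P(T_d \geq c)$ is continuous and strictly decreasing, equals $1$ as $c \downarrow 0$, and at $c=1$ the union bound gives $\P(T_d \geq 1) = \P\bigl(\bigcup_{j\leq m_d}\{\|\bm{\varepsilon}_d\|_{p_j} \geq \kappa_{j,d}\}\bigr) \leq \sum_{j=1}^{m_d}\alpha_{j,d} = \alpha$. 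Hence there is a unique $c_d \in (0,1]$ with $\P(T_d \geq c_d) = \alpha$, so $\psi_d$ has exact—and therefore asymptotic—size $\alpha$.

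The key device is that, for each fixed $j \leq m_d$, the inequality $T_d \geq \kappa_{j,d}^{-1}\|\cdot\|_{p_j}$ gives $\psi_d \geq \mathds{1}\{\|\cdot\|_{p_j} \geq c_d\kappa_{j,d}\}$ pointwise; that is, $\psi_d$ dominates the $p_j$-norm based test run at the implied critical value $c_d\kappa_{j,d}$. I would locate this component test's size by a sandwich: since $c_d \leq 1$ its size is at least $\alpha_{j,d}$, while the inclusion $\{\psi_d = 0\} \subseteq \{\|\bm{\varepsilon}_d\|_{p_j} < c_d\kappa_{j,d}\}$ forces it to be at most $\alpha < 1$. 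For Part~1, fix $p \in (0,\infty)$ and, using that $\mathbb{M}$ is unbounded and $p_j \to \infty$, pick a fixed $j_1 \in \mathbb{M}$ with $p_{j_1} \geq p$; then $\lim_d \alpha_{j_1,d} =: 2\beta_0 > 0$. Let $\hat\kappa_d$ solve $\P(\|\bm{\varepsilon}_d\|_{p_{j_1}} \geq \hat\kappa_d) = \beta_0$, so that $\{p_{j_1}, \hat\kappa_d\}\in\mathbb{T}_{\beta_0}$ with $\beta_0\in(0,1)$. For $d$ large the component size exceeds $\alpha_{j_1,d} > \beta_0$, and strict monotonicity of the survival function yields $c_d\kappa_{j_1,d} < \hat\kappa_d$, whence $\psi_d \geq \mathds{1}\{\|\cdot\|_{p_{j_1}} \geq \hat\kappa_d\}$ pointwise. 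Finally, any $\bm{\vartheta} \in \mathscr{C}(\{p,\kappa_d\})$ satisfies $\sum_i g_p(\theta_{i,d})/\sqrt{d} \to \infty$ by Theorem~\ref{thm:conspreal}; since $p_{j_1} \geq p$ gives $g_{p_{j_1}} \geq g_p$ pointwise, the same divergence holds with $g_{p_{j_1}}$, so Theorem~\ref{thm:conspreal} applied to $\{p_{j_1},\hat\kappa_d\}$ yields $\bm{\vartheta}\in\mathscr{C}(\{p_{j_1},\hat\kappa_d\})$ (equivalently, combine Theorem~\ref{thm:incl} with the size-independence of the consistency set). Domination transfers consistency to $\psi_d$, proving Part~1.

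For Part~2 I would run the identical argument at the component $j = m_d$, writing $q_d := p_{m_d}$. Here $\lim_d \alpha_{m_d,d} =: 2\beta_0' > 0$ by hypothesis; defining $\hat\kappa_d'$ by $\P(\|\bm{\varepsilon}_d\|_{q_d} \geq \hat\kappa_d') = \beta_0'$ gives $\{q_d,\hat\kappa_d'\}\in\mathbb{T}_{\beta_0'}$ and, exactly as above, $\psi_d \geq \mathds{1}\{\|\cdot\|_{q_d} \geq \hat\kappa_d'\}$ pointwise for large $d$. Condition~\eqref{eqn:powslow} is precisely $\liminf_d q_d/\log(d) > 2$, so Theorem~\ref{thm:pdconssup} applies to the growing-power test $\{q_d,\hat\kappa_d'\}$ and yields $\mathscr{C}(\{\infty,\kappa_{d,\infty}\}) \subseteq \mathscr{C}(\{q_d,\hat\kappa_d'\})$ for any sup-norm test of asymptotic size in $(0,1)$. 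Together with the size-independence of $\mathscr{C}(\{\infty,\cdot\})$ from Theorem~\ref{thm:conspinf} and the domination just established, this gives $\mathscr{C}(\{\infty,\kappa_d\}) \subseteq \mathscr{C}(\psi_d)$.

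The main obstacle is the middle step: the critical values $c_d\kappa_{j,d}$ that $\psi_d$ implicitly applies to each component are not chosen to achieve a prescribed asymptotic size, and $c_d$ need not converge. The sandwich bound—size at least $\alpha_{j,d}$ (from $c_d\leq 1$) and at most $\alpha$ (from $\{\psi_d=0\}$ sitting inside the component's acceptance event)—is what confines the component size to a compact subinterval of $(0,1)$; replacing the component test by a fixed-size-$\beta_0$ test via strict monotonicity of the survival function, and exploiting that the consistency sets of $p$- and $\infty$-norm based tests do not depend on the size in $(0,1)$, is what makes the comparison rigorous without having to control the limiting behavior of $c_d$.
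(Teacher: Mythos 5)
Your proof is correct and follows essentially the same route as the paper's (which obtains the result as a corollary of its non-Gaussian generalization, Theorem~\ref{thm:dompgen}): exact size via the union bound plus an intermediate-value argument for $c_d$, domination of $\psi_d$ by a single component test using $c_d \leq 1$, the monotonicity/size-independence of consistency sets (Theorems~\ref{thm:conspreal} and~\ref{thm:incl}) for Part~1, and Theorem~\ref{thm:pdconssup} applied to the $j = m_d$ component for Part~2. The only deviation is your re-calibration to auxiliary exact-size-$\beta_0$ critical values $\hat\kappa_d$, which is harmless but redundant: since $\{p_{j_1}, \kappa_{j_1,d}\}$ has exact size $\alpha_{j_1,d} \to \underline{\alpha}_{j_1} \in (0,1)$, it already lies in $\mathbb{T}_{\underline{\alpha}_{j_1}}$, so the size-independent consistency characterization applies to it directly, exactly as in the paper.
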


\begin{remark}\label{rem:domp}
	Let us note first that for every~$d \in \N$ the acceptance region of the test~$\psi_d$ in~\eqref{eq:max_test} is a symmetric convex set if~$p_1 \geq 1$. It then follows that~$\psi_d$ is admissible and unbiased (cf.~\cite{birnbaum1955} and~\cite{stein1956}, and~\cite{anderson1955integral}).
	
	Second, in case~\eqref{eqn:powslow} is satisfied, the test~$\psi_d$ does not only have the property of being consistent against every array of alternatives that any~$p$-norm based test with~$p \in (0, \infty)$ and asymptotic size in~$(0, 1)$ is consistent against. It moreover also dominates any supremum-norm based test with asymptotic size~$\alpha$. The proof of this property crucially relies on Theorem~\ref{thm:pdconssup}.
	
	Third, by Theorems~\ref{thm:conspreal} and~\ref{thm:conspinf}, the consistency sets of~$p$-norm based tests with asymptotic size in~$(0, 1)$ do not depend on the actual value of the asymptotic size. It therefore follows that an apparently stronger (but equivalent) version of Theorem~\ref{thm:domp} is true, in which~$\mathbb{T}_{\alpha}$ in Parts~1 and~2 is replaced by~$\bigcup_{\tilde{\alpha} \in (0, 1)}\mathbb{T}_{\tilde{\alpha}}$.
\end{remark}

The sequence~$m_d$ regulates the number of norms the test in Equation~\eqref{eq:max_test} is based on, whereas the sequence~$p_d$ determines the concrete powers~$p \in (0, \infty)$ used in the construction. The condition in Equation~\eqref{eqn:powslow} requires that the maximal power used in the test~\eqref{eq:max_test} grows sufficiently quickly in~$d$ to guarantee that the supremum-norm based test is dominated by making use of Theorem~\ref{thm:pdconssup}.

Intuitively, the role of the array~$\mathcal{A}$ is to regulate the sizes of the ``individual'' tests involved in the construction in Equation~\eqref{eq:max_test}. Furthermore,~$c_d$ is a correction term guaranteeing that the test in that display has size exactly equal to~$\alpha$ for every sample size~$d$. The choice of~$c_d = 1$ would in general only lead to a test of size not greater than~$\alpha$. Thus, working with a smaller~$c_d$ leads to higher power compared to~$c_d = 1$, which would correspond to an overly conservative test. The critical values~$\kappa_{j,d}$ and the multiplier~$c_d$ as in Theorem~\ref{thm:domp} can be found by a simple line search. The probabilities that need to be obtained in such computations can be approximated numerically.  This is computationally relatively cheap, because the number~$m_d$ of tests involved can be chosen to grow slowly in~$d$, cf.~Example~\ref{ex:spec} below. 
\begin{remark}\label{rem:array}
	Given sequences~$p_d$ and~$m_d$ as in Theorem~\ref{thm:domp}, concrete choices of arrays~$\mathcal{A}$ satisfying~$\sum_{j = 1}^{m_d} \alpha_{j,d} = \alpha$ for every~$d \in \N$, and Equation~\eqref{eqn:Aarray} in Theorem~\ref{thm:domp} with~$\mathbb{M} = \mathbb{N}$, can easily be obtained from any probability mass function~$\delta_j > 0$,~$j \in \N$, and~$\gamma \in (0, 1)$, via the transformation (for~$d$ large enough so that~$m_d \geq 2$)
	\begin{equation}\label{eqn:arrayspec}
		\alpha_{j,d} = 
		\begin{cases}
			\gamma \alpha \frac{\delta_j}{\sum_{i = 1}^{m_d-1}
				\delta_i} & \text{ for } j = 1, \hdots, m_d-1, \\
			(1-\gamma)\alpha & \text{ for } j = m_d.
		\end{cases}
	\end{equation}
\end{remark}

\begin{example}\label{ex:spec}
	A specific example of a test as in Theorem~\ref{thm:domp} is given next. The test we discuss commences with the likelihood-ratio test in the sense that~$p_1 = 2$ (this will be important in Example~\ref{ex:closeex} below). Choose~$\mathcal{A}$ as in Remark~\ref{rem:array} with the geometric probability mass function~$\delta_j = \delta_0(1-\delta_0)^{j-1}$ for some~$\delta_0 \in (0, 1)$ and~$\gamma \in (0, 1)$. Choose~$m_d = \lceil 3\log(d) \rceil +1$, and let~$p_d = d+1$, so that~\eqref{eqn:powslow} is satisfied. For critical values~$\kappa_{j, d}$ as defined in~\eqref{eqn:exactsize} (but based on the concrete array, and the concrete sequences~$p_d$ and~$m_d$ just defined) and the corresponding~$c_d \leq 1$, Theorem~\ref{thm:domp} shows that the corresponding sequence of tests
	satisfies the property sought for in Question~\ref{q:existdom} (and also dominates any supremum-norm based test).
\end{example}

The array~$\mathcal{A}$ that the test~$\psi_d$ in~\eqref{eq:max_test} is based on also regulates the uniform ``closeness'' of the asymptotic power function of~$\psi_d$ to the power function of each~$p_j$-norm based test involved in its construction. The following result quantifies this relation. How the result can be used to mimic the asymptotic power properties of a specific~$p$-norm based test when working with a test~$\psi_d$ will be discussed subsequently.
\begin{theorem}\label{thm:opt_sum}
	In the context of Theorem~\ref{thm:domp}, fix~$j \in \mathbb{M}$ and set 
	\begin{equation}\label{eqn:alphabardef}
		\lim_{d \to \infty} \alpha_{j, d} =: \underline{\alpha}_j > 0.
	\end{equation}
	For any sequence of critical values~$\kappa_{d}$ such that~$\{p_j, \kappa_{d}\} \in \mathbb{T}_{\alpha}$, the sequence of tests~$\psi_d$ as defined in~\eqref{eq:max_test} satisfies
	\begin{align}\label{eq:powerfunction}
		\limsup_{d\to\infty}\sup_{\bm{\theta}_d\in\R^d} \left[\P\del[1]{\|\bm{\theta}_d+\bm{\eps}_d\|_{p_j} \geq \kappa_{d}}-\E(\psi_d(\bm{\theta}_d+\bm{\eps}_d))\right] \leq \frac{
			\Phi^{-1}(1-\underline{\alpha}_j) - \Phi^{-1}(1-\alpha)}{\sqrt{2\pi}},
	\end{align}
	and
	\begin{equation}\label{eq:powerfunction2}
		\lim_{d\to\infty}\sup_{\bm{\theta}_d\in\R^d} \left[\E(\psi_d(\bm{\theta}_d+\bm{\eps}_d)) - \P\del[1]{\|\bm{\theta}_d+\bm{\eps}_d\|_{p_j}
			\geq \kappa_{d}}\right] = 1-\alpha.
	\end{equation}
\end{theorem}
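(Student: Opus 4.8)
The plan is to treat the two inequalities separately, in both cases comparing the max-test~$\psi_d$ to the single~$p_j$-norm test on the scale of~$S_d := \sum_{i=1}^d |\theta_{i,d} + \varepsilon_i|^{p_j} = \|\bm{\theta}_d + \bm{\varepsilon}_d\|_{p_j}^{p_j}$. For~\eqref{eq:powerfunction} I would first use that, by~\eqref{eq:max_test} and~$c_d \leq 1$, the maximum defining~$\psi_d$ dominates its~$j$-th entry, so~$\psi_d(\bm{x}) \geq \mathds{1}\{\|\bm{x}\|_{p_j} \geq c_d \kappa_{j,d}\}$ and hence~$\E(\psi_d(\bm{\theta}_d + \bm{\varepsilon}_d)) \geq \P(\|\bm{\theta}_d + \bm{\varepsilon}_d\|_{p_j} \geq c_d\kappa_{j,d})$. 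The bracket in~\eqref{eq:powerfunction} is therefore at most~$\P(a_d \leq S_d < b_d)$, with~$a_d := \kappa_d^{p_j}$ and~$b_d := (c_d\kappa_{j,d})^{p_j}$ (and non-positive when~$b_d \leq a_d$). Both thresholds are null-calibrated:~$a_d$ gives exact size~$\P(\|\bm{\varepsilon}_d\|_{p_j} \geq \kappa_d) \to \alpha$, whereas~$b_d$ gives a null-rejection probability lying between~$\alpha_{j,d}$ and~$\alpha$ (lower bound from~$c_d \leq 1$, upper bound since that event is contained in~$\{\psi_d(\bm{\varepsilon}_d) = 1\}$, using~\eqref{eqn:exactsize}), so its subsequential limits lie in~$[\underline{\alpha}_j, \alpha]$. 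Inverting these sizes through the null asymptotic normality of Lemma~\ref{lem:asyn}, i.e.~$(a_d - d\mu_{p_j})/(\sqrt{d}\sigma_{p_j}) \to \Phi^{-1}(1-\alpha)$ and analogously for~$b_d$, yields~$\limsup_{d}(b_d - a_d)/(\sqrt{d}\sigma_{p_j}) \leq \Phi^{-1}(1-\underline{\alpha}_j) - \Phi^{-1}(1-\alpha)$.

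It then remains to bound~$\sup_{\bm{\theta}_d}\P(a_d \leq S_d < b_d)$ by this width times the peak of the Gaussian density. I would pass to a near-maximizing sequence~$\bm{\theta}_d$ and a subsequence, write~$\P(a_d \leq S_d < b_d) = \P\bigl((a_d - \E S_d)/s_d \leq Z_d < (b_d - \E S_d)/s_d\bigr)$ with~$Z_d$ the standardized~$S_d$ and~$s_d$ its standard deviation, and apply~$\Phi(y) - \Phi(x) \leq (y-x)/\sqrt{2\pi}$ once~$Z_d \Rightarrow \mathbb{N}(0,1)$. Together with the width bound and~$s_d \geq \sqrt{d}\sigma_{p_j}(1-o(1))$ this gives exactly the constant~$(\Phi^{-1}(1-\underline{\alpha}_j) - \Phi^{-1}(1-\alpha))/\sqrt{2\pi}$. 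I expect this step to be the main obstacle, for two reasons. First, the variance lower bound is not automatic: per coordinate,~$\mathbb{V}ar(|\theta + \varepsilon_i|^{p_j})$ can vanish as~$|\theta| \to \infty$ when~$p_j < 1$. The resolution is a tension argument: coordinate-wise Anderson's theorem~\citep{anderson1955integral} gives~$\E|\theta + \varepsilon_i|^{p_j} \geq \mu_{p_j}$ with equality only at~$\theta = 0$, so having~$\E S_d$ within~$O(\sqrt{d})$ of~$a_d \approx d\mu_{p_j}$ (the only regime in which the probability is non-negligible) forces all but~$O(\sqrt{d})$ coordinates to be near zero, whose variances are each close to~$\sigma_{p_j}^2$, delivering~$s_d \geq \sqrt{d}\sigma_{p_j}(1-o(1))$. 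Second, the Gaussian limit must hold along the array, which I would obtain from a Lindeberg-type central limit theorem, checking that the~$O(\sqrt{d})$ large coordinates contribute negligibly.

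For~\eqref{eq:powerfunction2}, the upper bound~$\leq 1-\alpha$ is the easy direction: since~$\E(\psi_d(\cdot)) \leq 1$, it suffices to show~$\P(\|\bm{\theta}_d + \bm{\varepsilon}_d\|_{p_j} \geq \kappa_d) \geq \alpha - o(1)$ uniformly in~$\bm{\theta}_d$, i.e.\ asymptotic unbiasedness of the~$p_j$-norm test. This holds for every~$p_j \in (0, \infty)$ by stochastic dominance: coordinate-wise Anderson's theorem gives~$\P(|\theta + \varepsilon_i| \geq s) \geq \P(|\varepsilon_i| \geq s)$ for all~$s$, so~$|\theta_{i,d} + \varepsilon_i|^{p_j}$ stochastically dominates~$|\varepsilon_i|^{p_j}$ and, by independence,~$S_d$ stochastically dominates its null counterpart; hence the power is at least the exact size~$\P(\|\bm{\varepsilon}_d\|_{p_j} \geq \kappa_d) \to \alpha$.

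For the matching lower bound I would exhibit a single near-maximizing array, e.g.\ the one-sparse~$\bm{\theta}_d = (d^s, 0, \dots, 0)$ with~$1/(2p_{j+1}) < s < 1/(2p_j)$, using the next power~$p_{j+1} > p_j$ in the construction. By Theorem~\ref{thm:conspreal},~$\sum_i g_{p_{j+1}}(\theta_{i,d})/\sqrt{d} = d^{sp_{j+1}-1/2} \to \infty$, so the~$p_{j+1}$-norm test---and therefore~$\psi_d$, by Part~1 of Theorem~\ref{thm:domp}---is consistent, i.e.~$\E(\psi_d(\bm{\theta}_d + \bm{\varepsilon}_d)) \to 1$; while~$\sum_i g_{p_j}(\theta_{i,d})/\sqrt{d} = d^{sp_j-1/2} \to 0$ drives the~$p_j$-norm test's power down to its size~$\alpha$ (its limiting power equals~$\overline{\Phi}(\Phi^{-1}(1-\alpha) - c)$ with mean-shift~$c$ proportional to the limit of this vanishing signal, as in the analysis underlying Theorem~\ref{thm:conspreal}, whence~$c = 0$ and the power tends to~$\alpha$). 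The gap along this array thus tends to~$1-\alpha$, matching the upper bound and yielding the exact limit.
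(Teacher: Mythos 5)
Your proof is correct and follows essentially the same route as the paper's: the paper likewise lower-bounds~$\psi_d$ by its~$j$-th component test (discarding~$c_d\leq 1$), reduces~\eqref{eq:powerfunction} to comparing two null-calibrated thresholds for the same statistic along a near-maximizing array and subsequence, splits into the consistent regime ($d^{-1/2}\sum_i g_{p_j}(\theta_{i,d})\to\infty$, where both powers tend to~$1$) and the bounded-signal regime handled by the CLT of Lemma~\ref{lem:asyn}, gets the constant~$1/\sqrt{2\pi}$ from the mean-value theorem applied to~$\Phi$, and for~\eqref{eq:powerfunction2} combines Anderson-type unbiasedness (upper bound) with a one-sparse array --- there~$(d^{1/(4p_j)},0,\dots,0)$, detected via the~$4p_j$-norm test and Part~1 of Theorem~\ref{thm:domp} --- exactly as your~$(d^s,0,\dots,0)$ with~$1/(2p_{j+1})<s<1/(2p_j)$. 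The variance comparability you flag as the main obstacle is already contained in the paper's toolbox: the proof of Lemma~\ref{lem:asyn} shows~$\sum_{i}\mathbb{V}ar\left(|\theta_{i,d}+\varepsilon_i|^{p_j}\right)/(d\,\mathbb{V}ar|\varepsilon|^{p_j})\to 1$ whenever~$d^{-1/2}\sum_i g_{p_j}(\theta_{i,d})$ stays bounded, so your separate tension argument, while valid, is not needed.
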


Recall from Equation~\eqref{eqn:better_2} that the asymptotic power of~$\psi_d$ is~$1$ whenever that of some~$p$-norm based test is. The inequality in~\eqref{eq:powerfunction} sheds further light on the asymptotic power function of~$\psi_d$ by establishing that it is \emph{nowhere} much below that of~$\{p_j, \kappa_d\}$ for an index~$j \in \mathbb{M}$ such that~$\underline{\alpha}_j \approx \alpha$, as the upper bound in that inequality is then approximately~$0$. Thus, there are no arrays of alternatives for which much is lost by using~$\psi_d$ instead of such a~$\{p_j, \kappa_d\}$. What is more, Equation~\eqref{eq:powerfunction2} (and its proof) shows that much can be gained by using~$\psi_d$, as arrays of alternatives exist against which~$\psi_d$ is consistent, but against which~$\{p_j, \kappa_d\}$ has asymptotic power equal to its size.

In the following example we illustrate how this reasoning can be incorporated in the construction of~$\psi_d$.

\begin{example}\label{ex:closeex}
	In case one has reasons to favor a specific~$p$-norm based test, e.g., the likelihood-ratio test corresponding to~$p = 2$, but does not want to abandon the idea of using a test~$\psi_d$ that dominates all~$p$-norm based tests, one can decide on the following compromise, which is possible as a consequence of Theorems~\ref{thm:domp} and~\ref{thm:opt_sum}: choose the components in the construction of~$\psi_d$ in such a way that~$\psi_d$ dominates all~$p$-norm based tests \emph{and} such that the power function of the obtained test~$\psi_d$ is \emph{everywhere} at most slightly smaller than that of the preferred~$p$-norm based test. 
	
	To see how this can be achieved, we focus on the case where~$p = 2$. Let~$\{2, \kappa_d\}$ be a sequence of likelihood-ratio tests with asymptotic size~$\alpha \in (0, 1)$. To obtain a test~$\psi_d$ as in Theorem~\ref{thm:domp} and whose power is (asymptotically) nowhere much less than that of~$\{2, \kappa_d\}$, we can reconsider the test~$\psi_d$ constructed in Example~\ref{ex:spec}. Note that~$p_1 = 2$. Furthermore,~$\underline{\alpha}_1 =  \alpha \gamma \delta_0$, cf.~Equation~\eqref{eqn:alphabardef}. It hence follows from Theorem~\ref{thm:opt_sum} (with~$j = 1$) that
	\begin{align*}
		\limsup_{d\to\infty}\sup_{\bm{\theta}_d\in\R^d} \left[\P\del[1]{\|\bm{\theta}_d+\bm{\eps}_d\|_{2} \geq \kappa_{d}}-\E(\psi_d(\bm{\theta}_d+\bm{\eps}_d))\right] \leq \frac{
			\Phi^{-1}(1-\alpha \gamma \delta_0) - \Phi^{-1}(1-\alpha) }{\sqrt{2\pi}}.
	\end{align*}
	The upper bound can be made arbitrarily close to~$0$ by choosing~$\delta_0$ and~$\gamma$ close to~$1$. At the same time, and in contrast to the likelihood-ratio test~$\{2, \kappa_d\}$,~$\psi_d$ has the favorable property of being consistent against every alternative that some~$p$-norm based test is consistent against.
\end{example}

\section{Numerical results}\label{sec:num}

To investigate the non-asymptotic properties of the tests under consideration, we provide a numerical comparison of the power of $p$-norm based tests (we consider~$p = 1, 2, 3, 4$ and~$p = \infty$) and a test~$\psi_d$, say, corresponding to the construction in Theorem~\ref{thm:domp}. We set~$\alpha = 0.05$ and consider the dimensions~$d = 50.000$ and~$d = 250.000$. The specific version of~$\psi_d$ used in the computations is the following:
\begin{enumerate}
	\item We employed~$m_d = \lceil \log(\log(d^6)) \rceil$, which equals~$5$ for both dimensions considered.
	\item We used $p_d = \exp(d-1) + 1$, i.e., the test was based on~$p_1 = 2$, $p_2 = e + 1$, $p_3 = e^2 +1$,~$p_4  =  e^3 + 1$, and~$p_5 = p_{m_d} =e^4 + 1$.
	\item To generate~$\mathcal{A}$, we used the approach in Remark~\ref{rem:array} with~$\gamma = 1/2$ and~$\delta_j$ ($j = 1, \hdots, m_d - 1$) the probability mass function from a geometric distribution with success parameter~$1/2$. For both dimensions considered, this results in roughly~$\alpha_{1, d} = 0.013$,~$\alpha_{2, d} = 0.007$,~$\alpha_{3, d} = 0.003$,~$\alpha_{4, d} = 0.002$~ and~$\alpha_{5, d} =  \alpha_{m_d, d} = 0.025$.
\end{enumerate}
Note that with this choice Equation~\eqref{eqn:powslow} holds and that~$m_d$, i.e., the number of exponents used in the construction, grows slowly with~$d$ which is numerically favorable.  As a consequence of Theorem~\ref{thm:opt_sum}, and since~$p_1 = 2$, we obtain that the asymptotic power of the Euclidean norm based test can nowhere exceed the asymptotic power of~$\psi_d$ by more than~$(\Phi^{-1}(0.9875) - \Phi^{-1}(0.95))/\sqrt{2\pi} \approx 0.24$. The critical values for the tests considered were obtained through Monte Carlo (with~$50.000$ replications throughout). 

We compare the power functions (determined via Monte Carlo using $1.000$ replications) of each of the above-mentioned tests against three types of alternatives: (i) dense vectors~$\bm{\theta}_d$, i.e., vectors of the form~$a \times (1, \hdots, 1)$; (ii) semi-sparse vectors~$\bm{\theta}_d$, i.e., vectors of the form~$a \times \bm{\theta}_d^{\dagger}$ for~$\bm{\theta}_d^{\dagger}$  as defined in Equation~\eqref{eqn:deadenh}; and (iii) sparse vectors~$\bm{\theta}_d$, i.e., vectors of the form~$a \times (1, 0, \hdots, 0)$. The power functions are provided in Figure~\ref{fig:plot} and are plotted against~$a$. The results show that the choice of the exponent in a~$p$-norm based test has a strong effect on the type of signal one has high power against. While the supremum-norm based test performs well for sparse and semi-sparse signals, lower exponents perform better for dense signals, and vice versa. The combination procedure~$\psi_d$ strikes a balance between this extreme difference in performance. It performs best in the semi-sparse setup, while it is very competitive with the best performing tests in the other setups, particularly so for non-centrality parameters where power is high, echoing our theoretical results.
\begin{figure}
	\centering
	\includegraphics[width=\linewidth]{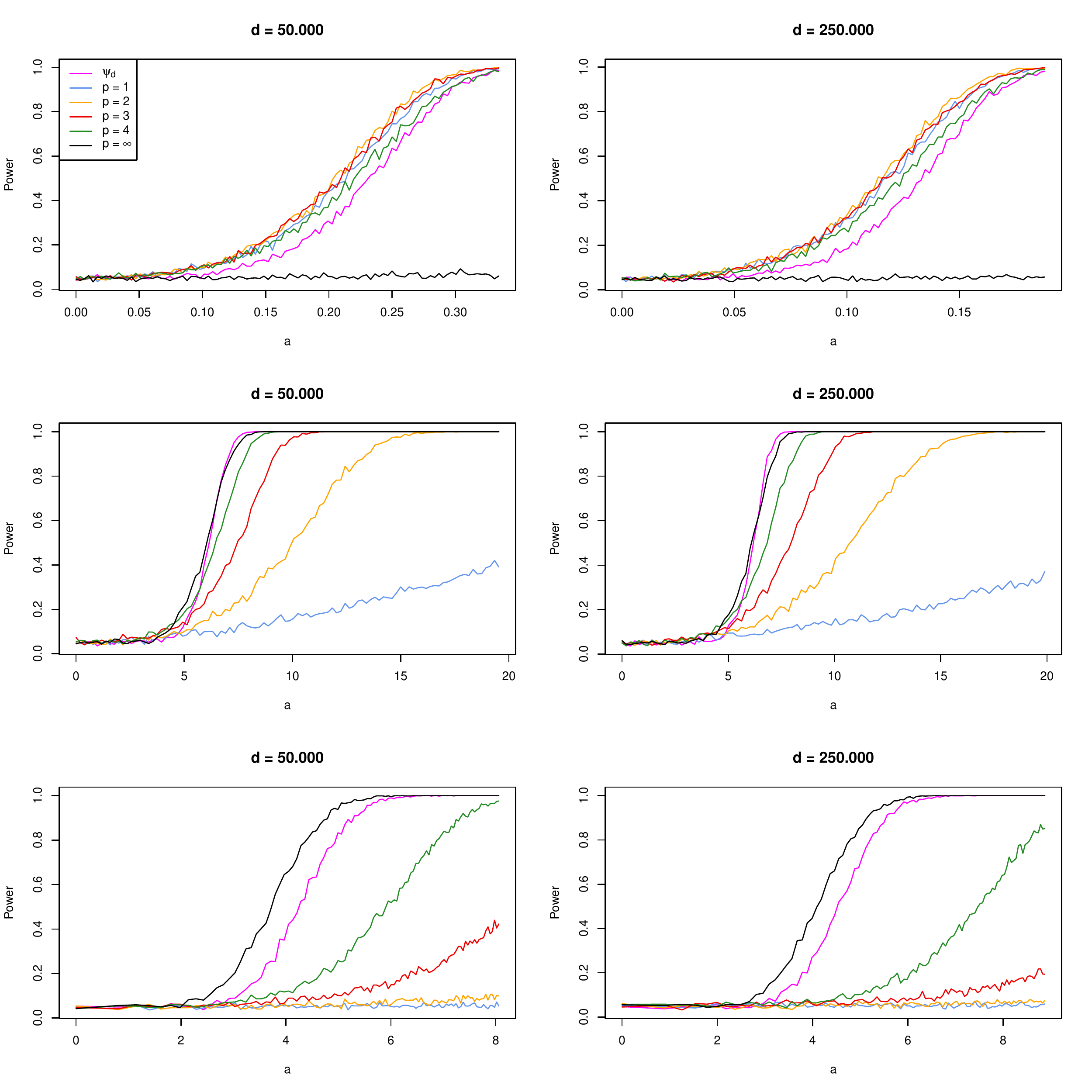}
	\caption{Power against dense alternatives (first row), semi-sparse alternatives (second row), and sparse alternatives (last row).}
	\label{fig:plot}
\end{figure}

\section{Conclusion}

Combining two tests to enhance power is the underlying paradigm of the power enhancement principle recently put forward by~\cite{fan2015}. There, it was suggested to combine tests based on the Euclidean and supremum-norms. In the present article, we have characterized the consistency sets of~$p$-norm based tests for all~$p \in (0, \infty]$. Our characterizations allowed us to reveal an unexpected monotonicity relation of the consistency sets, and, somewhat surprisingly, to asymptotically dominate \emph{all} these tests in terms of set inclusion of consistency sets, but also in a minimax sense. This was achieved by suitably combining a finite number of tests that grows with the dimension of the testing problem.

Even though the Gaussian sequence model is a prototypical framework for high-dimensional inference and results in that setup carry over to many other settings at least on a conceptual level, suitable generalizations of our results continue to hold also in non-Gaussian settings, as we show in detail in Appendix~\ref{app:AUX}.

\bibliographystyle{chicago}
\bibliography{refs}
\newpage
\appendix

\begin{center}
\Huge{Appendix}
\end{center}

\medskip

Appendix~\ref{app:MT} contains the proofs of all results in the main body of the paper, Appendix~\ref{sec:minimax} contains results on minimax adaptive testing the proofs of which are contained in Appendix~\ref{app:minimaxproofs}, whereas Appendix~\ref{app:AUX} contains results in a more general non-Gaussian framework.

\section{Proofs of results in the main part of the paper}\label{app:MT}

Throughout Appendix~\ref{app:MT}, all assumptions imposed in the main part of the paper concerning the distribution of the random variables~$\varepsilon_i$ are maintained; that is, the~$\varepsilon_i$ are~i.i.d.~standard normal. 

\subsection{Proof of Theorem~\ref{thm:impo}}

We establish a slightly stronger result than~Theorem~\ref{thm:impo}, showing that the sequence of tests~$\overline{\psi}_d$ in Theorem~\ref{thm:impo} can actually be constructed such that the power of the sequence of tests~$\psi_d$ is improved against a~$1$-sparse vector of alternatives. This is of additional interest, as it implies that the result pertains even in situations where the testing problem is reduced to~$1$-sparse parameter spaces, i.e., when instead of considering the parameter space~$\bm{\Theta} = \bigtimes_{d = 1}^{\infty} \R^d$ one only considers all arrays~$\bm{\vartheta}$ in~$$\bm{\Sigma}_1 := \bigtimes_{d = 1}^{\infty} \bigcup_{i = 1}^d \left\{ \theta \bm{e}_i(d) : \theta \in \R \right\} \subseteq \bm{\Theta},$$
where~$\bm{e}_i(d)$ denotes the~$i$-th element of the canonical basis of~$\R^d$. Note that for every fixed~$d \in \N$, every element of~$\bigcup_{i = 1}^d \left\{ \theta \bm{e}_i(d) : \theta \in \R \right\}$ has at most one non-zero coordinate, i.e., is~$1$-sparse.

\begin{theorem}\label{thm:imposparse}
	Let~$\alpha \in (0, 1)$. For every~$\psi_d \in \mathbb{T}_{\alpha}$ there exists a~$\overline{\psi}_d \in \mathbb{T}_{\alpha}$, such that:
	\begin{enumerate}
		\item~$\overline{\psi}_d \geq \psi_d$ holds for every~$d \in \N$, guaranteeing that~$\overline{\psi}_d$ has uniformly non-inferior asymptotic power compared to~$\psi_d$; 
		\item~$\overline{\psi}_d$ is consistent against an array~$\bm{\vartheta} \in \bm{\Sigma}_1$ against which~$\psi_d$ has asymptotic power at most~$\alpha$. 
	\end{enumerate}
	In particular it holds that~$\mathscr{C}(\psi_d) \subsetneqq \mathscr{C}(\overline{\psi}_d)$.
\end{theorem}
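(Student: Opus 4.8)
The plan is to invoke the power enhancement principle of~\cite{fan2015}, along the lines of Section~3.4.2 of~\cite{ingster}. Given~$\psi_d \in \mathbb{T}_{\alpha}$, I would first exhibit a~$1$-sparse array~$\bm{\vartheta} = \{\tau_d \bm{e}_{i_d}(d): d \in \N\} \in \bm{\Sigma}_1$, with a diverging magnitude~$\tau_d \to \infty$ and a coordinate~$i_d \in \{1, \dots, d\}$ to be chosen, against which~$\psi_d$ has asymptotic power at most~$\alpha$. I would then enhance~$\psi_d$ by a single component tailored to~$\bm{\vartheta}$, namely~$\varphi_d := \mathds{1}\{y_{i_d, d} \geq t_d\}$ for a threshold~$t_d \to \infty$ chosen with~$\tau_d - t_d \to \infty$. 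Crucially,~$\varphi_d$ is a perfectly legitimate (Borel measurable,~$d$-dependent) test that is free to inspect the single coordinate~$i_d$: under the null it rejects with probability~$\overline{\Phi}(t_d) \to 0$, whereas against~$\bm{\vartheta}$ it rejects with probability~$\overline{\Phi}(t_d - \tau_d) \to 1$. Setting~$\overline{\psi}_d := \max(\psi_d, \varphi_d)$ then gives Part~1 at once, and since~$\mathbb{E}(\overline{\psi}_d(\bm{\varepsilon}_d)) \in [\mathbb{E}(\psi_d(\bm{\varepsilon}_d)),\, \mathbb{E}(\psi_d(\bm{\varepsilon}_d)) + \overline{\Phi}(t_d)]$, the enhanced test retains asymptotic size~$\alpha$, so~$\overline{\psi}_d \in \mathbb{T}_{\alpha}$.

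The main obstacle — and the only genuinely non-elementary step — is producing the array~$\bm{\vartheta}$ \emph{uniformly over all}~$\psi_d \in \mathbb{T}_{\alpha}$. For this I would use an averaging (Bayesian) argument. Fix~$\tau_d \to \infty$ growing slowly enough that~$\tau_d^2 - \log(d) \to -\infty$, for instance~$\tau_d = (\log d)^{1/4}$, and put a uniform prior over the spike location, i.e.~consider the mixture~$\overline{P}_d := d^{-1}\sum_{i=1}^d \mathbb{N}(\tau_d \bm{e}_i(d), \bm{I}_d)$. Writing~$\phi_{\bm{a}}$ for the density of~$\mathbb{N}(\bm{a}, \bm{I}_d)$ and using the Gaussian affinity identity~$\int \phi_{\bm{a}}\phi_{\bm{b}}/\phi_{\bm{0}_d} = e^{\langle \bm{a}, \bm{b}\rangle}$, the~$\chi^2$-divergence of~$\overline{P}_d$ from the null evaluates to~$(e^{\tau_d^2} - 1)/d$: the~$d$ diagonal terms contribute~$e^{\tau_d^2}$ each, while the off-diagonal terms vanish because distinct spike directions are orthogonal. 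By the choice of~$\tau_d$ this divergence tends to~$0$, and by Cauchy--Schwarz it dominates the total variation distance between~$\overline{P}_d$ and the null. Since~$\psi_d$ takes values in~$[0,1]$, this forces~$d^{-1}\sum_{i=1}^d \mathbb{E}\left(\psi_d(\tau_d \bm{e}_i(d) + \bm{\varepsilon}_d)\right) \to \alpha$.

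As the minimum over locations is at most the average, choosing~$i_d$ to minimize~$i \mapsto \mathbb{E}(\psi_d(\tau_d \bm{e}_i(d) + \bm{\varepsilon}_d))$ yields~$\limsup_{d \to \infty}\mathbb{E}(\psi_d(\bm{\theta}_d + \bm{\varepsilon}_d)) \leq \alpha$ for~$\bm{\vartheta} = \{\tau_d \bm{e}_{i_d}(d): d \in \N\}$, which in particular gives~$\bm{\vartheta} \notin \mathscr{C}(\psi_d)$. With this~$\bm{\vartheta}$ fixed I would then take, say,~$t_d = \tau_d/2 \to \infty$ in the component~$\varphi_d$, so that~$\tau_d - t_d = \tau_d/2 \to \infty$ and hence~$\bm{\vartheta} \in \mathscr{C}(\overline{\psi}_d)$. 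Combined with~$\overline{\psi}_d \geq \psi_d$ — which already gives~$\mathscr{C}(\overline{\psi}_d) \supseteq \mathscr{C}(\psi_d)$ — this produces the strict inclusion~$\mathscr{C}(\psi_d) \subsetneqq \mathscr{C}(\overline{\psi}_d)$ and closes the proof. I expect the delicate point to be calibrating the rate of~$\tau_d$: it must diverge, so that the tailored single-coordinate component detects it with vanishing size, yet remain safely below~$\sqrt{\log d}$, so that the mixture stays asymptotically indistinguishable from the null; the~$\chi^2$ computation above is precisely what identifies this admissible window.
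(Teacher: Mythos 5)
Your proposal is correct and follows essentially the same route as the paper's proof: an Ingster-style second-moment (chi-square) bound for a uniform mixture over $1$-sparse alternatives whose spike magnitude stays below the $\sqrt{\log d}$ detection threshold, selection of the least-favorable coordinate, and enhancement of~$\psi_d$ by a single-coordinate threshold test with vanishing size. The remaining differences are cosmetic: the paper takes~$a_d = \sqrt{\log(d)/2}$, uses the two-sided component~$\mathds{1}\{|\bm{e}_{i(d)}'\bm{y}| \geq \sqrt{a_d}\}$, and combines via~$\min(\psi_d + \nu_d, 1)$ instead of a maximum, while your chi-square/affinity computation is the paper's moment-generating-function bound in different packaging.
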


The proof is a special case of the example discussed in Section~1.1 of~\cite{kp1}. The simplification is due to no sufficiency argument being necessary in the present framework. We provide a complete argument for the convenience of the reader.
\begin{proof}
	The first part of the proof is based on an argument in Section~3.4.2 of \cite{ingster}: For every~$d \in \N$, set~$\mathbb{Q}_{d, 0} := \mathbb{N}(\bm{0}_d, \bm{I}_{d})$ (the distribution of~$\bm{y}_d$ under the null) and define the mixture~$\mathbb{Q}_d = d^{-1} \sum_{i = 1}^{d} \mathbb{Q}_{d,i}$, where~$\mathbb{Q}_{d,i} := \mathbb{N}(a_d \bm{e}_{i}(d), \bm{I}_{d})$ (the distribution of~$\bm{y}_d$ under the 1-sparse alternative~$\bm{\theta}_d = a_d \bm{e}_{i}(d)$) for~$a_d := \sqrt{\log(d)/2}$. The likelihood-ratio statistic of~$\mathbb{Q}_d$ w.r.t.~$\mathbb{Q}_{d,0}$ is given by $L_d(z_1, \hdots, z_d) = d^{-1}\sum_{i = 1}^{d} e^{a_d z_i -a_d^2/2}$. Denote the expectation operators w.r.t.~$\mathbb{Q}_d$ and~$\mathbb{Q}_{d,i}$ by~$\mathbb{E}^Q_{d}$ and~$\mathbb{E}^Q_{d,i}$, respectively. It holds that
	\begin{equation}\label{eq:gauss1}
		\big|\mathbb{E}_{d,0}^Q(\psi_d) - d^{-1} \sum_{i = 1}^{d} \mathbb{E}_{d,i}^Q(\psi_d)\big|^2  
		= \left|\mathbb{E}^Q_{d,0}(\psi_d(1 - L_d))\right|^2 \leq \mathbb{E}^Q_{d,0}\left((1 - L_d)^2\right) = \mathbb{E}^Q_{d,0}\left(L_d^2\right) -1,
	\end{equation}
	where we used~$d^{-1} \sum_{i = 1}^{d} \mathbb{E}_{d,i}^Q(\psi_d) = \mathbb{E}_{d}^Q(\psi_d) = \mathbb{E}_{d,0}^Q(\psi_d L_d)$, Jensen's inequality and $\mathbb{E}_{d,0}^Q(L_d) = 1$. From the moment-generating-function of a normal distribution we obtain
	\begin{equation}\label{eq:gauss2}
		\mathbb{E}^Q_{d,0}\left(L_d^2\right) -1 = d^{-2} \sum_{i = 1}^{d} \sum_{j = 1}^{d} \mathbb{E}^Q_{d,0}\left(e^{a_d(z_i+z_j) - a_d^2}\right) - 1 \leq d^{-1/2} \to 0.
	\end{equation}
	Given this argument, we now see that the sequence to the left in~\eqref{eq:gauss1} converges to~$0$, and thus the existence of an array~$\bm{\vartheta} = \{ a_d \bm{e}_{i(d)}(d): d \in \N\} \in \bm{\Sigma}_1$ follows, against which~$\psi_d$ has asymptotic power at most~$\alpha$. Next, define (for every~$d \in \N$) the test~$\nu_d = \mathds{1} \{y \in \R^d: |\bm{e}_{i(d)}'y| \geq \sqrt{a_d}\}$. Obviously, its asymptotic size equals~$0$, and it is consistent against~$\bm{\vartheta}$. Hence~$\overline{\psi}_d := \min(\psi_d + \nu_d, 1)$ has the properties required in the theorem.
\end{proof}

\begin{remark}
	Theorem~\ref{thm:imposparse} is an ``existence result'' in the sense that it shows that there exists a sequence of tests~$\overline{\psi}_d$ with certain properties. Inspection of the proof just given, however, shows how such a sequence of tests~$\overline{\psi}_d$ can actually be obtained by ``enhancing'' the test~$\psi_d$ with a test~$\nu_d$ as defined in course of the	 proof. Note that the sequence~$i(d)$ needed in such a construction can in principle be found numerically by choosing~$i(d)$ for every~$d \in \N$ as an index~$i \in \{1, \hdots, d\}$ such that the test~$\psi_d$ has minimal power against~$\mathbb{N}(a_d \bm{e}_{i}(d), \bm{I}_{d})$ (note that the~$d$ values of the power function needed can be obtained by Monte-Carlo simulations). The same remark applies a fortiori to Theorem~\ref{thm:impo}.
\end{remark}

\subsection{$p$-norm based tests with~$p \in (0, \infty)$}

\subsubsection{Proof of Theorem~\ref{thm:conspreal}}

By Remark~\ref{rem:pfindist}, Assumption~\ref{ass:pfindist} holds for~$F = \Phi$, so that the theorem is an immediate consequence of Theorem~\ref{thm:consprealGEN} (since~$\mathbb{E}(|\varepsilon|^q) < \infty$ then holds for every~$q \in (0, \infty)$).

\subsubsection{Proof of Corollary~\ref{cor:rewrite}}

To show~\eqref{eqn:equi}, recalling the definition of~$g_p$ from~\eqref{eqn:rhodef}, we just combine Theorem~\ref{thm:conspreal} with~$\frac{1}{2}(|\cdot|^p + |\cdot|^2) \leq g_p(\cdot) \leq |\cdot|^p + |\cdot|^2$ for~$p \in [2, \infty)$. Similarly, 
to show~\eqref{eqn:equi2}, we use Theorem~\ref{thm:conspreal} together with~$g_p(\cdot) \leq |\cdot|^p$ and~$g_p(\cdot) \leq |\cdot|^2$ for~$p \in (0, 2)$. To verify the remaining statement, let~$\bm{\vartheta} \in \bm{\Theta}$ be defined via
\begin{equation*}
	\theta_{i,d} = 
	\begin{cases}
		d^{-1/4} & \text{ for } i = 1, \hdots, d-1, \\
		d^{1/(2p)} & \text{ for } i = d.
	\end{cases}
\end{equation*}
Then~$\|\bm{\theta}_d\|^2_2/\sqrt{d} \geq d^{\frac{1}{p} - \frac{1}{2}} \to \infty$ and~$\|\bm{\theta}_d\|^p_p/\sqrt{d} \geq \frac{d-1}{\sqrt{d}} d^{-p/4} \to \infty$. However,~$$d^{-1/2} \sum_{i = 1}^d g_p(\theta_{i,d}) = \frac{d-1}{d} + 1 \leq 2,$$ and we conclude~$\bm{\vartheta} \notin \mathscr{C}(\{p, \kappa_d\})$ with Theorem~\ref{thm:conspreal}.

\subsubsection{Proof of Theorem~\ref{thm:incl}}

Just observe that~$0 < p < q < \infty$ implies~$g_p \leq g_q$, which by Theorem~\ref{thm:conspreal} delivers~$\mathscr{C}(\{p, \kappa_{d, p}\}) \subseteq \mathscr{C}(\{q, \kappa_{d, q}\})$. That the inclusion is strict follows from~$$\bm{\vartheta} := \{(d^{1/(2p)}, 0, \hdots, 0): d \in \N\} \in \mathscr{C}(\{q, \kappa_{d, q}\}) \setminus \mathscr{C}(\{p, \kappa_{d, p}\}),$$
which is easily seen using the equivalence from Theorem~\ref{thm:conspreal}.

\subsubsection{Proof of Theorem~\ref{thm:cons_prop}}
Fix~$p<q$, $\{p, \kappa_{d,p}\}$ and~$\{q, \kappa_{d,q}\}$ as in the statement of this theorem. Let~$\bm{\vartheta}\in\mathscr{C}(\{q, \kappa_{d,q}\})\setminus\mathscr{C}(\{p, \kappa_{d,p}\})$, which is possible by Theorem~\ref{thm:incl}. From Theorem~\ref{thm:conspreal} and Remark~\ref{rem:arb} we conclude that for every~$\delta \in (0, \infty)$ it holds that
\begin{align}\label{eq:qnormaux}
	G_{d,p}(\delta) := \frac{\sum_{i=1}^d\theta_{i,d}^2\mathds{1}\cbr[0]{|\theta_{i,d}|\leq \delta}+\sum_{i=1}^d|\theta_{i,d}|^p\mathds{1}\cbr[0]{|\theta_{i,d}|>\delta}}{\sqrt{d}}
	=
	\frac{\sum_{i = 1}^dg^{(\delta)}_p(\theta_{i,d})}{\sqrt{d}} \not\to \infty.
\end{align}
By~\eqref{eq:qnormaux}, for every~$\delta \in (0, \infty)$, we can choose a subsequence~$d'_{\delta}$ of~$d$ (we highlight the dependence of the subsequence on~$\delta$ in the proof to avoid confusion), such that~$\sup_{d'_{\delta}} G_{d'_{\delta},p}(\delta) \leq D(\delta) < \infty$ for some~$D(\delta)$. Although~$d_{\delta}'$ and~$D(\delta)$ depend on~$\delta$, in general, if~$p \geq 2$ these dependencies can be avoided, because~$\sup_{d_1'} G_{d_1',p}(\delta) \leq \sup_{d_1'} G_{d_1',p}(1) =: D(1) < \infty$ then holds for all~$\delta \in (0, \infty)$. To see the latter, just note that in case~$p \geq 2$ the function~$\delta \mapsto G_{d,p}(\delta)$, for~$\delta \in (0, \infty)$, has a maximum at~$\delta = 1$ for every~$d \in \N$. In what follows we set~$d_{\delta}' = d_1'$ and~$D(\delta) = D(1)$ in case~$p \geq 2$.

Irrespective of the value of~$p\in(0, \infty)$, we obtain for every~$\delta \in (0, \infty)$ that
\begin{equation}\label{eqn:qnstr0}
	\delta^p \times \sup_{d_{\delta}'} 
	\frac{\sum_{i=1}^{d_{\delta}'}\mathds{1}\cbr[0]{|\theta_{i,d_{\delta}'}|>\delta}}{\sqrt{d_{\delta}'}}
	\leq
	\sup_{d_{\delta}'}
	\frac{\sum_{i=1}^{d_{\delta}'}|\theta_{i,d_{\delta}'}|^p\mathds{1}\cbr[0]{|\theta_{i,d_{\delta}'}|>\delta}}{\sqrt{d_{\delta}'}} 
	\leq \sup_{d_{\delta}'} G_{d_{\delta}', p}(\delta) \leq  D(\delta).
\end{equation}
In case~$p \geq 2$, we can furthermore take the supremum over~$\delta \in (0, \infty)$ (since the subsequence chosen does not depend on~$\delta$) to get
\begin{equation}\label{eqn:qnstr1}
	\sup_{\delta \in (0, \infty)}
	\delta^p \times \sup_{d'_1}
	\frac{\sum_{i=1}^{d'_1}\mathds{1}\cbr[0]{|\theta_{i,d'_1}|>\delta}}{\sqrt{d'_1}} \leq D(1).
\end{equation}
Thus, the first statements in Equation~\eqref{eqn:corstr1} and~\eqref{eqn:corstr2}, respectively, hold for the subsequences~$d_{\delta}'$ and~$d_1'$. It remains to verify that for every~$\delta \in (0, \infty)$ we have that~$\max_{i = 1, \hdots, d'_{\delta}} |\theta_{i, d'_{\delta}}| \to \infty$ (note that~$d'_{\delta} = d_1'$ in case~$p \geq 2$ by construction). Fix~$\delta \in (0, \infty)$. Theorem~\ref{thm:conspreal} and Remark~\ref{rem:arb} show that~$\bm{\vartheta} \in \mathscr{C}(\{q, \kappa_{d,q}\})$ is equivalent to
\begin{equation}\label{eqn:qnstr}
	\frac{\sum_{i=1}^d\theta_{i,d}^2\mathds{1}\cbr[0]{|\theta_{i,d}|\leq\delta}+\sum_{i=1}^d|\theta_{i,d}|^q\mathds{1}\cbr[0]{|\theta_{i,d}|>\delta}}{\sqrt{d}}
	=
	\frac{\sum_{i = 1}^d g_q^{(\delta)}(\theta_{i,d})}{\sqrt{d}} \to \infty
\end{equation}
The last inequality in~\eqref{eqn:qnstr0} delivers~$\sum_{i=1}^{d'_{\delta}}\theta_{i,d'_{\delta}}^2\mathds{1}\cbr[0]{|\theta_{i,d'_{\delta}}|\leq\delta}/\sqrt{d'_{\delta}} \leq D(\delta)$ for every~$d'_{\delta}$. Thus,~\eqref{eqn:qnstr} implies
\begin{equation*}
	\frac{\sum_{i=1}^{d'_{\delta}}|\theta_{i,d'_{\delta}}|^q\mathds{1}\cbr[0]{|\theta_{i,d'_{\delta}}|>\delta}}{\sqrt{d'_{\delta}}} \to \infty.
\end{equation*}
But for every~$d'_{\delta}$
\begin{equation*}
	\frac{\sum_{i=1}^{d'_{\delta}}|\theta_{i,d'_{\delta}}|^q\mathds{1}\cbr[0]{|\theta_{i,d'_{\delta}}|>\delta}}{\sqrt{d'_{\delta}}} \leq \max_{i = 1, \hdots, d'_{\delta}} |\theta_{i,d'_{\delta}}|^q
	\frac{\sum_{i=1}^{d'_{\delta}}\mathds{1}\cbr[0]{|\theta_{i,d'_{\delta}}|>\delta}}{\sqrt{d'_{\delta}}},
\end{equation*}
which, by~\eqref{eqn:qnstr0}, is upper bounded by~$\max_{i = 1, \hdots, d'_{\delta}} |\theta_{i,d'_{\delta}}|^q D(\delta) \delta^{-p}$, which hence diverges to~$\infty$.

\subsection{Supremum-norm based tests}

In some proofs in this section we will use the following classic inequality (e.g.,~\cite{feller}, Section 7.1) for the standard normal cdf~$\Phi$:
\begin{equation}\label{eqn:Gtail}
	\left(x^{-1} - x^{-3}\right) \frac{e^{-x^2/2}}{\sqrt{2\pi}} \leq 1-\Phi(x) \leq x^{-1} \frac{e^{-x^2/2}}{\sqrt{2\pi}} \quad \text{ for every } x > 0.
\end{equation}

We also need the following observation concerning~$\kappa_d$ sequences leading to~$\{\infty, \kappa_d\} \in \mathbb{T}_{\alpha}$ for some~$\alpha \in (0, 1)$ (which provides a more common way of writing critical values for the supremum-based test in the normal case as compared to the general case treated in Lemma~\ref{lem:cvsup} further below). To obtain the result, we recall the classical limit theorem (e.g., Theorem 1.5.3 in \cite{leadbetter})
\begin{equation}\label{eqn:Glt}
	\P\del[2]{a_d\sbr[1]{\max_{1\leq i\leq d}\eps_i-b_d} \leq x}
	\to
	\Lambda(x) \quad \text{ for every } x \in \R,
\end{equation}
where~$\Lambda(x) = \exp\left(-\exp(-x)\right)$,~$x \in \R$, denotes Gumbel's double exponential cdf, and where $a_d=\sqrt{2\log(d)}$ and $b_d=\sqrt{2\log(d)}-\frac{\log\log(d)+\log(4\pi)}{2\sqrt{2\log(d)}}$. For~$x \in \R$ set~$u_d = u_d(x) := x/a_d + b_d$, so that
\begin{align*}
	\P\del[2]{\max_{1\leq i\leq d}|\eps_i|\leq u_d}
	=
	\P\del[2]{\max_{1\leq i\leq d}\eps_i\leq u_d,\ \min_{1\leq i\leq d}\eps_i\geq -u_d}
	\to \exp(-2\exp(-x)),
\end{align*}
where we used~\eqref{eqn:Glt} and the asymptotic independence of the minimum and maximum (e.g., Theorem 1.8.2 in \cite{leadbetter}). That is,
\begin{equation}\label{lem:absG}
	\P\del[2]{a_d\sbr[1]{\max_{1\leq i\leq d}|\eps_i|-b_d} \leq x}
	\to
	\exp(-2\exp(-x)) \quad \text{ for every } x \in \R,	
\end{equation}
The following now immediately follows from~\eqref{lem:absG}.
\begin{lemma}\label{lem:crit_val}
	It holds that $\{\infty, \kappa_{d}\} \in \mathbb{T}_{\alpha}$,~$\alpha \in (0, 1)$, if and only if
	\begin{align}\label{eqn:crit_val}
		\kappa_d = \sqrt{2\log(d)}-\frac{\log\log(d)+\log(4\pi)}{2\sqrt{2\log(d)}}-\frac{\log\del[0]{-\log(1-\alpha)/2}}{\sqrt{2\log(d)}}+o\del[2]{\frac{1}{\sqrt{\log(d)}}}.
	\end{align}
\end{lemma}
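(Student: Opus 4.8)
The plan is to read the lemma off directly from the extreme-value limit~\eqref{lem:absG} via a deterministic change of variables followed by an inversion argument. By the definition of asymptotic size in~\eqref{eqn:sizep} with $p=\infty$, and since $\|\bm{\varepsilon}_d\|_{\infty}=\max_{1\leq i\leq d}|\eps_i|$ has a continuous distribution (so that $\P$ of $\geq$ and of $>$ agree), $\{\infty,\kappa_d\}\in\mathbb{T}_{\alpha}$ is equivalent to $\P\left(\max_{1\leq i\leq d}|\eps_i|\leq\kappa_d\right)\to 1-\alpha$. I would then introduce the \emph{deterministic} sequence $x_d:=a_d(\kappa_d-b_d)$, with $a_d,b_d$ as in~\eqref{lem:absG}, so that $\kappa_d=b_d+x_d/a_d$ exactly and the event above rewrites as $\{a_d[\max_{1\leq i\leq d}|\eps_i|-b_d]\leq x_d\}$. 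Writing $F_d$ for the cdf of $a_d[\max_{1\leq i\leq d}|\eps_i|-b_d]$ and $G(x):=\exp(-2\exp(-x))$, the whole lemma reduces to the claim that $F_d(x_d)\to 1-\alpha$ if and only if $x_d\to x^\ast:=G^{-1}(1-\alpha)$.

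The key technical step is this inversion, passing from convergence of the distribution functions (provided by~\eqref{lem:absG}, i.e.\ $F_d\to G$ pointwise) to convergence of the deterministic argument $x_d$. Here I would exploit that each $F_d$ is nondecreasing and that $G$ is continuous and strictly increasing on $\R$ with range $(0,1)$ (an elementary property of this double-exponential-type cdf). For the ``if'' direction, given $x_d\to x^\ast$, for each $\delta>0$ eventually $x^\ast-\delta\leq x_d\leq x^\ast+\delta$, so monotonicity of $F_d$ gives $F_d(x^\ast-\delta)\leq F_d(x_d)\leq F_d(x^\ast+\delta)$; sending $d\to\infty$ using~\eqref{lem:absG} and then $\delta\downarrow 0$ using continuity of $G$ yields $F_d(x_d)\to G(x^\ast)=1-\alpha$ by a squeeze. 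For the ``only if'' direction I would argue by contradiction along a subsequence $x_{d_k}\to L\in[-\infty,+\infty]$ with $L\neq x^\ast$ (compactness of the extended real line): if $L$ is finite the same squeeze forces $F_{d_k}(x_{d_k})\to G(L)\neq 1-\alpha$ by strict monotonicity of $G$; if $L=+\infty$ (resp.\ $-\infty$) monotonicity gives $\liminf_k F_{d_k}(x_{d_k})\geq G(M)$ (resp.\ $\limsup_k F_{d_k}(x_{d_k})\leq G(M)$) for every $M$, whence the limit is $1$ (resp.\ $0$), again $\neq 1-\alpha$ since $\alpha\in(0,1)$. Each case contradicts $F_{d_k}(x_{d_k})\to 1-\alpha$.

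Finally I would solve for $x^\ast$ explicitly. Setting $G(x^\ast)=1-\alpha$ gives $\exp(-x^\ast)=-\log(1-\alpha)/2$, hence $x^\ast=-\log\!\left(-\log(1-\alpha)/2\right)$, which is well defined because $-\log(1-\alpha)>0$ for $\alpha\in(0,1)$. Thus $\{\infty,\kappa_d\}\in\mathbb{T}_{\alpha}$ holds if and only if $a_d(\kappa_d-b_d)\to x^\ast$, i.e.
$$\kappa_d=b_d-\frac{\log\!\left(-\log(1-\alpha)/2\right)}{a_d}+\frac{o(1)}{a_d},$$
and substituting $a_d=\sqrt{2\log(d)}$ together with $b_d=\sqrt{2\log(d)}-\frac{\log\log(d)+\log(4\pi)}{2\sqrt{2\log(d)}}$ reproduces exactly~\eqref{eqn:crit_val}, upon noting $o(1/\sqrt{2\log(d)})=o(1/\sqrt{\log(d)})$.

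I expect the only genuine subtlety to be the inversion argument of the second paragraph, since $x_d$ is allowed to drift and one must rule out escape to $\pm\infty$; everything else is bookkeeping. The monotonicity-plus-continuity squeeze keeps this fully self-contained, and one could equivalently invoke P\'olya's theorem (uniform convergence of $F_d$ to the continuous limit $G$) to reach the same conclusion.
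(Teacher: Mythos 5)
Your proposal is correct and follows essentially the same route as the paper: the paper derives the lemma directly from the Gumbel-type limit~\eqref{lem:absG} (stating that it ``immediately follows''), and your argument simply spells out the standard inversion step---monotonicity of the cdfs $F_d$ plus continuity and strict monotonicity of the limit $G$ (equivalently, P\'olya's theorem)---that justifies passing from $F_d(x_d)\to 1-\alpha$ to $x_d\to G^{-1}(1-\alpha)$, together with the correct explicit solution $x^\ast=-\log\left(-\log(1-\alpha)/2\right)$.
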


We can now proceed to proving the characterization of the consistency set of supremum-norm based tests. Throughout the proof we interpret sums over empty index sets as~$0$.

\subsubsection{Proof of Theorem~\ref{thm:conspinf}}

Let~$\{\infty, \kappa_d\}$ be as in the theorem statement. Proposition~\ref{prop:abs_noabs} delivers that~$\bm{\vartheta} \in \mathscr{C}(\{\infty, \kappa_d\})$ is equivalent to the condition that every subsequence~$d'$ of~$d$ has a subsequence~$d''$ along which~$$\overline{\Phi}(\kappa_{d} - \|\bm{\theta}_d\|_{\infty}) \to 1 \quad \text{ or } \quad \sum_{i = 1}^d \overline{\Phi}\left(
\kappa_{d} - |\theta_{i,d}|
\right) \to \infty.$$ As a first step, we now show that this condition, and hence also~$\bm{\vartheta} \in \mathscr{C}(\{\infty, \kappa_d\})$, is equivalent to the same condition but with~$\mathfrak{c}_d = \sqrt{2\log(d)} - \frac{\log \log(d)}{2 \sqrt{2\log(d)}}$ replacing~$\kappa_d$.

Let~$d'$ be a subsequence of~$d$. Lemma~\ref{lem:crit_val} shows that~$\kappa_d - \mathfrak{c}_d = u_d/\sqrt{2\log(d)}$ for a bounded sequence~$u_d$. Therefore,
\begin{equation}\label{eqn:domd'}
	\limsup_{d' \to \infty} \overline{\Phi}(\kappa_{d'} - \|\bm{\theta}_{d'}\|_{\infty}) = 1 \quad \Leftrightarrow \quad \limsup_{d' \to \infty} \overline{\Phi}(\mathfrak{c}_{d'} - \|\bm{\theta}_{d'}\|_{\infty}) = 1,
\end{equation}
and setting~$\mathcal{I}_d := \{i \in \{1, \hdots, d\}: \kappa_d - |\theta_{i,d}| \leq 3\}$, it is obvious that 
\begin{equation}\label{eqn:eql1}
	\limsup_{d' \to \infty} \sum_{i \in \mathcal{I}_{d'}}\overline{\Phi}\left(
	\kappa_{d'} - |\theta_{i,d'}|
	\right) = \infty ~  \Leftrightarrow ~
	\limsup_{d' \to \infty} \sum_{i \in \mathcal{I}_{d'}}\overline{\Phi}\left(
	\mathfrak{c}_{d'} - |\theta_{i,d'}|
	\right) = \infty,
\end{equation}
(as both divergences are equivalent to~$|\mathcal{I}_d| \to \infty$ along a subsequence of~$d'$). For~$\mathcal{I}_d^c := \{1, \hdots, d\} \setminus \mathcal{I}_d$ we now show that
\begin{equation}\label{eqn:eql2}
	\limsup_{d' \to \infty} \sum_{i \in \mathcal{I}_{d'}^c}\overline{\Phi}\left(
	\kappa_{d'} - |\theta_{i,d'}|
	\right) = \infty ~  \Leftrightarrow ~
	\limsup_{d' \to \infty} \sum_{i \in \mathcal{I}_{d'}^c}\overline{\Phi}\left(
	\mathfrak{c}_{d'} - |\theta_{i,d'}|
	\right) = \infty.
\end{equation}
To this end, let~$d \geq 2$ be large enough so that
\begin{equation}\label{eqn:udbound}
	|u_d| \leq \sqrt{2\log(d)}.
\end{equation}
Furthermore, let~$x$ and~$y$ be in~$(2, \infty)$ and such that~$x-y = u_d/\sqrt{2\log(d)}$. We make the following two observations:
\begin{enumerate}
	\item[O1:] If~$\max(x, y) \geq \sqrt{2\log(d)}$, then, denoting~$z := \sqrt{2\log(d)} - |u_d|/\sqrt{2\log(d)}$, we have$$\overline{\Phi}(x) \vee \overline{\Phi}(y) \leq \overline{\Phi}\left(z \right) \leq e^{-z^2/2} \leq d^{-1} e^{|u_d|},$$
	where we used that~$\overline{\Phi}(w) \leq e^{-w^2/2}$ for~$w \geq 0$.
	\item[O2:] If~$\max(x, y) \leq \sqrt{2\log(d)})$, then~$|x^2-y^2| \leq 2|u_d|$, and
	\begin{equation*}
		\frac{x^{-1}}{y^{-1} - y^{-3}} = \left[
		(y + u_d/\sqrt{2\log(d)})(y^{-1} -y^{-3})
		\right]^{-1}
		\leq 
		\left[
		(1 - y^{-1})(1 -y^{-2})
		\right]^{-1} \leq \frac{8}{3},
	\end{equation*}
	where we used~\eqref{eqn:udbound} in the first (displayed) inequality. Now,~\eqref{eqn:Gtail} and boundedness of~$u_d$ shows that
	\begin{equation*}
		0 < \frac{3}{8} e^{-\sup_{d \in \N} |u_d|} \leq \frac{\overline{\Phi}(x)}{\overline{\Phi}(y)} \leq \frac{8}{3} e^{\sup_{d \in \N} |u_d|} < \infty;
	\end{equation*}
	(by symmetry one only needs to check the upper bound). 
\end{enumerate}
Let~$\mathcal{I}_{d, 1}^c$ denote those indices~$i$ in~$\mathcal{I}_{d}^c$ for which~$\max(\kappa_d - |\theta_{i,d}|, \mathfrak{c}_d - |\theta_{i,d}|) \geq \sqrt{2\log(d)}$, and denote the set of remaining indices in~$\mathcal{I}_{d}^c$ by~$\mathcal{I}_{d, 2}^c$. It follows from O1 together with~$|\mathcal{I}_{d}^c| \leq d$, that (to show the equivalence in~\eqref{eqn:eql2}) it suffices to verify the equivalence with~$\mathcal{I}_{d'}^c$ replaced by~$\mathcal{I}_{d', 2}^c$. But the latter equivalence follows from O2, which establishes~\eqref{eqn:eql2}. Combining~\eqref{eqn:domd'},~\eqref{eqn:eql1} and~\eqref{eqn:eql2}, we have shown that~$\bm{\vartheta} \in \mathscr{C}(\{\infty, \kappa_d\})$ is equivalent to every subsequence~$d'$ of~$d$ having a subsequence~$d''$ along which
\begin{equation}\label{eqn:subsCH}
	\overline{\Phi}(\mathfrak{c}_d - \|\bm{\theta}_d\|_{\infty}) \to 1 \quad \text{ or } \quad \sum_{i = 1}^d \overline{\Phi}\left(
	\mathfrak{c}_d- |\theta_{i,d}|
	\right) \to \infty.
\end{equation}
By Lemma~\ref{lem:crit_val}, the sequence of critical values~$\mathfrak{c}_d$ satisfies~$\{\infty, \mathfrak{c}_d\} \in \mathbb{T}_{\alpha^*}$ for some~$\alpha^* \in (0, 1)$. Hence, the just-derived equivalence~\eqref{eqn:subsCH} together with (the equivalence~$(3 \Leftrightarrow 4)$) in Proposition~\ref{prop:abs_noabs} applied to~$\{\infty, \mathfrak{c}_d\}$ shows that~
\begin{equation}\label{eq:FInsup}
	\bm{\vartheta} \in \mathscr{C}(\{\infty, \kappa_d\}) \Leftrightarrow \sum_{i = 1}^d \frac{\overline{\Phi}\left(\mathfrak{c}_d - |\theta_{i,d}|\right)}{\Phi\left(\mathfrak{c}_d - |\theta_{i,d}|\right)} \to \infty.
\end{equation}
Finally, note that by construction~$\overline{\Phi}/g_{\infty} > 0$ is uniformly continuous on compact subsets of~$\mathbb{R} \setminus \{1\}$ with positive left- and right-sided limits at~$1$, and~$\overline{\Phi}(x)/g_{\infty}(x) \to 1/\sqrt{2\pi}$ as~$x \to \infty$. Therefore, for every~$z \in \R$ there exists a~$C(z) \in (0, \infty)$ such that
\begin{equation}
	\frac{\overline{\Phi}(x)}{g_{\infty}(x)} \in [C(z)^{-1}, C(z)] \quad \text{ for every } x \geq z.
\end{equation}
Furthermore, a sequence~$x_d$ satisfies~$g_{\infty}(x_d) \to \infty$ if and only if~$x_d \to - \infty$. 
It follows that~$\sum_{i = 1}^d g_{\infty}\left(\mathfrak{c}_d - |\theta_{i,d}|\right) \to \infty$ if and only if every subsequence~$d'$ of~$d$ permits a subsequence~$d''$ along which~$\mathfrak{c}_d - \|\bm{\theta}\|_{\infty} \to - \infty$ (equivalently~$\overline{\Phi}(\mathfrak{c}_d - \|\bm{\theta}_d\|_{\infty}) \to 1$) or~$\sum_{i = 1}^d \overline{\Phi}(\mathfrak{c}_d - |\theta_{i,d}|) \to \infty$. In other words, we have shown that the condition in~\eqref{eqn:subsCH} is equivalent to~$\sum_{i = 1}^d g_{\infty}\left(\mathfrak{c}_d - |\theta_{i,d}|\right) \to \infty$, which concludes the proof.

\subsubsection{Proof of Theorem~\ref{thm:pdconssup}}

Proposition~\ref{prop:pdsup} (applicable due to Remark~\ref{rem:NRVnormal}) and Remark~\ref{rem:consnotdep} deliver the result.
\subsubsection{Proof of Theorem~\ref{thm:2-p-inf}}
It suffices to verify~$\bm{\vartheta}^{\dagger} \in \mathscr{C}(\{p, \kappa_{d, p}\})$ and~\eqref{eq:strongerstatement}. Recall that~$\bm{\theta}^{\dagger}_d := (\tau_d, \hdots, \tau_d, 0, \hdots, 0)$, $\tau_d=\sqrt{2\log(d)}/\log\log(d)$, and that the number of non-zero entries of~$\bm{\theta}_d$ is~$\lceil \sqrt{d}/\log(d)\rceil$ (at least for~$d$ large enough).

\textbf{1.:} That~$\bm{\vartheta}^{\dagger} \in \mathscr{C}(\{p, \kappa_{d, p}\})$ follows from Corollary~\ref{cor:rewrite}, noting that eventually~$$\|\bm{\theta}_d\|_p^p /\sqrt{d} \geq 2^{p/2} \log^{p/2 - 1}(d)/(\log \log(d))^p \to \infty,$$
the divergence following from~$p > 2$.

\textbf{2.:} That~$\mathbb{P}(
\|\bm{\theta}_d^{\dagger} + \bm{\varepsilon}_d \|_2 \geq \kappa_{d,2}
) \to \alpha_2$ follows from Theorem~\ref{thm:consprealGEN} (cf.~also Remark~\ref{rem:pfindist}) and~$$\sum_{i = 1}^d g_2(\theta_{i,d})/\sqrt{d} = \|\bm{\theta}^{\dagger}_d\|_2^2/\sqrt{d} \to 0.$$

\textbf{3.:} We now show that~$\mathbb{P}(
\|\bm{\theta}_d^{\dagger} + \bm{\varepsilon}_d \|_{\infty} \leq \kappa_{d,\infty}
) \to 1- \alpha_{\infty}$.  Setting~$k_d:=\lceil \sqrt{d}/\log(d)\rceil$, write~$\mathbb{P}(
\|\bm{\theta}_d^{\dagger} + \bm{\varepsilon}_d \|_{\infty} \leq \kappa_{d,\infty}
)
$ as
\begin{align}\label{eq:splittingup}
	\P(|\varepsilon_1 + \tau_d| \leq \kappa_{d, \infty})^{k_d}
	\sbr[1]{1-2\Phi(-\kappa_{d, \infty})}^{d-k_d}.
\end{align}
Concerning the second factor in~\eqref{eq:splittingup},~$B_d$, say, observe that 
\begin{align*}
	B_d = \sbr[1]{1-2\Phi(-\kappa_{d, \infty})}^{d-k_d}
	=
	\frac{\sbr[1]{1-2\Phi(-\kappa_{d, \infty})}^{d}}{\sbr[1]{1-2\Phi(-\kappa_{d, \infty})}^{d\times \frac{k_d}{d}}}
	\to 1-\alpha_{\infty},
\end{align*}
where we used that~$\{\infty, \kappa_{d, \infty}\}\in \mathbb{T}_{\alpha_{\infty}}$ is equivalent to~$\sbr[0]{1-2\Phi(-\kappa_{d, \infty})}^{d}\to 1-\alpha_{\infty}$, and that, as~$k_d/d\to 0$, one also has~$\sbr[0]{1-2\Phi(-\kappa_{d, \infty})}^{d\times \frac{k_d}{d}} \to 1$. It remains to show that the first factor in~\eqref{eq:splittingup},~$A_d$, say, converges to~$1$. Note that~$\P(|\varepsilon_1 + \tau_d| \leq \kappa_{d, \infty})$ can be written as
\begin{equation}
	\Phi(\kappa_{d, \infty} - \tau_d) - \Phi(-\kappa_{d, \infty} - \tau_d) \geq 1-2\Phi(-\kappa_{d, \infty} + \tau_d) = 1- 2\overline{\Phi}(\kappa_{d, \infty} - \tau_d).
\end{equation}
By~\eqref{eqn:crit_val} (and since~$\{\infty, \kappa_{d, \infty}\}\in \mathbb{T}_{\alpha_{\infty}}$ with~$\alpha_{\infty} \in (0, 1)$) we eventually have~$\kappa_{d, \infty}- \tau_d \geq \sqrt{\log(d)}$. Thus, we can eventually apply Bernoulli's inequality (that is, $(1+x)^m \geq 1+mx$ for~$m \in \N \cup \{0\}$ and~$x \geq -1$) and use~\eqref{eqn:Gtail} to conclude that
\begin{equation*}
	A_d \geq 1-2k_d \overline{\Phi}\left(\sqrt{\log(d)}\right) \geq 1-2 k_d /\sqrt{d} \to 1.
\end{equation*}

\subsubsection{Proof of Corollary~\ref{cor:PE}}

First consider the case where~$\alpha_2$ and~$\alpha_{\infty}$ are both greater than~$0$. Then, we just note that~\eqref{eqn:PEup} implies
\begin{equation*}
	\mathbb{E} \varphi_d(\bm{\theta}_d^{\dagger} + \bm{\varepsilon}_d) \leq \mathbb{P}(\|\bm{\theta}_d^{\dagger} + \bm{\varepsilon}_d\|_2 \geq \kappa_{d, 2}) + \mathbb{P}(\|\bm{\theta}_d^{\dagger} + \bm{\varepsilon}_d\|_{\infty} \geq \kappa_{d, \infty}) \to \alpha_2 + \alpha_{\infty} < 1,
\end{equation*}
the convergence following from the last statement in Theorem~\ref{thm:2-p-inf}. If any of the asymptotic sizes is~$0$, just pass to smaller critical values so that the asymptotic sizes are in~$(0, 1)$ and sum up to a number smaller than~$1$. Then, use the monotonicity of the rejection probabilities in the critical values and apply the already established statement.

\subsubsection{Proof of Theorem~\ref{thm:domp}}

The theorem follows immediately from Theorem~\ref{thm:dompgen} (cf.~Remark~\ref{rem:pfindist}) together with Remarks~\ref{rem:NRVnormal}, \ref{rem:consnotdep} and Theorem~\ref{thm:conspinf}.

\subsubsection{Proof of Theorem~\ref{thm:opt_sum}}

This is an immediate consequence of Theorem~\ref{thm:opt_sum_gen} (cf.~Remark~\ref{rem:pfindist}), noting that 
by Anderson's theorem~$$\E(\psi_d(\bm{\theta}_d+\bm{\eps}_d)) - \P\del[1]{\|\bm{\theta}_d+\bm{\eps}_d\|_{p_j}\geq \kappa_{d}} \leq 
1 - \P\del[1]{\|\bm{\eps}_d\|_{p_j}\geq \kappa_{d}}
\leq 1-\alpha + o(1),$$ which establishes~\eqref{eq:powerfunction2} from~\eqref{eq:powerfunction2gen}.

\subsection{Verification of a claim in Section~\ref{sec:pinf}}\label{app:supt}

Let~$\{\infty, \kappa_d\} \in \mathbb{T}_{\alpha}$,~$\alpha \in (0, 1)$. In Section~\ref{sec:pinf} it was claimed that for a real sequence~$\tau_d$ and~$\bm{\iota}_d$ the vector of ones of length~$d$ it holds that~$\bm{\vartheta} := \{\tau_d \bm{\iota}_d : d \in \N\} \in \mathscr{C}(\{\infty,\kappa_{d} \})$  if and only if~$\sqrt{\log(d)}|\tau_d| \to \infty$. To show this via Theorem~\ref{thm:conspinf}, it suffices to verify that for any sequence~$\tau_d \geq 0$ it holds that
\begin{equation}\label{eqn:aeqdisp}
	d g_{\infty}\left(\mathfrak{c}_d - \tau_d\right) \to \infty \quad \Leftrightarrow \quad \sqrt{\log(d)}\tau_d \to \infty.
\end{equation}
We choose~$w$ such that~$g_{\infty}$ is continuous and strictly decreasing (e.g., as in the example given after Equation~\eqref{eqn:ginfty}).
We first verify~\eqref{eqn:aeqdisp} for all bounded sequences~$\tau_d \geq 0$. In this case, for~$d$ large enough,~$d g_{\infty}\left(\mathfrak{c}_d - \tau_d\right) = de^{-(\mathfrak{c}_d - \tau_d)^2/2}/(\mathfrak{c}_d - \tau_d)$, the logarithm of which writes, using~$\mathfrak{c}_d := \sqrt{2\log(d)} - \log \log(d)/(2 \sqrt{2\log(d)})$ and expanding the square,
\begin{align*}
	\log(d) - (\mathfrak{c}_d - \tau_d)^2/2 - \log(\mathfrak{c}_d - \tau_d)  
	&= 
	\log\left(\sqrt{\log(d)}/(\mathfrak{c}_d - \tau_d)\right) + \sqrt{2\log(d)}\tau_d + O(1) \\
	&= \sqrt{2\log(d)}\tau_d + O(1).
\end{align*}
Finally, if there existed an unbounded sequence~$\tau_d \geq 0$ with the property that one of the sequences in~\eqref{eqn:aeqdisp} diverges and the other doesn't, there would exist a subsequence~$d'$ along which this property is preserved and along which~$\tau_d$ diverges to~$\infty$ (otherwise this would contradict what has already been shown). Then,~$\sqrt{\log(d')} \tau_{d'} \to \infty$ and thus~$\sqrt{\log(d')} \min(\tau_{d'}, 1)$ diverges, implying~$d'g_{\infty}(\mathfrak{c}_{d'} - \min(\tau_{d'}, 1)) \to \infty$ (by what we have already established for bounded sequences, along subsequences). Since~$g_{\infty}$ is strictly decreasing, we obtain~$d' g_{\infty}(\mathfrak{c}_{d'} - \tau_{d'}) \to \infty$, contradicting the property upon~$\tau_{d}$ was chosen to satisfy in the first place, as we have just shown that both sequences in~\eqref{eqn:aeqdisp} diverge along~$d'$.

\section{Minimax adaptive testing}\label{sec:minimax}

The main focus of the present article is to study the consistency behavior of~$p$-norm based tests for the \emph{unrestricted} testing problem~\eqref{eqn:tp}, and to answer questions like ``how do these tests compare'' and ``can they be dominated'' in terms of their consistency properties. 

Classical results on~$p$-norm based tests in the literature on minimax-optimal testing, on the other hand, start with a specific set of alternatives, i.e., complements of~$p$-norm balls centered at the origin. They then characterize the separation from the null necessary so that uniform consistency is possible, and show that a~$p$-norm based test can be constructed that is minimax rate optimal against such alternatives. Such results provide a justification for using a~$p$-norm based test if one cares about power against alternatives in the complement of a~$p$-norm ball as just described, but they do not answer the questions that we have focused on in previous sections.

In the present section, we want to illustrate that tools similar to the ones used in previous sections can be used to establish adaptivity results in the minimax framework. Essentially, we show that a~$p$-norm based test is not only minimax optimal against complements of centered~$p$-norm balls, but is adaptively minimax optimal over all complements of centered~$q$-norm balls with~$q \leq p$ (assuming that the radii of the balls admit minimax consistent tests), thus extending a result in~\cite{ingster} for the case where~$p \leq 2$. This corresponds to the monotonicity result in Theorem~\ref{thm:incl}. We also prove an adaptivity result over the whole range of all~$p \in (0, \infty)$ which parallels Theorem~\ref{thm:domp}.

Let us first revisit classical results and introduce some notation. Given~$p\in(0,\infty)$ and a radius~$r \in (0, \infty)$, let~$\mathbb{V}^d_p(r):=\cbr[0]{\bm{\theta}_d\in\R^d:\|\bm{\theta}_d\|_p\geq r}$. For a sequence of such radii~$r_{p,d}$,~\mbox{$d\in\N$}, we now consider (for every~$d \in \N$) the testing problem
\begin{align}\label{eq:mmtestprob}
	H_{0,d}:\bm{\theta}_d=\bm{0}_{d}\quad \text{vs.}\quad H_{1,d}:\bm{\theta}_d\in \mathbb{V}^d_p(r_{p,d}),
\end{align}
and denote the minimal sum of Type~1 and Type~2 errors for this testing problem by
\begin{align}\label{eq:gamma}
	\gamma(r_{p,d},p):=\inf_{\varphi_d} \left[\E \varphi_d(\bm{\eps}_d)+1-\inf_{\bm{\theta}_d\in \mathbb{V}^d_p(r_{p,d})}\E\varphi_d(\bm{\theta}_d+\bm{\eps}_d)\right];
\end{align}
here the outer infimum is taken over all Borel measurable functions from~$\R^d$ to~$[0,1]$. 

Compared to the testing problem~\eqref{eqn:tp} studied in previous sections, the alternatives considered in~\eqref{eq:mmtestprob} are now separated from the null, the type of separation depending on~$p$. The main questions concerning the sequence of testing problems~\eqref{eq:mmtestprob} (which have long been answered) are: (i) for which sequences of radii does~$\gamma(r_{p,d}, p) \to 0$; and (ii) for which tests this is achieved. 

Following \cite{ingster}, a sequence of radii~$r_{p,d}^*$,~$d\in\N$, is called a sequence of asymptotic \emph{minimax rates/critical radii} for the sequence of testing problems~\eqref{eq:mmtestprob} if the following holds for any sequence of radii~$r_{p,d}$ as above and as~$d\to\infty$:
\begin{align*}
	&\gamma(r_{p,d},p)\to 0 \quad \text{if and only if } \quad r_{p,d}/r^*_{p,d}\to \infty\\
	&\gamma(r_{p,d},p)\to 1 \quad \text{if and only if } \quad r_{p,d}/r^*_{p,d}\to 0.
\end{align*}
Proposition~3.9 (and its proof) in \cite{ingster} settles questions (i) and~(ii) above as follows: (i) The following sequences constitute sequences of critical radii
\begin{align}\label{eqn:ingrates}
	r^*_{p,d}=
	\begin{cases}
		d^{\frac{4-p}{4p}}\quad &\text{if } p\in(0,2]\\
		d^{\frac{1}{2p}}\quad &\text{if }p\in(2,\infty).
	\end{cases}
\end{align}
(ii.a) For every~$p\in[2,\infty)$ and for every sequence of radii~$r_{p,d}$ such that~$r_{p,d}/r^*_{p,d}\to\infty$, there exists a sequence of critical values~$\kappa_{p,r_{p,d},d}$, such that
\begin{align*}
	\P\left(\|\bm{\eps}_d\|_p\geq \kappa_{p,r_{p,d},d}\right)+1-\inf_{\bm{\theta}_d\in \mathbb{V}^d_p(r_{p,d})}\P\left(\|\bm{\theta}_d+\bm{\eps}_d\|_p\geq \kappa_{p,r_{p,d},d}\right)\to 0.
\end{align*}
That is, there exists a~$p$-norm based test (the critical values depending on the sequence of radii~$r_{p,d}$) that is \emph{minimax rate consistent} in the sequence of testing problems~\eqref{eq:mmtestprob}.\footnote{In light of the minimax rate being~$r_{p,d}^*=d^{\frac{4-p}{4p}}$ for~$p\in(0,2)$ it is tempting to conjecture that~\eqref{eqn:equi2} could be replaced by~$\bm{\vartheta} \in \mathscr{C}(\{p,\kappa_{d} \})$ being equivalent to~$\frac{\|\bm{\theta}_d\|^2_2}{\sqrt{d}} \wedge \frac{\|\bm{\theta}_d\|^p_p}{d^{\frac{4-p}{4}}} \to \infty$. This, however, is not the case as can be seen by considering~$\bm{\theta}_d=(d^{\frac{1}{2p}}\log(d),0,\hdots,0)$.
}
(ii.b) For every~$p\in(0,2)$ and for every sequence of radii~$r_{p,d}$ such that~$r_{p,d}/r^*_{p,d}\to\infty$, there exists a~$2$-norm based test that is minimax rate consistent in the sequence of testing problems~\eqref{eq:mmtestprob}. 

Inspection of the proof of Proposition~3.9 and using Corollary~3.4 in~\cite{ingster} shows that actually more can be said in case (ii.b): Given a set of radial sequences~$$\{r_{q,d} \in (0, \infty): q \in (0, 2], d \in \N\} \quad \text{ such that } \quad \inf_{q \in (0,2]} r_{q,d}/r^*_{q,d}\to\infty,$$ there exists a~$2$-norm based test that is minimax rate consistent for the sequence of testing problems
\begin{equation}\label{eq:mmtestprobU}
	H_{0,d}:\bm{\theta}_d=\bm{0}_{d}\quad \text{vs.}\quad H_{1,d}:\bm{\theta}_d\in \bigcup_{q \in (0, 2]} \mathbb{V}^d_q(r_{q,d});
\end{equation}
that is, the~$2$-norm based test \emph{adapts} to~$q\in(0,2]$.
The following theorem now shows that such a result actually extends from the~$2$-norm based test to every~$p\in [2,\infty)$, which parallels the monotonicity phenomenon in Theorem~\ref{thm:incl}. Throughout the remainder of this section, we shall again use the notation~$\sigma^2_p := \mathbb{V}ar(|\varepsilon_1|^p)$ and~$\mu_p := \mathbb{E}(|\varepsilon_1|^p)$.
\begin{theorem}\label{thm:minimax_p}
	Fix~$p \in (2, \infty)$ and suppose a set of radial sequences~$$\{r_{q,d} \in (0, \infty): q \in (0, p],~d \in \N\} \quad \text{ satisfies } \quad r_d := \inf_{q \in (0,p]} r_{q,d}/r^*_{q,d}\to\infty.$$
	Then, the sequence of tests~$\{p, \kappa_{d}\}$ with~$\kappa_{d}:=\sbr[1]{r_{d}\sqrt{d\sigma^2_p}+d\mu_p}^{1/p}$ is minimax rate consistent in the sequence of testing problems
	\begin{equation}\label{eq:mmtestprobUp}
		H_{0,d}:\bm{\theta}_d=\bm{0}_{d}\quad \text{vs.}\quad H_{1,d}:\bm{\theta}_d\in \bigcup_{q \in (0, p]} \mathbb{V}^d_q(r_{q,d});
	\end{equation}
	that is, for $\mathbb{V}_{p,d}:=\bigcup_{q \in (0, p]} \mathbb{V}^d_q(r_{q,d})$ it holds that
	\begin{align}\label{eq:mmresp}
		\P\del[1]{\enVert[0]{\bm{\eps}_d}_p\geq \kappa_{d}}+1-\inf_{\bm{\theta}_d\in\mathbb{V}_{p,d}}\P\del[1]{\enVert[0]{\bm{\theta}_d+\bm{\eps}_d}_p\geq \kappa_{d}}\to 0.
	\end{align}
\end{theorem}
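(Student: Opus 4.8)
The plan is to prove the two halves of~\eqref{eq:mmresp} separately: the type~1 error $\P(\|\bm{\eps}_d\|_p\geq\kappa_d)\to 0$ and the worst-case type~2 error $\sup_{\bm{\theta}_d\in\mathbb{V}_{p,d}}\P(\|\bm{\theta}_d+\bm{\eps}_d\|_p<\kappa_d)\to 0$. Writing $\kappa_d^p=r_d\sqrt{d}\,\sigma_p+d\mu_p$ and noting that $\|\bm{\eps}_d\|_p^p=\sum_{i=1}^d|\eps_i|^p$ has mean $d\mu_p$ and variance $d\sigma_p^2$, the type~1 error is immediate from Chebyshev's inequality,
$$\P(\|\bm{\eps}_d\|_p^p-d\mu_p\geq r_d\sqrt{d}\,\sigma_p)\leq \frac{d\sigma_p^2}{r_d^2\, d\sigma_p^2}=\frac{1}{r_d^2}\to 0,$$
using $r_d\to\infty$.

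For the type~2 error I would work with $S_d:=\|\bm{\theta}_d+\bm{\eps}_d\|_p^p=\sum_{i=1}^d|\theta_{i,d}+\eps_i|^p$ and the bound $\P(S_d<\kappa_d^p)\leq \Var(S_d)/(\E S_d-\kappa_d^p)^2$, valid once $\E S_d>\kappa_d^p$. Setting $h_p(\theta):=\E|\theta+\eps_1|^p-\mu_p$ gives $\E S_d-\kappa_d^p=\sum_{i=1}^d h_p(\theta_{i,d})-r_d\sqrt{d}\,\sigma_p$. A standard comparison---a Taylor expansion at $0$ yielding $h_p(\theta)\sim \tfrac{p(p-1)}{2}\E|\eps_1|^{p-2}\theta^2$, Jensen's inequality $\E|\theta+\eps_1|^p\geq|\theta|^p$ for large $|\theta|$, and continuity and strict positivity of $h_p/g_p$ on $(0,\infty)$---shows there is $c_1>0$ with $h_p(\theta)\geq c_1 g_p(\theta)$ for every $\theta$, where $g_p$ is as in~\eqref{eqn:rhodef}. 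Thus the argument reduces to bounding $G_d:=\sum_{i=1}^d g_p(\theta_{i,d})$ from below, uniformly over $\mathbb{V}_{p,d}$, and $\Var(S_d)$ from above.

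The heart of the proof is the uniform rate lemma $\inf_{\bm{\theta}_d\in\mathbb{V}_{p,d}}G_d/(r_d\sqrt{d})\to\infty$. Since $G_d$ is nondecreasing along rays, $\inf_{\mathbb{V}_{p,d}}G_d=\min_{q\in(0,p]}\inf_{\|\bm{\theta}_d\|_q=r_{q,d}}G_d$. For $q\in[2,p]$ this is elementary: $g_p(\theta)\geq|\theta|^q$ for all $\theta$ (as $g_p(\theta)=\theta^2\geq|\theta|^q$ when $|\theta|\leq1$, and $g_p(\theta)=|\theta|^p\geq|\theta|^q$ when $|\theta|>1$), so $G_d\geq\|\bm{\theta}_d\|_q^q=r_{q,d}^q\geq r_d^q\sqrt{d}\geq r_d^2\sqrt{d}$ by~\eqref{eqn:ingrates}. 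For $q\in(0,2)$ I would substitute $u_i=|\theta_{i,d}|^q$ and observe that $u\mapsto g_p(u^{1/q})$ is convex (its pieces $u^{2/q}$ and $u^{p/q}$ have exponent $>1$ and join with an upward kink), so by Jensen the constrained minimum of $G_d$ subject to $\sum_i u_i=r_{q,d}^q$ is attained at the equal-spread point $|\theta_{i,d}|=a:=r_{q,d}/d^{1/q}$. Writing $r_{q,d}=r_d^{(q)}r^*_{q,d}$ with $r_d^{(q)}:=r_{q,d}/r^*_{q,d}\geq r_d$ and $r^*_{q,d}=d^{(4-q)/(4q)}$, the exponent bookkeeping gives $\min G_d=(r_d^{(q)})^2\sqrt{d}\geq r_d^2\sqrt{d}$ in the regime $a\leq1$, and $\min G_d=(r_d^{(q)})^p d^{1-p/4}$ with $r_d^{(q)}>d^{1/4}$ in the regime $a>1$, which yields $\min G_d/(r_d\sqrt{d})\geq d^{1/4}$. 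Hence $G_d/(r_d\sqrt{d})\geq\min(r_d,d^{1/4})\to\infty$, uniformly in $q$ over the full range $(0,p]$.

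Finally, the rate lemma gives $\E S_d-\kappa_d^p\geq\tfrac12 c_1 G_d$ eventually, so it remains to show $\Var(S_d)=o(G_d^2)$. Here I would use the Gaussian Poincar\'e inequality to get $\Var(|\theta+\eps_1|^p)\leq p^2\E|\theta+\eps_1|^{2p-2}\leq C_p(|\theta|^{2p-2}+1)$; then coordinates with $|\theta_{i,d}|\leq1$ contribute at most $C_p d$, while those with $|\theta_{i,d}|>1$ contribute at most $C_p\sum_{|\theta_{i,d}|>1}g_p(\theta_{i,d})^{2-2/p}\leq C_p(\max_i g_p(\theta_{i,d}))^{1-2/p}G_d\leq C_p G_d^{2-2/p}$, using $\sum a_i^s\leq(\max_i a_i)^{s-1}\sum_i a_i$ with $s=2-2/p$. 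Combining, $\P(S_d<\kappa_d^p)\lesssim d/G_d^2+G_d^{-2/p}$, and both terms vanish uniformly because the rate lemma forces $G_d/\sqrt{d}\to\infty$ (so $d/G_d^2\to0$) and $G_d\to\infty$ (so $G_d^{-2/p}\to0$). The main obstacle I anticipate is the rate lemma for $q\in(0,2)$: pinning down the least-favorable (equal-spread) configuration through the convexity of $u\mapsto g_p(u^{1/q})$, and handling both regimes $a\leq1$ and $a>1$ so that the lower bound is genuinely uniform in $q$ across all of $(0,p]$.
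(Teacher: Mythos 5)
Your proposal is correct, and its skeleton coincides with the paper's: both are second-moment (Chebyshev) arguments that lower-bound the mean shift uniformly over~$\mathbb{V}_{p,d}$ in terms of~$G_d=\sum_{i=1}^d g_p(\theta_{i,d})$ via the comparison~$\E|\theta+\eps_1|^p-\mu_p\asymp g_p(\theta)$ (the paper's Lemma~\ref{lem:suff0behav}, which you re-sketch), and upper-bound the variance by~$C_p\bigl(d+G_d^{2-2/p}\bigr)$; the paper merely packages the final Chebyshev step into Corollary~3.1 of~\cite{ingster}. Where you genuinely diverge is in the two key estimates. For the uniform mean bound when~$q\in(0,2)$, the paper combines~$g_p\geq|\cdot|^2$ (valid since~$p\geq 2$) with the norm inequality~$\|\bm{\theta}_d\|_2\geq d^{1/2-1/q}\|\bm{\theta}_d\|_q$, which yields~$G_d\geq r_d^2\sqrt{d}$ in one line and with no case distinction; your route via convexity of~$u\mapsto g_p(u^{1/q})$ and Jensen identifies the least favorable (equal-spread) configuration exactly, at the price of the regime split~$a\lessgtr 1$ and the weaker (but still sufficient) bound~$G_d/(r_d\sqrt{d})\geq\min(r_d,d^{1/4})$ — your exponent bookkeeping here is right: $a=r_d^{(q)}d^{-1/4}$, and in the regime~$a>1$ one gets~$\min G_d/(r_d\sqrt{d})\geq(r_d^{(q)})^{p-1}d^{1/2-p/4}>d^{1/4}$ using~$r_d^{(q)}\geq r_d$. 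What your method buys is structural information (the extremal alternative on each~$q$-sphere); what the paper's buys is brevity and a single clean bound~$r_d^2\sqrt{d}$. For the variance, you invoke the Gaussian Poincar\'e inequality, a shortcut tied to the normality assumed here, whereas the paper's Lemma~\ref{lem:pbounds} obtains the same bound by direct moment computations valid under the weaker Assumption~\ref{ass:pfindist}, which is what lets those lemmas be reused in the non-Gaussian appendix. Your type-1 bound~$1/r_d^2$ is exactly the paper's.
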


Given the adaptivity result just obtained, one may ask whether one can construct a \emph{single} test that is minimax rate consistent in any testing problem of the type~\eqref{eq:mmtestprobUp}. That is, does there exist a \emph{single} test that is minimax rate consistent in the sequence of testing problems~\eqref{eq:mmtestprobUp} for \emph{every}~$p\in(0,\infty)$ simultaneously? As we shall establish next, this question can be answered affirmatively using a construction related to the one in Theorem~\ref{thm:domp}.
\begin{theorem}\label{thm:mmsum}
	Suppose a set of radial sequences~$$\{r_{q,d} \in (0, \infty): q \in (0, \infty),~d \in \N\} \quad \text{ satisfies } \quad r_d := \inf_{q \in (0,\infty)} r_{q,d}/r^*_{q,d}\to\infty.$$
	Let~$p_d$ be a non-decreasing and diverging sequence of natural numbers satisfying
	\begin{equation}\label{eqn:pdas}
		p_d \Phi(-r_d) \to 0\quad \text{ and } \quad \frac{p_d}{\sqrt{d}} \left(3/2\right)^{3p_d/2} \to 0.
	\end{equation}
	Then, setting~$\kappa_{j,d} :=\sbr[1]{r_d\sqrt{d\sigma^2_j}+d\mu_j}^{1/j}$, the sequence of tests
	\begin{equation}\label{eq:sum_test}
		\psi^*_d(\cdot) := \mathds{1}\left\{
		\max_{j = 1,\hdots, p_d} \kappa_{j, d}^{-1} \|\cdot \|_j \geq 1
		\right\}
	\end{equation}
	is minimax rate consistent in the sequence of testing problems~\eqref{eq:mmtestprobUp} for every~$p \in (0, \infty)$; \mbox{that is,} denoting~$\mathbb{V}_{p,d}:=\bigcup_{q \in (0, p]} \mathbb{V}^d_q(r_{q,d})$, it holds that
	\begin{align}\label{eqn:mmad}
		\E \psi_d^*(\bm{\eps}_d)+1-\inf_{\bm{\theta}_d\in\mathbb{V}_{p,d}}\E\psi_d^*(\bm{\theta}_d+\bm{\eps}_d)\to 0\qquad \text{for every }p\in(0,\infty).
	\end{align}
\end{theorem}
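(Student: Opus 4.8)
The plan is to show that the two nonnegative summands in~\eqref{eqn:mmad} — the size term $\E\psi_d^*(\bm{\eps}_d)$ and the maximal type-two error $1-\inf_{\bm{\theta}_d\in\mathbb{V}_{p,d}}\E\psi_d^*(\bm{\theta}_d+\bm{\eps}_d)$ — both tend to $0$. The power statement is the one that has to hold for \emph{every} $p\in(0,\infty)$, while the size statement is one and the same bound irrespective of $p$. I would treat the power part first, reducing it to Theorem~\ref{thm:minimax_p}, and then spend the real effort on the size part.

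For the \textbf{power}, fix $p\in(0,\infty)$ and set $j^\ast:=\max(3,\lceil p\rceil)$, so that $j^\ast\in(2,\infty)$ is an integer with $j^\ast\geq p$. Since $p_d$ is non-decreasing and diverges, eventually $j^\ast\in\{1,\dots,p_d\}$, whence the definition of $\psi_d^*$ in~\eqref{eq:sum_test} gives $\psi_d^*(\cdot)\geq\mathds{1}\{\|\cdot\|_{j^\ast}\geq\kappa_{j^\ast,d}\}$ pointwise. As $p\leq j^\ast$ implies $\mathbb{V}_{p,d}\subseteq\mathbb{V}_{j^\ast,d}$, I obtain
\[
\inf_{\bm{\theta}_d\in\mathbb{V}_{p,d}}\E\psi_d^*(\bm{\theta}_d+\bm{\eps}_d)\ \geq\ \inf_{\bm{\theta}_d\in\mathbb{V}_{j^\ast,d}}\P\big(\|\bm{\theta}_d+\bm{\eps}_d\|_{j^\ast}\geq\kappa_{j^\ast,d}\big).
\]
Because $\inf_{q\in(0,j^\ast]}r_{q,d}/r^*_{q,d}\geq r_d\to\infty$, Theorem~\ref{thm:minimax_p} applies with $j^\ast$ in place of $p$. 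The only discrepancy is that the critical value there is built from $\inf_{q\in(0,j^\ast]}r_{q,d}/r^*_{q,d}$, whereas our $\kappa_{j^\ast,d}$ uses the smaller $r_d=\inf_{q\in(0,\infty)}r_{q,d}/r^*_{q,d}$, so $\kappa_{j^\ast,d}$ is no larger than Theorem~\ref{thm:minimax_p}'s critical value. Since $\P(\|\bm{\theta}_d+\bm{\eps}_d\|_{j^\ast}\geq\kappa)$ is non-increasing in $\kappa$ for every fixed $\bm{\theta}_d$, using our smaller critical value only increases the right-hand side above, so the minimax consistency furnished by Theorem~\ref{thm:minimax_p} forces it to $1$.

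For the \textbf{size}, a union bound over the $p_d$ tests defining $\psi_d^*$ gives $\E\psi_d^*(\bm{\eps}_d)\leq\sum_{j=1}^{p_d}\P(\|\bm{\eps}_d\|_j\geq\kappa_{j,d})$. Writing $\kappa_{j,d}^j=d\mu_j+r_d\sqrt{d}\,\sigma_j$ and centering, each summand equals $\P(S_{j,d}\geq r_d)$, where $S_{j,d}:=(d\sigma_j^2)^{-1/2}\sum_{i=1}^d(|\varepsilon_i|^j-\mu_j)$ is a standardized sum of i.i.d.\ mean-zero, unit-variance variables. I would then invoke the classical Berry--Esseen bound to get $\P(S_{j,d}\geq r_d)\leq\overline{\Phi}(r_d)+C\rho_j/\sqrt{d}$ with $C$ absolute and $\rho_j:=\E\big||\varepsilon_1|^j-\mu_j\big|^3/\sigma_j^3$, whence $\E\psi_d^*(\bm{\eps}_d)\leq p_d\,\overline{\Phi}(r_d)+C\,d^{-1/2}\sum_{j=1}^{p_d}\rho_j$. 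Note that a cruder device such as Chebyshev is \emph{too weak} here: it would only yield $\sum_j 1/r_d^2 = p_d/r_d^2$, which need not vanish under $p_d\Phi(-r_d)\to 0$; the Gaussian-tail behaviour $\overline{\Phi}(r_d)=\Phi(-r_d)$ is exactly why the Berry--Esseen route is needed.

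The crux — and the main obstacle — is the bound on $\rho_j$, where Gaussianity enters quantitatively. Using $|a-b|^3\leq 4(|a|^3+|b|^3)$ gives $\rho_j\leq 4(\mu_{3j}+\mu_j^3)/\sigma_j^3$, and the key estimate is that the standardized third moment grows at the rate $(3/2)^{3j/2}$: inserting the explicit Gaussian moments $\mu_j=2^{j/2}\Gamma((j+1)/2)/\sqrt{\pi}$ and applying Stirling's formula yields $\mu_{3j}/\sigma_j^3\leq C'(3/2)^{3j/2}$ with $C'$ absolute (the polynomial-in-$j$ factors cancelling, and $\mu_j^2=o(\mu_{2j})$ so that $\sigma_j^2\sim\mu_{2j}$). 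Since $j\mapsto(3/2)^{3j/2}$ is increasing, $\sum_{j=1}^{p_d}\rho_j\leq p_d\max_{1\leq j\leq p_d}\rho_j\leq C''\,p_d\,(3/2)^{3p_d/2}$, and therefore $\E\psi_d^*(\bm{\eps}_d)\leq p_d\Phi(-r_d)+C'''\,p_d\,d^{-1/2}(3/2)^{3p_d/2}$. Both terms vanish precisely under the two conditions in~\eqref{eqn:pdas}, which completes the size bound and, together with the power argument, establishes~\eqref{eqn:mmad} for every $p\in(0,\infty)$.
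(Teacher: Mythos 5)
Your proposal is correct and follows essentially the same route as the paper's own proof: the power part reduces to Theorem~\ref{thm:minimax_p} via the integer index $\max(3,\lceil p\rceil)$ together with monotonicity of the rejection probability in the critical value, and the size part combines a union bound, Berry--Esseen, and the Stirling-based Gaussian moment bound giving the $(3/2)^{3j/2}$ growth of the standardized third moment (the paper's Lemma~\ref{lem:mom_gauss}). Your explicit handling of the centered third moment via $|a-b|^3\leq 4(|a|^3+|b|^3)$ and the remark that Chebyshev would be too weak are minor refinements of the same argument.
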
%

For concreteness, we have decided to base the construction in~\eqref{eq:sum_test} on all powers from~$1$ to~$p_d$. Inspection of the proof shows that this is not crucial, and that one can also achieve the same minimax rate consistency property as in Theorem~\ref{thm:mmsum} by maximizing over an expanding subset of powers. 

\section{Proofs of the results in Appendix~\ref{sec:minimax}}\label{app:minimaxproofs}

The following notation will be employed freely throughout the remainder of this section. For every~$p\in(0,\infty)$ define~$\lambda_p:\R\to[0,\infty)$ via~$s\mapsto \E|\eps_1+s|^p$ and abbreviate~$\sigma^2_p:=\mathbb{V}ar|\eps_1|^p$. We denote the positive square root of~$\sigma^2_p$ by~$\sigma_p$.

We also note 
the following bounds concerning absolute moments of a standard normally distributed random variable, which are due to the following version of Stirling's approximation (cf.~Theorem~1.5 in~\cite{batir}, applied with~``$x = (r-1)/2$'')
\begin{equation}\label{eqn:batir}
	\sqrt{2 e} \left(r/(2e)\right)^{r/2} \leq \Gamma\left((r+1)/2\right) < \sqrt{2\pi} \left(r/(2e)\right)^{r/2}, ~~\text{ for every } r > 1,
\end{equation}
and will be used in the proof of Theorem~\ref{thm:mmsum} below.
\begin{lemma}\label{lem:mom_gauss}
	Let~$Z$ be a standard normal random variable. Then, for all~$r > 1$,
	\begin{align*}
		\sqrt{2e/\pi} r^{r/2}e^{-r/2}
		\leq
		\E|Z|^r
		<
		\sqrt{2} r^{r/2}e^{-r/2}.
	\end{align*}
\end{lemma}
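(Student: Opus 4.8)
The plan is to reduce the lemma to the classical closed-form expression for the absolute moments of a standard normal variable and then insert the Stirling-type bounds in~\eqref{eqn:batir} that are already recorded immediately above the statement. The first step is to establish the identity
\[
\E|Z|^r = \frac{2^{r/2}}{\sqrt{\pi}}\,\Gamma\!\left(\frac{r+1}{2}\right),
\]
valid for every~$r > -1$ (in particular for~$r > 1$). I would derive this by writing~$\E|Z|^r = \frac{2}{\sqrt{2\pi}}\int_0^\infty z^r e^{-z^2/2}\,dz$ using symmetry, and then performing the substitution~$u = z^2/2$, which turns the integral into~$\int_0^\infty u^{(r-1)/2} e^{-u}\,du = \Gamma((r+1)/2)$ up to the explicit prefactor, which collapses to~$2^{r/2}/\sqrt{\pi}$.

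The second step is purely algebraic: multiply the two-sided bound in~\eqref{eqn:batir} on~$\Gamma((r+1)/2)$ by the positive factor~$2^{r/2}/\sqrt{\pi}$. For the lower bound this yields~$\E|Z|^r \geq \frac{2^{r/2}}{\sqrt{\pi}}\sqrt{2e}\,(r/(2e))^{r/2}$, and the factors of~$2^{r/2}$ cancel against those hidden in~$(2e)^{r/2}$, leaving~$\sqrt{2e/\pi}\,r^{r/2}e^{-r/2}$ exactly as claimed. The upper bound is identical in spirit: multiplying the strict bound~$\sqrt{2\pi}\,(r/(2e))^{r/2}$ by~$2^{r/2}/\sqrt{\pi}$ gives~$\sqrt{2}\,r^{r/2}e^{-r/2}$. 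The strict inequality on the right is inherited directly from the strict inequality in~\eqref{eqn:batir}.

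There is no genuine obstacle here; the lemma is essentially a restatement of~\eqref{eqn:batir} once the Gamma representation of~$\E|Z|^r$ is in hand. The only points requiring a little care are bookkeeping: checking that the constant in the moment identity simplifies to~$2^{r/2}/\sqrt{\pi}$ rather than some other combination of~$\sqrt{2}$ and~$\sqrt{\pi}$, and confirming that the range~$r > 1$ matches the hypothesis under which~\eqref{eqn:batir} is asserted (this corresponds to taking~``$x = (r-1)/2$'' with~$x > 0$ in Batir's inequality, exactly as noted in the statement preceding~\eqref{eqn:batir}). Once these are verified, the displayed two-sided estimate follows immediately.
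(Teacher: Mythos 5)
Your proposal is correct and follows exactly the paper's argument: the paper's proof likewise writes $\E|Z|^r = \frac{1}{\sqrt{\pi}}\Gamma((r+1)/2)\,2^{r/2}$ and applies~\eqref{eqn:batir}. The only difference is that you also derive the Gamma-function identity (which the paper takes as known), and your algebraic simplification of the constants is accurate.
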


\begin{proof}
	Write~$\E|Z|^r
	=
	\frac{1}{\sqrt{\pi}}\Gamma((r+1)/2)2^{r/2}$
	and apply~\eqref{eqn:batir}.
\end{proof}

\subsubsection{Proof of Theorem~\ref{thm:minimax_p}}  Let~$p\in(2,\infty)$ and the set of radial sequences be as in the statement of the theorem. We start with some preliminary observations. Note that for all~$q\in(0,p]$ and~$d\in\N$ one has~$r_{q,d}\geq r_dr^*_{q,d}$. Furthermore, for every~$\bm{\theta}_d\in\R^d$, one has that~$\|\bm{\theta}_d + \bm{\varepsilon}_d\|_p \geq \kappa_{d}$ is equivalent to~
\begin{equation*}
	t_{p,d} = \frac{\|\bm{\theta}_d + \bm{\varepsilon}_d\|_p^p - \sum_{i = 1}^d \lambda_p(\theta_{i,d})}{\sqrt{d \sigma^2_p}} + \frac{\sum_{i = 1}^d\left( \lambda_p(\theta_{i,d}) - \lambda_p(0)\right)}{\sqrt{d \sigma^2_p}} \geq r_d. 
\end{equation*}
Note that~$t_{p,d}$ depends on~$\bm{\theta}_d$, but we don't highlight this in our notation. By Lemma~\ref{lem:suff0behav} (cf.~Remark~\ref{rem:pfindist}) there exists a~$c_p>0$ such that
\begin{align}\label{eq:common}
	\E(t_{p,d})
	=
	\frac{\sum_{i = 1}^d\left( \lambda_p(\theta_{i,d}) - \lambda_p(0)\right)}{\sqrt{d \sigma^2_p}} 
	\geq 
	c_p^{-1}\frac{\sum_{i=1}^dg_p(\theta_{i,d})}{\sqrt{d \sigma^2_p}} = c_p^{-1}\frac{\sum_{i=1}^d( \theta^2_{i,d} \vee |\theta_{i,d}|^p)}{\sqrt{d \sigma^2_p}},
\end{align}
the last equality following from~$g_p(x) = x^2 \vee |x|^p$ for~$p \in [2, \infty)$,~cf.~\eqref{eqn:rhodef}. We can also bound~
\begin{equation}\label{eqn:gnq}
	\sum_{i=1}^d (\theta^2_{i,d} \vee |\theta_{i,d}|^p) \geq \|\bm{\theta}_d\|_s^s, \text{ for every } s \in [2, p].
\end{equation} 

We next claim that for~$d$ large enough
\begin{align}\label{eqn:explbdalt}
	\inf_{\bm{\theta}_d\in \mathbb{V}_{p,d}}\E(t_{p,d})
	\geq
	2r_d,
\end{align}
and first establish two inequalities taking care of different subsets of~$\mathbb{V}_{p,d}$:

\textbf{1.} Let~$\bm{\theta}_d\in \mathbb{V}_d^q(r_{q,d})$ for~$q\in[2,p]$: Then,~$r^*_{q,d}=d^{1/2q}$ by~\eqref{eqn:ingrates} and hence~$r_{q,d}\geq r_dd^{1/2q}$. By~\eqref{eq:common} and~\eqref{eqn:gnq} (applied with~$s = q$) one has that
\begin{align*}
	\E(t_{p,d}) \geq c_p^{-1}\frac{\sum_{i=1}^dg_p(\theta_{i,d})}{\sqrt{d \sigma^2_p}}
	\geq
	c_{p}^{-1}\frac{||\bm{\theta}_d||_q^q}{\sqrt{d \sigma^2_p}}
	\geq
	c_{p}^{-1}\frac{r_d^q}{\sigma_p}.
\end{align*}

\textbf{2.} Let~$\bm{\theta}_d\in \mathbb{V}_d^q(r_{q,d})$  for~$q\in(0,2)$: Then,~$r^*_{q,d}=d^{\frac{4-q}{4q}}$ by~\eqref{eqn:ingrates}  and hence~$r_{q,d}\geq r_dd^{\frac{4-q}{4q}}$. By Jensen's inequality~$||\bm{\theta}_d||_2\geq d^{1/2-1/q}||\bm{\theta}_d||_q$. By~\eqref{eq:common} and~\eqref{eqn:gnq} (applied with~$s = 2$), 
\begin{align*}
	\E(t_{p,d}) \geq c_p^{-1}\frac{\sum_{i=1}^dg_p(\theta_{i,d})}{\sqrt{d \sigma^2_p}}
	\geq
	c_{p}^{-1}\frac{||\bm{\theta}_d||_2^2}{\sqrt{d \sigma^2_p}}
	\geq 
	c_{p}^{-1}\frac{d^{1-\frac{2}{q}}||\bm{\theta}_d||_q^2}{\sqrt{d\sigma^2_p}}
	\geq
	c_{p}^{-1}\frac{d^{1-\frac{2}{q}}r_d^2d^{\frac{4-q}{2q}}}{\sqrt{d\sigma^2_p}} 
	=
	c_{p}^{-1}\frac{r_d^2}{\sigma_p}.
\end{align*}

Since the lower bounds in the two inequality chains obtained are both greater than~$2r_d$ for~$d$ large enough (uniformly over the respective set of~$\bm{\theta}_d$ considered in each case) the claim in~\eqref{eqn:explbdalt} follows.

Next, by Part~\textbf{2.} of Lemma~\ref{lem:pbounds} (cf.~\eqref{eqn:vbdII} and note that~$p \leq 2p-2$), there exists a~$C_p \in (0, \infty)$ such that for all~$\bm{\theta}_d\in\R^d$
\begin{align*}
	\mathbb{V}ar(t_{p,d})
	=
	\sum_{i = 1}^d \mathbb{V}ar\left(
	|\theta_{i,d} + \varepsilon_i|^p/\sqrt{d\sigma^2_p}
	\right)
	\leq
	1+
	\frac{C_p\sum_{i = 1}^d
		g_{2p-2}(\theta_{i,d})}{d\sigma^2_p}.
\end{align*}
Using~\eqref{eqn:vbdIIL} in the proof of Lemma~\ref{lem:asyn} and~$\sum_{i=1}^dg_p(\theta_{i,d})\geq 1$ for all~$\bm{\theta}_d\in\mathbb{V}_{p,d}$ for~$d$ sufficiently large (the latter following from the displayed inequalities in \textbf{1.} and \textbf{2.} above) implies that for~$d$ sufficiently large
\begin{align*}
	\frac{\sum_{i = 1}^d
		g_{2p-2}(\theta_{i,d})}{d\sigma^2_p}
	\leq
	\frac{2}{(d\sigma^2_p)^{1/p}}\del[3]{\sum_{i=1}^dg_{p}(\theta_{i,d})/\sqrt{d\sigma^2_p}}^{\frac{2p-2}{p}}
	\leq
	\frac{1}{(d\sigma^2_p)^{1/p}}\del[3]{c_p \E(t_{p,d})}^{\frac{2p-2}{p}},
\end{align*}
the last inequality following from~\eqref{eq:common}. Thus, for~$\tau := (2p-2)/p \in(0,2)$ and for~$d$ sufficiently large,
\begin{align}\label{eq:varboundbyexp}
	\mathbb{V}ar(t_{p,d})
	\leq
	1+\del[1]{\E(t_{p,d})}^{\tau} \text{ for every } \bm{\theta}_d\in \mathbb{V}_{p, d}.
\end{align}
Corollary~3.1 in \cite{ingster} together with~\eqref{eqn:explbdalt}, $r_d \to \infty$, and~\eqref{eq:varboundbyexp} now proves the result.

\subsubsection{Proof of Theorem~\ref{thm:mmsum}}

For every~$d \in \N$ and~$j = 1, \hdots, p_d$ we can write~$\|\bm{\varepsilon}_d\|_j\geq \kappa_{j,d}$ as
\begin{align*}
	\|\bm{\varepsilon}_d\|_j^j - d \lambda_j(0) \geq r_d\sqrt{d \sigma^2_j}.
\end{align*}
Combined with~$\psi_d^*(\cdot)\leq \sum_{j = 1}^{p_d} \mathds{1}\{
\| \cdot \|_{j} \geq \kappa_{j, d}\}$, which follows from the definition of~$\psi_d^*$, we obtain
\begin{equation*}
	\E \psi_d^*(\bm{\eps}_d)
	\leq
	\sum_{j=1}^{ p_d}\left[\Phi\del[0]{-r_d}+\sup_{x\in\R}\left|\P\left(\|\bm{\eps}_d\|_j^j - d \lambda_j(0) \leq x \sqrt{d \sigma^2_j}\right)-\Phi(x)\right|\right].
\end{equation*}
Using the Berry-Esseen theorem and Lemma~\ref{lem:mom_gauss}, together with a straightforward computation (note that~$\sigma^2_p = \mathbb{E}(|\varepsilon_1|^{2p}) - (\mathbb{E}(|\varepsilon_1|^{p}))^2$), it now follows that for constants~$C, D \in (0, \infty)$ (both independent of~$d$)
\begin{equation*}
	\E \psi_d^*(\bm{\eps}_d) \leq p_d \Phi(-r_d) +C\frac{p_d}{\sqrt{d}}\sup_{p\in[1,p_d]}\frac{\E|\eps_1|^{3p}}{\sigma^3_p} 
	\leq
	p_d \Phi(-r_d)+CD\frac{ p_d}{\sqrt{d}}\left(
	3/2
	\right)^{3p_d/2} \to 0,
\end{equation*}
the convergence following from~\eqref{eqn:pdas}. To establish~\eqref{eqn:mmad}, it hence remains to be shown that for any~$p\in(0,\infty)$, one has that 
\begin{align*}
	\inf_{\bm{\theta}_d\in \mathbb{V}_{p,d}}\E\psi_d^*(\bm{\theta}_d+\bm{\eps}_d)\to 1, \quad \text{as }d\to\infty.
\end{align*}
To this end, fix~$p\in(0,\infty)$ and let~$d$ be sufficiently large to ensure that~$J := \max(3, \lceil p \rceil) \leq p_d$ (recall that~$p_d \uparrow \infty$ by assumption). Then, it follows from the definition of~$\psi_d^*$ that
\begin{align*}
	\psi_d^*(\cdot)
	\geq
	\mathds{1}\cbr[0]{||\cdot||_{J}\geq \kappa_{J,d}},
\end{align*}
from which we can conclude that
\begin{align*}
	\inf_{\bm{\theta}_d\in \mathbb{V}_{p,d}}\E\psi_d^*(\bm{\theta}_d+\bm{\eps}_d)
	\geq
	\inf_{\bm{\theta}_d\in \mathbb{V}_{d,p}}\P(||\bm{\theta}_d+\bm{\eps}_d||_{J}\geq \kappa_{J,d})
	\geq
	\inf_{\bm{\theta}_d\in \mathbb{V}_{J,d}}\P(||\bm{\theta}_d+\bm{\eps}_d||_{J}\geq \kappa_{J,d}).
\end{align*}
From~$\overline{r}_{d}:=\inf_{q\in(0,J ]}\frac{r_{q,d}}{r^*_{q,d}} \geq r_d$ we now obtain
\begin{align*}
	\kappa_{J,d} = \left(
	r_d\sqrt{d\sigma^2_{J}}+d\lambda_{J}(0)\right)^{1/J}
	\leq \left(
	\overline{r}_{d}\sqrt{d\sigma^2_{J}}+d\lambda_{J}(0)\right)^{1/J} =: \kappa_d,
\end{align*}
so that~$$ \inf_{\bm{\theta}_d\in \mathbb{V}_{J,d}}\P(||\bm{\theta}_d+\bm{\eps}_d||_{J}\geq \kappa_{J,d}) \geq \inf_{\bm{\theta}_d\in \mathbb{V}_{J,d}}\P\del[1]{||\bm{\theta}_d+\bm{\eps}_d||_J\geq \kappa_d}
\to 
1,$$ where the convergence follows from Theorem~\ref{thm:minimax_p} (applied with~``$p = J$'').

\section{Supplementary general results \emph{not} imposing Gaussianity}\label{app:AUX}

Throughout Appendix~\ref{app:AUX} \emph{we do no longer assume that~$\varepsilon_i$ is standard normal} (although we still assume that the~$\varepsilon_i$ are i.i.d.), but we shall work under weaker conditions, which are imposed whenever needed. In particular, different (but overlapping) sets of conditions will be used depending on whether~$p \in (0, \infty)$ or~$p = \infty$. This allows us to establish more general results. 

We shall throughout denote the cdf of~$\varepsilon_i$ by~$F$, and we write~$\overline{F} = 1-F$. Since the~$\varepsilon_i$ are i.i.d.~throughout, an assumption imposed on the distribution of~$\varepsilon_1$ carries over to the distribution of every~$\varepsilon_{i}$ for~$i \in \N$. In formulating our assumptions we denote~$\varepsilon_1$ by~$\varepsilon$ for simplicity.

Note that in this more general framework, for a sequence of tests~$\varphi_d$ the statement whether or not~$\varphi_d \in \mathbb{T}_{\alpha}$ for a given~$\alpha \in (0, 1)$ can depend on~$F$. That is,~$\mathbb{T}_{\alpha}$ depends on~$F$, which we shall highlight by writing~$\mathbb{T}_{\alpha, F}$ in this section. Similarly, the consistency set~$\mathscr{C}(\varphi_d)$ of~$\varphi_d$ may depend on~$F$, which highlight by writing~$\mathscr{C}_F(\varphi_d)$.

\subsection{Consistency of~$p$-norm based tests with finite~$p > 0$}\label{appss:pfin}

We start with some auxiliary results.

\subsubsection{Assumptions and auxiliary results}\label{app:aux}

For every~$p \in (0, \infty)$ define the function~
\begin{equation}\label{eqn:ldef}
	\lambda_p:\R\to [0,\infty] \quad \text{ via } \quad s\mapsto\E|\eps+s|^p,
\end{equation}
(this notation was already used in Appendix~\ref{app:minimaxproofs} in the Gaussian case). Note that~$\lambda_p$ also depends on~$F$, but we do not highlight this notationally. Our main assumption on~$\varepsilon$ in Appendix~\ref{appss:pfin} is the following.

\begin{assumption}\label{ass:pfindist}
	The following conditions hold:	
	\begin{enumerate}
		\item $F(x) = 1-F(-x)$ for every~$x \in \R$;
		\item $F$ is twice continuously differentiable (with first and second derivative~$f$ and~$f'$, respectively);
		\item $f'$ restricted to~$(0, \infty)$ is non-positive, and is non-decreasing on~$(M, \infty)$ for some~$M > 0$.
	\end{enumerate} 	
\end{assumption}

\begin{remark}\label{rem:pfindist} Assumption~\ref{ass:pfindist} is obviously satisfied for normal distributions with mean zero (and positive variance). But it clearly holds much more generally.
\end{remark}

\begin{remark}\label{rem:fpbd}
	Note that under Assumption~\ref{ass:pfindist} the derivative~$f': \R \to \R$ is bounded.
\end{remark}

The following auxiliary results suitably extend statements contained in Lemma~3.2 of~\cite{ingster}, Lemma~4.2 of~\cite{pinelis2010asymptotic} and Sections~5.7 and~5.9 in the latter reference from the Gaussian case to distributions satisfying Assumption~\ref{ass:pfindist}. Recall the definition of~$g_p$ from Equation~\eqref{eqn:rhodef}.

\begin{lemma}\label{lem:suff0behav}
	Under Assumption~\ref{ass:pfindist} and if~$q \in (0, \infty)$ is such that~$\mathbb{E}(|\varepsilon|^q) < \infty$, then
	\begin{equation}\label{eqn:0behaveq}
		c_p^{-1} g_p(s) \leq \lambda_p(s)-\lambda_p(0) \leq c_p g_p(s) \quad \text{ for every } s \in \R,
	\end{equation}
	for every~$p \in (0, q]$ and a suitable constant~$c_p \in (0, \infty)$ (that may depend on~$F$). Furthermore, for every such~$p$ the function~$\lambda_p$ is real-valued, continuous and even.
\end{lemma}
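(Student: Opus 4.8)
The plan is to work throughout with $\psi(s) := \lambda_p(s)-\lambda_p(0)$ and to exploit that, by Remark~\ref{rem:arb}, it suffices to prove the two-sided bound with $g_p$ replaced by $g_p^{(M_0)}$ for any convenient $M_0 > 0$; that is, it is enough to show $\psi(s)\asymp s^2$ on $[0,M_0]$ and $\psi(s)\asymp s^p$ on $[M_0,\infty)$. First I would dispose of the three ``soft'' assertions. Real-valuedness of $\lambda_p$ follows from $|\varepsilon+s|^p\le 2^{(p-1)_+}(|\varepsilon|^p+|s|^p)$ together with $\E|\varepsilon|^p<\infty$ (which holds since $p\le q$ and $|\varepsilon|^p\le 1+|\varepsilon|^q$); evenness of $\lambda_p$ is immediate from the symmetry $\varepsilon\overset{d}{=}-\varepsilon$ in Assumption~\ref{ass:pfindist}(1); and continuity follows from dominated convergence, the map $s\mapsto|\varepsilon+s|^p$ being continuous and dominated on compacta by the integrable $(|\varepsilon|+S)^p$. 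Since both $\psi$ and $g_p$ are even, I may restrict to $s\ge 0$.

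The analytic core is a representation of $\psi'$. Writing $\lambda_p(s)=\int_{\R}|y|^p f(y-s)\,dy$ (a convolution, using that $f$ is even) and differentiating under the integral gives, after substituting and using that $f'$ is odd, the identity $\psi'(s)=\int_0^\infty\big(|x+s|^p-|x-s|^p\big)\big(-f'(x)\big)\,dx$ for $s>0$. Both factors in the integrand are nonnegative (the bracket because $x+s\ge|x-s|$, and $-f'\ge 0$ on $(0,\infty)$ by Assumption~\ref{ass:pfindist}(3)), so $\psi$ is nondecreasing on $[0,\infty)$ with $\psi\ge 0$ --- a fact I would use for positivity and for gluing the two regimes. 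Justifying the differentiation under the integral is where Assumption~\ref{ass:pfindist} is really used: $f'$ is bounded (Remark~\ref{rem:fpbd}) and, by the tail monotonicity of $f'$ on $(M,\infty)$, one can dominate $|f'(y-s)|$ uniformly over $s$ in a compact set by a function integrable against $|y|^p$, the relevant integral $\int|y|^p|f'(y)|\,dy$ being finite by one integration by parts and $\E|\varepsilon|^q<\infty$.

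With this representation in hand, the large-$s$ regime is routine: for the upper bound $\psi(s)\le\E(|\varepsilon|+s)^p\le C_p s^p$ for $s\ge 1$, and for the lower bound I restrict to $\{\varepsilon\ge 0\}$ to get $\lambda_p(s)\ge s^p\,\P(\varepsilon\ge0)=s^p/2$, so $\psi(s)\ge s^p/2-\E|\varepsilon|^p\ge \tfrac14 s^p$ once $s\ge M_0:=\max\{1,(4\E|\varepsilon|^p)^{1/p}\}$. The small-$s$ regime is the crux. Here I would compute $\lim_{s\downarrow0}\psi(s)/s^2$. From the $\psi'$-identity, $\psi'(s)/s\to 2p\,I_p$ with $I_p:=\int_0^\infty x^{p-1}\big(-f'(x)\big)\,dx$, whence $\psi(s)/s^2\to p\,I_p$ by L'H\^opital. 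The decisive point --- and the main obstacle for small $p$ --- is that $I_p\in(0,\infty)$: the naive second-order expansion $\tfrac12\int|y|^p f''(y)\,dy$ is unavailable (only $f\in C^1$ is assumed, and $\int|y|^{p-2}f$ diverges for $p\le 1$), but the singularity of $x^{p-1}$ at the origin is tamed because $f'(0)=0$ (as $f'$ is odd and continuous), so $x^{p-1}(-f'(x))$ is bounded near $0$ and integrable there for every $p>0$; finiteness at infinity again follows from $\E|\varepsilon|^q<\infty$ and the tail behaviour, and positivity is clear since $-f'\ge0$ and is not identically zero.

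Finally I would conclude by continuity and compactness: $\psi(s)/s^2$ is continuous and strictly positive on $(0,M_0]$ and has the positive finite limit $p\,I_p$ at $0$, hence is bounded above and below by positive constants on $[0,M_0]$, giving $\psi\asymp s^2$ there; combined with $\psi\asymp s^p$ on $[M_0,\infty)$ this yields $\psi\asymp g_p^{(M_0)}$, and therefore $\psi\asymp g_p$ by Remark~\ref{rem:arb}, which is exactly \eqref{eqn:0behaveq}. I expect the genuinely delicate steps to be the justification of differentiation under the integral sign and the integrability of $I_p$ at the origin for $p\le 1$; both hinge on the symmetry and regularity built into Assumption~\ref{ass:pfindist}, and neither involves more than one integration by parts together with the standing moment hypothesis.
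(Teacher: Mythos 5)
Your proposal is correct, and its skeleton matches the paper's: soft facts by dominated convergence, differentiation under the integral justified by boundedness of $f'$ plus its tail monotonicity and one integration by parts against the moment hypothesis, L'H\^opital at the origin with positivity coming from $-f'\geq 0$ and $-f'\not\equiv 0$, and a continuity/compactness gluing. The technical route differs in three genuine ways. First, the paper differentiates the layer-cake representation $\int_0^\infty\left[F(-x^{1/p}+s)+F(-x^{1/p}-s)\right]dx$ \emph{twice}, establishing $\lambda_p'(0)=0$ and $\lambda_p''(0)\in(0,\infty)$; you differentiate the convolution form only \emph{once} and evaluate $\lim_{s\downarrow 0}\psi'(s)/s = 2pI_p$ directly by dominated convergence, so you never need a second derivative (your $pI_p$ equals the paper's $\lambda_p''(0)/2$ after substituting $x=t^p$, so the computations agree). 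Second, at infinity the paper computes the exact limit $|s|^{-p}(\lambda_p(s)-\lambda_p(0))\to 1$ by dominated convergence and then runs a single global argument on the ratio $(\lambda_p(s)-\lambda_p(0))/g_p(s)$, whereas you use the cruder but more elementary bounds $\lambda_p(s)\geq s^p/2$ (restricting to $\{\varepsilon\geq 0\}$) and $\lambda_p(s)\leq C_p s^p$, gluing at an explicit threshold $M_0$ via the elementary equivalence $g_p^{(M_0)}\asymp g_p$ quoted in Remark~\ref{rem:arb} (not circular, since only the displayed inequality there is invoked, which concerns the functions $g_p^{(M)}$ and $g_p$ alone). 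Third, one local claim of yours is inaccurate, though harmlessly so: for $p<1$, $f'(0)=0$ and continuity of $f'$ give $-f'(x)=o(1)$ near the origin, not $O(x^{1-p})$, so $x^{p-1}\left(-f'(x)\right)$ need not be \emph{bounded} there; but boundedness is not needed, since $\sup|f'|<\infty$ and $\int_0^1 x^{p-1}\,dx<\infty$ already give the integrability you actually use — the step you single out as the main obstacle for small $p$ is in fact immediate, and $f'(0)=0$ plays no role in it.
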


\begin{proof}
	Let~$p \in (0, q]$. Note that~$\lambda_p$ is continuous and real valued because of~$\mathbb{E}(|\varepsilon|^p)^{1/p} \leq \mathbb{E}(|\varepsilon|^q)^{1/q} < \infty$ together with the Dominated Convergence Theorem (note that~$(a+b)^p \leq 2^p(a^p + b^p)$ for every~$a,b\geq 0$). Note furthermore that~$\lambda_p$ is even since~$f$ is even. This proves the statements in the last sentence of the lemma.
	
	We now claim that~\eqref{eqn:0behaveq} follows if we can show that
	\begin{equation}\label{eqn:0behav}
		\text{(a)}~~\lim_{s \to 0;  s \neq 0} s^{-2}\left(\lambda_p(s) - \lambda_p(0)\right)  \in (0, \infty),  ~ \text{ and } ~ \text{(b)}~~ \lambda_p(s) > \lambda_p(0) \text{ for every } s \neq 0.
	\end{equation}	
	To see that this claim is correct, first note that by the Dominated Convergence Theorem
	\begin{equation}\label{eqn:inflim}
		\lim_{s \to \pm\infty} |s|^{-p}\left(\lambda_p(s) - \lambda_p(0)\right) = \lim_{s \to \pm\infty} \E||s|^{-1}\eps+\mathrm{sign}(s)|^p = 1.
	\end{equation}
	Next, by~\eqref{eqn:0behav}, the function~$r(s) := (\lambda_p(s) - \lambda_p(0))/g_p(s)$,~$s \neq 0$, extends to a positive and continuous function on~$\R$, which we denote by~$r$ as well, and for which~\eqref{eqn:inflim} delivers~$$0 < a_0 := \inf_{s \in \R} r(s) \leq \sup_{s \in \R} r(s) =: a_1< \infty.$$ Hence~\eqref{eqn:0behaveq} follows. To verify~\eqref{eqn:0behav} set~$I(x, s) :=
	F(-x^{1/p}+s)
	+
	F(-x^{1/p}-s)$ for~$x \geq 0$ and~$s \in \R$, and use~$F(x) = 1-F(-x)$ to write
	\begin{equation}\label{eqn:intfomega}
		\lambda_p(s) = \int_{0}^{\infty}
		\P(|\varepsilon + s|^p > x)dx =  \int_{0}^{\infty}  I(x, s) dx.
	\end{equation}
	For every~$x \in \R$, using that~$f$ is even and the mean-value theorem,
	\begin{equation}\label{eqn:derivh}
		\begin{aligned}
			\partial_s I(x,s) &= f(x^{1/p} - s) - f(x^{1/p} + s) = -2s  f'(x^{1/p} + \tilde{s}),~\tilde{s} \in [-s, s], \\
			\partial^2_s I(x,s) &= - f'(x^{1/p} - s) - f'(x^{1/p} + s).
		\end{aligned}
	\end{equation}
	For every~$\epsilon > 0$, it thus follows that
	\begin{equation}\label{eqn:diffdom}
		\sup_{|s| \leq \epsilon, i = 1, 2}|\partial_s^i I(x,s)| \leq 2(\epsilon + 1) \sup_{|s| \leq \epsilon}|f'(x^{1/p} + s)|.
	\end{equation}
	Therefore, if, for every~$\epsilon > 0$, the function~$x \mapsto \sup_{|s| \leq \epsilon}|f'(x^{1/p} + s)|$ has a majorant~$m_{\epsilon}: (0, \infty) \to [0, \infty)$, say, that is Lebesgue integrable over~$(0, \infty)$, then, by the Dominated Convergence Theorem, we can differentiate twice under the integral in~\eqref{eqn:intfomega} at every~$s_0 \in \R$. By Remark~\ref{rem:fpbd}, we have~$|f'| \leq L$, say, and for~$\epsilon > 0$ set
	\begin{equation}
		m_{\epsilon}(x) :=
		\begin{cases}
			-f'(x^{1/p} - \epsilon) & \text{ if } x > (M+\epsilon)^p,
			\\
			L & \text{ if } x \in (0,  (M+\epsilon)^p].
		\end{cases}
	\end{equation}
	By assumption~$-f'$ is non-negative on~$(0, \infty)$ and non-increasing on~$(M, \infty)$. Thus,~$$0 \leq -f'(x^{1/p} + s) \leq -f'(x^{1/p} - \epsilon) = m_{\epsilon}(x) \text{ for every } x > (M+\epsilon)^p \text{ and every } |s|\leq \epsilon.$$ Hence, it remains to show that~$m_{\epsilon}$ is Lebesgue integrable. Since~$\mathbb{E}(|\varepsilon|^p)$ is finite, there exists a sequence~$b_m \in (1, \infty)$ such that~$b_m \uparrow \infty$ and~$b_m^p f(b_m) \to 0$. Because~$f'$ is bounded, and by monotone convergence, it suffices to show that the sequence~$\int_{(1+\epsilon)^p}^{(b_m + \epsilon)^p} -f'(x^{1/p} - \epsilon)dx = p \int_{1}^{b_m} -(x+\epsilon)^{p-1}f'(x)dx$ is bounded. But integration by parts shows that
	\begin{equation}\label{eqn:diffxpbd}
		\int_{1}^{b_m} -x^{p} f'(x)dx \leq f(1)-b_m^p f(b_m) + p\mathbb{E}(|\varepsilon|^p) \to f(1) + p\mathbb{E}(|\varepsilon|^p),
	\end{equation}
	where we used that~$\int_{1}^{b_m}x^{p-1}f(x) dx \leq \mathbb{E}(|\varepsilon|^p)$, and from which the boundedness condition follows. Differentiating twice under the integral in~\eqref{eqn:intfomega} now gives
	\begin{equation*}
		\lambda'_p(s) = \int_{0}^{\infty} f(x^{1/p} - s) - f(x^{1/p} + s) dx; ~  \lambda''_p(s) = - \int_{0}^{\infty} f'(x^{1/p} - s) + f'(x^{1/p} + s) dx.
	\end{equation*}
	By the Dominated Convergence Theorem both derivatives are continuous in~$s$ as~$f$ and~$f'$ are continuous (recall the majorant established above). 
	Thus~\eqref{eqn:0behav}(a) follows by L'Hospital' rule from~$\lambda_p'(0) = 0$ and~$\lambda_p''(0)$ positive (and finite) because~$-f'$ cannot vanish identically and is nonnegative everywhere on~$(0, \infty)$. From Parts~1 and~3 of Assumption~\ref{ass:pfindist} together with the previous display it readily follows that~$\lambda'_p(s) \geq 0$ for every~$s \geq 0$. But then~\eqref{eqn:0behav}(b) follows as well, because~\eqref{eqn:0behav}(a) already shows that~$\lambda_p(s) > \lambda_p(0)$ for every~$s$ in an open neighborhood of~$0$, and~$\lambda_p$ is even.
\end{proof}

\begin{lemma}\label{lem:pbounds}
	Under Assumption~\ref{ass:pfindist} and if~$q \in [2, \infty)$ is such that~$\mathbb{E}(|\varepsilon|^{q}) < \infty$, then the following holds:
	\begin{enumerate}[label=\textbf{\arabic*.}]
		\item  For every~$p \in (0, \infty)$ such that~$2 \lceil p/2 \rceil \leq q$ the function~$\Delta_p: \R \setminus (-1, 1) \to \R$ defined via
		\begin{equation}\label{eqn:pboundI}
			\Delta_p(s) := s^2 \left(|s|^{-p} \lambda_p(s) - 1\right)
		\end{equation}
		is bounded and continuous
		\item For every~$p \in (0, \infty)$ such that~$2 \lceil p \rceil \leq q$ there exists a constant~$C'_p \in (0, \infty)$, such that for every~$s \in \R$ we have
		\begin{equation}\label{eqn:pboundII}
			\left|\mathbb{V}ar|\eps+s|^p-\mathbb{V}ar|\eps|^p\right| 
			\leq
			C'_p\left[s^2\mathds{1}\{|s|\leq 1\}+(1+|s|^{2p-2})\mathds{1}\{|s|>1\}\right]. 
		\end{equation}
		\item For every~$p \in (0, \infty)$ such that~$2 \lceil 2p \rceil \leq q$, there exists a constant~$C''_{p} \in (0, \infty)$, such that for every~$s \in \R$ we have
		\begin{align}\label{eqn:pboundIII}
			\E\left[|\eps+s|^p-\E|\eps+s|^p\right]^{4} \leq C''_{p} \left[1+|s|^{4p - 2}\mathds{1}\{|s|>1\}\right].
		\end{align}
	\end{enumerate}
	
\end{lemma}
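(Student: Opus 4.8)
The plan is to prove the three parts in order, using Part~1 as the engine for Parts~2 and~3. Throughout I would exploit the symmetry of $F$ in Part~1 of Assumption~\ref{ass:pfindist}, which forces $\E(\eps)=0$ and thereby annihilates all first-order terms, together with the moment bound $\E|\eps|^q<\infty$ to control tails. Since $\lambda_p$ is even by Lemma~\ref{lem:suff0behav}, all three quantities are even in $s$, so I would assume $s>0$ without loss of generality and write $\lambda_p(s)=s^p\,\E|1+\eps/s|^p$, i.e. $\Delta_p(s)=s^2\left(\E|1+\eps/s|^p-1\right)$.

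For Part~1, continuity of $\Delta_p$ on $\{|s|\ge 1\}$ is immediate from continuity of $\lambda_p$ (Lemma~\ref{lem:suff0behav}) and of $s\mapsto|s|^{-p}$ there, so the real content is boundedness, which by compactness reduces to the regime $s\to\infty$. Here I would split the expectation at $\{|\eps|\le s/2\}$ versus $\{|\eps|>s/2\}$. On the central event $1+\eps/s$ is bounded away from $0$, so a second-order Taylor expansion of $u\mapsto|1+u|^p$ applies: the linear term integrates to $0$ by symmetry, the quadratic term contributes $\tfrac{p(p-1)}{2}\E(\eps^2\mathds{1}\{|\eps|\le s/2\})$ after multiplying by $s^2$, and the cubic remainder is $O(s^{-1}\E[|\eps|^3\mathds{1}\{|\eps|\le s/2\}])$. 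On the tail I would use $|1+\eps/s|^p\le 3^p|\eps/s|^p$ together with $\E[|\eps|^p\mathds{1}\{|\eps|>s/2\}]\le (s/2)^{p-q}\E|\eps|^q$ and $s^2\P(|\eps|>s/2)\le C s^{2-q}$. The hypothesis $2\lceil p/2\rceil\le q$ guarantees the relevant moments are finite and that each piece stays bounded, yielding the clean two-term expansion $\lambda_p(s)=|s|^p+|s|^{p-2}\Delta_p(s)$ with $\Delta_p$ bounded.

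Part~2 is where the main subtlety lies. Writing $\mathbb{V}ar|\eps+s|^p=\lambda_{2p}(s)-\lambda_p(s)^2$, the naive triangle-inequality estimate via Lemma~\ref{lem:suff0behav} only produces an $O(|s|^{2p})$ bound, which is two orders too crude; the point is that the leading $|s|^{2p}$ terms must cancel. I would therefore apply Part~1 to \emph{both} exponents $p$ and $2p$ --- the latter requiring precisely $2\lceil (2p)/2\rceil=2\lceil p\rceil\le q$, the stated hypothesis --- to obtain, for $|s|>1$, the expansions $\lambda_p(s)=|s|^p+|s|^{p-2}\Delta_p(s)$ and $\lambda_{2p}(s)=|s|^{2p}+|s|^{2p-2}\Delta_{2p}(s)$ with $\Delta_p,\Delta_{2p}$ bounded. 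Substituting and squaring makes the $|s|^{2p}$ terms cancel, leaving $\lambda_{2p}(s)-\lambda_p(s)^2=|s|^{2p-2}[\Delta_{2p}(s)-2\Delta_p(s)]-|s|^{2p-4}\Delta_p(s)^2=O(|s|^{2p-2})$ for $|s|>1$, while subtracting the constant $\mathbb{V}ar|\eps|^p$ is absorbed into the ``$1+$'' term. For $|s|\le 1$ I would instead invoke the $s^2$-bounds of Lemma~\ref{lem:suff0behav} directly, matching the $s^2\mathds{1}\{|s|\le 1\}$ term in \eqref{eqn:pboundII}.

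The same two-orders-too-large obstacle reappears at the level of the fourth moment in Part~3, and I would sidestep the mean by re-centering: since $(\E X-a)^4\le\E(X-a)^4$ by Jensen, one has $\E(X-\E X)^4\le 16\,\E(X-a)^4$ for any constant $a$. Taking $X=|\eps+s|^p$ and $a=|s|^p$ reduces \eqref{eqn:pboundIII} to $\E(|\eps+s|^p-|s|^p)^4=|s|^{4p}\,\E(|1+\eps/s|^p-1)^4\le C(1+|s|^{4p-2})$. Splitting once more at $\{|\eps|\le s/2\}$, the central region gives $|1+\eps/s|^p-1=O(|\eps|/s)$ and hence an $O(|s|^{4p-4})$ contribution, while the tail is bounded by $\E[|\eps|^{4p}\mathds{1}\{|\eps|>s/2\}]$ and $|s|^{4p}\P(|\eps|>s/2)$, both $O(|s|^{4p-q})=O(1)$ under $2\lceil 2p\rceil\le q$ (which forces $\E|\eps|^{4p}<\infty$); the case $|s|\le 1$ is again handled by boundedness on a compact set. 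The genuine difficulty across the lemma is the leading-order cancellation in Parts~2 and~3, which is invisible to term-by-term absolute-value estimates; everything hinges on first securing the expansion $\lambda_r(s)=|s|^r+|s|^{r-2}\Delta_r(s)$ from Part~1 and then tracking the cancellation exactly, after which the Taylor remainders and truncated-moment tail bounds are routine once the moment bookkeeping is in place.
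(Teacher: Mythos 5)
Your proposal is correct, and it departs from the paper's proof in two of the three parts. Part~2 coincides with the paper's argument: both write $\mathbb{V}ar|\eps+s|^p=\lambda_{2p}(s)-\lambda_p^2(s)$, apply Part~1 to the exponents $p$ and $2p$ (the latter being exactly where $2\lceil p\rceil\le q$ enters), and exploit the cancellation of the $|s|^{2p}$ terms, with the quadratic bounds of Lemma~\ref{lem:suff0behav} handling $|s|\le 1$. In Part~1, however, the paper treats the two directions asymmetrically: its upper bound comes from Jensen's inequality applied to $x\mapsto x^{p/(2\lceil p/2\rceil)}$ followed by a binomial expansion of $\E(\eps/s+1)^{2\lceil p/2\rceil}$ --- the even-integer moment $2\lceil p/2\rceil\le q$ is precisely what legitimizes that expansion --- while only its lower bound uses truncation plus Taylor; you run truncation plus a third-order Taylor expansion in both directions, which is more unified and in fact needs only $p\le q$ and $q\ge 2$, so you prove the claim under a (harmlessly) weaker hypothesis. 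In Part~3 the divergence is greatest: the paper expands $\E\left[|\eps+s|^p-\lambda_p(s)\right]^4=\sum_{i=0}^4\binom{4}{i}\lambda_{ip}(s)\left(-\lambda_p(s)\right)^{4-i}$, substitutes $\lambda_{ip}(s)=s^{ip}\left(s^{-2}\Delta_{ip}(s)+1\right)$, and verifies the exact cancellation of the $s^{4p}$ terms via the binomial formula, which requires Part~1 for all exponents $ip$, $i\le 4$; your re-centering inequality $\E(X-\E X)^4\le 16\,\E\left(X-|s|^p\right)^4$ replaces this bookkeeping with a one-line convexity/Jensen bound plus direct truncation estimates, bypassing Part~1 entirely --- a genuinely cleaner route to the same bound. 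One small imprecision: in your central-region estimate for Part~3, $\E\left[\eps^4\mathds{1}\{|\eps|\le s/2\}\right]$ need not be bounded when $q<4$ (only $\E|\eps|^q<\infty$ is assumed), so that contribution is of order $|s|^{4p-4}\vee|s|^{4p-q}$ rather than $|s|^{4p-4}$; since $4p\le 2\lceil 2p\rceil\le q$, this is still $O\left(1+|s|^{4p-2}\right)$, so the conclusion stands, and the fix is exactly the truncated-moment device you already use for the tails.
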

\begin{proof}
	\textbf{1.}: Fix~$p \in (0, \infty)$ such that~$\tilde{p} := \lceil p/2 \rceil \leq q/2$. By Lemma~\ref{lem:suff0behav} the function~$\lambda_p$ is real-valued, continuous and even. These properties carry over to~$\Delta_p$. To show that~$\Delta_p$ is bounded, it remains to verify that~$\limsup_{s\to \infty} \Delta_p(s) < \infty$ and that~$\liminf_{s\to \infty} \Delta_p(s) > - \infty$. Concerning the former, let~$s > 0$ be large enough such that~$|s^{-2\tilde{p}}\lambda_{2\tilde{p}}(s) - 1|<1/2$ holds (recall that~$s^{-2\tilde{p}}\lambda_{2\tilde{p}}(s) \to 1$ as~$s \to \infty$ was shown in~\eqref{eqn:inflim}). By Jensen's inequality and the mean-value theorem (both applied to~$x\mapsto x^{p/(2\tilde{p})}$) there exists a constant~$D > 0$ (independent of~$s$ in the range we consider) such that 
	\begin{equation*}
		s^{-p}\lambda_p(s)-1 \leq\left( \mathbb{E}\left((\varepsilon/s + 1)^{2\tilde{p}}\right)\right)^{p/(2\tilde{p})}-1 \leq D\left|
		\mathbb{E}\left((\varepsilon/s + 1)^{2\tilde{p}}\right)-1
		\right|.
	\end{equation*}
	Now, we write
	\begin{equation*}
		\mathbb{E}\left((\varepsilon/s + 1)^{2\tilde{p}}\right)-1 = \sum_{i = 1}^{2\tilde{p}} {2\tilde{p} \choose i}\frac{\mathbb{E}(\varepsilon^i)}{s^i} = \sum_{i = 1}^{\tilde{p}} {2\tilde{p} \choose 2i}\frac{\mathbb{E}(\varepsilon^{2i})}{s^{2i}},
	\end{equation*}
	the last equality following from symmetry of~$F$. Multiplying by~$s^2$ and letting~$s \to \infty$ we obtain~${2\tilde{p} \choose 2}\mathbb{E}(\varepsilon^{2}) < \infty$, which takes care of the limit superior. Next, we observe that
	\begin{equation*}
		s^{-p}\lambda_p(s)-1 \geq \mathbb{E}\left((\varepsilon/s + 1)^{p} \mathds{1}\{|\varepsilon|/s < 1/2\}\right)-1.
	\end{equation*}
	Expanding to the second order shows that for every~$x \in (-1/2, 1/2)$ the difference~$(x+1)^p - (1 + p x)$ equals~$\frac{p(p-1)}{2} (1+\tilde{x})^{p-2} x^2$ for an~$\tilde{x} \in (-1/2, 1/2)$. 
	In particular the difference is not smaller than~$-D' x^2$ for a constant~$D' > 0$ (treating separately the cases~$p \in (0, 1)$ and~$p \in [1, \infty)$). We can thus lower-bound~$s^{-p}\lambda_p(s)-1$ further by
	\begin{align*}
		&\mathbb{E}\left((p\varepsilon/s + 1) \mathds{1}\{|\varepsilon|/s < 1/2\}\right)-1 - D' \mathbb{E}((\varepsilon/s)^2) \\
		= &
		- \P(|\varepsilon|/s \geq 1/2) - D' \mathbb{E}((\varepsilon/s)^2) \geq - (4 + D') \mathbb{E}(\varepsilon^2) s^{-2};
	\end{align*}
	where we used~$\mathbb{E}(\varepsilon \mathds{1}\{|\varepsilon|/s < 1/2\}) = 0$ (by symmetry of~$F$) to obtain the equality, and the last inequality follows from Markov's inequality. This proves the statement concerning the limit inferior.
	
	\textbf{2.}: Fix~$p \in (0, \infty)$ such that~$2 \lceil p \rceil \leq q$. By symmetry of~$F$, it suffices to prove~\eqref{eqn:pboundII} for~$s \geq 0$. We start with the case~$s\geq 1$.  Writing~$\lambda_{\tilde{p}}(s) = s^{\tilde{p}}(s^{-2} \Delta_{\tilde{p}}(s) + 1)$  for~$\tilde{p} \in \{p, 2p\}$ shows that
	\begin{align*}
		\envert[1]{\mathbb{V}ar|\eps+s|^p-\mathbb{V}ar|\eps|^p}
		&=
		\envert[1]{\lambda_{2p}(s)-\lambda_{p}^2(s)-\lambda_{2p}(0)+\lambda_p^2(0)}\\
		&\leq
		s^{2p-2} \envert[1]{\Delta_{2p}(s) - 2\Delta_p(s) - s^{-2} \Delta_p^2(s) }+\envert[1]{\lambda_{2p}(0)-\lambda_p^2(0)}.
	\end{align*}
	By~\textbf{1.}, the upper bound is dominated by~$1+s^{2p-2}$ times a positive constant. Hence we are done in this case. In case~$s<1$, applying Lemma~\ref{lem:suff0behav} to~$\tilde{p} \in \{p, 2p\}$, we have~$\lambda_{\tilde{p}}(s)-\lambda_{\tilde{p}}(0)\leq c_{\tilde{p}}s^2$; furthermore, the monotonicity property discussed in the last paragraph of the proof of Lemma~\ref{lem:suff0behav} (and the mean-value theorem) shows that
	\begin{align*}
		0 \leq \lambda_{p}^2(s)-\lambda_{p}^2(0)
		\leq
		2\max_{s \in [0, 1]}\lambda_{p}(s)[\lambda_{p}(s)-\lambda_{p}(0)]
		\leq
		2\lambda_{p}(1)c_{p}s^2,  
	\end{align*}
	Hence, for~$s < 1$ we obtain (cf.~the penultimate display)
	\begin{align*}
		\envert[1]{\mathbb{V}ar|\eps+s|^p-\mathbb{V}ar|\eps|^p}
		\leq
		|\lambda_{2p}(s)-\lambda_{2p}(0)|+|\lambda_{p}^2(s)-\lambda_p^2(0)|
		\leq (c_{2p} + 2\lambda_{p}(1) c_p) s^2,
	\end{align*}
	and the statement in~\eqref{eqn:pboundII} thus follows.
	
	\textbf{3.}: Let~$p \in (0, \infty)$ be such that~$2 \lceil 2p \rceil \leq q$. The Dominated Convergence Theorem shows that the function~$s \mapsto \E\envert[1]{|\eps+s|^p-\E|\eps+s|^p}^{4}$ is continuous; furthermore, by symmetry of~$F$, this function is even. To show~\eqref{eqn:pboundIII}, it hence suffices to verify that the function just defined divided by~$s^{4p-2}$ remains bounded as~$s \to \infty$. To this end, let~$s \geq 1$ and write 
	\begin{align}\label{eqn:binom3}
		\E\left[|\eps+s|^p-\E|\eps+s|^p\right]^{4} &= 
		\sum_{i = 0}^4 {4 \choose i} \lambda_{ip}(s) \left(-\lambda_{p}(s)\right)^{4-i},
	\end{align}
	where we set~$\lambda_0 \equiv 1$. Writing~$\lambda_{\tilde{p}}(s) = s^{\tilde{p}}(s^{-2} \Delta_{\tilde{p}}(s) + 1)$  for~$\tilde{p} \in \{ip: i = 0, 1, \hdots, 4\}$ and~$\Delta_0 \equiv 0$, shows that upon dividing~\eqref{eqn:binom3} by~$s^{4p-2}$ we obtain
	\begin{align*}
		&\sum_{i = 0}^4 {4 \choose i} \left(\Delta_{ip}(s) + s^2\right) \left(-(s^{-2}\Delta_p(s) + 1)\right)^{4-i} \\ = ~& \sum_{i = 0}^4 {4 \choose i} \Delta_{ip}(s) \left(-(s^{-2}\Delta_p(s) + 1)\right)^{4-i} + s^2[s^{-2}\Delta_p(s)]^4,
	\end{align*}
	where we used the Binomial formula in the last equality. We now conclude with~\textbf{1.}.
\end{proof}

From now on convergence in distribution as~$d \to \infty$ will be denoted by~``$\rightsquigarrow$.''

\begin{lemma}\label{lem:asyn}
	Under Assumption~\ref{ass:pfindist} and if~$q \in [2, \infty)$ is such that~$\mathbb{E}(|\varepsilon|^{q}) < \infty$, then the following holds for every~$p \in (0, \infty)$ such that~$2 \lceil 2p \rceil \leq q$: for every~$\bm{\vartheta} \in \bm{\Theta}$ and every subsequence~$d'$ of~$d$ along which the sequence~$d^{-1/2} \sum_{i = 1}^d g_p(\theta_{i,d})$ is bounded, we have 
	\begin{equation}\label{eqn:asyn}
		\frac{\|\bm{\theta}_{d'} + \bm{\varepsilon}_{d'}\|_p^p - \sum_{i = 1}^{d'} \lambda_p(\theta_{i,d'})}{\sqrt{d' \mathbb{V}ar|\varepsilon|^p}} \rightsquigarrow \mathbb{N}(0, 1);
	\end{equation}
	in particular, it holds that
	\begin{equation}\label{eqn:asynNULL}
		\frac{\|\bm{\varepsilon}_d\|_p^p - d \mathbb{E}|\varepsilon|^p}{\sqrt{d \mathbb{V}ar|\varepsilon|^p}} \rightsquigarrow \mathbb{N}(0, 1),
	\end{equation}
	so that for~$\alpha \in (0, 1)$ a sequence of critical values~$\kappa_d$ satisfies~$\{p, \kappa_d\} \in \mathbb{T}_{\alpha, F}$ if and only if
	\begin{equation}\label{eqn:CVpreal}
		\kappa_d = \left[\left(\Phi^{-1}(1-\alpha) + o(1)\right)\sqrt{d \mathbb{V}ar|\varepsilon|^p} + d \mathbb{E}|\varepsilon|^p \right]^{1/p}.
	\end{equation}
\end{lemma}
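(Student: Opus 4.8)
The plan is to recognize $\|\bm{\theta}_d + \bm{\varepsilon}_d\|_p^p = \sum_{i=1}^d |\theta_{i,d} + \varepsilon_i|^p$ as a sum of independent (but non-identically distributed) random variables $X_{i,d} := |\theta_{i,d}+\varepsilon_i|^p$ with means $\E X_{i,d} = \lambda_p(\theta_{i,d})$, and to establish \eqref{eqn:asyn} by Lyapunov's central limit theorem applied along the subsequence $d'$, followed by Slutsky's lemma to replace the true standard deviation $s_{d}:=\big(\sum_{i=1}^d \mathbb{V}ar|\varepsilon+\theta_{i,d}|^p\big)^{1/2}$ by the null normalization $\sqrt{d\,\mathbb{V}ar|\varepsilon|^p}$. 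Throughout I abbreviate $v_0 := \mathbb{V}ar|\varepsilon|^p$, which is positive since $\varepsilon$ is non-degenerate, and I exploit that the hypothesis supplies a constant $C$ with $\sum_{i=1}^{d'} g_p(\theta_{i,d'}) \leq C\sqrt{d'}$; since $g_p(x) = |x|^p$ for $|x|>1$ (cf.\ \eqref{eqn:rhodef}), this yields both $\sum_{i:\,|\theta_{i,d'}|>1} |\theta_{i,d'}|^p \leq C\sqrt{d'}$ and $\#\{i : |\theta_{i,d'}|>1\}\leq C\sqrt{d'}$.

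The variance step is to show $s_{d'}^2/(d' v_0)\to 1$. Writing $s_d^2 - d v_0 = \sum_{i=1}^d\big(\mathbb{V}ar|\varepsilon+\theta_{i,d}|^p - v_0\big)$ and invoking the bound \eqref{eqn:pboundII} of Lemma~\ref{lem:pbounds}, it suffices to show that $d^{-1}\sum_{i=1}^{d'}\big[\theta_{i,d'}^2\mathds{1}\{|\theta_{i,d'}|\leq 1\} + (1+|\theta_{i,d'}|^{2p-2})\mathds{1}\{|\theta_{i,d'}|>1\}\big]\to 0$. The small-coordinate part equals $d^{-1}\sum g_p$ restricted to $\{|\theta|\leq 1\}$, hence is $O(d'^{-1/2})$; the count $\#\{|\theta|>1\}/d'$ is $O(d'^{-1/2})$ as well, and when $2p-2\leq p$ (i.e.\ $p\leq 2$) so is $d'^{-1}\sum_{|\theta|>1}|\theta|^{2p-2}$, because then $|\theta|^{2p-2}\leq|\theta|^p$ for $|\theta|>1$. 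The only delicate case is $p>2$, where $2p-2>p$: here I would use the superadditivity inequality $\sum_i a_i^{\beta}\leq\big(\sum_i a_i\big)^{\beta}$, valid for $\beta\geq 1$ and $a_i\geq 0$, with $a_i = |\theta_{i,d'}|^p$ and $\beta = (2p-2)/p \geq 1$, to obtain $\sum_{|\theta|>1}|\theta|^{2p-2}\leq(C\sqrt{d'})^{(2p-2)/p}$, so that after division by $d'$ this is of order $d'^{-1/p}\to 0$.

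The Lyapunov condition with exponent $\delta = 2$ requires $s_{d'}^{-4}\sum_{i=1}^{d'}\E(X_{i,d'}-\E X_{i,d'})^4\to 0$, and since $s_{d'}^2\sim d' v_0$ this amounts to $d'^{-2}\sum_{i=1}^{d'}\E(X_{i,d'}-\E X_{i,d'})^4\to 0$. The fourth-moment bound \eqref{eqn:pboundIII} reduces this to showing $d'^{-2}\big(d' + \sum_{|\theta|>1}|\theta|^{4p-2}\big)\to 0$; the first summand is $O(d'^{-1})$, and the second I would again control by the same superadditivity/case split, giving $\sum_{|\theta|>1}|\theta|^{4p-2}\leq (C\sqrt{d'})^{\max(1,\,(4p-2)/p)}$, which is $o(d'^2)$ because the exponent $(4p-2)/p = 4-2/p$ stays strictly below $4$. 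Controlling these higher-order power sums from the single constraint $\sum g_p = O(\sqrt{d})$ is the main technical obstacle; everything hinges on the elementary but crucial observation that $\sum_i |\theta_i|^{\gamma}$ with $\gamma>p$ is maximized, subject to a fixed budget $\sum_i|\theta_i|^p$, by concentrating that budget on a single coordinate, i.e.\ on the superadditivity inequality.

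With the variance ratio converging to $1$ and the Lyapunov condition verified, Lyapunov's theorem gives $s_{d'}^{-1}\sum_{i=1}^{d'}\big(X_{i,d'}-\lambda_p(\theta_{i,d'})\big)\rightsquigarrow \mathbb{N}(0,1)$, and multiplying by $s_{d'}/\sqrt{d' v_0}\to 1$ yields \eqref{eqn:asyn} via Slutsky. Taking $\theta_{i,d}\equiv 0$ (for which $\sum g_p = 0$ is trivially bounded and $\lambda_p(0)=\E|\varepsilon|^p$) specializes this to the null statement \eqref{eqn:asynNULL}. Finally, for the critical-value characterization I would note that $\{p,\kappa_d\}\in\mathbb{T}_{\alpha,F}$ means $\P(\|\bm{\varepsilon}_d\|_p^p\geq\kappa_d^p)\to\alpha$; rewriting this as $\P\big(Z_d \geq (\kappa_d^p - d\E|\varepsilon|^p)/\sqrt{d v_0}\big)$ with $Z_d$ the standardized statistic of \eqref{eqn:asynNULL}, and using that $Z_d\rightsquigarrow\mathbb{N}(0,1)$ with $\Phi$ continuous and strictly increasing, a routine subsequence argument shows the probability tends to $\alpha\in(0,1)$ if and only if $(\kappa_d^p - d\E|\varepsilon|^p)/\sqrt{d v_0}\to\Phi^{-1}(1-\alpha)$, which upon rearranging and taking $p$-th roots is exactly \eqref{eqn:CVpreal}.
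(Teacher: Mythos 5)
Your proposal is correct and follows essentially the same route as the paper's proof: a Lyapunov CLT with fourth moments, the variance-ratio convergence and fourth-moment bounds controlled via Parts 2 and 3 of Lemma~\ref{lem:pbounds}, and your superadditivity inequality $\sum_i a_i^{\beta} \leq \left(\sum_i a_i\right)^{\beta}$ is exactly the paper's inequality $\|\cdot\|_r^r \leq \|\cdot\|_1^r$ applied with $r = 1 \vee ((2p-2)/p)$ and $r = 1 \vee ((4p-2)/p)$. The specialization to the zero array for \eqref{eqn:asynNULL} and the routine argument for the critical-value characterization \eqref{eqn:CVpreal} also match the paper's treatment.
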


\begin{proof}
	Fix~$p \in (0, \infty)$ such that~$2 \lceil 2p \rceil \leq q$. We give the proof under the assumption that~$d' \equiv d$, which only simplifies the notation. To this end, we verify Lyapunov's condition (with fourth moments) for 
	\begin{align*}
		\xi_{d,i}:= |\theta_{i,d}+\eps_i|^p-\lambda_p(\theta_{i,d}),~d \in \N,~i = 1, \hdots, d.
	\end{align*}
	From \textbf{3.}~of Lemma~\ref{lem:pbounds} we obtain
	\begin{align*}
		\sum_{i=1}^d\E\envert[1]{|\theta_{i,d}+\eps_i|^p-\E|\theta_{i,d}+\eps_i|^p}^4
		\leq
		C''_p \sbr[2]{d+\sum_{i=1}^d|\theta_{i,d}|^{4p-2}\mathds{1}\{|\theta_{i,d}|>1\}}.
	\end{align*}
	The inequality~$\| \cdot \|_r^r \leq \| \cdot \|_1^r$ for~$r\geq 1$, applied with~$r^* := 1\vee ((4p-2)/p)$ delivers
	\begin{align*}
		\sum_{i=1}^d|\theta_{i,d}|^{4p-2}\mathds{1}\{|\theta_{i,d}|>1\}
		&\leq
		\sum_{i=1}^d|\theta_{i,d}|^{pr^*}\mathds{1}\{|\theta_{i,d}|>1\} 
		\leq d^{\frac{r^*}{2}} \del[3]{d^{-1/2}\sum_{i=1}^dg_p(\theta_{i,d})
		}^{r^*}.  
	\end{align*}
	Since~$d^{-1/2} \sum_{i = 1}^d g_p(\theta_{i,d})$ is bounded, we obtain 
	\begin{align*}
		\sum_{i=1}^d\E||\theta_{i,d}+\eps_i|^p-\E|\theta_{i,d}+\eps_i|^p|^4 \leq C_p''\sbr[4]{d+ O\left(d^{r^*/2}\right)} = o\left(d^2\right).
	\end{align*}
	Hence, the Lyapunov condition and~\eqref{eqn:asyn} follow, upon showing that
	\begin{align}\label{eq:varestim}
		\frac{\sum_{i=1}^d\mathbb{V}ar|\theta_{i,d}+\eps_i|^p}{d\mathbb{V}ar|\varepsilon|^p}
		=1+\frac{\sum_{i=1}^d\mathbb{V}ar|\theta_{i,d}+\eps_i|^p-d\mathbb{V}ar|\eps|^p}{d\mathbb{V}ar|\varepsilon|^p}
		\to 1.
	\end{align}
	From \textbf{2.} of Lemma~\ref{lem:pbounds} we obtain
	\begin{equation}\label{eqn:vbdII}
		\left|\sum_{i=1}^d\mathbb{V}ar|\theta_{i,d}+\eps_i|^p-d\mathbb{V}ar|\eps|^p\right|\leq 2 C'_p \sum_{i=1}^d g_{p \vee (2p-2)}(\theta_{i,d}). 
	\end{equation}
	On the one hand, if~$p\in (0, 2]$, then~$2p-2\leq p$, and since~$d^{-1/2}\sum_{i=1}^dg_p(\theta_{i,d})$ is bounded we obtain~\eqref{eq:varestim} from~\eqref{eqn:vbdII}. On the other hand, if~$p\in (2, \infty)$, then~$2p-2>p$ and the inequality~$\| \cdot \|_r^r \leq \| \cdot \|_1^r$ (for~$r \geq 1$) applied with~$r^{**} := (2p-2)/p$ delivers
	\begin{equation}\label{eqn:vbdIIL}
		\begin{aligned}
			\sum_{i=1}^d g_{2p-2}(\theta_{i,d}) &= \sum_{i: |\theta_{i,d}|\leq 1} \theta^2_{i,d} + \sum_{i: |\theta_{i,d}|> 1} |\theta_{i,d}|^{2p-2} \\ &\leq \sum_{i=1}^dg_{p}(\theta_{i,d}) + \left(\sum_{i=1}^dg_{p}(\theta_{i,d})\right)^{(2p-2)/p}.
		\end{aligned}
	\end{equation}
	Boundedness of~$d^{-1/2}\sum_{i=1}^dg_p(\theta_{i,d})$ again delivers~\eqref{eq:varestim}. The statement in~\eqref{eqn:asynNULL} follows from what has been established by setting~$\bm{\vartheta}$ the zero array. The final assertion in the lemma follows immediately from that statement. 
\end{proof}

\subsubsection{Characterizing the consistency set of~$p$-norm based tests with finite~$p > 0$}\label{app:conspreal}

We are now ready to characterize the consistency set of a~$p$-norm based test in the general (not necessarily normal) case. This is done in the following result. Note that the range of~$p > 0$ for which a statement is made in the following theorem depends on the ``highest'' moment that exists for~$\varepsilon$. In particular, if~$\mathbb{E}|\varepsilon|^q < \infty$ for all~$q \in (0, \infty)$, as in the normal case, then one obtains a statement for all~$p \in (0, \infty)$ which coincides with the statement made in Theorem~\ref{thm:conspreal}. If, on the other hand, not all moments exist (but at least the second moment exists), then one still obtains a corresponding statement, but for a limited range of~$p$.

\begin{theorem}\label{thm:consprealGEN}
	Under Assumption~\ref{ass:pfindist} and if~$q \in [2, \infty)$ is such that~$\mathbb{E}(|\varepsilon|^{q}) < \infty$, then the following holds for every~$p \in (0, \infty)$ such that~$2 \lceil 2p \rceil \leq q$: for~$\kappa_{d}$ such that~$\{p, \kappa_d\} \in \mathbb{T}_{\alpha, F}$,~$\alpha \in (0, 1)$, 
	\begin{equation}\label{eqn:cprealGEN}
		\bm{\vartheta} \in \mathscr{C}_F(\{p,\kappa_{d} \}) \quad \Leftrightarrow \quad 
		\frac{\sum_{i = 1}^d g_p(\theta_{i,d})}{\sqrt{d}} \to \infty;
	\end{equation}
	furthermore~$\{p, \kappa_d\}$ has asymptotic power~$\alpha$ against~$\bm{\vartheta} \in \bm{\Theta}$ if and only if the sum on the right in~\eqref{eqn:cprealGEN} converges to~$0$. 
\end{theorem}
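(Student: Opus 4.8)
The plan is to rewrite the power of $\{p,\kappa_d\}$ as the probability that a mean-zero, asymptotically-normal statistic exceeds an explicit deterministic threshold, and then to read off both equivalences from the behaviour of that threshold, using the central limit theorem of Lemma~\ref{lem:asyn} where it applies and a second-moment bound where it does not. First I would fix $p$ with $2\lceil 2p\rceil \le q$ (so that Lemmas~\ref{lem:suff0behav}, \ref{lem:pbounds} and~\ref{lem:asyn} are all available) and abbreviate $S_d := d^{-1/2}\sum_{i=1}^d g_p(\theta_{i,d})$, $\sigma_p^2 := \mathbb{V}ar|\varepsilon|^p$, and
$$T_d := \frac{\|\bm{\theta}_d+\bm{\varepsilon}_d\|_p^p - \sum_{i=1}^d \lambda_p(\theta_{i,d})}{\sqrt{d}\,\sigma_p}.$$
Since $\mathbb{E}\|\bm{\theta}_d+\bm{\varepsilon}_d\|_p^p = \sum_i \lambda_p(\theta_{i,d})$, the statistic $T_d$ has mean zero, and the power equals $\P(T_d \ge t_d)$, where by the critical-value characterization~\eqref{eqn:CVpreal} one has $t_d = \Phi^{-1}(1-\alpha) + o(1) - \Lambda_d$ with $\Lambda_d := (\sqrt{d}\,\sigma_p)^{-1}\sum_{i=1}^d(\lambda_p(\theta_{i,d}) - \lambda_p(0)) \ge 0$. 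The two-sided estimate in Lemma~\ref{lem:suff0behav} gives $c_p^{-1}\sigma_p^{-1}S_d \le \Lambda_d \le c_p\sigma_p^{-1}S_d$, so $\Lambda_d$ and $S_d$ diverge, stay bounded, or vanish simultaneously; the whole theorem then reduces to tracking $t_d$.

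Then I would settle the two ``if'' directions. If $S_d \to 0$, then $\Lambda_d \to 0$, hence $t_d \to \Phi^{-1}(1-\alpha)$, and because $S_d$ is bounded Lemma~\ref{lem:asyn} yields $T_d \rightsquigarrow \mathbb{N}(0,1)$; as the limiting cdf is continuous and $t_d$ converges, $\P(T_d \ge t_d) \to \alpha$. If instead $S_d \to \infty$, then $t_d \to -\infty$ and the central limit theorem is unavailable, so I would pass to a second-moment argument: once $t_d<0$, Chebyshev's inequality gives $1-\P(T_d \ge t_d) = \P(T_d < t_d) \le \mathbb{V}ar(T_d)/t_d^2$. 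The variance estimates from the proof of Lemma~\ref{lem:asyn} (namely~\eqref{eqn:vbdII}, and~\eqref{eqn:vbdIIL} for $p>2$) yield $\mathbb{V}ar(T_d) \le 1 + C_p\bigl(S_d/\sqrt{d} + d^{-1/p}S_d^{(2p-2)/p}\bigr)$, while $\Lambda_d \ge c_p^{-1}\sigma_p^{-1}S_d$ gives $t_d^2 \gtrsim S_d^2$. Dividing, the three resulting terms are of orders $S_d^{-2}$, $(\sqrt{d}\,S_d)^{-1}$ and $(dS_d^2)^{-1/p}$, all of which tend to $0$; thus the power tends to $1$.

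Finally, I would obtain the two ``only if'' directions by subsequence arguments built on the same two facts. If $S_d \not\to\infty$, I would select a subsequence along which $S_d$ is bounded; there Lemma~\ref{lem:asyn} applies and $t_d$ is bounded, so a further subsequence has $t_d \to t_\ast \in \R$ and the power converges to $\overline{\Phi}(t_\ast) < 1$, ruling out consistency. If $S_d \not\to 0$, I would fix a subsequence with $S_d \ge \epsilon>0$: either it has a bounded sub-subsequence, along which $T_d\rightsquigarrow\mathbb{N}(0,1)$ and $\Lambda_d$ is bounded away from $0$, so that after passing to a convergent $t_d \to t_\ast \le \Phi^{-1}(1-\alpha) - c_p^{-1}\epsilon/\sigma_p$ the power exceeds $\alpha$ in the limit; or it diverges, in which case the divergent-case argument already gives power $\to 1 > \alpha$. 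Either way the power fails to converge to $\alpha$. I expect the divergent case $S_d\to\infty$ to be the crux, since it is the only place where the governing central limit theorem breaks down and must be replaced by the Chebyshev bound, so the whole argument hinges on the variance control furnished by Lemma~\ref{lem:pbounds}.
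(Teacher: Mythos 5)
Your proposal is correct and follows essentially the same route as the paper's proof: the same decomposition of the rejection event into the centered statistic plus the drift term $\Lambda_d$ compared against a threshold tending to $\Phi^{-1}(1-\alpha)$, the same sandwich bound from Lemma~\ref{lem:suff0behav}, the CLT of Lemma~\ref{lem:asyn} along subsequences where $S_d$ stays bounded, and the variance bounds \eqref{eqn:vbdII}--\eqref{eqn:vbdIIL} when $S_d \to \infty$. The only difference is presentational: where you apply Chebyshev's inequality directly to $T_d$ against the diverging threshold $t_d$, the paper renormalizes the statistic by $\sum_{i=1}^d g_p(\theta_{i,d})$ and shows that its variance vanishes --- the same second-moment argument in different clothing.
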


\begin{proof}
	
	Let~$\alpha \in (0, 1)$,~$p$ be such that~$2 \lceil 2p \rceil \leq q$, and~$\bm{\vartheta} \in \bm{\Theta}$. We start with two observations:
	
	First, note that~$\|\bm{\theta}_d + \bm{\varepsilon}_d\|_p \geq \kappa_d$ is equivalent to~
	\begin{equation}\label{eqn:split}
		\frac{\|\bm{\theta}_d + \bm{\varepsilon}_d\|_p^p - \sum_{i = 1}^d \lambda_p(\theta_{i,d})}{\sqrt{d \mathbb{V}ar|\varepsilon|^p}} + \frac{\sum_{i = 1}^d\left( \lambda_p(\theta_{i,d}) - \lambda_p(0)\right)}{\sqrt{d \mathbb{V}ar|\varepsilon|^p}} \geq \frac{\kappa^p_d- d\lambda_p(0)}{\sqrt{d \mathbb{V}ar|\varepsilon|^p}} =: \overline{\kappa}_d,
	\end{equation}
	where~$\overline{\kappa}_d  \to \Phi^{-1}(1-\alpha) =: z_{1-\alpha}$ follows from~$\{p, \kappa_d\} \in \mathbb{T}_{\alpha, F}$ and~\eqref{eqn:CVpreal}. 
	
	Second, by Lemma~\ref{lem:suff0behav}, there exists a positive real number~$c_p$ such that
	\begin{equation}\label{eqn:sandwich}
		0 \leq c_p^{-1}
		\frac{\sum_{i = 1}^d g_p(\theta_{i,d})}{\sqrt{d \mathbb{V}ar|\varepsilon|^p}}
		\leq b_d := \frac{\sum_{i = 1}^d\left( \lambda_p(\theta_{i,d}) - \lambda_p(0)\right)}{\sqrt{d \mathbb{V}ar|\varepsilon|^p}}	\leq 
		c_p
		\frac{\sum_{i = 1}^d g_p(\theta_{i,d})}{\sqrt{d \mathbb{V}ar|\varepsilon|^p}} < \infty.
	\end{equation}
	
	We now prove the equivalence in~\eqref{eqn:cprealGEN}:
	
	Let~$\bm{\vartheta} \in \bm{\Theta}$ be such that the sequence~$\sum_{i = 1}^d g_p(\theta_{i,d})/\sqrt{d}$ does not diverge to~$\infty$. We show that~$\bm{\vartheta} \notin \mathscr{C}_F(\{p, \kappa_d\})$. From~\eqref{eqn:sandwich} it follows that there exists a subsequence~$d'$, say, along which~$b_d \to b \in [0, \infty)$, say. From~\eqref{eqn:asyn} in Lemma~\ref{lem:asyn} (applied along the subsequence~$d'$) it follows that the sequence of random variables to the left in~\eqref{eqn:split},~$X_d$, say, converges in distribution to~$\mathbb{N}(b, 1)$ along~$d'$. The Portmanteau Theorem hence implies
	\begin{equation*}
		\liminf_{d \to \infty} \mathbb{P}(\|\bm{\theta}_d + \bm{\varepsilon}_d\|_p \geq \kappa_{d}) \leq \lim_{d' \to \infty}
		\mathbb{P}(X_{d'} - \overline{\kappa}_{d'} \geq 0) = \overline{\Phi}(z_{1-\alpha}-b) < 1,
	\end{equation*}
	that is~$\bm{\vartheta} \notin \mathscr{C}_F(\{p, \kappa_d\})$.
	
	Next, let~$\bm{\vartheta} \in \bm{\Theta}$ be such that the sequence~$\sum_{i = 1}^d g_p(\theta_{i,d})/\sqrt{d} \to \infty$. We show that~$\bm{\vartheta} \in \mathscr{C}_F(\{p, \kappa_d\})$. We first claim that the sequence of random variables
	\begin{equation}\label{eqn:tight}
		\frac{\|\bm{\theta}_d + \bm{\varepsilon}_d\|_p^p - \sum_{i = 1}^d \lambda_p(\theta_{i,d})}{\sum_{i = 1}^d g_p(\theta_{i,d})} = 
		\frac{\sum_{i = 1}^d[ |\theta_{i,d} + \varepsilon_i|^p - \mathbb{E}(|\theta_{i,d} + \varepsilon_i|^p)]}{\sum_{i = 1}^d g_p(\theta_{i,d})}
	\end{equation}
	converges to~$0$ in probability (the quotients are well defined for~$d$ large enough). To prove this claim, since the expectation of the random variables under consideration are all~$0$, it is enough to verify that the sequence of their variances converges to~$0$. By~\textbf{2.} of Lemma~\ref{lem:pbounds}, we can bound these variances via
	\begin{equation}\label{eqn:upnew}
		0 \leq \frac{ \sum_{i = 1}^d \mathbb{V}ar\left(
			|\theta_{i,d} + \varepsilon_i|^p
			\right)}{\left(\sum_{i = 1}^d g_p(\theta_{i,d})\right)^2}
		\leq
		\frac{d\mathbb{V}ar|\eps|^p}{\left(\sum_{i = 1}^d g_p(\theta_{i,d})\right)^2}
		+
		\frac{2C_p'\sum_{i = 1}^d
			g_{p \vee (2p-2)}(\theta_{i,d})}{\left(\sum_{i = 1}^d g_p(\theta_{i,d})\right)^2}.
	\end{equation}
	The first ratio on the far right-hand side in~\eqref{eqn:upnew} converges to~$0$. To see that also the second ratio converges to~$0$ we argue as around~\eqref{eqn:vbdII}: if~$p \in (0, 2]$ then~$2p-2 \leq p$ and we conclude with~$\sum_{i = 1}^d g_p(\theta_{i,d}) \to \infty$. If~$p \in (2, \infty)$ we can use the bound in~\eqref{eqn:vbdIIL} and conclude in the same way.
	
	With this in mind, we now show that~$\mathbb{P}(\|\bm{\theta}_d + \bm{\varepsilon}_d\|_p \geq \kappa_d) \to 1$ if~$\sum_{i = 1}^d g_p(\theta_{i,d})/\sqrt{d} \to \infty$. Let~$d'$ be a subsequence of~$d$. By~\eqref{eqn:sandwich}, there exists a subsequence~$d''$ of~$d'$ along which~$b_d^* := \sum_{i = 1}^d(\lambda_{p}(\theta_{i,d}) - \lambda_p(0))/\sum_{i = 1}^d g_p(\theta_{i,d})$ converges to~$b^* \in (0, \infty)$, say. Now, we re-write the rejection event in~\eqref{eqn:split} as the event that the random variable
	\begin{equation*}
		\frac{\|\bm{\theta}_d + \bm{\varepsilon}_d\|_p^p - \sum_{i = 1}^d \lambda_p(\theta_{i,d})}{\sum_{i = 1}^d g_p(\theta_{i,d})} + b_d^* - \overline{\kappa}_d \frac{\sqrt{d \mathbb{V}ar|\varepsilon|^p}}{\sum_{i = 1}^d g_p(\theta_{i,d})} 
	\end{equation*}
	is non-negative. By the claim established above, and since~$\overline{\kappa}_d$ converges, this sequence of random variables converges along~$d''$ in probability to~$b^* > 0$. Hence, the consistency follows.
	
	The second statement in the theorem follows by slightly modifying the argument after Equation~\eqref{eqn:sandwich}.
\end{proof}

Given Theorem~\ref{thm:consprealGEN} we could now obtain versions of Corollary~\ref{cor:rewrite} and Theorems~\ref{thm:incl} and~\ref{thm:cons_prop} also under the more general Assumption~\ref{ass:pfindist}, but for a range of powers that depends on the ``highest'' moment of~$\varepsilon$, which is required to be greater than~$2$. In particular, if all moments exist as in the discussion before Theorem~\ref{thm:consprealGEN}, then the statements carry over identically. Instead of spelling out all the details, we illustrate one such generalization for the important monotonicity statement in Theorem~\ref{thm:incl}, but do not provide details for other results. The statement is as follows:

\begin{theorem}\label{thm:inclGEN}
	Under Assumption~\ref{ass:pfindist} the following holds: For~$0 < p < q < \infty$ such that~$\mathbb{E}|\varepsilon|^{2 \lceil 2q\rceil} < \infty$, and sequences of tests~$\{p, \kappa_{d,p}\}$ and~$\{q, \kappa_{d,q}\}$ with asymptotic sizes in~$(0, 1)$, we have~$$\mathscr{C}_F(\{p, \kappa_{d, p}\}) \subsetneqq \mathscr{C}_F(\{q, \kappa_{d, q}\}).$$
\end{theorem}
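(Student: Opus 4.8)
The plan is to transcribe the Gaussian argument behind Theorem~\ref{thm:incl} into the present setting, substituting the general characterization Theorem~\ref{thm:consprealGEN} for its Gaussian special case Theorem~\ref{thm:conspreal}. The first thing to verify is that the single moment hypothesis $\mathbb{E}|\varepsilon|^{2\lceil 2q\rceil} < \infty$ is precisely what makes Theorem~\ref{thm:consprealGEN} applicable to \emph{both} exponents at once. Indeed, since $0 < p < q$ gives $\lceil 2p\rceil \leq \lceil 2q\rceil$, and hence $2\lceil 2p\rceil \leq 2\lceil 2q\rceil$, one may invoke Theorem~\ref{thm:consprealGEN} with the moment order taken to be $2\lceil 2q\rceil$ for each of the powers $p$ and $q$. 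This yields, for $r \in \{p,q\}$, the equivalence $\bm{\vartheta} \in \mathscr{C}_F(\{r, \kappa_{d,r}\}) \Leftrightarrow \sum_{i=1}^d g_r(\theta_{i,d})/\sqrt{d} \to \infty$, which is the only input needed from the general theory. This moment bookkeeping is, such as it is, the ``main obstacle'': unlike the Gaussian case, where all moments exist and Theorem~\ref{thm:conspreal} holds for every exponent, here one must confirm that the stated hypothesis simultaneously covers the smaller power $p$, which it does via the monotonicity $\lceil 2p\rceil \leq \lceil 2q\rceil$.

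For the weak inclusion I would then use the elementary pointwise inequality $g_p \leq g_q$, valid for $0 < p < q < \infty$: by the definition in~\eqref{eqn:rhodef} both functions coincide with $x^2$ on $[-1,1]$, while on $\R\setminus[-1,1]$ one has $|x|^p \leq |x|^q$. Consequently $\sum_{i=1}^d g_p(\theta_{i,d})/\sqrt{d} \to \infty$ forces $\sum_{i=1}^d g_q(\theta_{i,d})/\sqrt{d} \to \infty$, and the characterization recorded above immediately delivers $\mathscr{C}_F(\{p, \kappa_{d,p}\}) \subseteq \mathscr{C}_F(\{q, \kappa_{d,q}\})$.

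To upgrade this to a strict inclusion I would reuse the separating array $\bm{\vartheta} := \{(d^{1/(2p)}, 0, \hdots, 0): d \in \N\}$ from the proof of Theorem~\ref{thm:incl}. For every $d \geq 1$ its single nonzero coordinate is at least $1$, so $g_r(d^{1/(2p)}) = d^{r/(2p)}$; hence $\sum_{i=1}^d g_p(\theta_{i,d})/\sqrt{d}$ is identically equal to $1$ (so it does not diverge), whereas $\sum_{i=1}^d g_q(\theta_{i,d})/\sqrt{d} = d^{q/(2p)-1/2} \to \infty$ because $q > p$ renders the exponent positive. By the characterization this places $\bm{\vartheta}$ in $\mathscr{C}_F(\{q, \kappa_{d,q}\}) \setminus \mathscr{C}_F(\{p, \kappa_{d,p}\})$, which yields the strict inclusion and completes the argument. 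Every step beyond the moment alignment in the first paragraph is a verbatim repetition of the Gaussian reasoning.
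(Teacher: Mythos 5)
Your proposal is correct and is in substance the paper's own argument: the paper proves Theorem~\ref{thm:inclGEN} by observing that Theorem~\ref{thm:consprealGEN} makes each consistency set~$\mathscr{C}_F(\{\tilde p, \kappa_{d,\tilde p}\})$ coincide with its Gaussian counterpart and then citing Theorem~\ref{thm:incl}, whose proof consists of exactly the two steps you spell out ($g_p \leq g_q$ and the separating array~$\{(d^{1/(2p)}, 0, \hdots, 0): d \in \N\}$). You simply inline that Gaussian proof, together with the moment bookkeeping~$2\lceil 2p\rceil \leq 2\lceil 2q\rceil$ needed to apply Theorem~\ref{thm:consprealGEN} to both exponents (a point the paper leaves implicit), which changes nothing essential.
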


\begin{proof}
	It follows from Theorem~\ref{thm:consprealGEN} that for~$\tilde{p} \in \{p, q\}$ we have~$$\mathscr{C}_F(\{\tilde{p}, \kappa_{d, \tilde{p}}\}) = \mathscr{C}_{\Phi}(\{\tilde{p}, \tilde{\kappa}_{d, \tilde{p}}\}) = \mathscr{C}(\{\tilde{p}, \tilde{\kappa}_{d, \tilde{p}}\}),$$ where~$\tilde{\kappa}_{d, \tilde{p}}$ is a sequence of critical values such that~$\{\tilde{p}, \tilde{\kappa}_{d, \tilde{p}}\}$ has asymptotic size in~$(0, 1)$ under standard normal errors. The statement hence follows from Theorem~\ref{thm:incl}.
\end{proof}

\subsection{Consistency of~supremum-norm based tests}\label{app:conspinf}

We first summarize some assumptions and observations needed in Appendix~\ref{app:conspinf}. The assumptions differ from Assumption~\ref{ass:pfindist} upon which the results in Appendix~\ref{appss:pfin} are based, in that we do not need differentiability of~$F$. However, in two of the results we need to further restrict its tail behavior. This is because the analysis of the supremum-norm based test relies on results from extreme-value theory. The standard normal distribution satisfies all of the conditions imposed. 

The symmetry condition on~$F$ in Assumption~\ref{ass:pfindist} will also be used in the context of supremum-norm based tests.

\begin{assumption}\label{as:symmetry}
	We have~$F(x) = 1-F(-x)$ for every~$x \in \R$.
\end{assumption}

\begin{assumption}\label{as:cont}
	The cdf~$F$ is continuous.
\end{assumption}

Under Assumptions~\ref{as:symmetry} and~\ref{as:cont} we have
\begin{equation}\label{eqn:tailabsgone}
	2\overline{F}(x)  = \P(\varepsilon \geq x) + \P(\varepsilon \leq -x) = \P(|\varepsilon| \geq x) \quad \text{ for all } x \geq 0.
\end{equation}

The following condition will be used whenever we rely on results in~\cite{bogachev2006limit}, where a discussion of this assumption can be found in Section~5; cf.~also~\cite{regvar} for a detailed account of regularly varying functions. Note that in the following assumption we implicitly impose the condition that the support of~$\varepsilon$ is unbounded.

\begin{assumption}\label{as:NRV}
	The (log-tail distribution) function
	\begin{equation}\label{eqn:logtail}
		h(x) := - \log\left(\P(|\varepsilon| \geq x)\right), ~x \in \R,
	\end{equation}
	is normalized regularly varying at infinity with index~$\rho \in (0, \infty)$; that is, for every~$\epsilon > 0$ the functions~$h(x)/x^{\rho-\epsilon}$ and~$h(x)/x^{\rho+\epsilon}$ are ultimately (i.e., for~$x$ large enough) increasing and decreasing, respectively.
\end{assumption}

\begin{remark}\label{rem:NRVnormal}
	That Assumption~\ref{as:NRV} holds with~$\rho = 2$ in case~$\varepsilon$ is standard normally distributed can easily be checked making use of~\eqref{eqn:tailabsgone},~$\Phi'(x)/(x\overline{\Phi}(x)) \to 1$ as~$x \to \infty$, and the characterization for normalized regular variation given in Lemma~5.2 of~\cite{bogachev2006limit} (cf.~also \citet[p.~15]{regvar}).
\end{remark}

A generalized inverse of the non-decreasing function~$h$ defined in~\eqref{eqn:logtail} is defined via~
\begin{equation}\label{eqn:invlogtail}
	h^{\leftarrow}(x) := \inf\{z \in \R: h(z) > x\},
\end{equation}
noting that the set over which the infimum is taken is non-empty for every~$x \in \R$ as~$h(x) \to \infty$ as~$x \to \infty$.

As the first main result in this section, we now provide some statements equivalent to~$\bm{\vartheta} \in \mathscr{C}_F(\{\infty, \kappa_d\})$ under very weak assumptions on~$F$. In the fourth statement in the following proposition we interpret~$1/0 = \infty$,~$a + \infty = \infty$ for every~$a \in (-\infty, \infty]$.

\begin{proposition}\label{prop:abs_noabs} 
	Suppose Assumptions~\ref{as:symmetry} and~\ref{as:cont} hold and let~$\kappa_{d}$ be a sequence of real numbers such that~$\{\infty, \kappa_d\} \in \mathbb{T}_{\alpha, F}$, for an $\alpha \in (0, 1)$. Then, the following statements are equivalent:
	\begin{enumerate}
		\item $\bm{\vartheta} \in \mathscr{C}_F(\{\infty, \kappa_d\})$.
		\item $\P(\max_{i = 1, \hdots, d} (\varepsilon_i + |\theta_{i,d}|) \leq \kappa_d) = \prod_{i = 1}^d F(\kappa_d - |\theta_{i,d}|) \to 0$.
		\item Every subsequence~$d'$ of~$d$ has a subsequence~$d''$ along which~$\overline{F}(\kappa_{d} - \|\bm{\theta}_d\|_{\infty}) \to 1$ or $\sum_{i = 1}^d \overline{F}\left(
		\kappa_{d} - |\theta_{i,d}|
		\right) \to \infty$.
		\item $\sum_{i = 1}^d \overline{F}\left(
		\kappa_{d} - |\theta_{i,d}|
		\right)/F\left(
		\kappa_{d} - |\theta_{i,d}|
		\right) \to \infty$.
	\end{enumerate}
	
\end{proposition}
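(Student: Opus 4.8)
My plan is to prove the four-way equivalence by first reducing the consistency statement (1) to the one-sided product in (2), and then treating (2)~$\Leftrightarrow$~(3)~$\Leftrightarrow$~(4) as a purely analytic fact about triangular arrays in $[0,1)$. Write $b_{i,d} := \overline{F}(\kappa_d - |\theta_{i,d}|)$ and $q_{i,d} := F(\kappa_d - |\theta_{i,d}|) = 1 - b_{i,d}$, and note the three identifications $\max_i b_{i,d} = \overline{F}(\kappa_d - \|\bm{\theta}_d\|_{\infty})$ (since $\overline{F}$ is non-increasing), $\sum_i b_{i,d} = \sum_i \overline{F}(\kappa_d - |\theta_{i,d}|)$, and $b_{i,d}/(1-b_{i,d}) = \overline{F}(\kappa_d - |\theta_{i,d}|)/F(\kappa_d - |\theta_{i,d}|)$, which are exactly the quantities appearing in (3) and (4). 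As a preliminary step I would extract from $\{\infty,\kappa_d\}\in\mathbb{T}_{\alpha,F}$, via Assumptions~\ref{as:symmetry}--\ref{as:cont} and~\eqref{eqn:tailabsgone}, that $(1-2\overline{F}(\kappa_d))^d \to 1-\alpha$, whence $\overline{F}(\kappa_d)\to 0$ and $d\,\overline{F}(\kappa_d)\to -\tfrac{1}{2}\log(1-\alpha)\in(0,\infty)$; in particular the total lower-tail mass $\sum_i \overline{F}(\kappa_d + |\theta_{i,d}|) \le d\,\overline{F}(\kappa_d)$ stays bounded. This bounded-lower-tail fact is the only quantitative input the size condition contributes.

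For (1)~$\Leftrightarrow$~(2) I would first use independence and continuity of $F$ to rewrite consistency as $\P(\max_i |\theta_{i,d}+\varepsilon_i| \le \kappa_d) = \prod_i p_{i,d} \to 0$, and then the symmetry of $F$, which gives $|\theta+\varepsilon|\stackrel{d}{=}\,\bigl||\theta|+\varepsilon\bigr|$, to write $p_{i,d} = q_{i,d} - r_{i,d}$ with $r_{i,d} := \overline{F}(\kappa_d+|\theta_{i,d}|)$. Since $0\le p_{i,d}\le q_{i,d}$, the implication (2)~$\Rightarrow$~(1) is immediate. The reverse implication is the crux, and I would argue by contraposition: if $\prod_i q_{i,d}\not\to 0$, pass to a subsequence along which $\prod_i q_{i,d}\ge \delta>0$; then $q_{i,d}\ge\delta$ for every $i$, so $r_{i,d}/q_{i,d} \le \overline{F}(\kappa_d)/\delta \to 0$ uniformly in $i$, and hence $\sum_i -\log(1-r_{i,d}/q_{i,d}) \le \tfrac{2}{\delta}\sum_i r_{i,d}$ remains bounded. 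Combining the decomposition $-\log\prod_i p_{i,d} = \sum_i(-\log q_{i,d}) + \sum_i\bigl(-\log(1-r_{i,d}/q_{i,d})\bigr)$ with $\sum_i(-\log q_{i,d}) = -\log\prod_i q_{i,d} \le -\log\delta$ shows that $\prod_i p_{i,d}$ stays bounded away from $0$, so (1) fails. The key point is that the only coordinates capable of making $r_{i,d}/q_{i,d}$ large are those with $q_{i,d}$ tiny, i.e.\ $|\theta_{i,d}|$ far exceeding $\kappa_d$, and any such coordinate already forces $\prod_i q_{i,d}\to 0$; they are therefore automatically absent on the subsequence considered.

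For the analytic equivalences (2)~$\Leftrightarrow$~(3)~$\Leftrightarrow$~(4) I would prove that, for any array $b_{i,d}\in[0,1)$, the conditions $\prod_i(1-b_{i,d})\to 0$; ``every subsequence has a subsubsequence along which $\max_i b_{i,d}\to 1$ or $\sum_i b_{i,d}\to\infty$''; and $\sum_i b_{i,d}/(1-b_{i,d})\to\infty$ are equivalent. The workhorse is the elementary inequality $x \le -\log(1-x) \le x/(1-x)$ for $x\in[0,1)$, together with $-\log(1-x)\ge\log 2$ once $x\ge 1/2$. For (2)~$\Leftrightarrow$~(3): if $\max_i b_{i,d}\to 1$ then $\prod_i(1-b_{i,d})\le 1-\max_i b_{i,d}\to 0$, while $\sum_i b_{i,d}\to\infty$ gives $-\log\prod_i(1-b_{i,d})\ge\sum_i b_{i,d}\to\infty$; conversely, given $\prod_i(1-b_{i,d})\to 0$, pass along any subsequence to one where $\max_i b_{i,d}\to L$, and if $L<1$ the left inequality bounds $\sum_i(-\log(1-b_{i,d}))$ by a constant multiple of $\sum_i b_{i,d}$, forcing $\sum_i b_{i,d}\to\infty$. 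For (2)~$\Leftrightarrow$~(4): the upper bound yields $\sum_i(-\log(1-b_{i,d}))\le\sum_i b_{i,d}/(1-b_{i,d})$, hence (2)~$\Rightarrow$~(4); for the converse I would split indices at $b_{i,d}=1/2$, bounding the small-index part of $\sum_i b_{i,d}/(1-b_{i,d})$ by $2\sum_i b_{i,d}$, and noting that a bounded $\sum_i(-\log(1-b_{i,d}))$ forces $1-b_{i,d}$ to be uniformly bounded below, so the finitely many large indices contribute a bounded amount as well.

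The step I expect to be the main obstacle is the reverse implication (1)~$\Rightarrow$~(2): one must exclude the possibility that the two-sided (absolute-value) maximum is consistent while the one-sided maximum is not. Resolving this requires both the quantitative control $\sum_i r_{i,d}=O(1)$ coming from the size constraint and the structural observation that large values of $r_{i,d}/q_{i,d}$ can only arise from coordinates already driving $\prod_i q_{i,d}$ to zero. Everything else is soft: subsequence extraction together with the two elementary logarithmic inequalities.
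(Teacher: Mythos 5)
Your proposal is correct and follows essentially the same route as the paper's proof: symmetry and continuity of $F$ reduce consistency to the one-sided product, the size condition supplies the quantitative input $d\,\overline{F}(\kappa_d)=O(1)$ that controls the lower-tail terms $\overline{F}(\kappa_d+|\theta_{i,d}|)$, and the remaining equivalences rest on the elementary inequalities $x\le-\log(1-x)\le x/(1-x)$ together with subsequence extraction. The only organizational differences are that you establish $(1)\Leftrightarrow(2)$ by contraposition via the factorization $p_{i,d}=q_{i,d}\left(1-r_{i,d}/q_{i,d}\right)$ where the paper argues directly (bounding $b_{d'}\left[1+\log(\epsilon)/(d'\zeta)\right]^{d'}\le a_{d'}$), and that you prove $(2)\Leftrightarrow(4)$ explicitly by splitting indices at $b_{i,d}=1/2$, whereas the paper obtains $(3)\Leftrightarrow(4)$ from a general fact about triangular arrays stated without proof --- so your write-up actually fills in a detail the paper leaves to the reader.
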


\begin{proof}
	Before we establish the equivalences in the lemma, we note some trivial equivalences to~$\bm{\vartheta} \in \mathscr{C}_F(\{\infty, \kappa_d\})$ under Assumptions~\ref{as:symmetry} and~\ref{as:cont}: by definition~$\bm{\vartheta} \in \mathscr{C}_F(\{\infty, \kappa_d\})$ is  equivalent to~$\P(\max_{i = 1, \hdots, d} |\varepsilon_i + \theta_{i,d}| \geq \kappa_d) \to 1$, which is equivalent to~$\P(\max_{i = 1, \hdots, d} |\varepsilon_i + |\theta_{i,d}|| \geq \kappa_d) \to 1$, because~$F$ is symmetric. Furthermore,~$\bm{\vartheta} \in \mathscr{C}_F(\{\infty, \kappa_d\})$ is equivalent to~$\P(\max_{i = 1, \hdots, d} |\varepsilon_i + \theta_{i,d}| \leq \kappa_d) \to 0$ because~$F$ is continuous, which is, by symmetry of~$F$, equivalent to~$\P(\max_{i = 1, \hdots, d} |\varepsilon_i + |\theta_{i,d}|| \leq \kappa_d) \to 0$.
	
	$\bm{(1 \Rightarrow 2)}$: We need to show that
	\begin{align}\label{Part1}
		a_d := \P\del[2]{\max_{1\leq i\leq d}|\varepsilon_i + |\theta_{i,d}|| \leq \kappa_d} \to 0 ~~ \text{ implies } ~~ b_d := \prod_{i = 1}^d F(\kappa_d - |\theta_{i,d}|) \to 0.
	\end{align}
	To do so, we argue that if~$a_d \to 0$, then any subsequence $d'$ of $d$ possesses a subsequence  along which $b_d \to 0$. Fix a subsequence~$d'$ of~$d$. The inequality~$b_d
	\leq F(\kappa_d - \|\bm{\theta}_d\|_{\infty})$ shows that in case~$\liminf_{d' \to \infty} F(\kappa_{d'} - \|\bm{\theta}_{d'}\|_{\infty}) = 0$ we are done. Hence, we may assume without loss of generality (otherwise pass to a subsequence) that for some~$\zeta \in (0, \infty)$ we have~
	\begin{equation}\label{eqn:infF}
		F(\kappa_{d'} - |\theta_{i,d'}|) \geq F(\kappa_{d'} - \|\bm{\theta}_{d'}\|_{\infty}) \geq \zeta/2 > 0 \text{ for every } d',  \text{ and every } i = 1, \hdots, d'.
	\end{equation}
	Since~$\{\infty, \kappa_d\} \in \mathbb{T}_{\alpha, F}$, we have~$\alpha_d := \P(\|\bm{\varepsilon}_d\|_{\infty} \geq \kappa_d) \to \alpha \in (0, 1)$, and we can hence assume without loss of generality that for some~$\epsilon > 0$ it holds that~$\alpha_{d'} \in (\epsilon, 1-\epsilon)$ for every~$d'$ (otherwise pass to a subsequence). Then, for every~$d'$
	\begin{equation}\label{eqn:supF}
		F(-\kappa_{d'} - |\theta_{i,d'}|)
		\leq
		F(-\kappa_{d'}) = \overline{F}(\kappa_{d'}) = \frac{1-(1-\alpha_{d'})^{1/d'}}{2} \leq \frac{1-\epsilon^{1/d'}}{2} \leq -\frac{\log(\epsilon)}{2d'},
	\end{equation}
	where we used symmetry of~$F$ to get the first equality,~\eqref{eqn:tailabsgone} and~$\P(|\varepsilon|\leq \kappa_d) = (1-\alpha_d)^{1/d}$ to get the second equality, and~$1-x^{-1} \leq \log(x)$ for every~$x > 0$ to get the last inequality.
	Combining~\eqref{eqn:infF} and~\eqref{eqn:supF}, we can write~$$\P\del[1]{|\varepsilon_{i} + |\theta_{i,d'}||\leq \kappa_{d'}} = 
	F(\kappa_{d'} - |\theta_{i,d'}|) - F(-\kappa_{d'} - |\theta_{i,d'}|)
	$$ as
	\begin{equation*}
		F(\kappa_{d'} - |\theta_{i,d'}|)
		\left[
		1-\frac{F(-\kappa_{d'} - |\theta_{i,d}|)}{F(\kappa_{d'} - |\theta_{i,d'}|)}
		\right]
		\geq
		F(\kappa_{d'} - |\theta_{i,d'}|)\left[1+\frac{\log(\epsilon)}{d'\zeta}\right].
	\end{equation*}
	By~\eqref{eqn:infF}, there exists a~$\underline{d} \in \N$ such that the lower bound just derived is greater than~$0$ for every~$d' \geq \underline{d}$. For all~$d' \geq \underline{d}$, it so follows that
	\begin{align*}
		b_{d'} \left[1+\frac{\log(\epsilon)}{d'\zeta}\right]^{d'}
		\leq
		\prod_{i=1}^{d'}
		\P\del[1]{|\varepsilon_{i} + |\theta_{i,d'}||\leq \kappa_{d'}}
		=
		a_{d'}
		\to 0,
	\end{align*}
	and~$b_{d'} \to 0$ thus follows from~$[1+\frac{\log(\epsilon)}{d'\zeta}]^{d'} \to \epsilon^{1/\zeta}$ as~$d' \to \infty$.
	
	$\bm{(2 \Rightarrow 3)}$: Upon taking the logarithm (interpreting~$-\log(0) = \infty$ and setting~$a + \infty = \infty$ for every~$a \in (-\infty, \infty]$) we obtain from~$\prod_{i = 1}^d F(\kappa_d - |\theta_{i,d}|) \to 0$ that
	\begin{equation}\label{eqn:aequisup}
		\sum_{i = 1}^d -\log\left(1-\overline{F}\left(
		\kappa_{d} - |\theta_{i,d}|
		\right)
		\right) \to \infty.
	\end{equation}
	Let~$d'$ be a subsequence of~$d$. In case~$\limsup_{d' \to \infty} \overline{F}\left(\kappa_{d'} - \|\bm{\theta}_{d'}\|_{\infty}
	\right) = 1$ we are obviously done. We may thus assume (otherwise pass to a subsequence) that for some~$\epsilon \in (0, 1)$ we have~$\overline{F}\left(\kappa_{d'} - \|\bm{\theta}_{d'}\|_{\infty}
	\right) \leq 1-\epsilon$ for every~$d'$. Then, for every~$d'$ and every~$i = 1, \hdots, d'$,
	\begin{equation*}
		\epsilon  \leq F\left(\kappa_{d'} - |\theta_{i,d'}|
		\right).
	\end{equation*}
	Together with~$\frac{-x}{1+x} \geq -\log(1+x)$ for~$x > -1$ we thus obtain
	\begin{equation*}
		\sum_{i = 1}^{d'} -\log\left(1 - 
		\overline{F}\left(
		\kappa_{d'} - |\theta_{i,d'}|
		\right)
		\right) \leq \frac{1}{\epsilon}
		\sum_{i = 1}^{d'} \overline{F}\left(
		\kappa_{d'} - |\theta_{i,d'}|
		\right),
	\end{equation*}
	and~\eqref{eqn:aequisup} thus implies~$\sum_{i = 1}^{d'} \overline{F}\left(
	\kappa_{d'} - |\theta_{i,d'}|
	\right) \to \infty$.
	
	$\bm{(3 \Rightarrow 1)}$: Let~$d'$ be a subsequence of~$d$. We start with two preliminary observations: On the one hand, if there exists a subsequence~$d''$ of~$d'$ along which~$\overline{F}\left(\kappa_{d} - \|\bm{\theta}_{d}\|_{\infty}
	\right) \to 1$, it follows that
	\begin{equation*}
		\P\left(\max_{i = 1, \hdots, d''}( \varepsilon_i + |\theta_{i,d''}|) \leq \kappa_{d''}\right) = \prod_{i = 1}^{d''} F\left(
		\kappa_{d''} - |\theta_{i,d''} |
		\right) \leq F\left(\kappa_{d''} - \|\bm{\theta}_{d''}\|_{\infty}
		\right)\to 0.
	\end{equation*}

	On the other hand, i.e., if~$\limsup_{d' \to \infty} \overline{F}\left(\kappa_{d'} - \|\bm{\theta}_{d'}\|_{\infty}
	\right)<1$, there exists a subsequence~$d''$ of~$d'$ along which~$\sum_{i = 1}^d \overline{F}\left( \kappa_{d} - |\theta_{i,d}|
	\right) \to \infty$, and the inequality~$-\log(1+x) \geq -x$ for every~$x \geq -1$ thus implies 
	\begin{equation*}
		\sum_{i = 1}^d -\log\left(1 - 
		\overline{F}\left(
		\kappa_{d} - |\theta_{i,d}|
		\right)
		\right) \geq \sum_{i = 1}^d 
		\overline{F}\left(
		\kappa_{d} - |\theta_{i,d}|
		\right),
	\end{equation*}
	from which~\eqref{eqn:aequisup} and thus~~$\prod_{i = 1}^{d''} F(\kappa_{d''} - |\theta_{i,d''}|) \to 0$ follows, from which we again conclude that~$\P\left(\max_{i = 1, \hdots, d''}( \varepsilon_i + |\theta_{i,d''}|) \leq \kappa_{d''}\right) \to 0$.
	
	In any case, any subsequence~$d'$ of~$d$ admits a subsequence~$d''$ along which
	\begin{equation*}
		0 = \lim_{d'' \to \infty} \P\left(\max_{i = 1, \hdots, d''} (\varepsilon_i + |\theta_{i,d''}|) \leq \kappa_{d''}\right) \geq \lim_{d'' \to \infty} \P\left(\max_{i = 1, \hdots, d''} |\varepsilon_i + |\theta_{i,d''}|| \leq \kappa_{d''}\right) \geq 0.
	\end{equation*}
	Thus,~$\P(\max_{i = 1, \hdots, d} |\varepsilon_i + |\theta_{i,d}|| \leq \kappa_d) \to 0$, which (cf.~the discussion in the first paragraph of this proof) is equivalent to~$\bm{\vartheta} \in \mathscr{C}_F(\{\infty, \kappa_d\})$.
	
	$\bm{(3 \Leftrightarrow 4)}$: We use the general fact (making use of the convention spelled out before the statement of the proposition) that a triangular array~$\{x_{i,d} \in [0, 1]: d \in \N, i = 1, \hdots, d\}$ satisfies~$	\sum_{i = 1}^d x_{i,d}/(1-x_{i,d}) \to \infty$ if and only if for every subsequence~$d'$ of~$d$ there exists a subsequence~$d''$ of~$d'$, along which
	\begin{equation*}
		\max_{i =1}^d x_{i,d} \to 1 \quad \text{ or }  \quad \sum_{i = 1}^{d}x_{i,d} \to \infty.
	\end{equation*}
	
\end{proof}

The following result gives a sufficient condition for a~$p_d$-norm based test with~$p_d$ diverging to~$\infty$ suitably quickly to be consistent against~$\bm{\vartheta}$. The condition is formulated in terms of the consistency behavior of supremum-norm based tests. In this sense, the result links the consistency set of~$p_d$-norm based tests with diverging~$p_d$ to that of the supremum-norm based test. 

\begin{proposition}\label{prop:pdsup}
	Suppose Assumption~\ref{as:NRV} holds, and let the sequence~$p_d \in (0, \infty)$
	and the sequence of critical values~$\kappa_d$ be such that the sequence of tests~$\{p_d, \kappa_d\} := \mathds{1}\{\|\cdot \|_{p_d} \geq \kappa_d\}$ is in~$\mathbb{T}_{\alpha, F}$,~$\alpha \in (0, 1)$. Under the condition that
	\begin{equation}\label{eqn:fasterlogd}
		\liminf_{d \to \infty} \frac{p_d}{\rho \log(d)} > 1,
	\end{equation}
	the sequence of tests~$\{p_d, \kappa_d\}$ is consistent against~$\bm{\vartheta} \in \bm{\Theta}$ if~\emph{every} supremum-norm based test with asymptotic size in~$(0, \alpha]$ is consistent against~$\bm{\vartheta}$. That is
	\begin{equation}\label{eqn:fasterlogd2}
		\bigcap \left\{ \mathscr{C}_F(\{\infty, \kappa_{d, \infty}\}) :  \{\infty, \kappa_{d, \infty}\}\in \mathbb{T}_{\tilde{\alpha}, F},~\tilde{\alpha} \in (0, \alpha] \right\} \subseteq \mathscr{C}_F(\{p_d, \kappa_d\});
	\end{equation}
	in fact, there exists an~$\alpha^* \in (0, \alpha]$ such that
	\begin{equation}\label{eqn:fasterlogd3}
		\bigcap \left\{ \mathscr{C}_F(\{\infty, \kappa_{d, \infty}\}) :  \{\infty, \kappa_{d, \infty}\}\in \mathbb{T}_{\tilde{\alpha}, F},~\tilde{\alpha} \in [\alpha^*, \alpha] \right\} \subseteq \mathscr{C}_F(\{p_d, \kappa_d\}),
	\end{equation}
	which can be chosen as~$\alpha^* = \alpha$ in case the limit inferior in~\eqref{eqn:fasterlogd} is~$\infty$.
\end{proposition}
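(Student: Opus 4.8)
The plan is to reduce the statement about $p_d$-norm based tests to one about the supremum-norm based test built from the \emph{same} critical values, and then to locate those critical values on the extreme-value scale by a squeezing argument based on \cite{bogachev2006limit}. First I would exploit that $t \mapsto \|\bm{x}\|_t$ is non-increasing, so that $\|\bm{x}\|_{p_d} \geq \|\bm{x}\|_\infty$ for every $\bm{x} \in \R^d$ and every $d$. Consequently, for every array $\bm{\vartheta}$,
\[
\P\!\left(\|\bm{\theta}_d + \bm{\varepsilon}_d\|_{p_d} \geq \kappa_d\right) \geq \P\!\left(\|\bm{\theta}_d + \bm{\varepsilon}_d\|_{\infty} \geq \kappa_d\right),
\]
so it suffices to show that the supremum-norm based test $\{\infty, \kappa_d\}$ is consistent against $\bm{\vartheta}$ whenever $\bm{\vartheta}$ lies in the relevant intersection. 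Writing $s_d := \P(\|\bm{\varepsilon}_d\|_{\infty} \geq \kappa_d)$ for the asymptotic-size proxy of this test, the same monotonicity together with $\{p_d, \kappa_d\} \in \mathbb{T}_{\alpha, F}$ immediately yields the easy half $\limsup_d s_d \leq \lim_d \P(\|\bm{\varepsilon}_d\|_{p_d} \geq \kappa_d) = \alpha$.

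The heart of the matter, and the step I expect to be the main obstacle, is the matching lower bound $\alpha^* := \liminf_d s_d > 0$ (with $\alpha^* = \alpha$ when the limit inferior in \eqref{eqn:fasterlogd} is $\infty$). The naive bound $\|\bm{\varepsilon}_d\|_{p_d} \leq d^{1/p_d}\|\bm{\varepsilon}_d\|_{\infty}$ is far too lossy: under \eqref{eqn:fasterlogd} one only gets $\limsup_d d^{1/p_d} \leq e^{1/(\rho L)}$ with $L := \liminf_d p_d/(\rho\log(d)) > 1$, and propagating this bounded multiplicative gap through the regularly varying log-tail $h$ of Assumption~\ref{as:NRV} inflates $h(\kappa_d)$ by a factor $e^{1/L} > 1$, which would only force $s_d \to 0$. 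Instead I would invoke \cite{bogachev2006limit}: because $\|\bm{x}\|_t$ is monotone in $t$, squeeze $p_d$ between two \emph{reference} sequences $p_d^- \leq p_d \leq p_d^+$, both still obeying $\liminf_d p_d^{\pm}/(\rho\log(d)) > 1$, for which Bogachev's limit theorems deliver explicit Gumbel-type limit laws — equivalently, sharp critical-value asymptotics — for $\|\bm{\varepsilon}_d\|_{p_d^{\pm}}$. In the regime $L > 1$ these asymptotics place the $p_d^{\pm}$-norm on the same extreme-value scale as the maximum, so the sandwiched $(1-\alpha)$-quantile $\kappa_d$ of $\|\bm{\varepsilon}_d\|_{p_d}$ must satisfy $h(\kappa_d) = \log(d) + O(1)$; hence $d\,e^{-h(\kappa_d)}$ stays bounded and bounded away from $0$, forcing $\liminf_d s_d > 0$. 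When $L = \infty$ the normalizing constants of $\|\bm{\varepsilon}_d\|_{p_d}$ and of $\|\bm{\varepsilon}_d\|_{\infty}$ coincide to the relevant order, upgrading this to $s_d \to \alpha$, i.e.\ $\alpha^* = \alpha$. The uniform regular variation (Potter) bounds from \cite{regvar} are what make the transfer through $h$ rigorous, and the monotone squeeze is what lets one handle a \emph{general} $p_d$ rather than the structured sequences to which Bogachev's theorems apply directly.

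Finally I would transfer consistency by a subsequence argument that circumvents having to assume the size-independence of $\mathscr{C}_F(\{\infty,\cdot\})$. Suppose $\{\infty, \kappa_d\}$ were \emph{not} consistent against $\bm{\vartheta}$; then along some subsequence $\P(\|\bm{\theta}_d + \bm{\varepsilon}_d\|_{\infty} \geq \kappa_d) \to c < 1$, and passing to a further subsequence $d''$ we may also arrange $s_{d''} \to \beta$ for some $\beta \in [\alpha^*, \alpha] \subseteq (0, \alpha]$, using the two bounds on $s_d$ above. Using the critical-value characterization for supremum-norm tests (Lemma~\ref{lem:cvsup}), extend $\{\kappa_{d''}\}$ to a full sequence $\tilde{\kappa}_d$ with $\{\infty, \tilde{\kappa}_d\} \in \mathbb{T}_{\beta, F}$ and $\tilde{\kappa}_d = \kappa_d$ along $d''$. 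Since $\beta \in [\alpha^*, \alpha]$, the hypothesis that $\bm{\vartheta}$ lies in the stated intersection forces $\bm{\vartheta} \in \mathscr{C}_F(\{\infty, \tilde{\kappa}_d\})$, i.e.\ $\P(\|\bm{\theta}_d + \bm{\varepsilon}_d\|_{\infty} \geq \tilde{\kappa}_d) \to 1$; restricting to $d''$ contradicts the convergence to $c < 1$. Hence $\{\infty, \kappa_d\}$, and therefore $\{p_d, \kappa_d\}$ by the first paragraph, is consistent against $\bm{\vartheta}$. This establishes \eqref{eqn:fasterlogd3} with $\alpha^* = \liminf_d s_d$, which a fortiori contains \eqref{eqn:fasterlogd2} (the intersection over $(0,\alpha]$ being smaller), and the $L = \infty$ case delivers $\alpha^* = \alpha$ as claimed.
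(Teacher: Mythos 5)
Your proposal is correct, and its analytic core coincides with the paper's proof: both rest on the sandwich $\|\cdot\|_{\infty} \leq \|\cdot\|_{p_d} \leq \|\cdot\|_{r_d}$ for a structured reference sequence $r_d = \rho\log(d)/a$, $a \in (0,1)$, chosen so that eventually $p_d \geq r_d$, and on the two limit theorems of \cite{bogachev2006limit} (Proposition~10.1 for the maximum, Theorem~2.7(c) for $\|\cdot\|_{r_d}$), which after the common centering $h^{\leftarrow}(\log d)$ and scaling $\rho\log(d)/h^{\leftarrow}(\log d)$ confine the standardized critical values $u_d$ to $\bigl[\Lambda^{-1}(1-\alpha)+o(1),\ \Lambda_a^{-1}(1-\alpha)+o(1)\bigr]$ --- this is exactly your claim that $h(\kappa_d)=\log(d)+O(1)$, and your diagnosis that the naive $d^{1/p_d}$ bound is too lossy is also correct. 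Where you genuinely deviate is the finishing step. The paper is constructive: it replaces $u_d$ by the running supremum $u_d' := \sup_{m\geq d} u_m$, producing a dominating critical-value sequence $\kappa_d' \geq \kappa_d$ whose supremum-norm test has an exact asymptotic size in $[\alpha^*,\alpha]$ with $\alpha^* := 1-\Lambda(\Lambda_a^{-1}(1-\alpha))$, and concludes in one line from $\P(\|\bm{\theta}_d+\bm{\varepsilon}_d\|_{p_d}\geq\kappa_d)\geq\P(\|\bm{\theta}_d+\bm{\varepsilon}_d\|_{\infty}\geq\kappa_d')$. You instead keep $\kappa_d$ itself and handle the possible non-convergence of $s_d=\P(\|\bm{\varepsilon}_d\|_{\infty}\geq\kappa_d)$ by contradiction along subsequences, splicing $\kappa_{d''}$ into a full sequence of asymptotic-size-$\beta$ critical values via Lemma~\ref{lem:cvsup}; this is valid and yields the cleaner characterization $\alpha^*=\liminf_d s_d$, at the cost of an indirect argument. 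One point you should make explicit in the $L=\infty$ case: ``the normalizing constants coincide to the relevant order'' is precisely the uniform convergence $\Lambda_a\to\Lambda$ as $a\to 0$ (Theorem~10.2 of \cite{bogachev2006limit}), applied for every fixed $a$ and then letting $a\to 0$ to force $u_d\to\Lambda^{-1}(1-\alpha)$ and hence $s_d\to\alpha$ --- the same tool the paper invokes.
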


\begin{remark}\label{rem:consnotdep}
	If~$F$ is such that the consistency set of any supremum-norm based test with asymptotic size in~$(0, 1)$ neither depends on the sequence of critical values~$\kappa_{d, \infty}$ nor on the actual value of the asymptotic size (which turns out to be the case under normality of the errors, cf.~Theorem~\ref{thm:conspinf}), then the intersection to the left in~\eqref{eqn:fasterlogd2} (and thus also in~\eqref{eqn:fasterlogd3}) coincides with any member of this set. We then see that  
	in case~$p_d$ satisfies~\eqref{eqn:fasterlogd}, every~$p_d$-norm based test with asymptotic size~$\alpha$ dominates (in terms of consistency) any supremum-norm based test with asymptotic size in~$(0, 1)$. 
\end{remark}

\begin{remark}
	Note that the intersection in~\eqref{eqn:fasterlogd3} actually equals
	\begin{equation*}
		\bigcap \left\{ \mathscr{C}_F(\{\infty, \kappa_{d, \infty}\}) :  \{\infty, \kappa_{d, \infty}\}\in \mathbb{T}_{\alpha^*, F} \right\} \subseteq \mathscr{C}_F(\{p_d, \kappa_d\}),
	\end{equation*}
	which follows since for~$\alpha^* \leq \tilde{\alpha}_1 <\tilde{\alpha}_2 \leq \alpha$ the sequence of critical values corresponding to~$\tilde{\alpha}_1$ are eventually larger than those of~$\tilde{\alpha}_2$.
\end{remark}

\begin{proof}
	Note that for~$d$ large enough~$h^{\leftarrow}(\log(d)) > 0$, recall~\eqref{eqn:invlogtail} for a definition of the generalized inverse~$h^{\leftarrow}$, which we can hence without loss of generality take for granted throughout. We start with two preliminary observations: 
	
	First, we note that Proposition~10.1 in \cite{bogachev2006limit} establishes that 
	\begin{equation}\label{eqn:boga1}
		\frac{\rho \log(d)}{h^{\leftarrow}(\log(d))}\left(\|\bm{\varepsilon}_d\|_{\infty} - h^{\leftarrow}(\log(d))\right) \rightsquigarrow \Lambda,
	\end{equation}
	where~$\Lambda(x) = \exp\left(-\exp(-x)\right)$,~$x \in \R$, denotes Gumbel's double exponential cdf. 
	
	Secondly, we fix an~$a \in (0, 1)$ such that~
	\begin{equation}\label{eqn:bseq}
		\liminf_{d \to \infty} p_d/\left(\rho \log(d)/a\right) > 1;
	\end{equation}
	such an~$a$ exists due to~\eqref{eqn:fasterlogd}. Abbreviating~$r_d := \rho \log(d)/a$, Part~(c) of Theorem 2.7 in \cite{bogachev2006limit} (applied with ``$t = r_d$'',~``$\alpha = a$'', and~``$N(t) = d$,'' and noting that~$h$ is eventually continuous and strictly increasing, cf.~the discussion after Condition~5.1 in~\cite{bogachev2006limit} and also Theorem~1.5.5 in~\cite{regvar}) then establishes
	\begin{equation}\label{eqn:boga2}
		\frac{\rho \log(d)}{h^{\leftarrow}(\log(d))}\left(\|\bm{\varepsilon}_{d}\|_{r_d} - h^{\leftarrow}(\log(d))\right) \rightsquigarrow \Lambda_{a},
	\end{equation}
	where~$\Lambda_{a}$ is the distribution of~$a\log(Z_a)$, for~$Z_a$ a random variable with a stable law with characteristic exponent~$a$ and skewness parameter~$1$ (the characteristic function of~$Z_a$ can be found in Equation 2.9 of~\cite{bogachev2006limit}); in particular the cdf corresponding to~$\Lambda_a$ is continuous.
	
	Given the two preliminary observations made, we now argue as follows: By~\eqref{eqn:bseq}, and since we are only concerned with asymptotic statements, we may for simplicity of notation and without loss of generality assume that~$p_d \geq r_d \geq 1$ holds for every~$d$. With this in mind, we have, for every~$d\in \N$,
	\begin{equation}\label{eqn:normin}
		\|\cdot\|_{\infty} \leq \|\cdot\|_{p_d} \leq \|\cdot\|_{r_d}.
	\end{equation}
	From~$\{p_d, \kappa_d\} \in \mathbb{T}_{\alpha, F}$ (cf.~also~\eqref{eqn:size}) and~\eqref{eqn:normin} we can hence conclude that 
	\begin{equation}\label{eqn:boga3}
		\limsup_{d \to \infty } \P\left(\|\bm{\varepsilon}_d\|_{\infty} \geq \kappa_d \right) \leq \alpha \leq \liminf_{d \to \infty } \P\left(\|\bm{\varepsilon}_d\|_{r_d} \geq \kappa_d \right).
	\end{equation}
	Together with~\eqref{eqn:boga1} and~\eqref{eqn:boga2}, continuity of~$\Lambda$ and~$\Lambda_a$, Polya's theorem,~$\alpha \in (0, 1)$, and denoting~$$u_d := \frac{\rho \log(d)}{h^{\leftarrow}(\log(d))}\left(\kappa_d - h^{\leftarrow}(\log(d))\right),$$ this implies 
	\begin{equation}\label{eqn:bogainf}
		-\infty < \Lambda^{-1}(1-\alpha) \leq \liminf_{d \to \infty} u_d \leq  \limsup_{d \to \infty} u_d \leq \Lambda_a^{-1}(1-\alpha) < \infty.
	\end{equation}
	Set~$u_d' := \sup_{m \geq d} u_m \to \limsup_{d \to \infty} u_d =: \overline{u}$. By~\eqref{eqn:boga1}, continuity of~$\Lambda$, Polya's theorem and~\eqref{eqn:bogainf} the supremum-norm based test with sequence of critical values
	\begin{equation}\label{eqn:cvlb}
		\kappa'_d := h^{\leftarrow}(\log(d)) + u_d' \frac{h^{\leftarrow}(\log(d))}{\rho \log(d)} \geq  h^{\leftarrow}(\log(d)) + u_d \frac{h^{\leftarrow}(\log(d))}{\rho \log(d)} = \kappa_d
	\end{equation}
	has asymptotic size~$1-\Lambda(\overline{u}) \in [\alpha^*, \alpha]$ where~$\alpha^* := 1-\Lambda(\Lambda_a^{-1}(1-\alpha)) \in (0, \alpha]$, the inclusion following from~\eqref{eqn:bogainf}. That is~$\{\infty, \kappa_d'\} \in \mathbb{T}_{\tilde{\alpha}, F}$ for some~$\tilde{\alpha} \in [\alpha^*, \alpha]$. Now, let~$\bm{\vartheta}$ be an element of the intersection in~\eqref{eqn:fasterlogd3}. In particular, it follows that~$\bm{\vartheta} \in \mathscr{C}(\{\infty, \kappa_d'\})$, i.e,~$\P\left(
	\|\bm{\theta}_d + \bm{\varepsilon}_d\|_{\infty} \geq \kappa_d'
	\right) \to 1$.
	From~\eqref{eqn:normin} and~\eqref{eqn:cvlb} it follows that~$\P\left(
	\|\bm{\theta}_d + \bm{\varepsilon}_d\|_{p_d} \geq \kappa_d
	\right) \geq \P\left(
	\|\bm{\theta}_d + \bm{\varepsilon}_d\|_{\infty} \geq \kappa_d'
	\right)$, which establishes the inclusion in~\eqref{eqn:fasterlogd3}
	(and thus also the weaker statement in~\eqref{eqn:fasterlogd2}). To prove the last statement, it suffices to note that~$\alpha^* \to \alpha$ for~$a \to 0$ because~$\Lambda_a \to \Lambda$ uniformly as~$a \to 0$, as established in Theorem~10.2 of~\cite{bogachev2006limit} (together with~$\Lambda$ being strictly increasing and continuous).
\end{proof}

The following result is an immediate consequence of Proposition~10.1 in \cite{bogachev2006limit}; cf.~Equation~\eqref{eqn:boga1} in the proof of Proposition~\ref{prop:pdsup}.

\begin{lemma}\label{lem:cvsup}
	Suppose Assumption~\ref{as:NRV} holds. Then, a sequence of critical values~$\kappa_d$ satisfies~$\{\infty, \kappa_d\} \in \mathbb{T}_{\alpha, F}$ if and only if
	\begin{equation}
		\kappa_d = h^{\leftarrow}(\log(d))\left(1 + \frac{\Lambda^{-1}(1-\alpha) + o(1)}{\rho \log(d)}\right),
	\end{equation}
	where~$\Lambda(x) = \exp\left(-\exp(-x)\right)$,~$x \in \R$, denotes Gumbel's double exponential cdf. 
\end{lemma}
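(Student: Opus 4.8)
The plan is to read the claim off directly from the extreme-value limit recorded in Equation~\eqref{eqn:boga1}. Under Assumption~\ref{as:NRV} that display asserts that the standardized maximum
\[
T_d := \frac{\rho \log(d)}{h^{\leftarrow}(\log(d))}\left(\|\bm{\varepsilon}_d\|_{\infty} - h^{\leftarrow}(\log(d))\right)
\]
converges in distribution to the continuous Gumbel cdf $\Lambda$. For $d$ large enough one has $h^{\leftarrow}(\log(d)) > 0$, so the affine map $x \mapsto \frac{\rho\log(d)}{h^{\leftarrow}(\log(d))}\left(x - h^{\leftarrow}(\log(d))\right)$ is strictly increasing; setting
\[
v_d := \frac{\rho\log(d)}{h^{\leftarrow}(\log(d))}\left(\kappa_d - h^{\leftarrow}(\log(d))\right),
\]
the event $\{\|\bm{\varepsilon}_d\|_{\infty} \geq \kappa_d\}$ coincides with $\{T_d \geq v_d\}$. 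Hence, by the definition of asymptotic size in~\eqref{eqn:size}, the condition $\{\infty, \kappa_d\} \in \mathbb{T}_{\alpha, F}$ is equivalent to $\P(T_d \geq v_d) \to \alpha$, i.e.\ to $\P(T_d < v_d) \to 1 - \alpha$.

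The crux is to pass from the weak convergence $T_d \rightsquigarrow \Lambda$, which a priori only controls $\P(T_d \leq x)$ at \emph{fixed} thresholds $x$, to the \emph{moving} threshold $v_d$. This is precisely where Polya's theorem enters, exactly as in the proof of Proposition~\ref{prop:pdsup}: since $\Lambda$ is continuous, the convergence of the distribution functions of $T_d$ is uniform, and this uniformity also controls the left-continuous modification, so that $\P(T_d < v_d) = \Lambda(v_d) + o(1)$ regardless of the behaviour of the sequence $v_d$. Consequently $\P(T_d < v_d) \to 1-\alpha$ if and only if $\Lambda(v_d) \to 1-\alpha$. Because $\Lambda$ is a strictly increasing continuous bijection from $\R$ onto $(0,1)$, hence a homeomorphism, and $1-\alpha \in (0,1)$, the latter is equivalent to $v_d \to \Lambda^{-1}(1-\alpha)$, that is $v_d = \Lambda^{-1}(1-\alpha) + o(1)$.

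It then remains only to rearrange this last relation. Substituting the definition of $v_d$ and solving for $\kappa_d$ yields
\[
\kappa_d = h^{\leftarrow}(\log(d)) + \left(\Lambda^{-1}(1-\alpha) + o(1)\right)\frac{h^{\leftarrow}(\log(d))}{\rho\log(d)} = h^{\leftarrow}(\log(d))\left(1 + \frac{\Lambda^{-1}(1-\alpha) + o(1)}{\rho\log(d)}\right),
\]
which is exactly the asserted characterization, and the equivalences above deliver both directions at once. I expect no genuine obstacle here: the only step that is not pure bookkeeping is the uniform-convergence argument needed to evaluate the limiting probability at the moving threshold $v_d$, and this is furnished verbatim by Polya's theorem together with the continuity of $\Lambda$; the remaining work is the affine renormalization and the inversion of the homeomorphism $\Lambda$.
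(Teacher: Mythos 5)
Your proof is correct and takes essentially the same route as the paper: the paper disposes of this lemma by declaring it an immediate consequence of the Gumbel limit in Equation~\eqref{eqn:boga1} (Proposition~10.1 of \cite{bogachev2006limit}), and the details you fill in --- Polya's theorem to evaluate the probability at the moving threshold $v_d$, together with continuity and strict monotonicity of $\Lambda$ to invert --- are precisely the steps the paper itself uses when it runs this argument explicitly in the proof of Proposition~\ref{prop:pdsup}.
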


\subsection{Question~\ref{q:existdom} and related results in the non-Gaussian case}\label{app:existdomgen}

We now show that Question~\ref{q:existdom} can be answered in the affirmative also in the non-Gaussian case under the condition that~$\varepsilon$ has moments of all orders (one can of course also formulate related results concerning tests that dominate all~$p$-norm based tests up to a certain order, in case~$\varepsilon$ only permits absolute moments up to some order, but we do not spell out the details). 

The following theorem contains a general version of the first part of Theorem~\ref{thm:domp}. The theorem also contains a generalized version of the second part of Theorem~\ref{thm:domp}, which is based on Proposition~\ref{prop:pdsup}, and which, by Remark~\ref{rem:consnotdep}, simplifies to the second statement in Theorem~\ref{thm:domp} under additional conditions.
\begin{theorem}\label{thm:dompgen}
	Suppose Assumption~\ref{ass:pfindist} holds and that~$\mathbb{E}(|\varepsilon|^p) < \infty$ for every~$p \in (0, \infty)$. Let~$p_d$,~$m_d$,~$\alpha$,~$\mathcal{A}$ and~$\mathbb{M}$ be as in the statement of Theorem~\ref{thm:domp}. For every~$d \in \N$ and every~$j = 1, \hdots, m_d$, choose~$\kappa_{j, d} > 0$ and~$c_d \in (0, 1]$ such that
	\begin{equation}\label{eqn:exactsizegen}
		\mathbb{P}\left( \|\bm{\varepsilon}_d \|_{p_j} \geq \kappa_{j, d} \right) = \alpha_{j,d} \quad \text{ and } \quad \mathbb{E}\left(\psi_d(\bm{\varepsilon}_d)\right) = \alpha,
	\end{equation}
	where
	\begin{equation}\label{eq:max_testgen}
		\psi_d(\cdot) := \mathds{1}\left\{ \max_{j = 1, \hdots, m_d} \kappa_{j,d}^{-1}
		\| \cdot \|_{p_j} \geq c_d
		\right\}.
	\end{equation}
	Then, the following holds.
	\begin{enumerate}
		\item The sequence of tests~$\psi_d$ has the property 
		\begin{equation}\label{eqn:bettergen} 
			\mathscr{C}_F(\{p, \kappa_{d}\}) \subseteq \mathscr{C}_F(\psi_d), \text{ for every } p \in (0, \infty) \text{ and every } \{p, \kappa_d\} \in \mathbb{T}_{\alpha, F}.
		\end{equation}	
		\item If Assumption~\ref{as:NRV} is satisfied and
		\begin{equation}\label{eqn:fasterlogd4}
			\liminf_{d \to \infty} \frac{p_{m_d}}{\rho \log(d)} > 1,
		\end{equation}
		then there exists a~$\delta^* \leq \delta := \lim_{d \to \infty} \alpha_{m_d, d}$ such that~$\psi_d$ is consistent against any~$\bm{\vartheta}$ that every supremum-norm based test with asymptotic size in $[\delta^*, \delta]$ is consistent against, i.e.,
		\begin{equation}
			\bigcap \left\{ \mathscr{C}_F(\{\infty, \kappa_{d, \infty}\}) :  \{\infty, \kappa_{d, \infty}\}\in \mathbb{T}_{\tilde{\alpha}, F},~\tilde{\alpha} \in [\delta^*, \delta] \right\} \subseteq \mathscr{C}_F(\psi_d);
		\end{equation}
		furthermore, in case the limit inferior in~\eqref{eqn:fasterlogd4} is~$\infty$ one can choose~$\delta^* = \delta$.
	\end{enumerate}
\end{theorem}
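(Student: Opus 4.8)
The plan is to assemble the statement from three ingredients established earlier: the characterization of $\mathscr{C}_F(\{p,\kappa_{d}\})$ in Theorem~\ref{thm:consprealGEN}, the fact that this set is independent of the asymptotic size, and a pointwise domination of $\psi_d$ by the individual tests built into its construction. The last ingredient is the elementary observation that, since $c_d \in (0,1]$, for each index $j \le m_d$ one has
\begin{equation*}
	\psi_d(\cdot) \ge \mathds{1}\{\kappa_{j,d}^{-1}\|\cdot\|_{p_j} \ge c_d\} = \mathds{1}\{\|\cdot\|_{p_j} \ge c_d \kappa_{j,d}\} \ge \mathds{1}\{\|\cdot\|_{p_j} \ge \kappa_{j,d}\},
\end{equation*}
so that $\E\psi_d(\bm{\theta}_d + \bm{\varepsilon}_d) \ge \P(\|\bm{\theta}_d + \bm{\varepsilon}_d\|_{p_j} \ge \kappa_{j,d})$ whenever $m_d \ge j$. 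Thus, to show $\bm{\vartheta} \in \mathscr{C}_F(\psi_d)$ it suffices to dominate $\psi_d$ from below by a single $p_j$-norm based test (with $j$ allowed to depend on $d$) against which $\bm{\vartheta}$ is already consistent. The routine bookkeeping (existence of the $\kappa_{j,d}$ and of a $c_d \le 1$ attaining exact size $\alpha$, and verification that the invoked moment and tail conditions hold under the standing assumptions) I would dispatch first.

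For Part~1, fix $p \in (0,\infty)$ and $\bm{\vartheta} \in \mathscr{C}_F(\{p,\kappa_{d}\})$; since all moments of $\varepsilon$ are finite, Theorem~\ref{thm:consprealGEN} applies for every power and gives $\sum_{i=1}^d g_p(\theta_{i,d})/\sqrt{d} \to \infty$. Because $\mathbb{M}$ is unbounded and $p_j \to \infty$, I would select a \emph{fixed} index $j^* \in \mathbb{M}$ with $p_{j^*} \ge p$; the pointwise monotonicity $r \mapsto g_r$ (as used in the proof of Theorem~\ref{thm:incl}) then yields $g_{p_{j^*}} \ge g_p$ and hence $\sum_{i=1}^d g_{p_{j^*}}(\theta_{i,d})/\sqrt{d} \to \infty$. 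The critical value satisfies $\P(\|\bm{\varepsilon}_d\|_{p_{j^*}} \ge \kappa_{j^*,d}) = \alpha_{j^*,d} \to \underline{\alpha}_{j^*} \in (0,\alpha] \subset (0,1)$ because $j^* \in \mathbb{M}$, so $\{p_{j^*}, \kappa_{j^*,d}\} \in \mathbb{T}_{\underline{\alpha}_{j^*}, F}$ and a second application of Theorem~\ref{thm:consprealGEN} gives $\bm{\vartheta} \in \mathscr{C}_F(\{p_{j^*}, \kappa_{j^*,d}\})$. The pointwise bound above (valid for all $d$ large enough that $m_d \ge j^*$) then forces $\E\psi_d(\bm{\theta}_d + \bm{\varepsilon}_d) \to 1$, i.e.\ $\bm{\vartheta} \in \mathscr{C}_F(\psi_d)$. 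This is where the hypothesis on $\mathbb{M}$ does the real work: it guarantees a fixed index whose limiting size stays positive, which is exactly what is needed to invoke the consistency characterization.

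For Part~2, I would feed the growing sequence of largest exponents $p_{m_d}$, together with the critical values $\kappa_{m_d,d}$, into Proposition~\ref{prop:pdsup}. The resulting test $\mathds{1}\{\|\cdot\|_{p_{m_d}} \ge \kappa_{m_d,d}\}$ has asymptotic size $\delta = \lim_d \alpha_{m_d,d} \in (0,1)$, and condition~\eqref{eqn:fasterlogd4} is precisely~\eqref{eqn:fasterlogd} for this sequence. Proposition~\ref{prop:pdsup} (with ``$\alpha$''$=\delta$) then produces a $\delta^* \in (0,\delta]$, equal to $\delta$ when the limit inferior is $\infty$, such that the intersection of $\mathscr{C}_F(\{\infty,\kappa_{d,\infty}\})$ over $\tilde{\alpha} \in [\delta^*,\delta]$ is contained in $\mathscr{C}_F(\{p_{m_d}, \kappa_{m_d,d}\})$. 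Applying the same pointwise domination with $j = m_d$ gives $\mathscr{C}_F(\{p_{m_d}, \kappa_{m_d,d}\}) \subseteq \mathscr{C}_F(\psi_d)$, and chaining the two inclusions yields the claim.

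The only genuinely substantive obstacle is already resolved upstream, in the extreme-value analysis of $p_{m_d}$-norm statistics under diverging $p_{m_d}$ that underlies Proposition~\ref{prop:pdsup}. Within the present proof the sole delicate point is the size bookkeeping — correctly identifying $\delta$ as the asymptotic size of the ``top'' constituent test and tracking that $\delta^* \le \delta$ — so that Proposition~\ref{prop:pdsup} can be applied verbatim; everything else reduces to the pointwise inequality and the monotonicity of $g_r$ in $r$.
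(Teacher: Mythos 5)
Your proposal is correct and takes essentially the same route as the paper's proof: for Part~1, pointwise domination $\psi_d \geq \mathds{1}\{\|\cdot\|_{p_{j^*}} \geq \kappa_{j^*,d}\}$ for a fixed $j^* \in \mathbb{M}$ with $p_{j^*} \geq p$, combined with the consistency characterization of Theorem~\ref{thm:consprealGEN} (the paper cites Theorem~\ref{thm:inclGEN} here, but its proof is exactly your inlined argument via monotonicity of $g_r$ in $r$); and for Part~2, Proposition~\ref{prop:pdsup} applied to $\{p_{m_d},\kappa_{m_d,d}\}$ followed by the same domination with $j = m_d$. The bookkeeping you defer — existence of $c_d \in (0,1]$ via the union bound $\P\bigl(\max_j \kappa_{j,d}^{-1}\|\bm{\varepsilon}_d\|_{p_j} \geq 1\bigr) \leq \sum_j \alpha_{j,d} = \alpha$ together with continuity and monotonicity in $c$ — is spelled out in the paper but is indeed routine.
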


\begin{proof}
	We prove the two statements in the theorem separately:
	
	\textbf{Part 1:} 
	That~$\kappa_{j,d} > 0$ as requested in~\eqref{eqn:exactsizegen} exists follows from the cdf of~$\|\bm{\varepsilon}_d \|_{p}$ being continuous, non-decreasing on~$(0, \infty)$ and zero everywhere else, together with~$\alpha_{j,d} \in (0, 1)$. To show that~$c_d \in (0, 1]$ as in~\eqref{eqn:exactsizegen}  exists, fix~$d \in \N$ and note that by~\eqref{eqn:exactsizegen} and a union bound 
	\begin{equation*}
		\mathbb{P}\left( \max_{j = 1, \hdots, m_d}
		\kappa_{j, d}^{-1} \| \bm{\varepsilon}_d \|_{p_j} \geq 1
		\right)
		\leq
		\sum_{j = 1}^{m_d} \mathbb{P}
		\left(
		\| \bm{\varepsilon}_d\|_{p_j} \geq \kappa_{j, d}
		\right) = \sum_{j = 1}^d \alpha_{j,d} = \alpha;
	\end{equation*}
	Furthermore, observe that the function
	\begin{equation}
		c \mapsto \mathbb{P}\left( \max_{j = 1, \hdots, m_d} \kappa_{j, d}^{-1}
		\| \bm{\varepsilon}_d \|_{p_j} \geq c
		\right)
	\end{equation}
	is continuous and non-increasing on~$(0, \infty)$. Since the limit of this function as~$c \to 0$ ($c \to \infty$) is~$1 > \alpha$ (is~$0 < \alpha$), the existence of~$c_d \leq 1$ as required in~\eqref{eqn:exactsizegen} follows. 
	
	Next, let~$p \in (0, \infty)$ and~$\{p, \kappa_d\} \in \mathbb{T}_{\alpha, F}$. Since~$\mathbb{M} \subseteq \N$ is unbounded, and because~$p_j$ is strictly increasing and unbounded, there exists a~$J \in \mathbb{M}$ (not depending on~$d$) such that~$p_J \geq p$. By assumption,~$m_j\to \infty$ as~$j \to \infty$, from which it follows that eventually~$m_d \geq J$. For all such~$d$,~$c_d \leq 1$ shows that
	\begin{equation}\label{eqn:lowpsigen}
		\psi_d \geq \mathds{1}\{ \| \cdot \|_{p_J} \geq c_d \kappa_{J, d}\} \geq \mathds{1}\{\| \cdot \|_{p_J} \geq \kappa_{J, d}\}.
	\end{equation}
	The sequence~$\kappa_{J, d}$ was chosen such that the test at the far-right in~\eqref{eqn:lowpsigen} has null-rejection probability~$\alpha_{J, d}$. Furthermore, we obtain~$\lim_{d \to \infty} \alpha_{J, d} =: \underline{\alpha}_J \in (0, 1)$ from~$J \in \mathbb{M}$,~\eqref{eqn:Aarray}, and
	~$\underline{\alpha}_J \leq \alpha < 1$.  Hence,~$\{p_J, \kappa_{J, d}\} \in \mathbb{T}_{\underline{\alpha}_J, F}$. Theorem~\ref{thm:inclGEN} now shows that
	\begin{equation}
		\mathscr{C}_F(\{p, \kappa_d\}) \subseteq \mathscr{C}_F(\{p_J, \kappa_{J, d}\}) \subseteq \mathscr{C}_F(\psi_d),
	\end{equation}
	the second inclusion following from~\eqref{eqn:lowpsigen}.
	
	\textbf{Part 2:} Note that
	\begin{equation}\label{eqn:lowpsi2}
		\psi_d \geq \mathds{1}\{ \| \cdot \|_{p_{m_d}} \geq c_d \kappa_{m_d, d}\} \geq \mathds{1}\{\| \cdot \|_{p_{m_d}} \geq \kappa_{m_d, d}\}.
	\end{equation}
	From~\eqref{eqn:exactsizegen} and~\eqref{eqn:Aarray} it follows that~$\{p_{m_d}, \kappa_{m_d, d}\}$, say, the sequence of tests to the far-right in~\eqref{eqn:lowpsi2} has asymptotic size~$\delta \in (0, 1)$. The condition in~\eqref{eqn:fasterlogd4} together with Proposition~\ref{prop:pdsup} allows us to conclude that for some~$\delta^* \in (0, \delta]$ we have
	\begin{equation*}
		\bigcap \left\{ \mathscr{C}_F(\{\infty, \kappa_{d, \infty}\}) :  \{\infty, \kappa_{d, \infty}\}\in \mathbb{T}_{\tilde{\alpha}, F},~\tilde{\alpha} \in [\delta^*, \delta] \right\} \subseteq \mathscr{C}_F(\{p_{m_d}, \kappa_{m_d, d}\}) \subseteq \mathscr{C}_F(\psi_d),
	\end{equation*}
	where the last inclusion followed from~\eqref{eqn:lowpsi2}. The final statement in Part~2 follows from the last statement in Proposition~\ref{prop:pdsup}.
\end{proof}

The following result quantifies the closeness of the power function of~$\psi_d$ to any~$p_j$-norm based test used in its construction. It is a general version of Theorem~\ref{thm:opt_sum} in the non-Gaussian case.

\begin{theorem}\label{thm:opt_sum_gen}
	In the context of Theorem~\ref{thm:dompgen}, fix~$j \in \mathbb{M}$ and set 
	\begin{equation}\label{eqn:alphabardefgen}
		\lim_{d \to \infty} \alpha_{j, d} =: \underline{\alpha}_j > 0.
	\end{equation}
	For any sequence of critical values~$\kappa_{d}$ such that~$\{p_j, \kappa_{d}\} \in \mathbb{T}_{\alpha, F}$, the sequence of tests~$\psi_d$ as defined in~\eqref{eq:max_testgen} satisfies
	\begin{align}\label{eq:powerfunctiongen}
		\limsup_{d\to\infty}\sup_{\bm{\theta}_d\in\R^d} \left[\P\del[1]{\|\bm{\theta}_d+\bm{\eps}_d\|_{p_j} \geq \kappa_{d}}-\E(\psi_d(\bm{\theta}_d+\bm{\eps}_d))\right] \leq \frac{
			\Phi^{-1}(1-\underline{\alpha}_j) - \Phi^{-1}(1-\alpha)}{\sqrt{2\pi}},
	\end{align}
	for~$\Phi$ the cdf of the standard normal distribution, and
	\begin{equation}\label{eq:powerfunction2gen}
		\liminf_{d\to\infty}\sup_{\bm{\theta}_d\in\R^d} \left[\E(\psi_d(\bm{\theta}_d+\bm{\eps}_d)) - \P\del[1]{\|\bm{\theta}_d+\bm{\eps}_d\|_{p_j}
			\geq \kappa_{d}}\right] \geq 1-\alpha.
	\end{equation}
\end{theorem}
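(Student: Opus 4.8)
The plan is to prove the two inequalities separately, in each case comparing $\psi_d$ with the single ``$j$-th'' component test that is hidden inside it. The elementary but crucial observation is that, since $\psi_d$ takes a maximum over its components, it rejects whenever the $j$-th one does; that is, for every $\bm{\theta}_d \in \R^d$,
\[
\E(\psi_d(\bm{\theta}_d+\bm{\eps}_d)) \;\geq\; \P\del[1]{\|\bm{\theta}_d+\bm{\eps}_d\|_{p_j} \geq c_d\kappa_{j,d}}.
\]
Taking $k=j$ in the maximum also shows that $\{\|\bm{\eps}_d\|_{p_j}\geq c_d\kappa_{j,d}\}$ is contained in the rejection event of $\psi_d$, so the null rejection probability of the auxiliary test $\{p_j, c_d\kappa_{j,d}\}$ lies in $[\alpha_{j,d},\alpha]$: indeed $c_d\leq 1$ forces $c_d\kappa_{j,d}\leq\kappa_{j,d}$ (lower bound $\alpha_{j,d}$), and the upper bound is $\E(\psi_d(\bm{\eps}_d))=\alpha$. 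Since $j\in\mathbb{M}$ yields $\alpha_{j,d}\to\underline{\alpha}_j>0$ while $\alpha<1$, every subsequence of $d$ has a sub-subsequence along which this auxiliary size converges to some $\beta\in[\underline{\alpha}_j,\alpha]\subset(0,1)$; this is the fact that lets me invoke Theorem~\ref{thm:consprealGEN} for the auxiliary test.

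For \eqref{eq:powerfunctiongen}, bounding $\E\psi_d$ from below reduces the claim to controlling $\sup_{\bm{\theta}_d}\sbr[1]{\P(\|\bm{\theta}_d+\bm{\eps}_d\|_{p_j}\geq\kappa_d)-\P(\|\bm{\theta}_d+\bm{\eps}_d\|_{p_j}\geq c_d\kappa_{j,d})}$, i.e.\ the probability that $T_d:=\|\bm{\theta}_d+\bm{\eps}_d\|_{p_j}^{p_j}$ lands in the interval with endpoints $\kappa_d^{p_j}$ and $(c_d\kappa_{j,d})^{p_j}$ (read as $0$ when the endpoints are in the wrong order). I would argue by contradiction along subsequences: take near-maximisers $\bm{\theta}_d$ realising the $\limsup$ and split on the behaviour of $\sum_i g_{p_j}(\theta_{i,d})/\sqrt d$. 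If this diverges along a further subsequence, Theorem~\ref{thm:consprealGEN} drives both rejection probabilities to $1$ (the second using the auxiliary-size fact above), so the difference vanishes. If instead it stays bounded, Lemma~\ref{lem:asyn} gives $(T_d-\sum_i\lambda_{p_j}(\theta_{i,d}))/\sqrt{d\sigma_{p_j}^2}\rightsquigarrow\mathbb{N}(0,1)$, and by \eqref{eqn:CVpreal} (with $\alpha_{j,d}\to\underline{\alpha}_j$ and $c_d\leq 1$) the standardised interval length is $\tfrac{(c_d\kappa_{j,d})^{p_j}-\kappa_d^{p_j}}{\sqrt{d\sigma_{p_j}^2}}\leq \Phi^{-1}(1-\underline{\alpha}_j)-\Phi^{-1}(1-\alpha)+o(1)=:W+o(1)$. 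Polya's theorem (the limit cdf $\Phi$ is continuous) then bounds the interval probability by $\sup_t\sbr[0]{\Phi(t+W)-\Phi(t)}+o(1)=\Phi(W/2)-\Phi(-W/2)+o(1)\leq W\phi(0)=W/\sqrt{2\pi}$, contradicting a strictly larger limit and giving \eqref{eq:powerfunctiongen}.

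For \eqref{eq:powerfunction2gen} it suffices to exhibit one array against which $\psi_d$ is consistent while $\{p_j,\kappa_d\}$ has asymptotic power equal to its size $\alpha$. I would fix $J\in\mathbb{M}$ with $J>j$ (hence $p_J>p_j$ and $\underline{\alpha}_J>0$), set $\tau_d:=(\sqrt d\,\log d)^{1/p_J}$, and take the $1$-sparse array $\bm{\theta}_d^{\ast}:=(\tau_d,0,\dots,0)$. Since $\tau_d>1$ eventually, $\sum_i g_{p_j}(\theta^{\ast}_{i,d})/\sqrt d=\tau_d^{p_j}/\sqrt d=d^{\frac12(p_j/p_J-1)}(\log d)^{p_j/p_J}\to0$, so the ``furthermore'' part of Theorem~\ref{thm:consprealGEN} makes $\{p_j,\kappa_d\}$ have asymptotic power $\alpha$ against $\bm{\theta}_d^{\ast}$. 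On the other hand $\sum_i g_{p_J}(\theta^{\ast}_{i,d})/\sqrt d=\tau_d^{p_J}/\sqrt d=\log d\to\infty$, and because $\E\psi_d\geq\P(\|\cdot\|_{p_J}\geq c_d\kappa_{J,d})$ with the auxiliary test $\{p_J,c_d\kappa_{J,d}\}$ of asymptotic size in $(0,1)$ along subsequences, Theorem~\ref{thm:consprealGEN} yields $\E(\psi_d(\bm{\theta}_d^{\ast}+\bm{\eps}_d))\to1$ (established along every subsequence). Thus the supremand at $\bm{\theta}_d^{\ast}$ tends to $1-\alpha$, proving \eqref{eq:powerfunction2gen}. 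The main obstacle is the uniformity in $\bm{\theta}_d$ together with the sharp constant $1/\sqrt{2\pi}$ in \eqref{eq:powerfunctiongen}: the central limit theorem of Lemma~\ref{lem:asyn} is only available where $\sum_i g_{p_j}(\theta_{i,d})/\sqrt d$ is bounded, so the argument must route through the subsequence/case split, and the exact constant $\phi(0)=1/\sqrt{2\pi}$ surfaces only after combining $\alpha_{j,d}\to\underline{\alpha}_j$, the bound $c_d\leq 1$, and the Polya-theorem anti-concentration estimate; keeping the auxiliary size inside a compact subinterval of $(0,1)$ — which relies on both $\E(\psi_d(\bm{\eps}_d))=\alpha$ and $j\in\mathbb{M}$ — is the step most easily overlooked.
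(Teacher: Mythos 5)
Your proposal is correct and follows essentially the same route as the paper's proof: lower-bound $\E(\psi_d)$ by the embedded $p_j$-component test (using $c_d\leq 1$), split along subsequences according to whether $\sum_{i}g_{p_j}(\theta_{i,d})/\sqrt{d}$ diverges (where Theorem~\ref{thm:consprealGEN} drives both rejection probabilities to one) or stays bounded (where Lemma~\ref{lem:asyn} and the critical-value expansion~\eqref{eqn:CVpreal} produce the constant $1/\sqrt{2\pi}$), and then exhibit a sparse witness array against which $\psi_d$ is consistent while $\{p_j,\kappa_d\}$ has asymptotic power $\alpha$. The only cosmetic deviations are that you bound the limiting Gaussian interval probability by a shift-uniform anti-concentration estimate where the paper extracts a further subsequence and applies the mean-value theorem, and that your witness array is tuned to a higher component $p_J$, $J\in\mathbb{M}$, of $\psi_d$ itself, whereas the paper takes $(d^{1/(4p_j)},0,\hdots,0)$ and invokes the domination property of Theorem~\ref{thm:dompgen}; both variants are immaterial.
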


\begin{proof}
	We make some initial observations. Unboundedness of~$\mathbb{M}$ together with~\eqref{eqn:Aarray} implies~$\underline{\alpha}_j \in (0, \alpha)$, and from~\eqref{eqn:exactsizegen} it follows that~$\{p_j, \kappa_{j, d}\} \in \mathbb{T}_{\underline{\alpha}_j, F}$. Furthermore, for every~$d \in \N$ and~$\bm{\theta}_d \in \R^d$, the definition of~$\psi_d$ in Equation~\eqref{eq:max_testgen} and~$c_d \in (0, 1]$ readily shows that
	\begin{equation}\label{eqn:pjupbd}
		\begin{aligned}
			\mathbb{P}\left(\|\bm{\theta}_d + \bm{\varepsilon}_d\|_{p_j} \geq\kappa_d\right) 
			- 
			\mathbb{E}(\psi_d(\bm{\theta}_d + \bm{\varepsilon}_d)) 
			&\leq 
			\mathbb{P}\left(\|\bm{\theta}_d + \bm{\varepsilon}_d\|_{p_j} \geq\kappa_d\right)
			-
			\mathbb{P}\left(\|\bm{\theta}_d + \bm{\varepsilon}_d\|_{p_j} \geq \kappa_{j, d}\right).
		\end{aligned}
	\end{equation}
	
	Next, let~$\bm{\vartheta} \in \bm{\Theta}$ and a subsequence~$d'$ of~$d$ be such that the limit superior to the left in~\eqref{eq:powerfunction2gen} coincides with
	\begin{equation}\label{eqn:limsuprepl}
		\lim_{d'\to\infty}  \left[\P\del[1]{\|\bm{\theta}_{d'}+\bm{\eps}_{d'}\|_{p_j} \geq \kappa_{d}}-\E(\psi_{d'}(\bm{\theta}_{d'}+\bm{\eps}_{d'}))\right] =: \mathfrak{s}.
	\end{equation}
	It remains to verify that~$\mathfrak{s} \leq \phi(0)(
	\Phi^{-1}(1-\underline{\alpha}_j) - \Phi^{-1}(1-\alpha))$. Without loss of generality (pass to a further subsequence if necessary) we can assume that $\sum_{i = 1}^{d'} g_{p_j}(\theta_{i,d'})/\sqrt{d'} \to a \in [0, \infty]$. We consider two cases:
	
	\textbf{Case 1:} If~$a = \infty$, we may define~$\bm{\vartheta}^* \in \bm{\Theta}$ such that~$\bm{\theta}_{d'} = \bm{\theta}_{d'}^*$ holds along~$d'$ and such that~$\sum_{i = 1}^{d} g_{p_j}(\theta^*_{i,d})/\sqrt{d} \to \infty$.
	By Theorem~\ref{thm:consprealGEN},~$\bm{\vartheta}^* \in \mathscr{C}_F(\{p_j, \kappa_{j, d}\})$ and~$\bm{\vartheta}^* \in \mathscr{C}_F(\{p_j, \kappa_{d}\})$, from which it follows that
	\begin{equation}
		\mathbb{P}\left(\|\bm{\theta}^*_d + \bm{\varepsilon}_d\|_{p_j} \geq \kappa_{d}\right)
		-
		\mathbb{P}\left(\|\bm{\theta}_d^* + \bm{\varepsilon}_d\|_{p_j} \geq \kappa_{j, d}\right) \to 0.
	\end{equation}
	In particular, it follows that (as~$d' \to \infty$)
	\begin{equation}
		\mathbb{P}\left(\|\bm{\theta}_{d'} + \bm{\varepsilon}_{d'}\|_{p_j} \geq \kappa_{d'}\right)
		-
		\mathbb{P}\left(\|\bm{\theta}_{d'} + \bm{\varepsilon}_{d'}\|_{p_j} \geq \kappa_{j, d'}\right) \to 0.
	\end{equation}
	Hence, in this case~$\mathfrak{s} \leq 0$ by~\eqref{eqn:pjupbd}, and we are done as~$\underline{\alpha}_j < \alpha$.
	
	\textbf{Case 2:} Suppose now that~$a \in [0, \infty)$. Lemma~\ref{lem:asyn} shows that
	\begin{equation}
		\frac{\|\bm{\theta}_{d'} + \bm{\varepsilon}_{d'}\|_p^p - \sum_{i = 1}^{d'} \lambda_p(\theta_{i,d'})}{\sqrt{d' \mathbb{V}ar|\varepsilon|^p}} \rightsquigarrow \mathbb{N}(0, 1)
	\end{equation}
	and that
	\begin{equation}
		\frac{\kappa_{d'}^{p_j} - {d'} \lambda_{p_j}(0)}{\sqrt{d' \mathbb{V}ar(|\varepsilon|^{p_j})}} \to \Phi^{-1}(1-\alpha) ~\text{ and } ~ \frac{\kappa_{j,d'}^{p_j} - d' \lambda_{p_j}(0)}{\sqrt{d' \mathbb{V}ar(|\varepsilon|^{p_j})}} \to \Phi^{-1}(1-\underline{\alpha}_j).
	\end{equation}
	Furthermore, since~$a$ is finite and by Lemma~\ref{lem:suff0behav}, along a subsequence~$d''$ of~$d'$ we have
	\begin{equation}
		\frac{\sum_{i = 1}^{d''} \lambda_{p_j}(\theta_{i,d''}) - \lambda_{p_j}(0)}{\sqrt{d'' \mathbb{V}ar(|\varepsilon|^{p_j})}} \to b \in [0, \infty).
	\end{equation}
	Combining the observations in the previous three displays using a decomposition similar to~\eqref{eqn:split} (and using Polya's theorem), we obtain 
	\begin{align*}
		\mathbb{P}\left(\|\bm{\theta}_{d''} + \bm{\varepsilon}_{d''}\|_{p_j} \geq\kappa_{d''}\right) &\to \overline{\Phi}( \Phi^{-1}(1-\alpha) - b) \\
		\mathbb{P}\left(\|\bm{\theta}_{d''} + \bm{\varepsilon}_{d''}\|_{p_j} \geq\kappa_{j,d''}\right) &\to \overline{\Phi}( \Phi^{-1}(1-\underline{\alpha}_j) - b),
	\end{align*}
	so that the upper bound in~\eqref{eqn:pjupbd}  converges along~$d''$ to
	\begin{equation*}
		\Phi( \Phi^{-1}(1-\underline{\alpha}_j) - b) -
		\Phi( \Phi^{-1}(1-\alpha) - b) \leq \phi(0)\left(
		\Phi^{-1}(1-\underline{\alpha}_j) - \Phi^{-1}(1-\alpha)\right),
	\end{equation*}
	the inequality following from the mean-value theorem.
	
	To prove the statement in~\eqref{eq:powerfunction2gen}, it suffices to exhibit an array~$\bm{\vartheta} \in \mathscr{C}_F(\psi_d)$ such that
	\begin{equation}\label{eqn:pw0}
		\P\del[1]{\|\bm{\theta}_d+\bm{\eps}_d\|_{p_j}
			\geq \kappa_{d}} \to \alpha.
	\end{equation}
	To this end, define~$\bm{\vartheta}$ via~$\bm{\theta}_d := (d^{1/(4p_j)}, 0, \hdots, 0)$, $d \in \N$. Theorem~\ref{thm:consprealGEN} verifies~\eqref{eqn:pw0}, and shows that~$\bm{\vartheta} \in \mathscr{C}_F(\{4p_j, \kappa_d^* \} )$ for~$\{4p_j, \kappa_d^* \} \in \mathbb{T}_{\alpha, F}$,~$\alpha \in (0, 1)$, from which it follows from Theorem~\ref{thm:dompgen} that~$\bm{\vartheta} \in \mathscr{C}(\psi_d)$. 
\end{proof}

\end{document}